\DeclareFontFamily{U}{rsfs}{} 
\DeclareFontShape{U}{rsfs}{n}{it}{<->
rsfs10}{} \DeclareSymbolFont{mscr}{U}{rsfs}{n}{it}
\DeclareSymbolFontAlphabet{\scr}{mscr}
\def\mathscr{\scr}
\begin{document}
\def\e#1\e{\begin{equation}#1\end{equation}}
\def\ea#1\ea{\begin{align}#1\end{align}}
\def\eq#1{{\rm(\ref{#1})}}
\theoremstyle{plain}
\newtheorem{thm}{Theorem}[section]
\newtheorem{lem}[thm]{Lemma}
\newtheorem{prop}[thm]{Proposition}
\newtheorem{cor}[thm]{Corollary}
\newtheorem{quest}[thm]{Question}
\newtheorem{prob}[thm]{Problem}
\newtheorem{claim}[thm]{Claim}
\theoremstyle{definition}
\newtheorem{dfn}[thm]{Definition}
\newtheorem{ex}[thm]{Example}
\newtheorem{rem}[thm]{Remark}
\newtheorem{conj}[thm]{Conjecture}
\newtheorem{princ}[thm]{Principle}
\newtheorem{ax}[thm]{Axiom}
\newtheorem{ass}[thm]{Assumption}
\newtheorem{property}[thm]{Property}
\newtheorem{cond}[thm]{Condition}
\newtheorem{conc}[thm]{Conclusion}
\numberwithin{figure}{section}
\numberwithin{equation}{section}
\def\depth{\mathop{\rm depth}\nolimits}
\def\dim{\mathop{\rm dim}\nolimits}
\def\bc{\mathop{\rm bc}\nolimits}
\def\ec{\mathop{\rm ec}\nolimits}
\def\codim{\mathop{\rm codim}\nolimits}
\def\vdim{\mathop{\rm vdim}\nolimits}
\def\sign{\mathop{\rm sign}\nolimits}
\def\Im{\mathop{\rm Im}\nolimits}
\def\det{\mathop{\rm det}\nolimits}
\def\Res{\mathop{\rm Res}\nolimits}
\def\Area{\mathop{\rm Area}\nolimits}
\def\Type{\mathop{\rm Type}\nolimits}
\def\Turn{\mathop{\rm Turn}\nolimits}
\def\Ker{\mathop{\rm Ker}}
\def\Coker{\mathop{\rm Coker}}
\def\Spec{\mathop{\rm Spec}}
\def\Perf{\mathop{\rm Perf}}
\def\Vect{\mathop{\rm Vect}}
\def\Iso{\mathop{\rm Iso}\nolimits}
\def\Aut{\mathop{\rm Aut}}
\def\End{\mathop{\rm End}}
\def\Ho{\mathop{\rm Ho}}
\def\PGL{\mathop{\rm PGL}\nolimits}
\def\GL{\mathop{\rm GL}\nolimits}
\def\SL{\mathop{\rm SL}}
\def\SO{\mathop{\rm SO}}
\def\SU{\mathop{\rm SU}}
\def\Sp{\mathop{\rm Sp}}
\def\Re{\mathop{\rm Re}}
\def\Pd{\mathop{\rm Pd}}
\def\ev{{\rm ev}}
\def\ror{{\rm or}}
\def\Orb{{\mathop{\bf Orb}}}
\def\Orbb{{\mathop{\bf Orb^b}}}
\def\Orbc{{\mathop{\bf Orb^c}}}
\def\Man{{\mathop{\bf Man}}}
\def\Manb{{\mathop{\bf Man^b}}}
\def\Manc{{\mathop{\bf Man^c}}}
\def\Mancin{{\mathop{\bf Man^c_{in}}}}  
\def\Mancbn{{\mathop{\bf Man^c_{bn}}}}
\def\Mancsi{{\mathop{\bf Man^c_{si}}}} 
\def\Mancst{{\mathop{\bf Man^c_{st}}}} 
\def\Mancis{{\mathop{\bf Man^c_{is}}}}
\def\cManc{{\mathop{\bf\check{M}an^c}}}
\def\cMancin{{\mathop{\bf\check{M}an^c_{in}}}} 
\def\cMancbn{{\mathop{\bf\check{M}an^c_{bn}}}} 
\def\cMancst{{\mathop{\bf\check{M}an^c_{st}}}}
\def\cMancis{{\mathop{\bf\check{M}an^c_{is}}}}
\def\cMancsi{{\mathop{\bf\check{M}an^c_{si}}}}
\def\Mangc{{\mathop{\bf Man^{gc}}}}
\def\cMangc{{\mathop{\bf\check{M}an^{gc}}}}
\def\Mangcin{{\mathop{\bf Man^{gc}_{in}}}}
\def\cMangcin{{\mathop{\bf\check{M}an^{gc}_{in}}}}
\def\Manac{{\mathop{\bf Man^{ac}}}}
\def\cManac{{\mathop{\bf\check{M}an^{ac}}}}
\def\Manacin{{\mathop{\bf Man^{ac}_{in}}}}
\def\cManacin{{\mathop{\bf\check{M}an^{ac}_{in}}}}
\def\SMan{{\mathop{\bf SMan}}}
\def\SManb{{\mathop{\bf SMan^b}}}
\def\SManc{{\mathop{\bf SMan^c}}}
\def\cSMan{{\mathop{\bf \check{S}Man}}}
\def\cSManb{{\mathop{\bf \check{S}Man^b}}}
\def\cSManc{{\mathop{\bf \check{S}Man^c}}}
\def\SManbin{{\mathop{\bf SMan^b_{in}}}}
\def\SManbbn{{\mathop{\bf SMan^b_{bn}}}}
\def\SMancin{{\mathop{\bf SMan^c_{in}}}}
\def\SMancbn{{\mathop{\bf SMan^c_{bn}}}}
\def\cSManbin{{\mathop{\bf\check{S}Man^b_{in}}}}
\def\cSManbbn{{\mathop{\bf\check{S}Man^b_{bn}}}}
\def\cSMancin{{\mathop{\bf\check{S}Man^c_{in}}}}
\def\cSMancbn{{\mathop{\bf\check{S}Man^c_{bn}}}}
\def\cSManbbn{{\mathop{\bf\check{S}Man^b_{bn}}}}
\def\CSch{{\mathop{\bf C^{\bs\iy}Sch}}}
\def\inc{\mathop{\rm inc}\nolimits}
\def\St{{\rm St}}
\def\rin{{\rm in}}
\def\ror{{\rm or}}
\def\ind{\mathop{\rm ind}\nolimits}
\def\glb{\mathop{\rm glb}\nolimits}
\def\lub{\mathop{\rm lub}\nolimits}
\def\bdim{{\mathbin{\bf dim}\kern.1em}}
\def\rk{\mathop{\rm rk}}
\def\colim{\mathop{\rm colim}\nolimits}
\def\Stab{\mathop{\rm Stab}\nolimits}
\def\supp{\mathop{\rm supp}}
\def\Or{\mathop{\rm Or}\nolimits}
\def\rank{\mathop{\rm rank}\nolimits}
\def\Hom{\mathop{\rm Hom}\nolimits}
\def\bHom{\mathop{\bf Hom}\nolimits}
\def\id{{\mathop{\rm id}\nolimits}}
\def\Id{{\mathop{\rm Id}\nolimits}}
\def\cs{{\rm cs}}
\def\lf{{\rm lf}}
\def\pl{{\rm pl}}
\def\eu{{\rm eu}}
\def\mix{{\rm mix}}
\def\rint{{\rm int}}
\def\fd{{\rm fd}}
\def\wt{{\rm wt}}
\def\ran{{\rm an}}
\def\fpd{{\rm fpd}}
\def\pfd{{\rm pfd}}
\def\coa{{\rm coa}}
\def\rsi{{\rm si}}
\def\ssi{{\rm ssi}}
\def\rst{{\rm st}}
\def\ss{{\rm ss}}
\def\vi{{\rm vi}}
\def\smq{{\rm smq}}
\def\rsm{{\rm sm}}
\def\cla{{\rm cla}}
\def\rArt{{\rm Art}}
\def\po{{\rm po}}
\def\pe{{\rm pe}}
\def\rp{{\rm rp}}
\def\spo{{\rm spo}}
\def\Kur{{\rm Kur}}
\def\dcr{{\rm dcr}}
\def\top{{\rm top}}
\def\fc{{\rm fc}}
\def\cla{{\rm cla}}
\def\num{{\rm num}}
\def\irr{{\rm irr}}
\def\red{{\rm red}}
\def\sing{{\rm sing}}
\def\virt{{\rm virt}}
\def\inv{{\rm inv}}
\def\fund{{\rm fund}}
\def\qcoh{{\rm qcoh}}
\def\coh{{\rm coh}}
\def\vect{{\rm vect}}
\def\lft{{\rm lft}}
\def\lfp{{\rm lfp}}
\def\cs{{\rm cs}}
\def\BM{{\sst\rm BM}}
\def\dR{{\rm dR}}
\def\Obj{{\rm Obj}}
\def\Top{{\mathop{\bf Top}\nolimits}}
\def\ul{\underline}
\def\bs{\boldsymbol}
\def\ge{\geqslant}
\def\le{\leqslant\nobreak}
\def\O{{\mathcal O}}
\def\bA{{\mathbin{\mathbb A}}}
\def\bF{{\mathbin{\mathbb F}}}
\def\bG{{\mathbin{\mathbb G}}}
\def\bH{{\mathbin{\mathbb H}}}
\def\bL{{\mathbin{\mathbb L}}}
\def\P{{\mathbin{\mathbb P}}}
\def\K{{\mathbin{\mathbb K}}}
\def\R{{\mathbin{\mathbb R}}}
\def\bT{{\mathbin{\mathbb T}}}
\def\Z{{\mathbin{\mathbb Z}}}
\def\bP{{\mathbin{\mathbb P}}}
\def\Q{{\mathbin{\mathbb Q}}}
\def\N{{\mathbin{\mathbb N}}}
\def\C{{\mathbin{\mathbb C}}}
\def\CP{{\mathbin{\mathbb{CP}}}}
\def\KP{{\mathbin{\mathbb{KP}}}}
\def\RP{{\mathbin{\mathbb{RP}}}}
\def\fC{{\mathbin{\mathfrak C}\kern.05em}}
\def\fD{{\mathbin{\mathfrak D}}}
\def\fE{{\mathbin{\mathfrak E}}}
\def\fF{{\mathbin{\mathfrak F}}}
\def\A{{\mathbin{\cal A}}}
\def\G{{{\cal G}}}
\def\M{{\mathbin{\cal M}}}
\def\B{{\mathbin{\cal B}}}  
\def\cC{{\mathbin{\cal C}}}
\def\cD{{\mathbin{\cal D}}}
\def\cE{{\mathbin{\cal E}}}
\def\cF{{\mathbin{\cal F}}}
\def\cG{{\mathbin{\cal G}}}
\def\cH{{\mathbin{\cal H}}}
\def\cI{{\mathbin{\cal I}}}
\def\cJ{{\mathbin{\cal J}}}
\def\cK{{\mathbin{\cal K}}}
\def\cL{{\mathbin{\cal L}}}
\def\cB{{\mathbin{\cal B}}}
\def\bcM{{\mathbin{\bs{\cal M}}}}
\def\cN{{\cal N}}
\def\cP{{\mathbin{\cal P}}}
\def\cQ{{\mathbin{\cal Q}}}
\def\cR{{\mathbin{\cal R}}}
\def\cS{{\mathbin{\cal S}}}
\def\T{{{\cal T}\kern .04em}}
\def\cU{{\mathbin{\cal U}}}
\def\cV{{\mathbin{\cal V}}}
\def\cW{{\mathbin{\cal W}}}
\def\cX{{\cal X}}
\def\cY{{\cal Y}}
\def\cZ{{\cal Z}}
\def\oM{{\mathbin{\smash{\,\,\overline{\!\!\mathcal M\!}\,}}\vphantom{\cal M}}}
\def\tiM{{\mathbin{\smash{\ti{\mathcal M}}}\vphantom{\cal M}}}
\def\cV{{\cal V}}
\def\cW{{\cal W}}
\def\sC{{{\mathscr C}}}
\def\sF{{{\mathscr F}}}
\def\sR{{{\mathscr R}}}
\def\sS{{{\mathscr S}}}
\def\sT{{{\mathscr T}}}
\def\sU{{{\mathscr U}}}
\def\sV{{{\mathscr V}}}
\def\sW{{{\mathscr W}}}
\def\b{{\mathfrak b}}
\def\fe{{\mathfrak e}}
\def\f{{\mathfrak f}}
\def\g{{\mathfrak g}}
\def\h{{\mathfrak h}}
\def\k{{\mathfrak k}}
\def\m{{\mathfrak m}}
\def\n{{\mathfrak n}}
\def\p{{\mathfrak p}}
\def\q{{\mathfrak q}}
\def\u{{\mathfrak u}}
\def\H{{\mathfrak H}}
\def\so{{\mathfrak{so}}}
\def\su{{\mathfrak{su}}}
\def\sp{{\mathfrak{sp}}}
\def\fW{{\mathfrak W}}
\def\fX{{\mathfrak X}}
\def\fY{{\mathfrak Y}}
\def\fZ{{\mathfrak Z}}
\def\bfb{{\bs{\mathfrak b}}}
\def\bfc{{\bs{\mathfrak c}}}
\def\bfd{{\bs{\mathfrak d}}}
\def\bfe{{\bs{\mathfrak e}}}
\def\bff{{\bs{\mathfrak f}}}
\def\bfg{{\bs{\mathfrak g}}}
\def\bfh{{\bs{\mathfrak h}}}
\def\bfU{{\bs{\mathfrak U}}}
\def\bfV{{\bs{\mathfrak V}}}
\def\bfW{{\bs{\mathfrak W}}}
\def\bfX{{\bs{\mathfrak X}}}
\def\bfY{{\bs{\mathfrak Y}}}
\def\bfZ{{\bs{\mathfrak Z}}}
\def\bE{{\bs E}}
\def\bM{{\bs M}}
\def\bN{{\bs N}}
\def\bO{{\bs O}}
\def\bQ{{\bs Q}}
\def\bS{{\bs S}}
\def\bU{{\bs U}}
\def\bV{{\bs V}}
\def\bW{{\bs W}\kern -0.1em}
\def\bX{{\bs X}}
\def\bY{{\bs Y}\kern -0.1em}
\def\bZ{{\bs Z}}
\def\al{\alpha}
\def\be{\beta}
\def\ga{\gamma}
\def\de{\delta}
\def\io{\iota}
\def\ep{\epsilon}
\def\la{\lambda}
\def\ka{\kappa}
\def\th{\theta}
\def\ze{\zeta}
\def\up{\upsilon}
\def\vp{\varphi}
\def\si{\sigma}
\def\om{\omega}
\def\De{\Delta}
\def\Ka{{\rm K}}
\def\La{\Lambda}
\def\Om{\Omega}
\def\Ga{\Gamma}
\def\Si{\Sigma}
\def\Th{\Theta}
\def\Up{\Upsilon}
\def\Chi{{\rm X}}
\def\Tau{{\rm T}}
\def\Nu{{\rm N}}
\def\pd{\partial}
\def\ts{\textstyle}
\def\st{\scriptstyle}
\def\sst{\scriptscriptstyle}
\def\w{\wedge}
\def\sm{\setminus}
\def\lt{\ltimes}
\def\bu{\bullet}
\def\sh{\sharp}
\def\di{\diamond}
\def\he{\heartsuit}
\def\od{\odot}
\def\op{\oplus}
\def\ot{\otimes}
\def\hot{\mathbin{\hat\otimes}}
\def\bt{\boxtimes}
\def\bp{\boxplus}
\def\ov{\overline}
\def\bigop{\bigoplus}
\def\bigot{\bigotimes}
\def\tr{\blacktriangle}
\def\iy{\infty}
\def\es{\emptyset}
\def\ra{\rightarrow}
\def\rra{\rightrightarrows}
\def\Ra{\Rightarrow}
\def\Longra{\Longrightarrow}
\def\ab{\allowbreak}
\def\longra{\longrightarrow}
\def\hookra{\hookrightarrow}
\def\dashra{\dashrightarrow}
\def\lb{\llbracket}
\def\pf{\pitchfork}
\def\rb{\rrbracket}
\def\ha{{\ts\frac{1}{2}}}
\def\t{\times}
\def\pr{\preceq}
\def\tl{\trianglelefteq}
\newcommand{\bl}{\mathrel{\mathpalette\blinn\relax}}
\newcommand{\blinn}[2]{%
  \ooalign{%
    \raisebox{.2ex}{$#1\blacktriangleleft$}\cr
    $#1\leq$\cr
  }%
}
\def\ci{\circ}
\def\ti{\tilde}
\def\ac{\acute}
\def\gr{\grave}
\def\d{{\rm d}}
\def\D{{\rm D}}
\def\md#1{\vert #1 \vert}
\def\ms#1{\vert #1 \vert^2}
\def\bmd#1{\big\vert #1 \big\vert}
\def\bms#1{\big\vert #1 \big\vert^2}
\def\an#1{\langle #1 \rangle}
\def\ban#1{\bigl\langle #1 \bigr\rangle}
\def\nm#1{\Vert #1 \Vert}
\def\bnm#1{\big\Vert #1 \big\Vert}
\title{Stratified manifolds with corners}
\author{Dominic Joyce}
\date{}
\maketitle

\begin{abstract}
We define categories of {\it stratified manifolds\/} ({\it s-manifolds\/}) and {\it stratified manifolds with corners\/} ({\it s-manifolds with corners\/}). An s-manifold $\bX$ of dimension $n$ is a Hausdorff, locally compact topological space $X$ with a stratification $X=\coprod_{i\in I}X^i$ into locally closed subsets $X^i$ which are smooth manifolds of dimension $\le n$, satisfying some conditions. 

S-manifolds can be very singular, but still share many good properties with ordinary manifolds, e.g.\ an oriented s-manifold $\bX$ has a fundamental class $[\bX]_\fund$ in Steenrod homology $H_n^\St(X,\Z)$, and transverse fibre products exist in the category of s-manifolds.

S-manifolds are designed for applications in Symplectic Geometry. In future work we hope to show that after suitable perturbations, the moduli spaces $\oM$ of $J$-holomorphic curves used to define Gromov--Witten invariants, Lagrangian Floer cohomology, Fukaya categories, and so on, can be made into s-manifolds or s-manifolds with corners, and their fundamental classes used to define Gromov--Witten invariants, Lagrangian Floer cohomology, \ldots.
\end{abstract}

\setcounter{tocdepth}{2}
\tableofcontents

\section{Introduction}
\label{sm1}

\baselineskip 11.98pt minus .1pt

This paper defines and studies two new classes of spaces: {\it stratified manifolds\/} (or {\it s-manifolds\/} for short), and {\it stratified manifolds with corners\/} (or {\it s-manifolds with corners\/} for short). They are generalizations of smooth manifolds (with corners), but allowing singularities in positive codimension.

An s-manifold $\bX$ of dimension $n$ is a locally compact, Hausdorff, second countable topological space $X$ equipped with a locally finite stratification $X=\coprod_{i\in I}X^i$ into locally closed strata $X^i\subset X$, which have the structure of a smooth manifold of dimension $\dim X^i\le n$, satisfying some conditions. These strata can be glued together in complicated ways, so that $X$ may be a pathological topological space. Figure \ref{sm1fig1} shows a compact, oriented s-manifold of dimension 1 given in Example \ref{sm3ex4}.

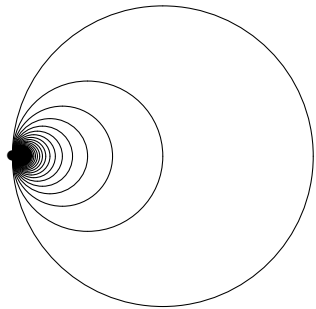
\begin{figure}[htb]
\centerline{$\splinetolerance{.8pt}
\begin{xy}
0;<1cm,0cm>:
,(4,2)*{}
,(-.5,-2)*{}
,(0,0)*{\bu}
,(1,0)*{\ellipse<2cm>{-}}
,(.5,0)*{\ellipse<1cm>{-}}
,(.333,0)*{\ellipse<.666cm>{-}}
,(.25,0)*{\ellipse<.5cm>{-}}
,(.2,0)*{\ellipse<.4cm>{-}}
,(.166,0)*{\ellipse<.333cm>{-}}
,(.1429,0)*{\ellipse<.2857cm>{-}}
,(.125,0)*{\ellipse<.25cm>{-}}
,(.111,0)*{\ellipse<.222cm>{-}}
,(.1,0)*{\ellipse<.2cm>{-}}
,(.0909,0)*{\ellipse<.1818cm>{-}}
,(.0833,0)*{\ellipse<.1666cm>{-}}
,(.0769,0)*{\ellipse<.1538cm>{-}}
,(.0714,0)*{\ellipse<.1428cm>{-}}
,(.0666,0)*{\ellipse<.1333cm>{-}}
,(.0625,0)*{\ellipse<.125cm>{-}}
,(.12,-.01)*{\hbox{\huge$\bullet$}}
\end{xy}$}
\caption{s-manifold $\bX$ of dimension 1 in Example \ref{sm3ex4}}
\label{sm1fig1}
\end{figure}

Note that although each stratum $X^i$ is a smooth manifold, there is {\it no global smooth structure\/} on $\bX$: the strata $X^i$ are glued together only as a topological space, with no extra smooth information on how they are related.

An oriented $n$-manifold $X$ has a {\it fundamental class\/} $[X]_\fund$ in the Borel--Moore homology group $H_n^\BM(X,\Z)$. If $X$ is compact, $H_n^\BM(X,\Z)=H_n(X,\Z)$ is ordinary homology. Similarly, an oriented s-manifold $\bX$ of dimension $n$ has a fundamental class $[\bX]_\fund$ in the {\it Steenrod\/} homology group $H_n^\St(X,\Z)$. 

Here {\it Steenrod homology\/} is a homology theory of locally compact Hausdorff topological spaces which has good properties under limits, and agrees with Borel--Moore homology for nice topological spaces, and with ordinary homology for nice, compact topological spaces. For $\bX$ as in Figure \ref{sm1fig1}, $[\bX]_\fund$ does not exist in ordinary homology $H_1(X,\Z)$, which is why we need Steenrod homology. 

Several areas of differentiable geometry of manifolds (with corners) extend nicely to s-manifolds (with corners), for example, orientations, partitions of unity, the Whitney Embedding Theorem, submersions,  transverse fibre products, and boundaries of (s-)manifolds with corners.

The author has designed s-manifolds (with corners) especially for applications in enumerative invariants, Floer homology theories, and so on, and intends to use them in a project \cite{CJR} with Guillem Cazassus and Alexander Ritter which gives a new construction of Fukaya categories of symplectic manifolds.

In enumerative invariant problems in symplectic geometry, gauge theory, \ldots, one forms a moduli space $\oM$ of geometric objects such as $J$-holomorphic curves, or connections satisfying an instanton-type curvature condition. Often $\oM$ must be compactified by adding singular objects, such as Deligne--Mumford stable $J$-holomorphic curves, or `bubble trees' of singular instantons. This gives $\oM$ a natural stratification by singularity type; under genericity conditions, each stratum may naturally be a smooth manifold.

To form enumerative invariants, we want $\oM$ to behave like a compact oriented manifold (with corners), in that it should have a fundamental class (or fundamental chain) in homology. Invariants (e.g.\ Gromov--Witten invariants of symplectic manifolds, or Donaldson invariants of 4-manifolds) are integrals of cohomology classes over these fundamental classes; Floer homology theories and Fukaya categories use fundamental chains in a more complicated way.

S-manifolds (with corners) $\bX$ have several features which make them very well suited to be the geometric structure on such moduli spaces $\oM$ used to define the fundamental class (or fundamental chain):
\begin{itemize}
\setlength{\itemsep}{0pt}
\setlength{\parsep}{0pt}
\item[(i)] There is {\it no global smooth structure\/} on $\bX$, just smooth manifold structures on each stratum $X^i$. The $X^i$ are only glued together topologically.
\item[(ii)] The conditions on how strata $X^i$ are glued together are {\it extremely weak}.
\item[(iii)] (Oriented) s-manifolds (with corners) have {\it transverse fibre products}.
\item[(iv)] An oriented s-manifold with corners $\bX$ has a {\it fundamental class\/} $[\bX]_\fund$ in Steenrod homology $H^\St_*(\bX,\Z)$. If $\bfX$ is an oriented s-manifold with boundary and $i:\pd\bfX\hookra\bfX$ the inclusion then $i_*([\pd\bfX]_\fund)=0$ in~$H^\St_*(\bfX,\Z)$.
\end{itemize}
Parts (i),(ii) mean that it should be {\it very much less work\/} to construct an s-manifold structure on a moduli space $\oM$ than competing structures such as Kuranishi spaces \cite{FOOO,FuOn} or polyfolds \cite{HWZ1,HWZ2}. Part (iii) is needed for relating fundamental chains of different moduli spaces in Floer theories, when boundaries of moduli spaces are disjoint unions of transverse fibre products of other moduli spaces. Part (iv) is essential for defining enumerative invariants.

We begin in \S\ref{sm2} with background material on (co)homology and Steenrod homology, and manifolds with corners. Sections \ref{sm3} and \ref{sm4} define and study s-manifolds, and s-manifolds with corners. Finally, \S\ref{sm5} discusses applications of s-manifolds in Symplectic Geometry.

The next remark compares our s-manifolds with other classes of spaces.

\begin{rem}
\label{sm1rem1}
Several areas of Geometry and Topology involve the study of a topological space $X$ (often with extra structure, e.g.\ $X$ could be a complex algebraic variety) with a locally finite stratification $X=\coprod_{i\in I}X^i$ into locally closed subspaces $X^i\subset X$, such that each $X^i$ is a topological or smooth manifold. It is common to require the closure $\bar X^i$ of $X^i$ in $X$ to be a union of strata $X^j$ (the `condition of the frontier'). We discuss some of these areas.
\smallskip

\noindent{\bf(a)} Probably the spaces in the literature most similar to s-manifolds are Kreck's {\it stratifolds\/} \cite{Krec}. A {\it stratifold\/} $X$ of {\it dimension\/} $m$ is a locally compact, second countable, Hausdorff `differential space' $X$ (so in particular, it has a {\it global smooth structure}, which s-manifolds do not) such that defining $X^j=\{x\in X:\dim T_xX=j\}$, then each $X^j$ is a smooth manifold of dimension $j$, and $X^j=\es$ for $j>m$, and smooth bump functions exist supported in arbitrarily small neighbourhoods of any $x\in X$. He also defines {\it stratifolds with boundary}.

Kreck is primarily interested in defining homology theories of topological spaces $Y$ as bordism groups of continuous maps $f:X\ra Y$ from compact $m$-dimensional stratifolds $X$. To do this he imposes the condition that $X$ is $\Z_2$-{\it oriented}, that is, $X^{m-1}=\es$, as without this condition, stratifolds would not have a well-behaved fundamental class in homology.

In stratifolds, the strata $X^j$ can be glued together in a pathological way, as for s-manifolds. A $\Z_2$-oriented $m$-dimensional stratifold $X$ (with boundary) can be made into an s-manifold (with boundary) by forgetting the global smooth structure, and remembering only the smooth structure on each~$X^j$.

Our oriented s-manifolds have a well-behaved fundamental class in (Steenrod) homology, without assuming that~$X^{m-1}=\es$.

One could define a homology theory using compact s-manifolds (with boundary), in the same way as for stratifolds in Kreck~\cite{Krec}.
\smallskip

\noindent{\bf(b)} There is also some similarity, both in structure and in the intended applications in Symplectic Geometry, between our s-manifolds and the {\it pseudocycles\/} of McDuff--Salamon \cite{McSa2} and Zinger \cite{Zing}, and the {\it pseudomanifolds\/} of Ruan--Tian \cite[Def.~5.1]{RuTi}. Here a {\it pseudocycle\/} in a manifold $Y$ is a smooth map $f:X\ra Y$ where $X$ is an oriented $n$-manifold, such that $\bigcap_{\text{$K\subseteq X$ compact}}\ov{f(X\sm K)}$ has dimension at most $n-2$ in $Y$. A {\it pseudomanifold\/} is roughly a topological space $X$ with an open nonsingular part $X^\ci\subseteq X$ which is an $n$-manifold, such that $X\sm X^\ci$ is a simplicial complex of dimension $\le n-2$. In both cases, we do not have a full stratification $X=\coprod_{i\in I}X^i$ into manifolds, but only a top-dimensional stratum $X^\ci$ which is a manifold, plus a lower-dimensional singular part.

Our s-manifolds have better categorical properties than pseudocycles and pseudomanifolds, for example, they have a good notion of transverse fibre product, which is helpful in applications. They also have a `corners' version useful for Lagrangian Floer cohomology and Fukaya categories.
\smallskip

\noindent{\bf(c)} Let $X$ be a quasiprojective variety over $\K=\R$ or $\C$, so that $X$ has an embedding $X\subset\mathbb{KP}^N$ for $N\ge 0$. Then $X$ has a natural stratification $X=\coprod_{j=0}^mX^j$, where $m=\dim_\K X$, and $X^j$ is a quasiprojective $\K$-manifold of dimension $j$, and the closure $\bar X^j$ of $X^j$ in $X$ has $\bar X^j\subseteq\coprod_{k=0}^jX^k$. We define $X^j$ by induction on $m,m-1,\ldots,0$, such that if we have defined $X^m,\ldots,X^k$ then $X\sm(\coprod_{j=k}^mX^j)$ is a quasiprojective variety of dimension $\le k-1$, and $X^{k-1}$ is the set of nonsingular points of $X\sm(\coprod_{j=k}^mX^j)$ of dimension~$k-1$.

It turns out that this stratification is often not the best for applications. A {\it Whitney stratification\/} is $X=\coprod_{i\in I}X^i$ as above such that any two strata $X^i,X^j$ satisfy the {\it Whitney conditions\/} \cite[\S 19]{Whit}, which concern the limits of tangent spaces $T_{x_a}X^i$, $T_{y_b}X^j$ of sequences $(x_a)_{a=1}^\iy$ in $X^i$ and $(y_b)_{b=1}^\iy$ in $X^j$. Whitney showed that any $\K$-algebraic or $\K$-analytic variety has a Whitney stratification. 

The Whitney conditions have nice consequences. For example, Thom \cite{Thom} and Mather \cite{Math} showed that $X$ is {\it equisingular\/} along each stratum. That is, if $x,y$ are points in the same connected component of $X^i$ then $x,y$ have homeomorphic open neighbourhoods in $X$, compatible with stratifications. 

Our notion of s-manifold is much weaker than Whitney stratification. They are not equisingular, so much more pathological behaviour is allowed. Also, the Whitney conditions require $X$ to be (locally) embedded in an ambient smooth manifold $Y$, so $X$ has a global smooth structure. In an s-manifold $X$ each stratum $X^i$ is a smooth manifold, but there is no global smooth structure.

For geometry and analysis of Whitney stratified spaces (and slightly more general spaces), see Pflaum \cite{Pfla}. Goresky and MacPherson \cite{GoMa} develop Morse Theory for Whitney stratified spaces.

\smallskip

\noindent{\bf(d)} {\it Thom--Mather stratified spaces\/} were introduced by Mather \cite[\S 8]{Math} to axiomatize some properties of Whitney stratified spaces. A Thom--Mather space $X$ has a stratification $X=\coprod_{i\in I}X^i$ as above with the $X^i$ smooth manifolds, together with `control data' specifying how the strata glue together, which roughly says that each stratum $X^i$ has a tubular neighbourhood which is locally a cone bundle over $X^i$, with link another Thom--Mather stratified space. Our notion of s-manifold is much weaker than Thom--Mather stratified spaces.

\smallskip

\noindent{\bf(e)} {\it Topological pseudomanifolds\/} are a class of topological spaces for which one can define {\it intersection\/} ({\it co\/}){\it homology}, as in Kirwan and Woolf \cite{KiWo}. We first define a {\it topologically stratified space\/} $X$ of {\it dimension\/} $m$, inductively on dimension, to be a paracompact Hausdorff topological space $X$ with a stratification $X=\coprod_{j=0}^mX^j$, such that $X^j$ is a topological manifold of dimension $j$, and near each point $x\in X^j$ there is a local stratification-preserving homeomorphism $X\cong\R^j\t C(L_{m-j-1})$, where $L_{m-j-1}$ is a topologically stratified space of dimension $m-j-1$. We call $X$ a {\it topological pseudomanifold\/} if $X^{m-1}=\es$ and $X^m$ is dense in $X$. The stratification must exist, but is not part of the data.

Complex algebraic varieties are important examples of pseudomanifolds.

Intersection (co)homology $IH_*(X),IH^*(X)$ behave a lot like (co)homology of manifolds (or of K\"ahler manifolds, in the complex case). For example, they satisfy a version of Poincar\'e duality, and for complex algebraic varieties, analogues of the Lefschetz Theorems and Hodge decomposition hold.

In an s-manifold, the strata $X^j$ are {\it smooth\/} manifolds, which is not required in pseudomanifolds. But the conditions on how strata are glued together are much weaker for s-manifolds than for pseudomanifolds.
\smallskip

\noindent{\bf(f)} Weinberger \cite{Wein} studies a class of stratified topological spaces, with the~aim~of extending classification theory for topological manifolds to the stratified context. His `PL stratified spaces' are similar to topologically stratified spaces~in~{\bf(e)}.
\end{rem}

\noindent {\it Acknowledgements.} The author learned about the special properties of Steenrod homology which make the fundamental classes in this paper possible from Dusa McDuff, whom he would like to thank, and \v Cech homology over $\Q$ (which agrees with Steenrod homology in this case) is used for a similar purpose in McDuff--Wehrheim \cite{McWe}. The author would like to thank Guillem Cazassus, Jason Lotay and Alexander Ritter for helpful conversations, and a referee. This research was supported by EPSRC grant EP/T012749/1. For the purpose of open access, the author has applied a CC BY public copyright licence to any Author Accepted Manuscript (AAM) version arising from this submission.

\section{Background material}
\label{sm2}

\subsection{Basic properties of homology and cohomology}
\label{sm21}

We begin by briefly reviewing homology and cohomology, and their variants Borel--Moore homology and compactly-supported cohomology. There are many excellent textbooks on (co)homology, e.g.\ Bott and Tu \cite{BoTu}, Bredon \cite{Bred1}, Eilenberg and Steenrod \cite{EiSt}, Hatcher \cite{Hatc}, MacLane \cite{MacL}, Massey \cite{Mass}, Maunder \cite{Maun}, Munkres \cite{Munk}, Spanier \cite{Span}, and tom Dieck \cite{toDi}. For Borel--Moore homology see \cite{BoMo,Dimc,EiSt,Hatc,HuRa,Iver,Mass,Petk,Sklj}. For compactly-supported cohomology, see \cite{BoTu,Hatc,Mass,Span}. Here is the definition of singular homology.

\begin{dfn}
\label{sm2def1}
For $k=0,1,\ldots$, the $k$-{\it simplex\/} $\De_k$ is
\e
\De_k=\bigl\{(x_0,\ldots,x_k)\in\R^{k+1}:x_i\ge 0,\;\>
x_0+\cdots+x_k=1\bigr\}.
\label{sm2eq1}
\e
It is a compact, oriented $k$-manifold with corners, as in \S\ref{sm24}.

Let $X$ be a topological space, and $R$ a commutative ring. Define
$C_k^\rsi(X,R)$ to be the $R$-module spanned by {\it singular
simplices}, which are continuous maps $\si:\De_k\ra X$. Elements of
$C_k^\rsi(X,R)$, which are called {\it singular chains}, are finite
sums $\sum_{a\in A}\rho_a\,\si_a$, where $A$ is a finite indexing
set, $\rho_a\in R$, and $\si_a:\De_k\ra X$ is continuous for~$a\in
A$.

The {\it boundary operator} $\pd:C_k^\rsi(X,R)\ra C_{k-1}^\rsi(X,R)$
is \cite[\S IV.1]{Bred1}:
\e
\pd:\ts\sum_{a\in A}\rho_a\,\si_a\longmapsto \ts\sum_{a\in
A}\sum_{j=0}^k(-1)^j\rho_a(\si_a\ci F_j^k),
\label{sm2eq2}
\e
where for $j=0,\ldots,k$ the map $F_j^k:\De_{k-1}\ra\De_k$ is given
by $F_j^k(x_0,\ldots,x_{k-1})=(x_0,\ldots,x_{j-1},0,x_j,\ldots,
x_{k-1})$. As a manifold with boundary, we have
$\pd\De_k=\coprod_{j=0}^k\De_j^k$, where $\De_j^k$ is the connected
component of $\pd\De_k$ on which $x_j\equiv 0$, so that
$F_j^k:\De_{k-1}\ra\De_j^k$ is a diffeomorphism. The orientation on
$\De_k$ induces one on $\pd\De_k$, and so induces orientations on
$\De_j^k$ for $j=0,\ldots,k$. It is easy to show that under
$F_j^k:\De_{k-1}\ra\De_j^k$, the orientations of $\De_{k-1}$ and
$\De_j^k$ differ by a factor $(-1)^j$, which is why this appears in
\eq{sm2eq2}. So $\pd$ in \eq{sm2eq2} basically restricts from
$\De_k$ to $\pd\De_k$, as oriented manifolds with boundary and
corners.

From \eq{sm2eq2} we find that $\pd\ci\pd=0$, since each codimension
2 face $\De_{k-2}$ of $\De_k$ contributes twice to $\pd\ci\pd$, once
with sign 1 and once with sign $-1$. Thus we may define the {\it
singular homology group}
\begin{equation*}
H_k^\rsi(X,R)=\frac{\Ker\bigl(\pd:C_k^\rsi(X,R)\ra
C_{k-1}^\rsi(X,R)\bigr)}{\Im\bigl(\pd:C_{k+1}^\rsi(X,R)\ra
C_k^\rsi(X,R)\bigr)}\,.
\end{equation*}

If $Y\subseteq X$ is a subspace we define {\it relative singular homology\/} $H_*^\rsi(X;Y,R)$ using the quotient chain groups $C_k^\rsi(X;Y,R)=C_k^\rsi(X,R)/C_k^\rsi(Y,R)$.

If $X$ is a smooth manifold, we can instead define {\it smooth singular homology\/}
$C_k^\ssi(X,R),\ab H_k^\ssi(X,R)$ using {\it smooth\/} maps
$\si:\De_k\ra X$, as in \cite[\S V.5]{Bred1}, which we call {\it
smooth singular simplices}, and this gives the same homology groups.
\end{dfn}

\begin{property}
\label{sm2pr1}
Let $X$ be a topological space, $Y\subseteq X$ a subspace, and $R$ a commutative ring. Then using some choice of homology or cohomology theory (e.g.\ singular homology \cite{Bred1,EiSt,Maun,Munk,Span}, singular cohomology \cite{EiSt,Munk,Span}, \v Cech cohomology \cite{EiSt,Munk}, Alexander--Spanier cohomology \cite{Span}, de Rham cohomology of manifolds \cite{BoTu,Bred1}, and sheaf cohomology \cite{Bred2,Iver}), for all $k=0,1,2,\ldots$ one can define $R$-modules as follows:
\begin{itemize}
\setlength{\itemsep}{0pt}
\setlength{\parsep}{0pt}
\item[(i)] The {\it relative homology group\/} $H_k(X;Y,R)$.
\item[(ii)] The {\it relative cohomology group\/} $H^k(X;Y,R)$.
\end{itemize}
If $Y=\es$ we write these as $H_k(X,R),H^k(X,R)$, and call them ({\it co\/}){\it homology groups}, rather than relative (co)homology groups.

If $X$ is a locally compact topological space we also define:
\begin{itemize}
\setlength{\itemsep}{0pt}
\setlength{\parsep}{0pt}
\item[(iii)] The {\it Borel--Moore homology group\/} $H_k^\BM(X,R)$. (These have several different names in the literature: they are called {\it Borel--Moore homology groups\/} in \cite{BoMo,Iver}, {\it locally finite homology\/} in \cite{HuRa}, {\it homology with closed supports\/} in \cite{Dimc}, and {\it homology of the second kind\/} in \cite{Petk,Sklj}.)
\item[(iv)] The {\it compactly-supported cohomology group\/} $H^k_\cs(X,R)$.
\end{itemize}

They have the following properties:
\begin{itemize}
\setlength{\itemsep}{0pt}
\setlength{\parsep}{0pt}
\item[(a)] For locally compact $X$ there are natural $R$-module morphisms
\e
H_k(X,R)\longra H_k^\BM(X,R),\quad  H^k_\cs(X,R)\longra H^k(X,R).
\label{sm2eq3}
\e
If $X$ is compact then \eq{sm2eq3} are isomorphisms, so that ordinary homology and Borel--Moore homology agree, and ordinary and compactly-supported cohomology agree.

If $X$ is locally compact, noncompact, and particularly nice (`forward tame' in the sense of Hughes--Ranicki \cite[\S 7]{HuRa}) then we have isomorphisms
\e
H_k^\BM(X,R)\cong H_k(\ti X;\{\iy\},R),\quad H^k_\cs(X,R)\cong H^k(\ti X;\{\iy\},R),
\label{sm2eq4}
\e
where $\ti X=X\cup\{\iy\}$ is the one-point compactification of $X$. However, \eq{sm2eq4} fails for general noncompact $X$. For example, if $X=\N$ with the discrete topology (which is not forward tame) then
\end{itemize}
\begin{equation*}
H_0^\BM(X,R)\cong H^0(\ti X;\{\iy\},R)\cong R[[x]],\;\> H^0_\cs(X,R)\cong H_0(\ti X;\{\iy\},R)\cong R[x].
\end{equation*}
\begin{itemize}
\setlength{\itemsep}{0pt}
\setlength{\parsep}{0pt}
\item[(b)] Suppose $f:X_1\ra X_2$ is continuous. Then there are covariantly functorial {\it pushforwards\/} $f_*:H_k(X_1,R)\ra H_k(X_2,R)$, and contravariantly functorial {\it pullbacks\/} $f^*:H^k(X_2,R)\ra H^k(X_1,R)$.

If $X_1,X_2$ are locally compact and $f:X_1\ra X_2$ is proper then pushforwards $f_*$ are also defined on Borel--Moore homology, and pullbacks $f^*$ on compactly-supported cohomology.

If $f:X_1\hookra X_2$ is an open inclusion then there are contravariantly functorial {\it pullbacks\/} $f^*:H_k^\BM(X_2,R)\ra H_k^\BM(X_1,R)$, and covariantly functorial {\it pushforwards\/} $f_*:H^k_\cs(X_1,R)\ra H^k_\cs(X_2,R)$.
\item[(c)] There are natural $R$-bilinear dual pairings
\begin{equation*}
H_k(X,R)\t H^k(X,R)\longra R, \quad H_k^\BM(X,R)\t H^k_\cs(X,R)\longra R.
\end{equation*}
If $R$ is a field $\bF$ these are perfect pairings, and induce isomorphisms
\begin{equation*}
H^k(X,\bF)\cong H_k(X,\bF)^*, \quad H_k^\BM(X,\bF)\cong H^k_\cs(X,\bF)^*.
\end{equation*}
\item[(d)] There are $R$-bilinear {\it cup\/} and {\it cap products\/}
\begin{align*}
\cup&:H^k(X,R)\t H^l(X,R)\longra H^{k+l}(X,R),\\
\cup&:H^k(X,R)\t H^l_\cs(X,R)\longra H^{k+l}_\cs(X,R),\\
\cup&:H^k_\cs(X,R)\t H^l_\cs(X,R)\longra H^{k+l}_\cs(X,R),\\
\cap&:H_k(X,R)\t H^l(X,R)\longra H_{k-l}(X,R), \\
\cap&:H_k^\BM(X,R)\t H^l(X,R)\longra H_{k-l}^\BM(X,R),\\
\cap&:H_k^\BM(X,R)\t H^l_\cs(X,R)\longra H_{k-l}(X,R).
\end{align*}
There is a natural {\it identity\/} $1_X\in H^0(X,R)$. Then $\cup,1_X$ make $H^*(X,R)$ into a supercommutative graded $R$-algebra, and $\cap$ makes $H_*(X,R)$ and $H_*^\BM(X,R)$ into graded modules over $H^*(X,R)$. There is an identity $1_X$ in $H^0_\cs(X,R)$ only if $X$ is compact.
\item[(e)] Suppose $X$ is an oriented manifold of dimension $n$. Then there is a canonical {\it fundamental class\/} $[X]_\fund\in H_n^\BM(X,R)$. Cap product with $[X]_\fund$ induces {\it Poincar\'e duality isomorphisms\/}
\begin{align*}{}
[X]_\fund\cap-&:H^l(X,R)\longra H_{n-l}^\BM(X,R),\\
[X]_\fund\cap-&:H^l_\cs(X,R)\longra H_{n-l}(X,R).
\end{align*}
If also $X$ is compact then $[X]_\fund$ is defined in $H_n(X,R)$, and Poincar\'e duality identifies $H^l(X,R)\cong H_{n-l}(X,R)$.
\item[(f)] Let $U,V\subseteq X$ be open with $X=U\cup V$, and write $\inc_U^X:U\hookra X$ for the inclusion, and so on. Then there is a {\it Mayer--Vietoris exact sequence}:
\e
\begin{gathered}
\!\!\!\!\!\!\!\!\!\!\!\xymatrix@C=20pt@R=11pt{
\cdots \ar[r] & H_k(U\cap V,R) \ar[rrrrr]_(0.35){(\inc_{U\cap V}^U)_*\op(\inc_{U\cap V}^V)_*} &&&&& *+[l]{H_k(U,R)\op H_k(V,R)} \ar@<-3ex>[d]_{(\inc_U^X)_*\op -(\inc_V^X)_*} \\
\cdots & H_{k-1}(U\cap V,R) \ar[l] &&&&& *+[l]{H_k(X,R).} \ar[lllll]_(0.65)\pd    }
\end{gathered}\!\!\!\!\!\!\!\!
\label{sm2eq5}
\e
\item[(g)] For $Y\subseteq X$ there are long exact sequences of $R$-modules
\end{itemize}
\e
\begin{aligned}
\xymatrix@C=10pt@R=15pt{
\cdots \ar[r] & H_k(Y,R) \ar[r]^{\inc_*} & H_k(X,R) \ar[r] & H_k(X;Y,R) \ar[r]^\pd & H_{k-1}(Y,R) \ar[r] & \cdots, \\
\cdots \ar[r] & H^{k-1}(Y,R) \ar[r]^\pd & H^k(X;Y,R) \ar[r] & H^k(X,R) \ar[r]^{\inc^*} & H^k(Y,R) \ar[r] & \cdots. }
\end{aligned}\!\!\!\!\!\!\!\!
\label{sm2eq6}
\e
\begin{itemize}
\setlength{\itemsep}{0pt}
\setlength{\parsep}{0pt}
\item[(h)] Suppose $Z\subseteq Y\subseteq X$ are such that the closure of $Z$ is contained in the interior of $Y$. Then there are canonical {\it excision isomorphisms\/}
\end{itemize}
\begin{equation*}
H_k(X;Y,R)\cong H_k(X\sm Z;Y\sm Z,R),\;\> H^k(X;Y,R)\cong H^k(X\sm Z;Y\sm Z,R).
\end{equation*}

Now let $\pi:P\ra X$ be a principal $\Z_2$-bundle. Then we can also consider homology and cohomology {\it twisted by\/} $P$. We write the corresponding twisted groups as the {\it relative twisted homology group\/} $H_k(X;Y,P,R)$, the {\it twisted homology group\/} $H_k(X,P,R)$, the {\it relative twisted cohomology group\/} $H^k(X;Y,P,R)$, the {\it twisted Borel--Moore homology group\/} $H_k^\BM(X,P,R)$, and the {\it twisted comp\-act\-ly-supported cohomology group\/} $H^k_\cs(X,P,R)$. To avoid confusion between relative homology $H_k(X;Y,R)$ and twisted homology $H_k(X,P,R)$, we write relative groups with a semicolon $X;Y$, and twisted groups with a comma~$X,P$.

Most of (a)--(h) above have easy extensions to twisted (co)homology. Also:
\begin{itemize}
\setlength{\itemsep}{0pt}
\setlength{\parsep}{0pt}
\item[(j)] Suppose $X$ is a manifold of dimension $n$, which need not be oriented or orientable, and write $\pi:\Or_X\ra X$ for the principal $\Z_2$-bundle of orientations on $X$. Then part (e) above generalizes as follows: there is a canonical {\it fundamental class\/} $[X]_\fund$ in the twisted homology group $H_n^\BM(X,\Or_X,R)$. Cap product with $[X]_\fund$ induces {\it Poincar\'e duality isomorphisms\/}
\begin{align*}{}
[X]_\fund\cap-&:H^l(X,R)\longra H_{n-l}^\BM(X,\Or_X,R),\\
[X]_\fund\cap-&:H^l(X,\Or_X,R)\longra H_{n-l}^\BM(X,R),\\
[X]_\fund\cap-&:H^l_\cs(X,R)\longra H_{n-l}(X,\Or_X,R),\\
[X]_\fund\cap-&:H^l_\cs(X,\Or_X,R)\longra H_{n-l}(X,R).
\end{align*}
If $X$ is compact then $[X]_\fund\in H_n(X,\Or_X,R)$, and Poincar\'e duality identifies $H^l(X,R)\cong H_{n-l}(X,\Or_X,R)$ and~$H^l(X,\Or_X,R)\cong H_{n-l}(X,R)$.
\end{itemize}
\end{property}

\subsection{Steenrod homology}
\label{sm22}

Homology and cohomology theories are often explained using the {\it Eilenberg--Steenrod axioms\/} \cite{EiSt}. Most (co)homology theories worthy of the name satisfy these axioms. The Eilenberg--Steenrod axioms determine (co)homology groups $H_*(X;Y,R),$ $H_*(X,R),$ $H^*(X;Y,R),$ $H^*(X,R)$ uniquely when $X,Y$ are nice topological spaces (e.g.\ finite CW complexes, or compact polyhedra, or compact manifolds). So for many applications, it does not matter which (co)homology theory you choose.

However, for more pathological topological spaces $X$, which include the topological spaces of our s-manifolds below, different (co)homology theories can yield different answers. To define the fundamental class $[\bX]_\fund$ of an s-manifold $\bX$, we will need to work in {\it Steenrod homology}, and not for example in singular homology, which does not have all the properties we need.

Steenrod homology was introduced by Steenrod \cite{Stee1,Stee2}, as homology groups $H^\St_k(X;Y,R)$ for $X$ a compact metric space, $Y\subseteq X$ a compact subspace, and $R$ a commutative ring. They depend only on the underlying topological spaces, so we could instead take $X,Y$ to be metrizable topological spaces. 

Milnor \cite{Miln} proved that for such pairs $(X,Y)$, Steenrod homology can be characterized uniquely by the seven Eilenberg--Steenrod axioms, plus two others. If $X$ is any metrizable topological space then we define
\begin{equation*}
H^\St_k(X,R)=H^\St_k(\ti X;\{\iy\},R),	
\end{equation*}
where $\ti X=X\amalg\{\iy\}$ is the one-point compactification of $X$. Then one of Milnor's extra axioms \cite[Ax.~$8^{\rm LF}$]{Miln} says that for any compact metrizable $Y\subseteq X$, we have canonical isomorphisms
\e
H^\St_k(X;Y,R)\cong H^\St_k(X\sm Y,R).	
\label{sm2eq7}
\e
This is a strengthened form of the excision axiom, Property \ref{sm2pr1}(h). It means that in Steenrod homology, there is no need to use relative homology $H^\St_k(X;Y,R)$, as we can write everything in terms of absolute homology $H_k^\St(X,R)$ --- it is what Eilenberg and Steenrod call a `single space homology theory'.

For many commutative rings $R$ including $R=\Q$, but not for $R=\Z$, {\it \v Cech homology\/} $\check H_k(X,R)$, as defined in Eilenberg--Steenrod \cite[\S IX--\S X]{EiSt}, satisfies all of Milnor's axioms, and so agrees with Steenrod homology on compact metrizable pairs $(X,Y)$. (For $R=\Z$, \v Cech homology does not satisfy Eilenberg--Steenrod's exactness axiom \cite[\S X.4]{EiSt}.) Massey \cite[Preface]{Mass} argues that the traditional definition of \v Cech homology is unsatisfactory, and should be corrected. The resulting theory, which he calls {\it \v Cech--Alexander--Spanier homology} (but we will call {\it Steenrod homology\/}), then satisfies Milnor's axioms for all~$R$.

Here are some properties of Steenrod homology:

\begin{property}
\label{sm2pr2}
Let $X$ be a locally compact Hausdorff topological space, and $R$ a commutative ring. Then for all $k=0,1,2,\ldots$ one can define $R$-modules $H_k^\St(X,R)$ called the {\it Steenrod homology groups}. To be definite, we take these to be as in Massey \cite[\S 4]{Mass}. But for our applications, as if $X$ is an s-manifold then the 1-point compactification $\ti X$ is a metrizable topological space, any homology theory satisfying Milnor's axioms \cite{Miln} will give the same groups~$H_k^\St(X,R)$.

These satisfy the following analogues of Property \ref{sm2pr1}(a)--(g) (see~\cite[\S 4.3]{Mass}):
\begin{itemize}
\setlength{\itemsep}{0pt}
\setlength{\parsep}{0pt}
\item[(a)] If $X$ is a sufficiently nice topological space, for example, a manifold or locally finite CW complex, then $H_k^\St(X,R)$ agrees with Borel--Moore homology $H_k^\BM(X,R)$ from Property \ref{sm2pr1}(iii) (e.g.\ defined using locally finite singular chains as in \cite[\S 3]{HuRa}). If $X$ is also compact then $H_k^\St(X,R)$ agrees with ordinary homology $H_k(X,R)$ (e.g.\ singular homology).
\item[(b)] Suppose $f:X_1\ra X_2$ is continuous and proper. Then there are covariantly functorial {\it pushforwards\/} $f_*:H_k^\St(X_1,R)\ra H_k^\St(X_2,R)$. If $f:X_1\hookra X_2$ is an open inclusion then there are contravariantly functorial {\it pullbacks\/} $f^*:H_k^\St(X_2,R)\ra H_k^\St(X_1,R)$.
\item[(c)] The cohomology theory naturally dual to Steenrod homology is {\it \v Cech cohomology\/} $\check H^k(X,R)$, or {\it compactly-supported \v Cech cohomology\/} $\check H^k_\cs(X,R)$. There is a natural $R$-bilinear dual pairing
\begin{equation*}
H_k^\St(X,R)\t \check H^k_\cs(X,R)\longra R.
\end{equation*}
If $R$ is a field $\bF$ this is a perfect pairing, and induce isomorphisms
\begin{equation*}
H_k^\St(X,\bF)\cong\check H^k_\cs(X,\bF)^*.
\end{equation*}
\item[(d)] There are $R$-bilinear {\it cap products\/}
\begin{align*}
\cap&:H_k^\St(X,R)\t\check H^l(X,R)\longra H_{k-l}^\St(X,R),\\
\cap&:H_k^\St(X,R)\t\check H^l_\cs(X,R)\longra H_{k-l}^\St(X,R).
\end{align*}
These make $H_*^\St(X,R)$ a graded module over $\check H^*(X,R)$ and $\check H^*_\cs(X,R)$.
\item[(e)] Let $X$ be an oriented manifold of dimension $n$. Then there is a canonical {\it fundamental class\/} $[X]_\fund\in H_n^\St(X,R)$. Cap product with $[X]_\fund$ induces {\it Poincar\'e duality isomorphisms\/}
\begin{equation*}
[X]_\fund\cap-:\check H^l(X,R)\longra H_{n-l}^\St(X,R).
\end{equation*}
\item[(f)] Let $U,V\subseteq X$ be open with $X=U\cup V$, and write $\inc_U^X:U\hookra X$ for the inclusion, and so on. Then there is a {\it Mayer--Vietoris exact sequence}:
\begin{equation*}
\xymatrix@C=20pt@R=11pt{
\cdots \ar[r] & H_{k+1}(U\cap V,R) \ar[rrrrr]_(0.37)\pd &&&&& *+[l]{H_k(X,R)} \ar@<-3ex>[d]_{(\inc_U^X)^*\op -(\inc_V^X)^*} \\
\cdots & H_k(U\cap V,R) \ar[l] &&&&& *+[l]{H_k(U,R)\op H_k(V,R).} \ar[lllll]_(0.62){(\inc_{U\cap V}^U)^*\op(\inc_{U\cap V}^V)^*}    }
\end{equation*}
Note that this goes the {\it opposite way\/} to \eq{sm2eq5}, as Steenrod homology is contravariantly functorial under open inclusions.

\item[(g)] Let $X$ be a locally compact Hausdorff space and $Y\subseteq X$ a closed subspace, and write $i:Y\hookra X$, $j:X\sm Y\hookra X$ for the inclusions. Then there is a long exact sequence of $R$-modules
\end{itemize}
\e
\!\!\!\!\xymatrix@C=10pt@R=15pt{
\cdots \ar[r] & H_k^\St(Y,R) \ar[r]^{i_*} & H_k^\St(X,R) \ar[r]^{j^*} & H_k^\St(X\sm Y,R) \ar[r]^(0.55)\pd & H_{k-1}^\St(Y,R) \ar[r] & \cdots, }\!\!\!\!
\label{sm2eq8}
\e
\begin{itemize}
\setlength{\itemsep}{0pt}
\setlength{\parsep}{0pt}
\item[] where $i_*,j^*$ are as in (b), and $\pd$ is a natural morphism.
\end{itemize}
Here \eq{sm2eq8} is analogous to the top line of \eq{sm2eq6}, but involves $H_k^\St(X\sm Y,R)$ rather than relative homology $H_k(X;Y,R)$. Because of \eq{sm2eq7} we need no analogue of excision, Property~\ref{sm2pr1}(h).

Steenrod homology also satisfies a {\it continuity property\/} under inverse limits of topological spaces. Here is one form of this, which follows from~\cite[Th.~4]{Miln}:
\begin{itemize}
\setlength{\itemsep}{0pt}
\setlength{\parsep}{0pt}
\item[(h)] Let $Y$ be a metrizable topological space.
\begin{itemize}
\setlength{\itemsep}{0pt}
\setlength{\parsep}{0pt}
\item[(i)] Suppose $Y\supseteq X_1\supseteq X_2\supseteq\cdots$ is a decreasing family of closed subsets, and set $X=\bigcap_{i=1}^\iy X_i$. Then for each $k=0,1,\ldots$ there is an exact sequence
\e
\!\!\!\!\!\!\!\!\!\!\!\!\!\!\!\!\!\!\xymatrix@C=15pt{ 0 \ar[r] & \varprojlim^1 H_{k+1}^\St(X_i,R) \ar[r] & H_k^\St(X,R) \ar[r] & 
\varprojlim H_k^\St(X_i,R) \ar[r] & 0, }
\label{sm2eq9}
\e
where $\varprojlim^1$ is the first derived functor of the inverse limit, and the inverse limits are formed using pushforwards $\inc_*:H_k^\St(X_{i+1},R)\ra H_k^\St(X_i,R)$ along the proper inclusion maps~$\inc:X_{i+1}\hookra X_i$.
\item[(ii)] Suppose $X_1\subseteq X_2\subseteq\cdots\subseteq Y$ is an increasing family of open subsets, and set $X=\bigcup_{i=1}^\iy X_i$. Then for each $k=0,1,\ldots$ there is an exact sequence \eq{sm2eq9}, where the inverse limits are formed using pullbacks $\inc^*:H_k^\St(X_{i+1},R)\ra H_k^\St(X_i,R)$ along the open inclusion maps~$\inc:X_i\hookra X_{i+1}$.
\end{itemize}
\end{itemize}

Now let $\pi:P\ra X$ be a principal $\Z_2$-bundle. Then we can also consider {\it Steenrod homology twisted by\/} $P$, written $H_k^\St(X,P,R)$. Most of (a)--(h) above have easy extensions to twisted Steenrod homology. Also:
\begin{itemize}
\setlength{\itemsep}{0pt}
\setlength{\parsep}{0pt}
\item[(j)] Suppose $X$ is a manifold of dimension $n$, which need not be oriented or orientable, and write $\pi:\Or_X\ra X$ for the principal $\Z_2$-bundle of orientations on $X$. Then by Property \ref{sm2pr1}(j) and (a) above, there is a canonical {\it fundamental class\/} $[X]_\fund$ in the twisted homology group $H_n^\St(X,\Or_X,R)$. Cap product with $[X]_\fund$ induces {\it Poincar\'e duality isomorphisms\/}
\begin{align*}{}
[X]_\fund\cap-&:\check H^l(X,R)\longra H_{n-l}^\St(X,\Or_X,R),\\
[X]_\fund\cap-&:\check H^l(X,\Or_X,R)\longra H_{n-l}^\St(X,R).
\end{align*}
\end{itemize}
\end{property}

\subsection{Transverse fibre products of manifolds}
\label{sm23}

Here is a definition from category theory.

\begin{dfn}
\label{sm2def2}
Let $\cC$ be a category, and $g:X\ra Z$, $h:Y\ra Z$ be morphisms in $\cC$. A {\it fibre product\/} of $g,h$ in $\cC$ is an object $W$ and morphisms $e:W\ra X$ and $f:W\ra Y$ in $\cC$, such that $g\ci e=h\ci f$, with the universal property that if $e':W'\ra X$ and $f':W'\ra Y$ are morphisms in $\cC$ with $g\ci e'=h\ci f'$ then there is a unique morphism $b:W'\ra W$ with $e'=e\ci b$ and $f'=f\ci b$. Then we write $W=X\t_{g,Z,h}Y$ or $W=X\t_ZY$. The diagram
\begin{equation*}
\xymatrix@R=13pt@C=50pt{ W \ar[r]_f \ar[d]^e & Y
\ar[d]_h \\ X \ar[r]^g & Z}
\end{equation*}
is called a {\it Cartesian square}. Fibre products need not exist, but if they do exist they are unique up to canonical isomorphism in~$\cC$.
\end{dfn}

The next definition and theorem are well known, see e.g.\ Lee~\cite[Cor.~2.49]{Lee1}.

\begin{dfn}
\label{sm2def3}
Let $g:X\ra Z$ and $h:Y\ra Z$ be smooth maps of manifolds without boundary. We call $g,h$ {\it transverse\/} if $T_xg\op T_yh:T_xX\op T_yY\ra T_zZ$ is surjective for all $x\in X$ and $y\in Y$ with $g(x)=h(y)=z\in Z$. We call $h:Y\ra Z$ a {\it submersion\/} if $T_yh:T_yY\ra T_zZ$ is surjective for all $y\in Y$ with $h(y)=z\in Z$. If $h$ is a submersion then $g,h$ are transverse for any~$g:X\ra Z$.
\end{dfn}

\begin{thm}
\label{sm2thm1}
Suppose $g:X\ra Z$ and\/ $h:Y\ra Z$ are transverse smooth maps of manifolds without boundary. Then a fibre product $W=X\t_{g,Z,h}Y$ exists in the category of manifolds $\Man,$ with\/ $\dim W=\dim X+\dim Y-\dim Z$. We may write
\begin{equation*}
W=\bigl\{(x,y)\in X\t Y:\text{$g(x)=h(y)$ in $Z$}\bigr\}
\end{equation*}
as an embedded submanifold of\/ $X\t Y,$ where $e:W\ra X$ and\/ $f:W\ra Y$ act by $e:(x,y)\mapsto x$ and\/~$f:(x,y)\mapsto y$.
\end{thm}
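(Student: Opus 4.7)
The plan is to construct $W$ concretely as the set-theoretic fibre product sitting inside $X\t Y$, verify it is an embedded submanifold of the claimed dimension using the preimage theorem, and then check the universal property of Definition \ref{sm2def2} directly.

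First I would define $W=\{(x,y)\in X\t Y:g(x)=h(y)\}$ together with the projections $e,f$ to $X,Y$. To endow $W$ with a smooth structure, I would introduce the diagonal $\De_Z\subset Z\t Z$ and the smooth map $\Phi=g\t h:X\t Y\ra Z\t Z$, so that $W=\Phi^{-1}(\De_Z)$. The transversality hypothesis on $(g,h)$ is exactly the statement that $\Phi$ is transverse to the submanifold $\De_Z\subset Z\t Z$: indeed, for $(x,y)\in W$ with $g(x)=h(y)=z$, the image of $T_{(x,y)}\Phi=T_xg\op T_yh$ together with $T_{(z,z)}\De_Z=\{(v,v):v\in T_zZ\}$ spans $T_zZ\op T_zZ$ precisely when $T_xg\op T_yh:T_xX\op T_yY\ra T_zZ$ is surjective. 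By the standard preimage theorem for a submanifold (e.g.\ Lee \cite[Cor.~2.49]{Lee1} or a short local argument using the implicit function theorem applied to local defining equations of $\De_Z$ near $(z,z)$), $W=\Phi^{-1}(\De_Z)$ is then an embedded submanifold of $X\t Y$ of codimension $\codim\De_Z=\dim Z$, giving
\begin{equation*}
\dim W=\dim X+\dim Y-\dim Z,
\end{equation*}
as required. The projections $e,f$ are smooth as restrictions of the coordinate projections $X\t Y\ra X,Y$, and by construction $g\ci e=h\ci f$.

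For the universal property, suppose $e':W'\ra X$ and $f':W'\ra Y$ are smooth maps of manifolds with $g\ci e'=h\ci f'$. Then $(e',f'):W'\ra X\t Y$ is smooth, and the equation $g\ci e'=h\ci f'$ shows it factors set-theoretically through $W$; define $b:W'\ra W$ by $b(w')=(e'(w'),f'(w'))$. This $b$ is the unique map satisfying $e'=e\ci b$ and $f'=f\ci b$, so the only remaining issue is smoothness. Because $W\hookra X\t Y$ is an embedded submanifold, any continuous map into $X\t Y$ that lands in $W$ and is smooth as a map into $X\t Y$ is automatically smooth as a map into $W$; applying this to $(e',f')$ yields smoothness of $b$.

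I do not anticipate a serious obstacle: the only real content is the transversality-to-the-diagonal reformulation, which is a routine linear-algebra check, and the fact that embeddedness makes the universal property automatic. The argument would only become delicate if $X,Y,Z$ had boundary or corners (which is treated separately in the paper), or if one insisted on a global construction avoiding the diagonal; here, working in $\Man$ without boundary, the preimage theorem handles everything cleanly.
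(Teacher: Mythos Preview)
Your argument is correct and is the standard proof: reformulate $W$ as $\Phi^{-1}(\De_Z)$ for $\Phi=g\t h$, check that transversality of $(g,h)$ is equivalent to $\Phi\pf\De_Z$, apply the preimage theorem, and use embeddedness to deduce the universal property. The paper does not supply its own proof of this statement; it simply cites it as well known, referring to Lee~\cite[Cor.~2.49]{Lee1}, so there is nothing further to compare.
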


\subsection{Manifolds with corners}
\label{sm24}

We define the category $\Manc$ of {\it manifolds with corners}, spaces locally modelled on $\R^n_k=[0,\iy)^k\t\R^{n-k}$ for $0\le k\le n$. Some references are Melrose \cite{Melr2,Melr3} and the author \cite{Joyc1,Joyc3}. 

\begin{dfn}
\label{sm2def4}
Use the notation $\R^m_k=[0,\iy)^k\t\R^{m-k}$
for $0\le k\le m$, and write points of $\R^m_k$ as $u=(x_1,\ldots,x_m)$ for $x_1,\ldots,x_k\in[0,\iy)$, $x_{k+1},\ldots,x_m\in\R$. Let $U\subseteq\R^m_k$ and $V\subseteq \R^n_l$ be open, and $f=(f_1,\ldots,f_n):U\ra V$ be a continuous map, so that $f_j=f_j(x_1,\ldots,x_m)$ maps $U\ra[0,\iy)$ for $j=1,\ldots,l$ and $U\ra\R$ for $j=l+1,\ldots,n$. Then we say:
\begin{itemize}
\setlength{\itemsep}{0pt}
\setlength{\parsep}{0pt}
\item[(a)] $f$ is {\it weakly smooth\/} if all derivatives $\frac{\pd^{a_1+\cdots+a_m}}{\pd x_1^{a_1}\cdots\pd x_m^{a_m}}f_j(x_1,\ldots,x_m):U\ra\R$ exist and are continuous for all $j=1,\ldots,n$ and $a_1,\ldots,a_m\ge 0$, including one-sided derivatives where $x_i=0$ for $i=1,\ldots,k$.
\item[(b)] $f$ is {\it smooth\/} if it is weakly smooth and every $u=(x_1,\ldots,x_m)\in U$ has an open neighbourhood $\ti U$ in $U$ such that for each $j=1,\ldots,l$, either:
\begin{itemize}
\setlength{\itemsep}{0pt}
\setlength{\parsep}{0pt}
\item[(i)] we may uniquely write $f_j(\ti x_1,\ldots,\ti x_m)=F_j(\ti x_1,\ldots,\ti x_m)\cdot\ti x_1^{a_{1,j}}\cdots\ti x_k^{a_{k,j}}$ for all $(\ti x_1,\ldots,\ti x_m)\in\ti U$, where $F_j:\ti U\ra(0,\iy)$ is weakly smooth and $a_{1,j},\ldots,a_{k,j}\in\N=\{0,1,2,\ldots\}$, with $a_{i,j}=0$ if $x_i\ne 0$; or 
\item[(ii)] $f_j\vert_{\smash{\ti U}}=0$.
\end{itemize}
\item[(c)] $f$ is {\it interior\/} if it is smooth, and case (b)(ii) does not occur.
\item[(d)] $f$ is {\it b-normal\/} if it is interior, and in case (b)(i), for each $i=1,\ldots,k$ we have $a_{i,j}>0$ for at most one $j=1,\ldots,l$.
\item[(e)] $f$ is {\it strongly smooth\/} if it is smooth, and in case (b)(i), for each $j=1,\ldots,l$ we have $a_{i,j}=1$ for at most one $i=1,\ldots,k$, and $a_{i,j}=0$ otherwise. 
\item[(f)] $f$ is a {\it diffeomorphism\/} if it is a smooth bijection with smooth inverse.
\end{itemize}
All the classes (a)--(f) include identities and are closed under composition.
\end{dfn}

\begin{dfn}
\label{sm2def5}
Let $X$ be a second countable Hausdorff topological space. An {\it $m$-dimensional chart on\/} $X$ is a pair $(U,\phi)$, where
$U\subseteq\R^m_k$ is open for some $0\le k\le m$, and $\phi:U\ra X$ is a
homeomorphism with an open set~$\phi(U)\subseteq X$.

Let $(U,\phi),(V,\psi)$ be $m$-dimensional charts on $X$. We call
$(U,\phi)$ and $(V,\psi)$ {\it compatible\/} if
$\psi^{-1}\ci\phi:\phi^{-1}\bigl(\phi(U)\cap\psi(V)\bigr)\ra
\psi^{-1}\bigl(\phi(U)\cap\psi(V)\bigr)$ is a diffeomorphism between open subsets of $\R^m_k,\R^m_l$, in the sense of Definition~\ref{sm2def4}(f).

An $m$-{\it dimensional atlas\/} for $X$ is a system
$\{(U_a,\phi_a):a\in A\}$ of pairwise compatible $m$-dimensional
charts on $X$ with $X=\bigcup_{a\in A}\phi_a(U_a)$. We call such an
atlas {\it maximal\/} if it is not a proper subset of any other
atlas. Any atlas $\{(U_a,\phi_a):a\in A\}$ is contained in a unique
maximal atlas, the set of all charts $(U,\phi)$ of this type on $X$
which are compatible with $(U_a,\phi_a)$ for all~$a\in A$.

An $m$-{\it dimensional manifold with corners\/} is a second
countable Hausdorff topological space $X$ equipped with a maximal
$m$-dimensional atlas. Usually we refer to $X$ as the manifold,
leaving the atlas implicit, and by a {\it chart\/ $(U,\phi)$ on\/}
$X$, we mean an element of the maximal atlas.

Now let $X,Y$ be manifolds with corners of dimensions $m,n$, and $f:X\ra Y$ a continuous map. We call $f$ {\it weakly smooth}, or {\it smooth}, or {\it interior}, or {\it b-normal}, or {\it strongly smooth}, if whenever $(U,\phi),(V,\psi)$ are charts on $X,Y$ with $U\subseteq\R^m_k$, $V\subseteq\R^n_l$ open, then
\begin{equation*}
\psi^{-1}\ci f\ci\phi:(f\ci\phi)^{-1}(\psi(V))\longra V
\end{equation*}
is weakly smooth, or smooth, or interior, or b-normal, or strongly smooth, respectively, as maps between open subsets of $\R^m_k,\R^n_l$ in the sense of Definition~\ref{sm2def4}.

We write $\Manc$ for the category with objects manifolds with corners $X,Y,$ and morphisms smooth maps $f:X\ra Y$ in the sense above. We also write $\Mancbn\subset\Mancin\subset\Manc$ for the subcategories with the same objects and with morphisms b-normal, or interior, maps.

Write $\cManc$ for the category whose objects are disjoint unions $\coprod_{m=0}^\iy X_m$, where $X_m$ is a manifold with corners of dimension $m$, allowing $X_m=\es$, and whose morphisms are continuous maps $f:\coprod_{m=0}^\iy X_m\ra\coprod_{n=0}^\iy Y_n$, such that $f\vert_{X_m\cap f^{-1}(Y_n)}:X_m\cap f^{-1}(Y_n)\ra Y_n$ is a smooth map of manifolds with corners for all $m,n\ge 0$. Objects of $\cManc$ will be called {\it manifolds with corners of mixed dimension}. Write $\cMancbn\subset\cMancin\subset\cManc$ for the subcategories with morphisms b-normal, or interior, maps.
\end{dfn}

\begin{rem}
\label{sm2rem1}
{\bf(a)} There are several non-equivalent definitions of categories of manifolds with corners. Just as objects, without considering morphisms, most authors define manifolds with corners as in Definition \ref{sm2def5}. However, Melrose \cite{Melr1,Melr2,Melr3} imposes an extra condition: in \S\ref{sm25} we will define the boundary $\pd X$ of a manifold with corners $X$, with an immersion $i_X:\pd X\ra X$. Melrose requires that $i_X\vert_C:C\ra X$ should be injective for each connected component $C$ of $\pd X$ (such $X$ are sometimes called {\it manifolds with faces\/}).

There is no general agreement in the literature on how to define smooth maps, or morphisms, of manifolds with corners: 
\begin{itemize}
\setlength{\itemsep}{0pt}
\setlength{\parsep}{0pt}
\item[(i)] Our smooth maps are due to Melrose \cite[\S 1.12]{Melr3}, \cite[\S 1]{Melr1}, who calls them {\it b-maps}. Interior and b-normal maps are also due to Melrose.
\item[(ii)] The author \cite{Joyc1} defined and studied strongly smooth maps above (which were just called `smooth maps' in \cite{Joyc1}). 
\item[(iii)] Monthubert's {\it morphisms of manifolds with corners\/} \cite[Def.~2.8]{Mont} coincide with our strongly smooth b-normal maps. 
\item[(iv)] Most other authors, such as Cerf \cite[\S I.1.2]{Cerf}, define smooth maps of manifolds with corners to be weakly smooth maps, in our notation.
\end{itemize}

\noindent{\bf(b)} For generalizations of manifolds with corners, see the author's {\it manifolds with generalized corners\/} \cite{Joyc3} and {\it manifolds with analytic corners\/} \cite{Joyc4}, as in~\S\ref{sm27}.

\smallskip

\noindent{\bf(c)} For theorems on existence of fibre products in $\Manc$ under transversality conditions, generalizing Theorem \ref{sm2thm1}, see \cite[\S 6]{Joyc1} and~\cite[\S 4.3]{Joyc3}.
\end{rem}

\subsection{Boundaries and corners of manifolds with corners}
\label{sm25}

The material of this section broadly follows the author \cite{Joyc1,Joyc3}.

\begin{dfn}
\label{sm2def6}
Let $U\subseteq\R^m_k$ be open. For each
$u=(x_1,\ldots,x_m)$ in $U$, define the {\it depth\/} $\depth_Uu$ of
$u$ in $U$ to be the number of $x_1,\ldots,x_k$ which are zero. That
is, $\depth_Uu$ is the number of boundary faces of $U$
containing~$u$.

Let $X$ be an $m$-manifold with corners. For $x\in X$, choose a
chart $(U,\phi)$ on the manifold $X$ with $\phi(u)=x$ for $u\in U$,
and define the {\it depth\/} $\depth_Xx$ of $x$ in $X$ by
$\depth_Xx=\depth_Uu$. This is independent of the choice of
$(U,\phi)$. For each $l=0,\ldots,m$, define the {\it depth\/ $l$
stratum\/} of $X$ to be
\begin{equation*}
S^l(X)=\bigl\{x\in X:\depth_Xx=l\bigr\}.
\end{equation*}
Then $X=\coprod_{l=0}^mS^l(X)$ and $\overline{S^l(X)}=
\bigcup_{k=l}^m S^k(X)$. The {\it interior\/} of $X$ is
$X^\ci=S^0(X)$. Each $S^l(X)$ has the structure of an
$(m-l)$-manifold without boundary.
\end{dfn}

\begin{dfn}
\label{sm2def7}
Let $X$ be an $m$-manifold with corners, $x\in X$, and $k=0,1,\ldots,m$. A {\it local $k$-corner component\/ $\ga$ of\/ $X$ at\/} $x$ is a local choice of connected component of $S^k(X)$ near $x$. That is, for each small open neighbourhood $V$ of $x$ in $X$, $\ga$
gives a choice of connected component $W$ of $V\cap S^k(X)$ with
$x\in\overline W$, and any two such choices $V,W$ and $V',W'$ must
be compatible in that $x\in\overline{(W\cap W')}$. When $k=1$, we call $\ga$ a {\it local boundary component}.

As sets, define the {\it boundary\/} $\pd X$ and {\it k-corners\/} $C_k(X)$ for $k=0,1,\ldots,m$ by
\begin{align*}
\pd X&=\bigl\{(x,\be):\text{$x\in X$, $\be$ is a local boundary
component of $X$ at $x$}\bigr\},\\
C_k(X)&=\bigl\{(x,\ga):\text{$x\in X$, $\ga$ is a local $k$-corner 
component of $X$ at $x$}\bigr\}.
\end{align*}
Define $i_X:\pd X\ra X$ and $\Pi_k:C_k(X)\ra X$ by $i_X:(x,\be)\mapsto x$, $\Pi_k:(x,\ga)\mapsto x$.

If $(U,\phi)$ is a chart on $X$ with $U\subseteq\R^m_k$ open, then for each $i=1,\ldots,k$ we can define a chart $(U_i,\phi_i)$ on $\pd X$ by
\begin{align*}
&U_i=\bigl\{(x_1,\ldots,x_{m-1})\in \R^{m-1}_{k-1}:
(x_1,\ldots,x_{i-1},0,x_i,\ldots,x_{m-1})\in
U\subseteq\R^m_k\bigr\},\\
&\phi_i:(x_1,\ldots,x_{m-1})\longmapsto\bigl(\phi
(x_1,\ldots,x_{i-1}, 0,x_i,\ldots,x_{m-1}),\phi_*(\{x_i=0\})\bigr).
\end{align*}
The set of all such charts on $\pd X$ forms an atlas, making $\pd X$ into a manifold with corners of dimension $m-1$, and $i_X:\pd X\ra X$ into a smooth (but not interior) map. Similarly, we make $C_k(X)$ into an $(m-k)$-manifold with corners, and $\Pi_k:C_k(X)\ra X$ into a smooth map.

We call $X$ a {\it manifold without boundary\/} if $\pd X=\es$, and
a {\it manifold with boundary\/} if $\pd^2X=\es$. We write $\Man$ and $\Manb$ for the full subcategories of $\Manc$ with objects manifolds without boundary, and manifolds with boundary, so that $\Man\subset\Manb\subset\Manc$. This definition of $\Man$ is equivalent to the usual definition of the category of manifolds.
\end{dfn}

For $X$ a manifold with corners and $k\ge 0$, there are natural identifications 
\ea
\begin{split}
\pd^kX\cong\bigl\{(x&,\be_1,\ldots,\be_k):\text{$x\in X,$
$\be_1,\ldots,\be_k$ are distinct}\\
&\text{local boundary components for $X$ at $x$}\bigr\},
\end{split}
\label{sm2eq10}\\
\begin{split}
C_k(X)\cong\bigl\{(x&,\{\be_1,\ldots,\be_k\}):\text{$x\in X,$
$\be_1,\ldots,\be_k$ are distinct}\\
&\text{local boundary components for $X$ at $x$}\bigr\}.
\end{split}
\label{sm2eq11}
\ea
There is a natural, free, smooth action of the symmetric group $S_k$ on $\pd^kX$, by permutation of $\be_1,\ldots,\be_k$ in \eq{sm2eq10}, and \eq{sm2eq10}--\eq{sm2eq11} give a natural diffeomorphism
\e
C_k(X)\cong\pd^kX/S_k.
\label{sm2eq12}
\e

Corners commute with boundaries: there are natural isomorphisms
\e
\begin{aligned}
\pd C_k(X)\cong C_k&(\pd X)\cong \bigl\{(x,\{\be_1,\ldots,
\be_k\},\be_{k+1}):x\in X,\; \be_1,\ldots,\be_{k+1}\\
&\text{are distinct local boundary components for $X$ at
$x$}\bigr\}.
\end{aligned}
\label{sm2eq13}
\e
For products of manifolds with corners we have natural diffeomorphisms
\ea
\pd(X\t Y)&\cong (\pd X\t Y)\amalg (X\t\pd Y),
\label{sm2eq14}\\
C_k(X\t Y)&\cong \ts\coprod_{i,j\ge 0,\; i+j=k}C_i(X)\t C_j(Y).
\label{sm2eq15}
\ea

\begin{ex}
\label{sm2ex1}
The {\it teardrop\/} $T=\bigl\{(x,y)\in\R^2:x\ge 0$, $y^2\le x^2-x^4\bigr\}$, shown in Figure \ref{sm2fig1}, is a manifold with corners of dimension 2. The boundary $\pd T$ is diffeomorphic to $[0,1]$, and so is connected, but $i_T:\pd T\ra T$ is not injective. Thus $T$ is not a manifold with faces, in the sense of Remark \ref{sm2rem1}.
\begin{figure}[htb]
\begin{xy}
,(-1.5,0)*{}
,<6cm,-1.5cm>;<6.7cm,-1.5cm>:
,(3,.3)*{x}
,(-1.2,2)*{y}
,(-1.5,0)*{\bullet}
,(-1.5,0); (1.5,0) **\crv{(-.5,1)&(.1,1.4)&(1.5,1.2)}
?(.06)="a"
?(.12)="b"
?(.2)="c"
?(.29)="d"
?(.4)="e"
?(.5)="f"
?(.6)="g"
?(.7)="h"
?(.83)="i"
,(-1.5,0); (1.5,0) **\crv{(-.5,-1)&(.1,-1.4)&(1.5,-1.2)}
?(.06)="j"
?(.12)="k"
?(.2)="l"
?(.29)="m"
?(.4)="n"
?(.5)="o"
?(.6)="p"
?(.7)="q"
?(.83)="r"
,"a";"j"**@{.}
,"b";"k"**@{.}
,"c";"l"**@{.}
,"d";"m"**@{.}
,"e";"n"**@{.}
,"f";"o"**@{.}
,"g";"p"**@{.}
,"h";"q"**@{.}
,"i";"r"**@{.}
\ar (-1.5,0);(3,0)
\ar (-1.5,0);(-3,0)
\ar (-1.5,0);(-1.5,2)
\ar (-1.5,0);(-1.5,-2)
\end{xy}
\caption{The teardrop, a 2-manifold with corners}
\label{sm2fig1}
\end{figure}
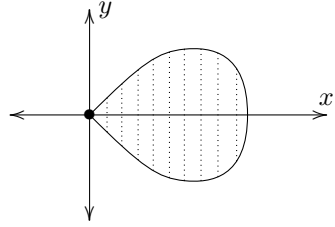
\end{ex}

The following lemma is easy to prove from Definition \ref{sm2def4}(b).

\begin{lem}
\label{sm2lem1}
Let\/ $f:X\ra Y$ be a smooth map of manifolds with corners. Then $f$ \begin{bfseries}is compatible with the depth stratifications\end{bfseries} $X=\coprod_{k\ge 0}S^k(X),$ $Y=\coprod_{l\ge 0}S^l(Y)$ in Definition\/ {\rm\ref{sm2def6},} in the sense that if\/ $\es\ne W\subseteq S^k(X)$ is a connected subset for some $k\ge 0,$ then $f(W)\subseteq S^l(Y)$ for some unique $l\ge 0$.
\end{lem}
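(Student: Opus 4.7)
The plan is to show that the function $x\mapsto\depth_Y f(x)$ is locally constant on each depth stratum $S^k(X)$; since strata are locally closed and $W$ is connected, this forces $f(W)$ to lie entirely in a single $S^l(Y)$, and uniqueness is automatic from $Y=\coprod_{l\ge 0} S^l(Y)$.

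To establish local constancy, I would pick an arbitrary $x_0\in S^k(X)\cap W$ and work in charts $(U,\phi)$ on $X$ and $(V,\psi)$ on $Y$ around $x_0$ and $f(x_0)$ with $U\subseteq\R^m_k$, $V\subseteq\R^n_l$. Without loss of generality we may arrange that $\phi^{-1}(x_0)=u_0$ has its first $k$ coordinates equal to zero (so $\depth_U u_0=k$). Write $\psi^{-1}\ci f\ci\phi=(g_1,\dots,g_n)$ on some neighbourhood of $u_0$. The depth of a point $v=(y_1,\dots,y_n)\in V$ in $Y$ is exactly the number of indices $j\in\{1,\dots,l\}$ with $y_j=0$, so it suffices to count, for $u$ near $u_0$ in the same stratum, how many of the $g_j(u)$, $j=1,\dots,l$, vanish.

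Now apply Definition \ref{sm2def4}(b) to $g_1,\dots,g_l$ on a small neighbourhood $\ti U\ni u_0$. For each $j=1,\dots,l$, either (ii) $g_j\equiv 0$ on $\ti U$, in which case $g_j$ vanishes on every point of $\ti U$; or (i)
\begin{equation*}
g_j(\ti x_1,\dots,\ti x_m)=G_j(\ti x_1,\dots,\ti x_m)\cdot \ti x_1^{a_{1,j}}\cdots\ti x_k^{a_{k,j}}
\end{equation*}
with $G_j>0$ and $a_{i,j}\in\N$. In case (i), $g_j(u)=0$ if and only if $\ti x_i=0$ for some $i\in\{1,\dots,k\}$ with $a_{i,j}>0$. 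For $u$ in the stratum of $u_0$ (i.e.\ with $\ti x_1=\cdots=\ti x_k=0$ and $(\ti x_{k+1},\dots,\ti x_m)$ close to $u_0$'s last coordinates), this reduces to the condition that some $a_{i,j}>0$, which is a property of the exponents alone and does not depend on $u$. Therefore the set of $j\in\{1,\dots,l\}$ with $g_j(u)=0$ is the same for every $u$ in $\ti U\cap\phi^{-1}(S^k(X))$, so $\depth_Y(f(\cdot))$ is constant on $\phi(\ti U)\cap S^k(X)$, a neighbourhood of $x_0$ in $S^k(X)$.

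Since $x_0$ was arbitrary, $\depth_Y\ci f$ is locally constant on $S^k(X)$; restricting to the connected subset $W\subseteq S^k(X)$ gives a single value $l\ge 0$ with $f(W)\subseteq S^l(Y)$, and $l$ is unique because the strata of $Y$ are disjoint. The only step requiring care is the bookkeeping in case (i): one must note precisely that the vanishing condition depends only on the exponents $a_{i,j}$ once we restrict to the stratum, but no other obstacle arises.
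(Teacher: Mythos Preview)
Your proof is correct and follows exactly the approach the paper indicates: the paper does not actually write out a proof but simply remarks that the lemma ``is easy to prove from Definition~\ref{sm2def4}(b),'' and your argument is precisely the natural unpacking of that definition --- using the local factorization $g_j=G_j\cdot\ti x_1^{a_{1,j}}\cdots\ti x_k^{a_{k,j}}$ (or $g_j\equiv 0$) to see that the vanishing of each $g_j$ on the depth-$k$ stratum is governed by the exponents alone, hence is locally constant.
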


It is {\it not\/} true that general smooth $f:X\ra Y$ induce maps $\pd f:\pd X\ra\pd Y$ or $C_k(f):C_k(Y)\ra C_k(Y)$. For example, if $f:X\ra Y$ is the inclusion $[0,\iy)\hookra\R$ then no map $\pd f:\pd X\ra\pd Y$ exists, as $\pd X\ne\es$ and $\pd Y=\es$. So boundaries and $k$-corners do not give functors on $\Manc$. However, if we work in the enlarged category $\cManc$ of Definition \ref{sm2def5} and consider the full corners $C(X)=\coprod_{k\ge 0}C_k(X)$, we can define a functor.

\begin{dfn}
\label{sm2def8}
Define the {\it corners\/} $C(X)$ of a manifold with corners $X$ by
\begin{align*}
&C(X)=\ts\coprod_{k=0}^{\dim X}C_k(X)\\
&=\bigl\{(x,\ga):\text{$x\in X$, $\ga$ is a local $k$-corner 
component of $X$ at $x$, $k\ge 0$}\bigr\},
\end{align*}
considered as an object of $\cManc$ in Definition \ref{sm2def5}, a manifold with corners of mixed dimension. Define $\Pi_X:C(X)\ra X$ by $\Pi_X:(x,\ga)\mapsto x$. This is smooth (i.e. a morphism in $\cManc$) as the maps $\Pi_k:C_k(X)\ra X$ are smooth for~$k\ge 0$.

Let $f:X\ra Y$ be a smooth map of manifolds with corners, and suppose $\ga$ is a local $k$-corner component of $X$ at $x\in X$. For each sufficiently small open neighbourhood $V$ of $x$ in $X$, $\ga$ gives a choice of connected component $W$ of $V\cap S^k(X)$ with $x\in\overline W$, so by Lemma \ref{sm2lem1} $f(W)\subseteq S^l(Y)$ for some $l\ge 0$. As $f$ is continuous, $f(W)$ is connected, and $f(x)\in\ov{f(W)}$. Thus there is a unique local $l$-corner component $f_*(\ga)$ of $Y$ at $f(x)$, such that if $\ti V$ is a sufficiently small open neighbourhood of $f(x)$ in $Y$, then the connected component $\ti W$ of $\ti V\cap S^l(Y)$ given by $f_*(\ga)$ has $f(W)\cap\ti W\ne\es$. This $f_*(\ga)$ is independent of the choice of sufficiently small $V,\ti V$, so is well-defined.

Define a map $C(f):C(X)\ra C(Y)$ by $C(f):(x,\ga)\mapsto (f(x),f_*(\ga))$. Then $C(f)$ is an interior morphism in $\cManc$. If $g:Y\ra Z$ is another smooth map of manifolds with corners then $C(g\ci f)=C(g)\ci C(f):C(X)\ra C(Z)$, so $C:\Manc\ra\cManc$ is a functor, which we call the {\it corner functor}.
\end{dfn}

Equations \eq{sm2eq13} and \eq{sm2eq15} imply that if $X,Y$ are manifolds with corners, we have natural isomorphisms
\ea
\pd C(X)&\cong C(\pd X),
\label{sm2eq16}\\
C(X\t Y)&\cong C(X)\t C(Y).
\label{sm2eq17}
\ea
The corner functor $C$ {\it preserves products and direct products}. That is, if $f:W\ra Y,$ $g:X\ra Y,$ $h:X\ra Z$ are smooth then the following commute
\begin{equation*}
\xymatrix@C=60pt@R=20pt{ *+[r]{C(W\t X)} \ar[d]^\cong \ar[r]_{C(f\t
h)} & *+[l]{C(Y\t Z)} \ar[d]_\cong \\ *+[r]{C(W)\!\t\! C(X)}
\ar[r]^{\raisebox{8pt}{$\st C(f) \t C(h)$}} &
*+[l]{C(Y)\!\t\! C(Z),} }\;
\xymatrix@C=65pt@R=3pt{ & C(Y\t Z) \ar[dd]^\cong \\
C(X) \ar[ur]^(0.4){C((g,h))} \ar[dr]_(0.4){(C(g),C(h))} \\
& C(Y)\!\t\! C(Z), }
\end{equation*}
where the columns are the isomorphisms~\eq{sm2eq17}.

\begin{ex}
\label{sm2ex2}
{\bf(a)} Let $X=[0,\iy)$, $Y=[0,\iy)^2$, and define $f:X\ra Y$
by $f(x)=(x,x)$. We have
\begin{align*}
C_0(X)&\cong[0,\iy), \qquad\quad C_1(X)\cong\{0\}, & C_0(Y)&\cong[0,\iy)^2,\\
C_1(Y)&\cong\bigl(\{0\}\t [0,\iy)\bigr)\amalg \bigl([0,\iy)\t\{0\}\bigr),&
C_2(Y)&\cong\{(0,0)\}.
\end{align*}
Then $C(f)$ maps $C_0(X)\ra C_0(Y)$, $x\mapsto (x,x)$, and
$C_1(X)\ra C_2(Y)$, $0\mapsto(0,0)$.

\smallskip

\noindent{\bf(b)} Let $X=*$, $Y=[0,\iy)$ and define $f:X\ra Y$ by
$f(*)=0$. Then $C_0(X)\cong *$, $C_0(Y)\cong [0,\iy)$,
$C_1(Y)\cong\{0\}$, and $C(f)$ maps $C_0(X)\ra C_1(Y)$, $*\mapsto
0$.
\end{ex}

Note that $C(f)$ need not map $C_k(X)\ra C_k(Y)$.

\subsection{\texorpdfstring{`Fundamental classes' of manifolds with corners}{‘Fundamental classes’ of manifolds with corners}}
\label{sm26}

If $X$ is a compact oriented $n$-manifold with corners, and $\pd X\ne\es$, then $X$ does {\it not\/} have a fundamental class $[X]_\fund$ in $H_n(X,\Z)$, in the usual sense. It does have a fundamental class $[X]_\fund$ in {\it relative\/} homology $H_n(X;i_X(\pd X),\Z)$, but that is not quite what we want to discuss.

Authors who deal with manifolds with corners and homology in applications, such as Fukaya--Oh--Ohta--Ono \cite{FOOO}, sometimes take the view that for manifolds with corners one should work {\it at the chain level}, and choose a {\it fundamental chain\/} $[X]_\fund\in C^\rsi_n(X,\Z)$ for $X$, say in singular chains. As $\pd[X]_\fund\ne 0$, this does not represent a homology class. If $X$ is a manifold with boundary then $\pd [X]_\fund=[\pd X]_\fund$ is a fundamental chain for the boundary~$\pd X$.

I wish to advocate a slightly different point of view. The main ideas are these:
\begin{itemize}
\setlength{\itemsep}{0pt}
\setlength{\parsep}{0pt}
\item[(i)] Let $X$ be a manifold with corners. Then (after choosing an orientation convention) an orientation on $X$ determines orientations on $\pd X,\pd^2X,\ldots.$
\item[(ii)] Let $X$ be a compact, oriented $n$-manifold with corners. Then the interior $X^\ci$ of $X$ is an oriented $n$-manifold, which is generally {\it noncompact}, but by Properties \ref{sm2pr1}(e), \ref{sm2pr2}(e) it still has a fundamental class $[X^\ci]_\fund$ in {\it Borel--Moore\/} homology $H_n^\BM(X^\ci,\Z)$, or equivalently in {\it Steenrod homology\/} $H_n^\St(X^\ci,\Z)$. Similarly, $(\pd^kX)^\ci$ has a fundamental class $[(\pd^kX)^\ci]_\fund$ in~$H_{n-k}^\BM((\pd^kX)^\ci,\Z)\cong H_{n-k}^\St((\pd^kX)^\ci,\Z)$.
\item[(iii)] Write $X_{\le 1}=S^0(X)\amalg S^1(X)\subseteq X$, so that $S^0(X)=X^\ci$ is open in $X_{\le 1}$ with $X_{\le 1}\sm X^\ci=S^1(X)\cong(\pd X)^\ci$. Thus \eq{sm2eq8} gives a long exact sequence
\e
\!\!\!\!\!\!\!\!\!\!\!\!\!\!\!\!\!\!\xymatrix@C=14pt{
\cdots \ar[r] & H_n^\St(X^\ci,R) \ar[r]^(0.45)\pd & H_{n-1}^\St((\pd X)^\ci,R) \ar[r]^(0.52){\inc_*} & H_{n-1}^\St(X_{\le 1},R) \ar[r] &\cdots. }
\label{sm2eq18}
\e
(This also works in Borel--Moore homology, for manifolds with corners.) Under this map $\pd$ we have
\e
\pd\bigl([X^\ci]_\fund\bigr)=[(\pd X)^\ci]_\fund.
\label{sm2eq19}
\e
Similarly, working in $(\pd^k X)_{\le 1}$ we have $\pd\bigl([(\pd^kX)^\ci]_\fund\bigr)=[(\pd^{k+1}X)^\ci]_\fund$.
\item[(iv)] Equations \eq{sm2eq18}--\eq{sm2eq19} imply that
\e
\inc_*\bigl([(\pd X)^\ci]_\fund\bigr)=0\qquad\text{in $H_{n-1}^\St(X_{\le 1},R)$.}
\label{sm2eq20}
\e
This can be seen as a consequence of Stokes' Theorem.
\item[(v)] If $X$ is a compact manifold with boundary then $X=X_{\le 1}$ and $(\pd X)^\ci=\pd X$ are compact, so Steenrod and ordinary homology coincide. Then
\e
\!\!\!\!\!\!\![\pd X]_\fund\in H_{n-1}(\pd X,\Z)\;\text{with}\; \inc_*\bigl([\pd X]_\fund\bigr)=0\;\text{in}\; H_{n-1}(X,\Z).
\label{sm2eq21}
\e
\item[(vi)] For example, if $X$ is a compact oriented 1-manifold with boundary, then pushing forward \eq{sm2eq21} to the point $*$ yields the obvious
\e
\#\bigl([\pd X]_\fund\bigr)=0\quad\text{in $\Z$,}
\label{sm2eq22}
\e
where an orientation on a 0-manifold is a sign $\pm 1$ at each point, and $\#(\cdots)$ counts points with signs.
\item[(vii)] In our intended applications, we have no need to work with fundamental chains. Instead, we regard the important objects to be fundamental classes $[X^\ci]_\fund$ and $[(\pd^kX)^\ci]_\fund$ in Steenrod homology, and relations \eq{sm2eq19}--\eq{sm2eq20}. In fact, after taking transverse fibre products with insertion chains, we can arrange that we only really care about counts $\#\bigl([Y]_\fund\bigr)$ when $\dim Y=0$, and relations \eq{sm2eq22} if we can write $Y=\pd X$.
\end{itemize}
All of (i)--(vii), except the parts about Borel--Moore homology and ordinary homology, extend to s-manifolds with corners in \S\ref{sm44}. In particular, the definition of s-manifold with corners is designed to ensure that \eq{sm2eq19}--\eq{sm2eq20} hold.

\subsection{Manifolds with g-corners and a-corners}
\label{sm27}

Finally we discuss two variants on manifolds with corners due to the author \cite{Joyc3,Joyc4}, {\it manifolds with g-corners\/} and {\it manifolds with a-corners}.

\subsubsection{Manifolds with generalized corners}
\label{sm271}

We briefly summarize some ideas about {\it manifolds with generalized corners\/} ({\it manifolds with g-corners\/}) from the author \cite{Joyc3}. These were inspired by the `interior binomial varieties' of Kottke and Melrose~\cite[\S 9]{KoMe}.
\smallskip

\noindent{\bf(a)} Let $(P,0,+)$ be a commutative monoid that is {\it weakly toric\/} (i.e. finitely-generated, integral, saturated, and torsion-free). Define $X_P$ to be the set of monoid morphisms $x:P\ra[0,\iy)$, where $\bigl([0,\iy),1,\cdot\bigr)$ is the monoid $[0,\iy)$ with operation multiplication and identity 1. We make $X_P$ into a topological space, with a `smooth structure' (i.e. there are natural notions of smooth maps from open subsets of $X_P$ to $\R$, and between open subsets of $X_P$ and $X_Q$). The {\it dimension\/} of $X_P$ is $\dim X_P=\rank P$.
\smallskip

\noindent{\bf(b)} When $P=\Z$ we can identify $X_P\cong\R$ such that $t\in\R$ corresponds to the morphism $\Z\ra[0,\iy)$ mapping $n\mapsto e^{nt}$.

When $P=\N$ we can identify $X_P\cong[0,\iy)$ such that $t\in[0,\iy)$ corresponds to the morphism $\N\ra[0,\iy)$ mapping $n\mapsto t^n$.  

When $P=\N^k\t\Z^{m-k}$ for $0\le k\le m$ we can identify $X_P\cong\R^m_k=[0,\iy)^k\t\R^{m-k}$, where $\R^m_k$ has the `smooth structure' in Definition \ref{sm2def4}. Thus, the model spaces $\R^m_k$ used to define manifolds with corners $\Manc$ in \S\ref{sm24} are special examples of the spaces $X_P$ in~{\bf(a)}.
\smallskip

\noindent{\bf(c)} The category $\Mangc$ of {\it manifolds with g-corners\/} is defined in a very similar way to Definition \ref{sm2def5}, but using the spaces $X_P$ as local models rather than $\R^m_k$. As $\R^m_k$ is an example of a space $X_P$, there is a full inclusion $\Manc\subset\Mangc$. That is, manifolds with corners are examples of manifolds with g-corners.
\smallskip

\noindent{\bf(d)} A manifold with g-corners $X$ of dimension $m$ has a {\it depth stratification\/} $X=\coprod_{k=0}^mS^k(X)$, where $S^k(X)$ is an ordinary manifold of dimension $m-k$, and is a locally closed subset in $X$ with closure $\ov{S^k(X)}=\coprod_{l=k}^mS^l(X)$. The {\it interior\/} of $X$ is $X^\ci=S^0(X)$. 
\smallskip

\noindent{\bf(e)} A smooth map $f:X\ra Y$ in $\Mangc$ is called {\it interior\/} if $f(X^\ci)\subseteq Y^\ci$. We write $\Mangcin\subset\Mangc$ for the subcategory with morphisms interior maps.

As for $\cMancin\subset\cManc$ in \S\ref{sm24}, we also write $\cMangcin\subset\cMangc$ for the categories with objects {\it manifolds with g-corners with mixed dimension}, and morphisms interior or smooth maps.
\smallskip

\noindent{\bf(f)} The simplest example of a manifold with g-corners which is not an ordinary manifold with corners is the space $X_P$ from the rank 3 weakly toric monoid
\begin{equation*}
P=\bigl\{(a,b,c)\in \Z^3:a\ge 0,\; b\ge 0,\; a+b\ge c\ge 0\bigr\}.
\end{equation*}
Write
\begin{equation*}
p_1=(1,0,0),\;\> p_2=(0,1,1),\;\> p_3=(0,1,0),\;\> p_4=(1,0,1).
\end{equation*}
Then $p_1,p_2,p_3,p_4$ are generators for $P$, subject to the single relation
\begin{equation*}
p_1+p_2=p_3+p_4.
\end{equation*}
Because of this we may identify
\e
X_P\cong \bigl\{(x_1,x_2,x_3,x_4)\in[0,\iy)^4:x_1x_2=x_3x_4\bigr\}.
\label{sm2eq23}
\e

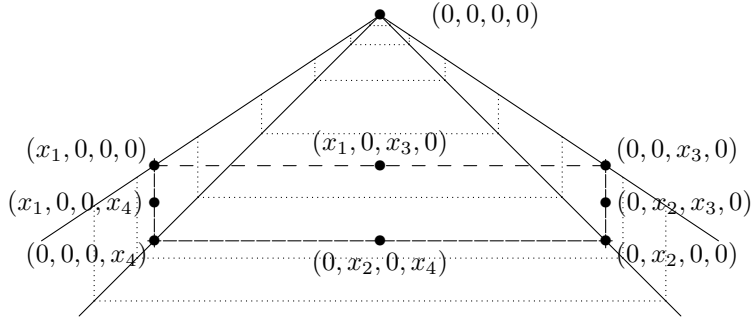
\begin{figure}[htb]
\centerline{$\splinetolerance{.8pt}
\begin{xy}
0;<1mm,0mm>:
,(0,0)*{\bu}
,(14,0)*{(0,0,0,0) }
,(-30,-20)*{\bu}
,(-39,-18)*{(x_1,0,0,0)}
,(30,-20)*{\bu}
,(39.5,-18)*{(0,0,x_3,0)}
,(-30,-30)*{\bu}
,(-39,-32)*{(0,0,0,x_4)}
,(30,-30)*{\bu}
,(39.5,-32)*{(0,x_2,0,0)}
,(0,-30)*{\bu}
,(0,-33)*{(0,x_2,0,x_4)}
,(0,-20)*{\bu}
,(0,-17)*{(x_1,0,x_3,0)}
,(-30,-25)*{\bu}
,(-40.5,-25)*{(x_1,0,0,x_4)}
,(30,-25)*{\bu}
,(40.5,-25)*{(0,x_2,x_3,0)}
,(0,0);(-45,-30)**\crv{}
?(.8444)="aaa"
?(.85)="bbb"
?(.75)="ccc"
?(.65)="ddd"
?(.55)="eee"
?(.45)="fff"
?(.35)="ggg"
?(.25)="hhh"
?(.15)="iii"
?(.05)="jjj"
,(0,0);(45,-30)**\crv{}
?(.8444)="aaaa"
?(.85)="bbbb"
?(.75)="cccc"
?(.65)="dddd"
?(.55)="eeee"
?(.45)="ffff"
?(.35)="gggg"
?(.25)="hhhh"
?(.15)="iiii"
?(.05)="jjjj"
,(0,0);(-40,-40)**\crv{}
?(.95)="a"
?(.85)="b"
?(.75)="c"
?(.65)="d"
?(.55)="e"
?(.45)="f"
?(.35)="g"
?(.25)="h"
?(.15)="i"
?(.05)="j"
,(0,0);(40,-40)**\crv{}
?(.95)="aa"
?(.85)="bb"
?(.75)="cc"
?(.65)="dd"
?(.55)="ee"
?(.45)="ff"
?(.35)="gg"
?(.25)="hh"
?(.15)="ii"
?(.05)="jj"
,"a";"aa"**@{.}
,"b";"bb"**@{.}
,"c";"cc"**@{.}
,"d";"dd"**@{.}
,"e";"ee"**@{.}
,"f";"ff"**@{.}
,"g";"gg"**@{.}
,"h";"hh"**@{.}
,"i";"ii"**@{.}
,"j";"jj"**@{.}
,"a";"aaa"**@{.}
,"b";"bbb"**@{.}
,"c";"ccc"**@{.}
,"d";"ddd"**@{.}
,"e";"eee"**@{.}
,"f";"fff"**@{.}
,"g";"ggg"**@{.}
,"h";"hhh"**@{.}
,"i";"iii"**@{.}
,"j";"jjj"**@{.}
,"aa";"aaaa"**@{.}
,"bb";"bbbb"**@{.}
,"cc";"cccc"**@{.}
,"dd";"dddd"**@{.}
,"ee";"eeee"**@{.}
,"ff";"ffff"**@{.}
,"gg";"gggg"**@{.}
,"hh";"hhhh"**@{.}
,"ii";"iiii"**@{.}
,"jj";"jjjj"**@{.}
,(-30,-20);(30,-20)**@{--}
,(-30,-20);(-30,-30)**\crv{}
,(-30,-30);(30,-30)**\crv{}
,(30,-30);(30,-20)**\crv{}
\end{xy}$}
\caption{3-manifold with g-corners $X_P$ in \eq{sm2eq23}}
\label{sm2fig2}
\end{figure}

We sketch $X_P$ in Figure \ref{sm2fig2}. We picture it as a 3-dimensional infinite pyramid on a square base. It has one vertex $(0,0,0,0)$, four 1-dimensional edges of points $(x_1,0,0,0),(0,x_2,0,0), (0,0,x_3,0),(0,0,0,x_4)$, four 2-dimensional faces of points $(x_1,0,x_3,0)$, $(x_1,0,0,x_4)$, $(0,x_2,x_3,0)$, $(0,x_2,0,x_4)$, and an interior $X_P^\ci\ab\cong\R^3$ of points $(x_1,x_2,x_3,x_4)$. Then $X_P$ is not a manifold with corners near $(0,0,0,0)$, as we can see from the non-simplicial face structure.
\smallskip

\noindent{\bf(g)} Let $X$ be a manifold with g-corners of dimension $m$. Then as in \S\ref{sm25} we can define the {\it boundary\/} $\pd X$, a manifold with g-corners of dimension $m-1$, and the $k$-{\it corners\/} $C_k(X)$, a manifold with g-corners of dimension $m-k$ for $k=0,\ldots,m$, with $\pd X=C_1(X)$. The {\it corners\/} is $C(X)=\coprod_{k=0}^mC_k(X)$, an object of $\cMangcin\subset\cMangc$. There is a {\it corner functor\/} $C:\Mangc\ra\cMangcin$ acting by $X\mapsto C(X)$ on objects.
\smallskip

\noindent{\bf(h)} As in \cite[\S 3.4]{Joyc4}, some properties of boundaries and corners of manifolds with corners are {\it false\/} for manifolds with g-corners. In particular, equations \eq{sm2eq10}--\eq{sm2eq13} are all false in $\Mangc$. For example, \eq{sm2eq10}--\eq{sm2eq11} with $k=2$ are false for $X_P$ in \eq{sm2eq23}, as the right hand sides contain additional points over $(0,0,0,0)$ from boundary components $\be_1,\be_2$ on opposite sides of the pyramid.
\smallskip

\noindent{\bf(i)} One reason for studying manifolds with g-corners is that {\it transverse fibre products\/} exist in $\Mangc$ under weaker conditions than in $\Manc$, as in \cite[\S 4.3]{Joyc3}. For example, $X_P$ in \eq{sm2eq23} may be written as a transverse fibre product $[0,\iy)^2\t_{f,[0,\iy),f}[0,\iy)^2$, where $f:[0,\iy)^2\ra[0,\iy)$ maps $(x,y)\mapsto xy$. The corresponding fibre product does not exist in~$\Manc$.
\smallskip

\noindent{\bf(j)} Another reason is that there are moduli spaces occurring in nature which have the structure of manifolds with g-corners, but not of manifolds with corners. For example, Ma'u and Woodward \cite{MaWo} define moduli spaces $\oM_{n,1}$ of `stable $n$-marked quilted discs'. As in \cite[\S 6]{MaWo}, for $n\ge 4$ these are not manifolds with corners, but have an exotic corner structure; in the language of \cite{Joyc3}, the $\oM_{n,1}$ are manifolds with g-corners.

\subsubsection{Manifolds with analytic corners}
\label{sm272}

We summarize some ideas about {\it manifolds with analytic corners\/} ({\it manifolds with a-corners\/}) from the author \cite{Joyc4}.
\smallskip

\noindent{\bf(a)} As for manifolds with corners in \S\ref{sm24}, manifolds with a-corners are spaces $X$ locally modelled on $\R^m_k=[0,\iy)^k\t\R^{m-k}$ for $0\le k\le m$. However, the notions of {\it smooth map\/} from open subsets of $\R^m_k$ to $\R$, and between open subsets of $\R^m_k,\R^n_l$, are {\it not\/} those in Definition \ref{sm2def7}, but are modified.

We will not give the full definition, but to give the general idea, a continuous map $f:[0,\iy)\ra\R$ is {\it a-smooth\/} if  $\frac{\d^kf}{\d x^k}$ exists and is continuous on $(0,\iy)$, and $\frac{\d^kf}{\d x^k}=O(x^{\al-k})$ as $x\ra 0$ for some $\al>0$, for each $k=1,2,\ldots.$

In this way we define categories $\Manacin\subset\Manac$ of {\it manifolds with a-corners}, with interior or a-smooth maps.
\smallskip

\noindent{\bf(b)} Essentially all of the properties of manifolds with corners in \S\ref{sm24}--\S\ref{sm26} extend to manifolds with a-corners without change, including the depth stratification $X=\coprod_{k=0}^mS^k(X)$, boundaries $\pd X$ and $k$-corners $C_k(X)$, and the corner functor $C:\Manac\ra\cManacin\subset\cManac$.
\smallskip

\noindent{\bf(c)} The author introduced manifolds with a-corners for two reasons. Firstly, as in the work  of Melrose and others \cite{Melr1,Melr2,Melr3}, many interesting analytic problems involving elliptic or parabolic equations on noncompact manifolds with asymptotic ends can be rephrased in terms of functions on the noncompact interior $X^\ci$ of a compact manifold with boundary or corners $X$. But to extend these equations from $X^\ci$ to $X$, the appropriate smooth structure on $X$ is that of a manifold with a-corners, not an ordinary manifold with corners. So, several areas of analysis of p.d.e.s with asymptotic conditions can be rewritten in the language of manifolds with a-corners, and a general theory developed.

Secondly, for some classes of moduli spaces of solutions of nonlinear elliptic p.d.e.s in which the moduli space is compactified by adding a `boundary' of singular solutions, such as moduli spaces of $J$-holomorphic discs in Symplectic Geometry, the correct smooth structure on the moduli space appears to be that of manifold with a-corners, not a manifold with corners.

\section{\texorpdfstring{Stratified manifolds (s-manifolds)}{Stratified manifolds (s-manifolds)}}
\label{sm3}

We define s-manifolds in \S\ref{sm31}, and give examples in \S\ref{sm32}. Sections \ref{sm33}--\ref{sm38} show that much of the standard differential geometry of smooth manifolds extends to s-manifolds in a nice way, in particular orientations, fundamental classes, partitions of unity, the Whitney Embedding Theorem, (oriented) transverse fibre products, Sard's Theorem, and vector bundles and sections.

\subsection{The definition of s-manifolds}
\label{sm31}

\begin{dfn}
\label{sm3def1}
An {\it s-manifold} (short for {\it stratified manifold\/}) $\bX$ of {\it dimension\/} $\dim\bX=n\ge 0$, is data $\bX=\bigl(X,\{X^i:i\in I\}\bigr)$ satisfying (a)--(e), where:
\begin{itemize}
\setlength{\itemsep}{0pt}
\setlength{\parsep}{0pt}
\item[(a)] $X$ is a locally compact, Hausdorff, second countable topological space.
\item[(b)] $\{X^i:i\in I\}$ is a stratification of $X$ into nonempty, locally closed subsets. That is, $\es\ne X^i\subseteq X$ with $X^i$ locally closed for each $i\in I$, and $X^i\cap X^j=\es$ for $i\ne j$ in $I$, and $X=\coprod_{i\in I}X^i$. We call the $X^i$ the {\it strata\/} of $\bX$. Here $I$ is an indexing set which we use for notational convenience, but is not part of the data of $\bX$, which is just the set of subsets $X^i$ of $X$. We require that $I$ is finite or countable.
\item[(c)] Each stratum $X^i$ has the structure of a smooth manifold of dimension $\dim X^i\le n$. These smooth manifold structures are part of the data of $\bX$.

If $x\in X$ (which we also write as $x\in\bX$) then $x$ lies in a unique stratum $X^i\subseteq X$, and we write $\dim_x\bX=\dim X^i$. We also define the {\it tangent space\/} $T_x\bX$ to be $T_xX^i$. Note that $T_x\bX$ can have dimension $0,1,\ldots,\dim\bX$.
\item[(d)] Each $x\in X$ should admit an open neighbourhood $U$ in $X$ and a continuous map $f:U\ra\R^N$ for $N\gg 0$ (in fact $N=2n+1$ will do, see Theorem \ref{sm3thm3}) such that $f:U\ra f(U)$ is a homeomorphism, and $f\vert_{U\cap X^i}:U\cap X^i\ra\R^N$ is a smooth embedding of manifolds for all~$i\in I$.
\item[(e)] Each $x\in X$ should admit an open neighbourhood $V$ in $X$ such that $V\cap X^i\ne\es$ for only finitely many $i\in I$. (Then we say that $\{X^i:i\in I\}$ is a  {\it locally finite stratification\/} of $X$.) Furthermore, there should exist a partial order $\pr$ on $I_V:=\{i\in I:V\cap X^i\ne\es\}$ such that:
\begin{itemize}
\setlength{\itemsep}{0pt}
\setlength{\parsep}{0pt}
\item[(i)] For each $i\in I_V$ we have $\ov{(V\cap X^i)}\subseteq\coprod_{j\in I:i\pr j}(V\cap X^j)$, where
$\ov{(V\cap X^i)}$ is the closure of $V\cap X^i$ in $V$. 
\item[(ii)] Let $i,j\in I_V$ with $i\ne j$ and $i\pr j$. Then either:
\begin{itemize}
\setlength{\itemsep}{0pt}
\setlength{\parsep}{0pt}
\item[(A)] $\dim X^i=n-1$ or $n$ and $\dim X^j<\dim X^i$; or
\item[(B)] $\dim X^i\le n-2$ and $\dim X^j\le n-2$.
\end{itemize}
\end{itemize}
\end{itemize}

For an s-manifold $\bX$, define subsets $X_0,X_1,X_{\le 1},X_{\ge 2}$ of $X$ by
\e
\begin{aligned}
X_0&=\coprod_{i\in I:\dim X^i=n}X^i, & X_1&=\coprod_{i\in I:\dim X^i=n-1}X^i, \\
X_{\le 1}&=\coprod_{i\in I:\dim X^i\ge n-1}X^i,& X_{\ge 2}&=\coprod_{i\in I:\dim X^i\le n-2}X^i.
\end{aligned}
\label{sm3eq1}
\e
Here the subscript refers to the codimension, so $X_{\le 1}$ is the union of strata of codimension $\le 1$, and so on. Then (e) implies that:
\begin{itemize}
\setlength{\itemsep}{0pt}
\setlength{\parsep}{0pt}
\item[(i)] $X_0$ is a smooth manifold of dimension $n$. In particular, (e)(i),(ii) imply that if $X^i,X^j$ are strata of $\bX$ of dimension $n$ for $i\ne j$ then $\ov{X^i}\cap X^j=\es$, so as a topological space $X_0$ is the disjoint union of the $X^i$ for $i\in I$ with $\dim X^i=n$. There are no issues with strata $X^i$ being topologically glued together in a pathological way.
\item[(ii)] $X_1$ is a smooth manifold of dimension $n-1$.
\item[(iii)] $X_{\le 1}=X_0\amalg X_1$ and $X=X_0\amalg X_1\amalg X_{\ge 2}=X_{\le 1}\amalg X_{\ge 2}$.
\item[(iv)] $X_0$ and $X_{\le 1}$ are open in $X$, and $X_{\ge 2}$ is closed in $X$, and $X_0$ is open in $X_{\le 1}$, and $X_1$ is closed in $X_{\le 1}$.
\end{itemize}

Let $\bX$ be an s-manifold, and $U\subseteq X$ be open. It is easy to see that $\bU=\bigl(U,\{U\cap X^i:i\in I$, $U\cap X^i\ne\es\}\bigr)$ is an s-manifold with $\dim\bU=\dim\bX$. We call $\bU\subseteq\bX$ an {\it open s-submanifold}.

Now let $\bX$ be an s-manifold as above, and $\bY=\bigl(Y,\{Y^j:j\in J\}\bigr)$ be another s-manifold. A {\it morphism}, or {\it smooth map}, $f:\bX\ra\bY$ is a continuous map $f:X\ra Y$, such that for each $i\in I$ there exists a unique $j\in J$ such that $f(X^i)\subseteq Y^j$, and $f\vert_{X^i}:X^i\ra Y^j$ is a smooth map of manifolds. We call $f$ a {\it diffeomorphism\/} if it is smooth and invertible with smooth inverse. Compositions of smooth maps are smooth, and identities are smooth. Write $\SMan$ for the category with objects s-manifolds and morphisms smooth maps.

If $f:\bX\ra\bY$ is smooth and $x\in\bX$ with $f(x)=y\in\bY$, then $x\in X^i$, $y\in Y^j$ for strata $X^i,Y^j$ of $\bX,\bY$, with $f\vert_{X^i}:X^i\ra Y^j$ smooth. We define the {\it tangent map\/} $T_xf:T_x\bX\ra T_y\bY$ to be $T_x(f\vert_{X^i}):T_xX^i\ra T_yY^j$, for $T_x\bX,T_y\bY$ as in~(c).

Any smooth $n$-manifold $X$ can be made into an s-manifold $\bX$ with just one stratum $X^i=X$, so that $X_0=X$ and $X_1=X_{\ge 2}=\es$. This induces a full and faithful inclusion $\inc:\Man\hookra\SMan$ of the category $\Man$ of manifolds into $\SMan$. So we can regard ordinary manifolds as examples of s-manifolds.

As for $\cManc$ in Definition \ref{sm2def2}, we write $\cSMan$ for the category whose objects are disjoint unions $\coprod_{m=0}^\iy\bX_m$, where $\bX_m$ is an s-manifold of dimension $m$, allowing $\bX_m=\es$, and whose morphisms are continuous maps $f:\coprod_{m=0}^\iy X_m\ra\coprod_{n=0}^\iy Y_n$, such that $f\vert_{X_m\cap f^{-1}(Y_n)}:\bX_m\cap f^{-1}(\bY_n)\ra\bY_n$ is a smooth map of s-manifolds for all $m,n\ge 0$. Objects of $\cSMan$ will be called {\it s-manifolds of mixed dimension}.
\end{dfn}

\begin{rem}
\label{sm3rem1}
{\bf(a)} We can think of an s-manifold $\bX$ as a kind of {\it singular manifold}, where $X_0\subseteq X$ is the nonsingular part, which is a smooth manifold of dimension $\dim\bX$, and $X_{\ge 1}$ is the singular part, which has codimension~$\ge 1$.

In order to make {\it fundamental classes\/} $[\bX]_\fund$ work in \S\ref{sm33}, it is necessary to impose some conditions on the codimension 1 part $X_1$, such as Definition \ref{sm3def1}(e)(ii)(A). But strata $X^i$ with $\dim X^i\le n-2$ are more-or-less unconstrained, and can be glued together in very strange ways.
\smallskip

\noindent{\bf(b)} We need $X$ to be {\it locally compact\/} and {\it Hausdorff\/} so that Steenrod homology is defined, as we will need this for fundamental classes in \S\ref{sm33}. We will show in Corollary \ref{sm3cor2} that $X$ is automatically {\it metrizable}, so Property \ref{sm2pr2}(j) holds.
\end{rem}

\subsection{Examples of s-manifolds}
\label{sm32}

\begin{ex}
\label{sm3ex1}
{\bf(Manifolds with corners, g-corners, and a-corners.)} In \S\ref{sm24}--\S\ref{sm27} we discussed the categories $\Manc,\Mangc,\Manac$ of manifolds with corners, g-corners, and a-corners. If $X$ lies in any of these categories with dimension $n$ it has a {\it depth stratification\/} $X=\coprod_{k=0}^nS^k(X)$, where $S^k(X)$ is an ordinary manifold of dimension $n-k$, and is a locally closed subset in $X$ with closure $\ov{S^k(X)}=\coprod_{l=k}^nS^l(X)$. Then $\bX=\bigl(X,\{S^k(X)':S^k(X)'$ is a connected component of $S^k(X),$ $k=0,\ldots,n\}\bigr)$ is an s-manifold of dimension $n$. This induces inclusion functors $\Manc\hookra\SMan$, $\Mangc\hookra\SMan$, $\Manac\hookra\SMan$, which are faithful, but not full.
\end{ex}

\begin{rem}
\label{sm3rem2}
Example \ref{sm3ex1} does {\it not\/} yet give examples of s-manifolds with corners in the sense of \S\ref{sm4}. An s-manifold with corners $\bfX$ is an s-manifold $\bX$ together with extra data needed to form the boundary $\pd\bfX$ and $k$-corners~$C_k(\bfX)$.
\end{rem}

\begin{ex}
\label{sm3ex2}
{\bf(Simplicial complexes.)}
A {\it simplicial complex\/} is a topological space $X$ with a stratification into $k$-{\it simplices\/} $\si:\De_k\ra X$, for $\si$ a homeomorphism with its image in $X$, where $\De_k$ is the standard $k$-simplex from \eq{sm2eq1}, and all boundary faces of $\si$ are also simplices of $X$. It is {\it locally finite\/} if $X$ can be covered by open sets intersecting only finitely many simplices. It is a {\it simplicial\/ $n$-complex\/} if the maximum dimension of simplices is $n$.

Let $X$ be a locally compact, Hausdorff, locally finite $n$-complex. Then we can make $X$ into an s-manifold $\bX$ of dimension $n$ by taking the strata to be $X^\si=\si(\De_k^\ci)$ for all simplices $\si:\De_k\ra X$ of $X$, where $\De_k^\ci=\De_k\cap(0,\iy)^{k+1}$ is the interior of~$\De_k$.
\end{ex}

\begin{ex}
\label{sm3ex3}
{\bf(Orbifolds.)} An ({\it effective\/}) {\it orbifold\/} $X$ is a generalization of a manifold which is locally modelled on $\R^n/G$ for $G\subset\GL(n,\R)$ a finite group. Much of the differential geometry of manifolds extends nicely to orbifolds (though with some complexities, depending on whether orbifolds form a category or a 2-category). See for example Satake \cite{Sata}, Moerdijk \cite{Moer}, and Adem--Leida--Ruan \cite{ALR}. 

Let $X$ be an orbifold of dimension $n$. Then $X$ has a natural locally finite, locally closed stratification $X=\coprod_{[G]}X^{[G]}$ indexed by conjugacy classes $[G]$ of finite subgroups $G\subset\SL(n,\R)$, where $x\in X^{[G]}$ if $X$ near $x$ is locally modelled on $\R^n/G$ near 0. Each $X^{[G]}$ is naturally a smooth manifold of dimension $\dim(\R^n)^G$, where $(\R^n)^G$ is the $G$-invariant subspace of $\R^n$. We call the $X^{[G]}$ the {\it orbifold strata\/} of $X$. It is easy to show that
\begin{align*}
\bX=\bigl(X,\bigl\{X^{\prime[G]}:\,&\text{$[G]$ is a conjugacy class of finite subgroups $G\subset\GL(n,\R)$,}\\
&\text{and $X^{\prime[G]}$ is a connected component of $X^{[G]}$}\bigr\}\bigr)
\end{align*} 
is an s-manifold $\bX$ of dimension $n$.

Morphisms of orbifolds induce morphisms of the corresponding s-manifolds. This yields an inclusion functor $\Orb\hookra\SMan$ (treating orbifolds as a category, rather than a 2-category), which is faithful but not full.
\end{ex}

\begin{ex}
\label{sm3ex4}
{\bf(An example with pathological topological space.)} Let $n\ge 1$, and define a subset $X\subset\R^{n+1}$ by
\begin{align*}
X=\bigl\{(x_0,\ldots,x_n)\in\R^{n+1}:\,&\ts(x_0-\frac{1}{k})^2+x_1^2+\cdots+x_n^2=\frac{1}{k^2}\\
&\text{for some $k=1,2,\ldots$}\bigr\},
\end{align*}
so $X$ is the union of $n$-spheres $\cS^n_{1/k}$ of radius $\frac{1}{k}$ and centre $(\frac{1}{k},0,\ldots,0)$, which all intersect at $(0,\ldots,0)$. Divide $X$ into two strata $X^1=X\sm\{(0,\ldots,0)\}$ and $X^2=\{(0,\ldots,0)\}$, so that $X=\coprod_{i\in I}X^i$ for $I=\{1,2\}$. Then $X^1$ is an $n$-manifold, and $X^2$ a 0-manifold, as submanifolds of $\R^{n+1}$. Definition \ref{sm3def1}(a)--(e) for $\bX$ are easy, where in (e) we define $\pr$ on $I=\{1,2\}$ by $\pr=\le$. Thus $\bX$ is a compact s-manifold, of dimension $n$. Figure \ref{sm3fig1} illustrates the case $n=1$.
\begin{figure}[htb]
\centerline{$\splinetolerance{.8pt}
\begin{xy}
0;<1cm,0cm>:
,(4,2)*{}
,(-.5,-2)*{}
,(0,0)*{\bu}
,(-.5,0)*+{(0,0)}
,(1,0)*{\ellipse<2cm>{-}}
,(.5,0)*{\ellipse<1cm>{-}}
,(.333,0)*{\ellipse<.666cm>{-}}
,(.25,0)*{\ellipse<.5cm>{-}}
,(.2,0)*{\ellipse<.4cm>{-}}
,(.166,0)*{\ellipse<.333cm>{-}}
,(.1429,0)*{\ellipse<.2857cm>{-}}
,(.125,0)*{\ellipse<.25cm>{-}}
,(.111,0)*{\ellipse<.222cm>{-}}
,(.1,0)*{\ellipse<.2cm>{-}}
,(.0909,0)*{\ellipse<.1818cm>{-}}
,(.0833,0)*{\ellipse<.1666cm>{-}}
,(.0769,0)*{\ellipse<.1538cm>{-}}
,(.0714,0)*{\ellipse<.1428cm>{-}}
,(.0666,0)*{\ellipse<.1333cm>{-}}
,(.0625,0)*{\ellipse<.125cm>{-}}
,(.12,-.01)*{\hbox{\huge$\bullet$}}
\end{xy}$}
\caption{s-manifold $\bX$  in Example \ref{sm3ex4} when $n=1$}
\label{sm3fig1}
\end{figure}
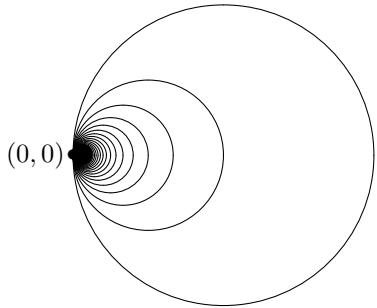
\end{ex}

\begin{ex}
\label{sm3ex5}
{\bf(A variant of Example \ref{sm3ex4} with $n=1$.)} Define $Y\subset\R^2$ by
\begin{align*}
Y=\bigl\{(y_0,y_1)\in\R^2:\,&\ts(y_0-\frac{1}{k})^2+y_1^2=\frac{1}{k^2}\text{ for some $k=1,2,\ldots$}\bigr\}\\
&\cup\bigl\{(y,0):y\in[-1,0]\bigr\},
\end{align*}
so $Y$ is the union of circles $\cS^1_{1/k}$ of radius $\frac{1}{k}$ and centre $(\frac{1}{k},0)$ for $k\ge 1$, and the interval $[-1,0]\t\{0\}$, all intersecting at $(0,0)$. This is illustrated in Figure~\ref{sm3fig2}.
\begin{figure}[htb]
\centerline{$\splinetolerance{.8pt}
\begin{xy}
0;<1cm,0cm>:
,(0,0)*+{\bu}
,(-4,0)*+{\bu}
,(-4,-.3)*+{(-1,0)}
,(-.4,-.3)*+{(0,0)}
,(-3.9,0);(-.1,0)**\crv{}
,(-3.8,0);(-.2,0)**\crv{}
,(4,2)*{}
,(-4,-2)*{}
,(1,0)*{\ellipse<2cm>{-}}
,(.5,0)*{\ellipse<1cm>{-}}
,(.333,0)*{\ellipse<.666cm>{-}}
,(.25,0)*{\ellipse<.5cm>{-}}
,(.2,0)*{\ellipse<.4cm>{-}}
,(.166,0)*{\ellipse<.333cm>{-}}
,(.1429,0)*{\ellipse<.2857cm>{-}}
,(.125,0)*{\ellipse<.25cm>{-}}
,(.111,0)*{\ellipse<.222cm>{-}}
,(.1,0)*{\ellipse<.2cm>{-}}
,(.0909,0)*{\ellipse<.1818cm>{-}}
,(.0833,0)*{\ellipse<.1666cm>{-}}
,(.0769,0)*{\ellipse<.1538cm>{-}}
,(.0714,0)*{\ellipse<.1428cm>{-}}
,(.0666,0)*{\ellipse<.1333cm>{-}}
,(.0625,0)*{\ellipse<.125cm>{-}}
,(.12,-.01)*{\hbox{\huge$\bullet$}}
\end{xy}$}
\caption{s-manifold $\bY$ in Example \ref{sm3ex5}}
\label{sm3fig2}
\end{figure}

Divide $Y$ into strata $Y^1=Y\sm\{(-1,0),(0,0)\}$ and $Y^2=\{(-1,0),(0,0)\}$, so that $Y=\coprod_{i\in I}Y^i$ for $I=\{1,2\}$. Then $Y^1$ is a 1-manifold, and $Y^2$ a 0-manifold, as submanifolds of $\R^2$, and $\bY=(Y,\{Y^1,Y^2\})$ is a compact s-manifold, of dimension~1. 
\end{ex}

We will see in \S\ref{sm33} that $\bX$ in Example \ref{sm3ex4} admits an orientation and a fundamental class, but $\bY$ in Example \ref{sm3ex5} does not. However, we will show in \S\ref{sm42}--\S\ref{sm43} that $\bY$ in Example \ref{sm3ex5} can be enhanced to an s-manifold with boundary $\bfY$ which does admit an orientation and a fundamental class.

We will return to these examples in Example \ref{sm3ex6}. We can construct many more examples of s-manifolds, including those with pathological topological spaces, by taking transverse fibre products as in Theorem \ref{sm3thm4}, or zeroes of transverse sections of vector bundles as in Theorem~\ref{sm3thm6}.

\subsection{Orientations and fundamental classes of s-manifolds}
\label{sm33}

As in Properties \ref{sm2pr1}(e) and \ref{sm2pr2}(e), if $X$ is an oriented $n$-manifold it has a fundamental class $[X]_\fund$ in Borel--Moore homology $H_n^\BM(X,\Z)$, which agrees with Steenrod homology $H_n^\St(X,\Z)$ in this case. Our goal is to show that, for a suitable notion of {\it orientation\/} on $\bX$, an oriented s-manifold $\bX$ of dimension $n$ has a fundamental class $[\bX]_\fund$ in Steenrod homology~$H_n^\St(X,\Z)$.

As in Example \ref{sm3ex1}, s-manifolds $\bX$ include manifolds with corners $X$, which do {\it not\/} have fundamental classes $[X]_\fund$ if $\pd X\ne\es$ (except in relative homology $H_n(X;i_X(\pd X),\Z)$, which is not relevant at this point). So, our notion of orientation on an s-manifold $\bX$ should be restrictive: orientations will not exist for s-manifolds $\bX$ from manifolds with corners $X$ with $\pd X\ne\es$. We introduce an extra structure of an {\it orientation bundle\/} $(\Or_\bX,\om_\bX)$ on an s-manifold $\bX$, which makes orientations and fundamental classes behave like those on manifolds. 

\begin{dfn}
\label{sm3def2}
Let $\bX$ be an s-manifold of dimension $n$, and define $X_0,X_1,X_{\le 1}$ as in Definition \ref{sm3def1}. Then $X_0$ is an $n$-manifold and $X_1$ an $(n-1)$-manifold, and $X_1\subset X_{\le 1}$ is closed with $X_0=X_{\le 1}\sm X_1$, with $X_{\le 1}$ locally compact and Hausdorff. Thus by Property \ref{sm2pr2}(g) for Steenrod homology, we have a long exact sequence
\e
\xymatrix@C=20pt{ \cdots \ar[r] & H_n^\St(X_{\le 1},\Z) \ar[r]^{\vert_{X_0}} & H_n^\St(X_0,\Z) \ar[r]^(0.49)\pd & H_{n-1}^\St(X_1,\Z) \ar[r] & \cdots.  }
\label{sm3eq2}
\e
An {\it orientation\/} on $\bX$ is an orientation on $X_0$ such that
\e
\pd\bigl([X_0]_\fund\bigr)=0\qquad\text{in $H_{n-1}^\St(X_1,\Z)$,}
\label{sm3eq3}
\e
with $[X_0]_\fund\in H_n^\St(X_0,\Z)$ the fundamental class from Property~\ref{sm2pr2}(e).

If $X_0$ is an $n$-manifold then $U_0\mapsto H_n^\St(X_0,\Z)$ for open subsets $U_0\subseteq X_0$ is a sheaf of abelian groups on $X_0$. So the middle and right terms in \eq{sm3eq2} are sheaves over $X_0,X_1$, and $\pd$ is also a morphism of sheaves. Thus, the condition $\pd\bigl([X_0]_\fund\bigr)=0$ is local (it is enough to verify it on the sets of any open cover of $X_{\le 1}$), and orientations can be glued on the sets of an open cover.

We call $\bX$ {\it oriented\/} if it has a particular choice of orientation, and {\it orientable\/} if it admits an orientation, and {\it locally orientable\/} if $\bX$ can be covered with open s-submanifolds $\bU\subseteq\bX$ with $\bU$ orientable. If $\bX$ is an oriented s-manifold, we write $-\bX$ for $\bX$ with the opposite orientation.
\end{dfn}

\begin{dfn}
\label{sm3def3}
Let $\bX$ be an s-manifold of dimension $n$, and use the notation of Definition \ref{sm3def2}. An {\it orientation bundle\/} $(\Or_\bX,\om_\bX)$ on $\bX$ consists of:
\begin{itemize}
\setlength{\itemsep}{0pt}
\setlength{\parsep}{0pt}
\item[(a)] a topological principal $\Z_2$-bundle $\pi:\Or_\bX\ra X$ over $X$, and
\item[(b)] an isomorphism $\om_\bX:\Or_{X_0}\ra\Or_\bX\vert_{X_0}$ of principal $\Z_2$-bundles over $X_0\subseteq X$, where $\pi:\Or_{X_0}\ra X_0$ is the principal $\Z_2$-bundle of orientations of the smooth $n$-manifold $X_0$, whose fibre over $x_0\in X_0$ is the $\Z_2$-torsor of orientations on $T_{x_0}X_0$, such that
\item[(c)] By Property \ref{sm2pr2}(g) for Steenrod homology, as $X_1$ is closed in $X_{\le 1}$ with $X_0=X_{\le 1}\sm X_1$, we have a long exact sequence
\e
\begin{gathered}
\xymatrix@C=25pt@R=15pt{ \cdots \ar[r] & H_n^\St(X_1,\Or_\bX\vert_{X_1},\Z) \ar[r]_(0.47){\inc_*} & H_n^\St(X_{\le 1},\Or_\bX\vert_{X_{\le 1}},\Z) \ar[d]^{\vert_{X_0}} \\
\cdots  & H_{n-1}^\St(X_1,\Or_\bX\vert_{X_1},\Z) \ar[l] & H_n^\St(X_0,\Or_\bX\vert_{X_0},\Z). \ar[l]_(0.49)\pd }
\end{gathered}
\label{sm3eq4}
\e
The $n$-manifold $X_0$ has a fundamental class $[X_0]_\fund\in H_n^\St(X_0,\Or_{X_0},\Z)$ by Property \ref{sm2pr2}(e). Applying $\om_\bX$ in (h) gives an isomorphism
\e
(\om_\bX)_*:H_n^\St(X_0,\Or_{X_0},\Z)\longra H_n^\St(X_0,\Or_\bX\vert_{X_0},\Z).
\label{sm3eq5}
\e
We require that
\e
\pd\ci (\om_\bX)_*\bigl([X_0]_\fund\bigr)=0\qquad\text{in $H_{n-1}^\St(X_1,\Or_\bX\vert_{X_1},\Z)$,}
\label{sm3eq6}
\e
where $\pd,(\om_\bX)_*$ are as in \eq{sm3eq4}--\eq{sm3eq5}.
\end{itemize}

Thus, $\Or_\bX$ is an extension of the principal $\Z_2$-bundle $\Or_{X_0}$ from $X_0$ to $X$, satisfying a condition over $X_{\le 1}$. Note that (c) is a local condition on $\bX$, that is, it is enough for it to hold for any family of arbitrarily small open sets $U\subseteq X$ covering $X_{\le 1}$. It is easy to see that:
\begin{itemize}
\setlength{\itemsep}{0pt}
\setlength{\parsep}{0pt}
\item[(i)] If an s-manifold $\bX$ has an orientation bundle $(\Or_\bX,\om_\bX)$ then it is locally orientable. 
\item[(ii)] If $\bX$ is oriented then it has an orientation bundle $(\Or_\bX,\om_\bX)$ with $\Or_\bX=X\t\Z_2$, and $\om_\bX:\Or_{X_0}\ra\Or_\bX\vert_{X_0}=X_0\t\Z_2$ the trivialization of $\Or_{X_0}$ induced by the orientation on $X_0$, as \eq{sm3eq3} implies \eq{sm3eq6}.
\item[(iii)] If $\bX$ has an orientation bundle $(\Or_\bX,\om_\bX)$, any trivialization $\Or_\bX\cong X\t\Z_2$ induces an orientation on $\bX$, as \eq{sm3eq6} implies~\eq{sm3eq3}.
\end{itemize}

However, {\it beware\/} that in pathological examples such as Example \ref{sm3ex4}, given an orientation bundle $(\Or_\bX,\om_\bX)$, there is {\it not\/} a 1-1 correspondence between trivializations $\Or_\bX\cong X\t\Z_2$ and orientations on $\bX$: an orientation on $\bX$ gives a trivialization of $\Or_\bX$ on $X_0\subseteq X$, which need not extend to  a trivialization on $X$, and if it does extend, it might not extend uniquely. For the same reason, in pathological examples, orientation bundles $(\Or_\bX,\om_\bX)$ on $\bX$ may not be unique up to isomorphism. See Example \ref{sm3ex6}(d) below for more on this.

We say that an orientation on $\bX$ is {\it compatible with\/} $(\Or_\bX,\om_\bX)$ if it corresponds to a trivialization~$\Or_\bX\cong X\t\Z_2$. 
\end{dfn}

Theorem \ref{sm3thm1} will define fundamental classes $[\bX]_\fund$ for s-manifolds $\bX$ with an orientation or an orientation bundle. First we prove a vanishing result for Steenrod homology.

\begin{prop}
\label{sm3prop1}
Let\/ $\bX$ be an s-manifold of dimension $n,$ and use the notation of Definition\/ {\rm\ref{sm3def1}}. Then for each commutative ring $R$ we have
\e
H_k^\St(X,R)=0\quad\text{if\/ $k>n,$}\quad
H_k^\St(X_{\ge 2},R)=0\quad\text{if\/ $k>n-2$.}
\label{sm3eq7}
\e	
If\/ $\bX$ has an orientation bundle $(\Or_\bX,\om_\bX)$ then
\e
H_k^\St(X,\Or_\bX,R)=0\;\text{if\/ $k>n,$}\;
H_k^\St(X_{\ge 2},\Or_\bX\vert_{X_{\ge 2}},R)=0\;\text{if\/ $k>n-2$.}
\label{sm3eq8}
\e	
\end{prop}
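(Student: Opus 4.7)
The plan is to prove \eqref{sm3eq7} by first establishing the vanishing on $X_{\ge 2}$, then combining with the analogous statement for $X_{\le 1}$ via the long exact sequence of Property~\ref{sm2pr2}(g); the twisted version \eqref{sm3eq8} will follow from the identical argument applied to twisted Steenrod homology. First I reduce to finite stratification: since $X$ is locally compact, Hausdorff and second countable, and by Definition~\ref{sm3def1}(e) each point has an open neighbourhood meeting only finitely many strata, one can write $X=\bigcup_{j\ge 1}V_j$ as an increasing union of open s-submanifolds each meeting finitely many strata. The sets $V_j\cap X_{\ge 2}$ then form an increasing open cover of $X_{\ge 2}$, so Property~\ref{sm2pr2}(h)(ii) reduces both vanishings to the case $\md{I}<\iy$.

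The case of $X_{\le 1}$ is easy: Definition~\ref{sm3def1}(e)(ii)(A) forces each dimension-$n$ stratum to be open in $X$ (as no distinct stratum can have $y$ in its closure without violating (ii)), so $X_0$ is open in $X_{\le 1}$ and $X_1=X_{\le 1}\sm X_0$ is closed. Property~\ref{sm2pr2}(g) gives the long exact sequence
\begin{equation*}
\cdots\longrightarrow H_k^\St(X_1,R)\longrightarrow H_k^\St(X_{\le 1},R)\longrightarrow H_k^\St(X_0,R)\longrightarrow\cdots,
\end{equation*}
and by Property~\ref{sm2pr2}(a) the outer terms coincide with Borel--Moore homologies of ordinary manifolds of dimensions $n$ and $n-1$, hence vanish in degrees $>n$ and $>n-1$ respectively. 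So $H_k^\St(X_{\le 1},R)=0$ for $k>n$.

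For $X_{\ge 2}$ I induct on the (now finite) number of its strata, with trivial base case. In the inductive step I pick a stratum $X^i\subseteq X_{\ge 2}$ that is open in $X_{\ge 2}$: such a stratum exists because the partial order of Definition~\ref{sm3def1}(e), patched into a global partial order on the finite stratum set of $X_{\ge 2}$, has a minimal element, and condition~(e)(i) ensures a minimal stratum is disjoint from the closures of all other strata of $X_{\ge 2}$. Property~\ref{sm2pr2}(g) applied to $X^i$ open in $X_{\ge 2}$ with closed complement $X_{\ge 2}\sm X^i$ gives
\begin{equation*}
\cdots\longrightarrow H_k^\St(X_{\ge 2}\sm X^i,R)\longrightarrow H_k^\St(X_{\ge 2},R)\longrightarrow H_k^\St(X^i,R)\longrightarrow\cdots.
\end{equation*}
The right term vanishes for $k>n-2$ by Property~\ref{sm2pr2}(a), as $X^i$ is an ordinary manifold of dimension $\le n-2$; the left term vanishes for $k>n-2$ by the inductive hypothesis applied to $X_{\ge 2}\sm X^i$, a closed subspace with strictly fewer strata, all of dimension $\le n-2$. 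Hence $H_k^\St(X_{\ge 2},R)=0$ for $k>n-2$.

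Finally, Property~\ref{sm2pr2}(g) applied to $X_{\ge 2}\subseteq X$ closed yields a long exact sequence in which, for $k>n$, both outer terms vanish by the two previous cases, giving $H_k^\St(X,R)=0$. The twisted statement \eqref{sm3eq8} is obtained by the same argument applied to twisted Steenrod homology with the appropriate restrictions of $(\Or_\bX,\om_\bX)$ to $X_{\le 1},X_{\ge 2},X_0,X_1,X^i$, and $X_{\ge 2}\sm X^i$. The main obstacle is the existence of an open stratum in $X_{\ge 2}$ in the inductive step: Definition~\ref{sm3def1}(e) only supplies local partial orders on $I_V$ for small $V$, so some care is needed to verify that in the finite case these patch together (for instance via the transitive closure of closure-containment, with cycles ruled out using the dimension constraints in (e)(ii)) to a global partial order on the stratum set of $X_{\ge 2}$ whose minimal elements are indeed open.
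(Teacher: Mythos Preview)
Your overall architecture is sound, and your treatment of $X_{\le 1}$, the reduction to finitely many strata via Property~\ref{sm2pr2}(h)(ii), and the final assembly via Property~\ref{sm2pr2}(g) are all correct. The gap is exactly where you flag it: the existence of an open stratum in $X_{\ge 2}$. Your proposed fix --- ruling out cycles in the closure relation via the dimension constraints in Definition~\ref{sm3def1}(e)(ii) --- does not work for strata in $X_{\ge 2}$, because case (e)(ii)(B) imposes no inequality between $\dim X^i$ and $\dim X^j$; it only says both are $\le n-2$. So nothing in the definition prevents two strata $X^i,X^j\subseteq X_{\ge 2}$ from satisfying $\ov{X^i}\cap X^j\ne\es$ and $\ov{X^j}\cap X^i\ne\es$ at different points of $X$. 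For a concrete counterexample with $n=2$, take $X=\R^2$ with three strata: the $0$-manifolds $A=\{(0,0)\}\cup\{(1+\tfrac{1}{k},0):k\ge 1\}$ and $B=\{(1,0)\}\cup\{(\tfrac{1}{k},0):k\ge 1\}$, and the open $2$-manifold $X^0=\R^2\sm(A\cup B)$. One checks Definition~\ref{sm3def1}(a)--(e) hold (near $(0,0)$ the local partial order has $B\pr A$, near $(1,0)$ it has $A\pr B$), yet in $X_{\ge 2}=A\cup B$ neither $A$ nor $B$ is open or closed, so your induction cannot start.

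The paper's proof sidesteps this by peeling off strata \emph{locally} rather than globally. It first fixes a single neighbourhood $V$ as supplied by Definition~\ref{sm3def1}(e), where the partial order $\pr$ on $I_V$ is given by hypothesis, extends it to a total order $i_1,\ldots,i_M$, and shows by induction on $m$ via Property~\ref{sm2pr2}(g) that $H_k^\St\bigl(V\cap\bigcup_{j\le m}X^{i_j},R\bigr)=0$ for $k>n$; this is your stratum-removal argument, but carried out on $V$ where the partial order is guaranteed to exist. It then passes from finitely many such $V$'s to their union via the Mayer--Vietoris sequence of Property~\ref{sm2pr2}(f), and finally takes the limit over an increasing countable exhaustion using Property~\ref{sm2pr2}(h)(ii). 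The same argument run on $V\cap X_{\ge 2}$, using only those $i\in I_V$ with $\dim X^i\le n-2$ and the restriction of $\pr$, gives the second vanishing. So the repair to your argument is to run the stratum-peeling induction on each $V$ separately, \emph{before} rather than after the covering and limit steps.
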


\begin{proof}
Each $x\in X$ has an open neighbourhood $V$ satisfying Definition \ref{sm3def1}(e). We first show that  $H_k^\St(V,R)=0$ if $k>n$. Choose a total order $\le$ on $I_V$ compatible with the partial order $\pr$, and write $I_V=\{i_1,i_2,\ldots,i_M\}$ in the order $\le$, so that $i_j\pr i_k$ implies that $j\le k$. Hence $\ov{V\cap X^{i_j}}\subseteq \coprod_{k=j}^MV\cap X^{i_k}$ by Definition \ref{sm3def1}(e)(i), so $V\cap(X^{i_1}\amalg X^{i_2}\amalg\cdots\amalg X^{i_m})$ is open in $V$ for all $m=1,2,\ldots,M$. Thus $V\cap X^{i_m}$ is closed in $V\cap(X^{i_1}\amalg X^{i_2}\amalg\cdots\amalg X^{i_m})$ with $(V\cap (X^{i_1}\amalg\cdots\amalg X^{i_m}))\sm (V\cap X^{i_m})=V\cap (X^{i_1}\amalg\cdots\amalg X^{i_{m-1}})$. Hence for each commutative ring $R$, Property \ref{sm2pr2}(g) gives a long exact sequence
\e
\begin{gathered}
\xymatrix@C=25pt@R=15pt{ \cdots \ar[r] & H_k^\St\bigl(V\cap X^{i_m},R\bigr) \ar[r]_(0.45){\inc_*} & H_k^\St\bigl(V\cap(\bigcup_{j=1}^mX^{i_j}),R\bigr) \ar[d]^{\vert_{V\cap(\bigcup_{j=1}^{m-1}X^{i_j})}} \\
\cdots  & H_{k-1}^\St\bigl(V\cap X^{i_m},R\bigr) \ar[l] & H_k^\St\bigl(V\cap(\bigcup_{j=1}^{m-1}X^{i_j}),R\bigr). \ar[l]_(0.55)\pd }
\end{gathered}
\label{sm3eq9}
\e

Now $H_*^\St(V\cap X^{i_m},R)\cong H_*^\BM(V\cap X^{i_m},R)$ is the usual Borel--Moore homology of the manifold $V\cap X^{i_m}$ by Property \ref{sm2pr2}(a), where $\dim V\cap X^{i_m}\le n$ by Definition \ref{sm3def1}(c), so $H_*^\St(V\cap X^{i_m},R)=0$ for $k>n$. Thus we prove by induction on $m=1,\ldots,M$ that $H_k^\St\bigl(V\cap (\bigcup_{j=1}^mX^{i_j}),R\bigr)=0$ for $k>n$, where the first step holds as $V\cap X^{i_1}$ is a manifold of dimension $\le n$, and the inductive step holds by \eq{sm3eq9} and $H_*^\St(V\cap X^{i_m},R)=0$ for $k>n$. When $m=M$ this shows that $H_k^\St(V,R)=0$ when $k>n$. 

As $X$ is second countable, we can choose a countable cover of $X$ by open sets $V$ satisfying Definition \ref{sm3def1}(e). Write this as $\{V_1,V_2,\ldots\}$. For each $m=2,3,\ldots,$ Property \ref{sm2pr2}(f) gives a long exact sequence
\e
\begin{gathered}
\!\!\!\!\!\!\!\!\!\!\!\xymatrix@C=20pt@R=11pt{
\cdots \ar[r] & H_{k+1}\bigl((\bigcup_{j=1}^{m-1}V_j)\cap V_m,R\bigr) \ar[rrr] &&& *+[l]{H_k(\bigcup_{j=1}^mV_j,R)} \ar[d] \\
\cdots & H_k\bigl((\bigcup_{j=1}^{m-1}V_j)\cap V_m,R\bigr) \ar[l] &&& *+[l]{H_k(\bigcup_{j=1}^{m-1}V_j,R)\op H_k(V_m,R).} \ar[lll]    }
\end{gathered}\!\!\!\!\!\!\!\!
\label{sm3eq10}
\e
Here $H_k(V_m,R)=0$ for $k>n$ by the first part. Also $H_{k+1}\bigl((\bigcup_{j=1}^{m-1}V_j)\cap V_m,R\bigr)=H_k\bigl((\bigcup_{j=1}^{m-1}V_j)\cap V_m,R\bigr)=0$ by the first part with $(\bigcup_{j=1}^{m-1}V_j)\cap V_m$ in place of $V_m$. Hence \eq{sm3eq10} shows that if $H_k(\bigcup_{j=1}^{m-1}V_j,R)=0$ for $k>n$ then $H_k(\bigcup_{j=1}^mV_j,R)=0$ for $k>n$. Thus by induction on $m=1,2,\ldots$ we see that $H_k^\St\bigl(\bigcup_{j=1}^mV_j,R\bigr)=0$ for $k>n$ and all $m\ge 1$.

Applying Property \ref{sm2pr2}(h)(ii) to the increasing sequence $\bigl(\bigcup_{j=1}^mV_j\bigr)_{m\ge 1}$ of open subsets of $X$ with $\bigcup_{m\ge 1}\bigl(\bigcup_{j=1}^mV_j\bigr)=X$ gives an exact sequence
\begin{equation*}
\xymatrix@C=10pt{ 0 \ar[r] & \varprojlim_{m\ra\iy}^1 H_{k+1}^\St\bigl(\bigcup\limits_{j=1}^mV_j,R\bigr) \ar[r] & H_k^\St\bigl(X,R\bigr) \ar[r] & 
\varprojlim_{m\ra\iy} H_k^\St\bigl(\bigcup\limits_{j=1}^mV_j,R\bigr) \ar[r] & 0 }
\end{equation*}
for each $k$. When $k>n$ the terms in the direct limits are zero, so $H_k^\St\bigl(X,R\bigr)=0$. This proves the first equation of \eq{sm3eq7}. For the second, $X_{\ge 2}$ is the union of all strata $X^{i_j}$ with $\dim X^{i_j}\le n-2$, so we can use the same arguments with $k>n-2$ rather than $k>n$. For \eq{sm3eq8} we use the same arguments in Steenrod homology twisted by~$\Or_\bX$.
\end{proof}

The next theorem is now immediate from Proposition \ref{sm3prop1}.

\begin{thm}
\label{sm3thm1}
{\bf(a)} Let\/ $\bX$ be an oriented s-manifold of dimension $n,$ and use the notation of Definition\/ {\rm\ref{sm3def1}}. As $X_{\ge 2}\subseteq X$ is closed with\/ $X_{\le 1}=X\sm X_{\ge 2},$ and\/ $X_1\subseteq X_{\le 1}$ is closed with\/ $X_0=X_{\le 1}\sm X_1,$ by Property\/ {\rm\ref{sm2pr2}(g)} for Steenrod homology, {\rm\eq{sm3eq7},} and\/ $H_n^\St(X_1,\Z)=0$ we have exact sequences
\e
\begin{gathered}
\xymatrix@C=25pt{  0 \ar[r] & H_n^\St(X,\Z) \ar[rr]^(0.45){\vert_{X_{\le 1}}} && H_n^\St(X_{\le 1},\Z) \ar[r] & 0, }\\
\xymatrix@C=20pt{  0 \ar[r] & H_n^\St(X_{\le 1},\Z) \ar[r]^(0.52){\vert_{X_0}} & 
H_n^\St(X_0,\Z) \ar[r]^(0.48)\pd & H_{n-1}^\St(X_1,\Z). }
\end{gathered}
\label{sm3eq11}
\e
Combining these gives an exact sequence
\e
\xymatrix@C=20pt{  0 \ar[r] & H_n^\St(X,\Z) \ar[r]^(0.45){\vert_{X_0}} & 
H_n^\St(X_0,\Z) \ar[r]^(0.47)\pd & H_{n-1}^\St(X_1,\Z). }
\label{sm3eq12}
\e
As $X_0$ is oriented we have a fundamental class $[X_0]_\fund\in H_n^\St(X_0,\Z)$ with $\pd\bigl([X_0]_\fund\bigr)=0$ by \eq{sm3eq3}. Hence there is a unique class $[\bX]_\fund$ in the Steenrod homology group\/ $H_n^\St(X,\Z)$ such that  
\e
[\bX]_\fund\vert_{X_0}=[X_0]_\fund.
\label{sm3eq13}
\e
We call\/ $[\bX]_\fund$ the \begin{bfseries}fundamental class\end{bfseries} of\/ $\bX$. The existence of some $[\bX]_\fund$ satisfying \eq{sm3eq13} is equivalent to \eq{sm3eq3}.
\smallskip

\noindent{\bf(b)} Let\/ $\bX$ be a dimension $n$ s-manifold with an orientation bundle $(\Or_\bX,\om_\bX)$. As for \eq{sm3eq12} but twisted by $\Or_\bX,$ using {\rm\eq{sm3eq8},} we have an exact sequence
\e
\text{\begin{small}$
\xymatrix@C=13pt{  0 \ar[r] & H_n^\St(X,\Or_\bX,\Z) \ar[r]^(0.45){\vert_{X_0}} & 
H_n^\St(X_0,\Or_\bX\vert_{X_0},\Z) \ar[r]^(0.47)\pd & H_{n-1}^\St(X_1,\Or_\bX\vert_{X_1},\Z). }$\end{small}}
\label{sm3eq14}
\e
Equation \eq{sm3eq6} now implies that there is a unique class $[\bX]_\fund$ in the twisted Steenrod homology group\/ $H_n^\St(X,\Or_\bX,\Z)$ such that  
\e
[\bX]_\fund\vert_{X_0}=(\om_\bX)_*\bigl([X_0]_\fund\bigr).
\label{sm3eq15}
\e
We call\/ $[\bX]_\fund$ the \begin{bfseries}fundamental class\end{bfseries} of\/ $\bX$. The existence of some $[\bX]_\fund$ satisfying \eq{sm3eq15} is equivalent to~\eq{sm3eq6}.

A trivialization $\Or_\bX\cong X\t\Z_2$ induces an orientation on $\bX,$ and then the fundamental classes $[\bX]_\fund$ from {\bf(a)\rm,\bf(b)} agree in $H_n^\St(X,\Or_\bX,\Z)\!\cong\! H_n^\St(X,\Z)$.
\end{thm}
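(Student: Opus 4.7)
The plan is to derive both parts of the theorem as essentially formal consequences of Proposition \ref{sm3prop1} combined with the long exact sequence for Steenrod homology of a closed/open decomposition (Property \ref{sm2pr2}(g)). Since the author remarks that the result is ``immediate'' from the proposition, there should be no substantive obstacle; the work is just to write down the right two long exact sequences and invoke the vanishing.

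For part (a), first I would apply Property \ref{sm2pr2}(g) to the closed subspace $X_{\ge 2}\subseteq X$ with open complement $X_{\le 1}=X\sm X_{\ge 2}$ to get
\begin{equation*}
\cdots\ra H_n^\St(X_{\ge 2},\Z)\ra H_n^\St(X,\Z)\xrightarrow{\vert_{X_{\le 1}}}H_n^\St(X_{\le 1},\Z)\xrightarrow{\pd} H_{n-1}^\St(X_{\ge 2},\Z)\ra\cdots.
\end{equation*}
The outer two terms vanish by the second equation of \eq{sm3eq7} in Proposition \ref{sm3prop1} (applied with $k=n$ and $k=n-1$, both strictly greater than $n-2$), yielding the first isomorphism in \eq{sm3eq11}. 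Next I would apply Property \ref{sm2pr2}(g) again, now to the closed subspace $X_1\subseteq X_{\le 1}$ with open complement $X_0=X_{\le 1}\sm X_1$, giving
\begin{equation*}
\cdots\ra H_n^\St(X_1,\Z)\ra H_n^\St(X_{\le 1},\Z)\xrightarrow{\vert_{X_0}}H_n^\St(X_0,\Z)\xrightarrow{\pd} H_{n-1}^\St(X_1,\Z)\ra\cdots.
\end{equation*}
Since $X_1$ is a smooth $(n-1)$-manifold, Property \ref{sm2pr2}(a) identifies $H_n^\St(X_1,\Z)$ with Borel--Moore homology, which vanishes above the dimension, so the leftmost map is zero. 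This is the second sequence in \eq{sm3eq11}. Splicing the two yields the single four-term exact sequence \eq{sm3eq12}.

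Now the definition of orientation on $\bX$ is precisely that the fundamental class $[X_0]_\fund\in H_n^\St(X_0,\Z)$ lies in $\ker\pd$, i.e.\ satisfies \eq{sm3eq3}. By exactness of \eq{sm3eq12} this kernel equals the image of the injection $\vert_{X_0}:H_n^\St(X,\Z)\hookra H_n^\St(X_0,\Z)$, so there is a unique lift $[\bX]_\fund\in H_n^\St(X,\Z)$ with $[\bX]_\fund\vert_{X_0}=[X_0]_\fund$, which is \eq{sm3eq13}. Conversely, the existence of any such lift forces $\pd([X_0]_\fund)=0$, establishing the equivalence claimed at the end of (a).

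For part (b), the argument is identical after replacing every Steenrod homology group with its $\Or_\bX$-twisted version (Property \ref{sm2pr2} extends to twisted coefficients as noted there), using the twisted vanishing \eq{sm3eq8} of Proposition \ref{sm3prop1} in place of \eq{sm3eq7}. This produces the twisted exact sequence \eq{sm3eq14}, and condition \eq{sm3eq6} in Definition \ref{sm3def3} says exactly that $(\om_\bX)_*([X_0]_\fund)$ lies in $\ker\pd$, hence lifts uniquely to $[\bX]_\fund\in H_n^\St(X,\Or_\bX,\Z)$ satisfying \eq{sm3eq15}. Finally, if we are given a trivialization $\Or_\bX\cong X\t\Z_2$, compatibility of the Steenrod homology pushforwards $\vert_{X_0}$ and $(\om_\bX)_*$ with this trivialization shows the two constructions of $[\bX]_\fund$ from (a) and (b) correspond under the induced isomorphism $H_n^\St(X,\Or_\bX,\Z)\cong H_n^\St(X,\Z)$, by uniqueness applied to the restriction to $X_0$. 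No step presents a real obstacle; the content is entirely packaged into Proposition \ref{sm3prop1}, whose proof is the substantive one.
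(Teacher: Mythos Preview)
Your proposal is correct and matches the paper's approach exactly: the paper declares the theorem ``immediate from Proposition \ref{sm3prop1}'' and embeds the argument in the statement itself, and you have simply written out the two applications of Property \ref{sm2pr2}(g) together with the vanishing from \eq{sm3eq7}--\eq{sm3eq8} and $H_n^\St(X_1,\Z)=0$ that the statement already spells out.
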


\begin{ex}
\label{sm3ex6}
{\bf(a) (Ordinary manifolds.)} Let $X$ be an $n$-manifold, made into an s-manifold $\bX$ as in Definition \ref{sm3def1}. Then $X_0=X$ and $X_1=\es$, so equations \eq{sm3eq3} and \eq{sm3eq6} are trivial. Orientations on $X$ and on $\bX$ are equivalent, and then $[\bX]_\fund$ in Theorem \ref{sm3thm1}(a) is the usual oriented fundamental class $[X]_\fund$ of $X$ from Property \ref{sm2pr2}(e). Also $\bX$ has a canonical orientation bundle $(\Or_{X_0},\id_{\Or_{X_0}}),$ and $[\bX]_\fund$ in Theorem \ref{sm3thm1}(a) is the usual twisted fundamental class $[X]_\fund$ of $X$ from Property~\ref{sm2pr2}(j).
\smallskip

\noindent{\bf(b) (Manifolds with corners.)} Let $X$ be an $n$-manifold with corners, or g-corners, or a-corners, with $\pd X\ne\es$, and let $\bX$ be the corresponding s-manifold from Example \ref{sm3ex1}. Then $\bX$ is not orientable, or locally orientable, and no orientation bundle $(\Or_\bX,\om_\bX)$ exists on $\bX$, so Theorem \ref{sm3thm1} does not apply. This is because with appropriate choices of orientations on $X_0,X_1$ we have $\pd\bigl([X_0]_\fund\bigr)=[X_1]_\fund\ne 0$, so \eq{sm3eq3} and \eq{sm3eq6} cannot hold.
\smallskip

\noindent{\bf(c) (Orbifolds.)} Let $X$ be an $n$-orbifold, and $\bX$ be the corresponding s-manifold from Example \ref{sm3ex3}. We call $X$ {\it locally orientable\/} if is locally modelled on $\R^n/G$ for $G\subset\SL(n,\R)$, that is, $G$ should preserve the orientation on~$\R^n$.

Then $\bX$ is locally orientable as an s-manifold if and only if $X$ is locally orientable as an orbifold, and if this happens there is a canonical orientation bundle $(\Or_\bX,\om_\bX)$ on $\bX$ with $\Or_\bX$ the principal $\Z_2$-bundle of local orientations on $\bX$. The locally orientable condition on $X$ also implies that there are no strata $X^{[G]}$ with $\dim X^{[G]}=n-1$, so $X_1=\es$ and \eq{sm3eq3} and \eq{sm3eq6} are trivial.
\smallskip

\noindent{\bf(d) (Example \ref{sm3ex4}.)} Let $\bX$ be the s-manifold of dimension $n\ge 1$ defined in Example \ref{sm3ex4}. We can show that the singular homology $H_*^\rsi(X,\Z)$ and Steenrod homology $H_*^\St(X,\Z)$ are different: we have
\e
H_n^\rsi(X,\Z)\cong z\Z[z],\qquad H_n^\St(X,\Z)\cong z\Z[[z]],
\label{sm3eq16}
\e
where $z^k$ for $k\ge 1$ corresponds to the fundamental class $[\cS^n_{1/k}]_\fund$ of $\cS^n_{1/k}\subset X$, and the natural map $H_n^\rsi(X,\Z)\ra H_n^\St(X,\Z)$ to the inclusion $z\Z[z]\hookra z\Z[[z]]$. 

As $X_0=\coprod_{k\ge 1}\cS^n_{1/k}\sm \{(0,\ldots,0)\}$ is the disjoint union of infinitely many copies of $\R^n$, there are {\it infinitely many\/} orientations on $X_0$, corresponding to an independent choice of orientation on $\cS^n_{1/k}\sm \{(0,\ldots,0)\}$ for each~$k=1,2,\ldots.$ 

If $n>1$ then $X_1=\es$ so \eq{sm3eq3} is trivial, and all of these orientations on $X_0$ are orientations on $\bX$. If $n=1$ we can give a nontrivial proof that \eq{sm3eq3} holds for every orientation on $X_0$ by taking a limit in Steenrod homology. The basic idea is that for each $m=1,2,\ldots$ the open subset $\coprod_{k=1}^m\cS^1_{1/k}\sm\{(0,0)\}$ in $X_0$ contributes zero to $\pd\bigl([X_0]_\fund\bigr)$ in \eq{sm3eq3}, as the two ends of $\cS^1_{1/k}\sm\{(0,0)\}$ at $(0,0)$ contribute $+1,-1$ or $-1,+1$, depending the orientation on $\cS^1_{1/k}\sm\{(0,0)\}$.

The fundamental class $[\bX]_\fund$ of $\bX$ is then identified with $\sum_{k\ge 1}\pm z^k$ under \eq{sm3eq16}, where the sign of $z^k$ is determined by the choice of orientation on $\cS^n_{1/k}\sm \{(0,\ldots,0)\}$. Observe that $[\bX]_\fund$ lies in $H_n^\St(X,\Z)$ but not in $H_n^\rsi(X,\Z)$. This illustrates why we must define fundamental classes in Steenrod homology, not ordinary (singular) homology.

Each orientation $o$ on $X_0$ determines an orientation bundle $(\Or^o_\bX,\om^o_\bX)$ on $\bX$ as in Definition \ref{sm3def3}. Two orientations $o,o'$ yield isomorphic orientation bundles, $(\Or^o_\bX,\om^o_\bX)\cong(\Or^{o'}_\bX,\om^{o'}_\bX)$, if and only if $o=\pm o'$, that is, if the fundamental classes satisfy $[\bX]^o_\fund=\pm[\bX]^{o'}_\fund$. Hence there are infinitely many non-isomorphic orientation bundles on $\bX$. If $n>1$ then all orientation bundles on $\bX$ are trivializable, and come from orientations. If $n=1$ there also exist non-trivializable orientation bundles $(\Or_\bX,\om_\bX)$ on $X$, in which the monodromy of $\Or_\bX$ around an even finite number of circles $\cS^1_{1/k}$ is $-1$.

One moral is that for s-manifolds $\bX$ with topological spaces of infinite topological type, orientations and orientation bundles can be complicated.
\smallskip

\noindent{\bf(d) (Example \ref{sm3ex5}.)} Let $\bY$ be the s-manifold of dimension 1 in Example \ref{sm3ex5}. Then $\bY$ does not admit an orientation, or an orientation bundle, even locally, as we cannot satisfy \eq{sm3eq3} or \eq{sm3eq6} near $(-1,0)$ or $(0,0)$.

At $(-1,0)$ this is obvious: the curve $(-1,0)\t\{0\}\subset Y_0$ contributes $\pm 1$ to $\pd\bigl([Y_0]_\fund\bigr)$ at $(1,0)$, so $\pd\bigl([Y_0]_\fund\bigr)\ne 0$ at~$(-1,0)\in Y_1$.

At $(0,0)$ it is less obvious: both $X$ in Example \ref{sm3ex4} with $n=1$ and $Y$ in Example \ref{sm3ex5} have infinitely many curves ending at $(0,0)$, so why should Example \ref{sm3ex4} be orientable but Example \ref{sm3ex5} non-orientable near~$(0,0)$?

The explanation is that any small open neighbourhood $U$ of $(0,0)$ in $X$ or $Y$ will contain the whole of  
$\cS^1_{1/k}$ for $k>N$, for some $N\gg 0$. Then each $\cS^1_{1/k}$ for $k>N$ does not contribute to the orientability of $X$ or $Y$ at $(0,0)$, as the two ends of $\cS^1_{1/k}$ at $(0,0)$ cancel out. Thus we can determine orientability of $\bX$ or $\bY$ by considering $\cS^1_{1/k}$ for $k=1,\ldots,N$, and (for $Y$) $[-1,0]\t\{0\}$. These give $2N$ curves ending at $(0,0)$ for $X$, and $2N+1$ curves ending at $(0,0)$ for $Y$. As $2N$ is even we can choose orientations on these $2N$ curves so they contribute 0 to \eq{sm3eq3}, making $\bX$ locally orientable, but as $2N+1$ is odd the contribution to \eq{sm3eq3} at $(0,0)$ for $\bY$ must be odd, and $\bY$ is not locally orientable at $(0,0)$. In Steenrod homology one should take a limit as $U$ shrinks to $(0,0)$ and~$N\ra\iy$.
\end{ex}

\subsection{\texorpdfstring{$\Z_2$-fundamental classes}{ℤ₂-fundamental classes}}
\label{sm34}

If $X$ is an $n$-manifold, not necessarily orientable, it has a fundamental class $[X]_\fund$ in Borel--Moore  or Steenrod homology $H_n^\BM(X,\Z_2)\cong H_n^\St(X,\Z_2)$ over $\Z_2$. For the same thing to work for s-manifolds $\bX$ we need to impose a condition analogous to~\eq{sm3eq3}.

\begin{dfn}
\label{sm3def4}
Let $\bX$ be an s-manifold of dimension $n$, and define $X_0,X_1,\ab X_{\le 1}$ as in Definition \ref{sm3def1}. Then as for \eq{sm3eq2} we have a long exact sequence
\begin{equation*}
\xymatrix@C=18pt{ \cdots \ar[r] & H_n^\St(X_{\le 1},\Z_2) \ar[r]^{\vert_{X_0}} & H_n^\St(X_0,\Z_2) \ar[r]^(0.49)\pd & H_{n-1}^\St(X_1,\Z_2) \ar[r] & \cdots.  }
\end{equation*}
We say that $\bX$ {\it admits a $\Z_2$-fundamental class\/} if
\begin{equation*}
\pd\bigl([X_0]_\fund\bigr)=0\qquad\text{in $H_{n-1}^\St(X_1,\Z_2)$,}
\end{equation*}
with $[X_0]_\fund\in H_n^\St(X_0,\Z_2)$ the usual $\Z_2$-fundamental class. In Theorem \ref{sm3thm1}(a) the analogues of \eq{sm3eq11}--\eq{sm3eq12} work over $\Z_2$, giving an exact sequence
\begin{equation*}
\xymatrix@C=20pt{  0 \ar[r] & H_n^\St(X,\Z_2) \ar[r]^(0.45){\vert_{X_0}} & 
H_n^\St(X_0,\Z_2) \ar[r]^(0.47)\pd & H_{n-1}^\St(X_1,\Z_2). }
\end{equation*}
Hence if $\bX$ admits a $\Z_2$-fundamental class then there is a unique class $[\bX]_\fund$ in the Steenrod homology group $H_n^\St(X,\Z_2)$ such that  
\begin{equation*}
[\bX]_\fund\vert_{X_0}=[X_0]_\fund.
\end{equation*}
We call $[\bX]_\fund$ the $\Z_2$-{\it fundamental class\/} of~$\bX$.

If $X$ is an $n$-manifold then $U\mapsto H_n^\St(U,\Z_2)$ for open $U\subseteq X$ is a sheaf on $X$. Because of this, the condition that $\bX$ admits a $\Z_2$-fundamental class is a local condition on $X_{\le 1}$. If $\bX$ is locally orientable in the sense of Definition \ref{sm3def2} then it admits a $\Z_2$-fundamental class.
\end{dfn}

\subsection{Partitions of unity, a Whitney Embedding Theorem}
\label{sm35}

\begin{lem}
\label{sm3lem1}
Let $\bX$ be an s-manifold, $x\in X,$ and\/ $V$ be an open neighbourhood of $x$ in $X$. Then there exists a smooth map $f:\bX\ra\R$ in the sense of s-manifolds such that\/  $\ov{\{y\in X:f(y)\ne 0\}}\subseteq V\subseteq X,$ and\/ $f(x)>0,$ and\/ $f(X)\subseteq[0,\iy)$. We call\/ $f$ a \begin{bfseries}bump function\end{bfseries} on $\bX$.
\end{lem}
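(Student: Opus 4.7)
The plan is to use the embedding condition Definition~\ref{sm3def1}(d) to pull back a standard Euclidean bump function, then extend by zero using local compactness of $X$ to control the support.

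First I would choose an open neighbourhood $U$ of $x$ in $X$ together with a continuous map $g:U\ra\R^N$ as in Definition~\ref{sm3def1}(d), so that $g:U\ra g(U)$ is a homeomorphism and $g\vert_{U\cap X^i}$ is a smooth embedding for every $i\in I$. Shrinking $U$, I can assume $U\subseteq V$. Since $X$ is locally compact and Hausdorff and $V\cap U$ is open, I can next find an open set $W\subseteq X$ with $x\in W$, $\ov W$ compact, and $\ov W\subseteq V\cap U$.

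Next I would produce a Euclidean ball that captures only points inside $W$. Because $g$ is a homeomorphism onto its image, $g(W)$ is open in $g(U)$, so there is an open set $\tilde W\subseteq\R^N$ with $g(W)=\tilde W\cap g(U)$. Since $g(x)\in\tilde W$, there exists $r>0$ with $B(g(x),r)\subseteq\tilde W$; hence $B(g(x),r)\cap g(U)\subseteq g(W)$, which gives $g^{-1}(B(g(x),r)\cap g(U))\subseteq W$. Now pick a standard smooth bump function $\phi:\R^N\ra[0,\iy)$ with $\phi(g(x))>0$ and $\ov{\{\phi\ne 0\}}\subseteq B(g(x),r)$. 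Define
\e
f(y)=\begin{cases}\phi(g(y)), & y\in U,\\ 0, & y\in X\sm U.\end{cases}
\label{sm3eqbump}
\e

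The content of the verification is that $f$ is continuous on $X$ and smooth on each stratum. By construction, $\{y\in U:\phi(g(y))\ne 0\}\subseteq W$, so $f$ vanishes on the open set $X\sm\ov W$. The open sets $U$ and $X\sm\ov W$ cover $X$, and on the overlap $U\sm\ov W$ the formula $\phi\ci g$ agrees with $0$; so by the gluing lemma $f$ is well-defined and continuous, with $\ov{\{f\ne 0\}}\subseteq\ov W\subseteq V$. For each stratum $X^i$, on $X^i\cap U$ the map $f\vert_{X^i}=\phi\ci(g\vert_{X^i\cap U})$ is smooth as the composition of a smooth embedding with a smooth map, and on the open subset $X^i\sm\ov W$ of $X^i$ we have $f\vert_{X^i}=0$; these agree on the overlap, so $f\vert_{X^i}$ is smooth. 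Finally $f(x)=\phi(g(x))>0$ and $f(X)\subseteq[0,\iy)$, so $f$ is the required bump function.

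The only mild obstacle is organising the shrinking: the embedding $g$ need not map $U$ to an open subset of $\R^N$, so one cannot simply pull back a bump function on $\R^N$ without first ensuring via local compactness that a sufficiently small Euclidean ball intersected with $g(U)$ lies inside $g(W)$. Everything else is a routine application of smoothness on strata.
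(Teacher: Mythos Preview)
Your proof is correct and follows essentially the same approach as the paper: both use the local embedding $g:U\ra\R^N$ from Definition~\ref{sm3def1}(d) to pull back a standard Euclidean bump function, and both invoke local compactness to trap the support inside $V$. The paper normalizes $g(x)=0$, chooses $\ep>0$ so that $g(U\cap V)\cap\ov B_{2\ep}(0)$ is compact, and writes down the explicit formula $e^{-1/(\ep^2-\|z\|^2)}$, whereas you first shrink to a relatively compact $W$ with $\ov W\subseteq U\cap V$ and then find a ball in $\R^N$ whose preimage lands in $W$; these are organizationally different but mathematically equivalent manoeuvres, and your version makes the gluing and strata-wise smoothness verification more explicit than the paper's ``it is easy to check''.
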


\begin{proof}
By Definition \ref{sm3def1}(d) we can choose an open neighbourhood $U$ of $x$ in $X$ and a smooth map $g:U\ra\R^N$ in the sense of s-manifolds for $N\gg 0$ with $g:U\ra g(U)$ a homeomorphism. Replacing $g$ by $g-g(x)$ we can suppose that $g(x)=0$. As $X$ and hence $U\cap V,g(U\cap V)$ are locally compact, we may choose small $\ep>0$ such that $g(U\cap V)\cap \,\ov{\!B}_{2\ep}(0)$ is compact, where $\,\ov{\!B}_{2\ep}(0)$ is the closed ball of radius $2\ep$ in $\R^N$. Now define $f:X\ra\R$ by
\begin{equation*}
f(y)=\begin{cases} e^{-1/(\ep^2-z_1^2-z_2^2-\cdots-z_N^2)}, & \begin{aligned}&\text{$y\in U\cap V$ and $g(y)=(z_1,\ldots,z_N)$} \\ &\text{with $z_1^2+z_2^2+\cdots+z_N^2<\ep^2,$} \end{aligned}\\
0, & \text{otherwise.}
\end{cases}
\end{equation*}
It is easy to check $f$ satisfies the conditions of the lemma.	
\end{proof}

As s-manifolds are locally compact, Hausdorff, and second countable, they are paracompact. Thus the standard proof of the existence of smooth partitions of unity on manifolds using bump functions, as in Bredon \cite[Th.~II.10.1]{Bred1} or Lee \cite[Th.~1.73]{Lee1}, \cite[Th.~2.25]{Lee2}, works for s-manifolds with essentially no changes, giving:

\begin{thm}
\label{sm3thm2}
Let\/ $\bX$ be an s-manifold, and\/ $\{U_a:a\in A\}$ an open cover of\/ $X$. Then there exist smooth functions $\eta_a:\bX\ra\R$ in the sense of s-manifolds for $a\in A$ satisfying:
\begin{itemize}
\setlength{\itemsep}{0pt}
\setlength{\parsep}{0pt}
\item[{\bf(i)}] $\eta_a(X)\subseteq [0,1]\subset\R$ for all\/ $a\in A$.
\item[{\bf(ii)}] $\supp\eta_a:=\ov{\{x\in X:\eta_a(x)\ne 0\}}\subseteq U_a\subseteq X$ for all\/ $a\in A$.
\item[{\bf(iii)}] Each $x\in X$ has an open neighbourhood\/ $U$ such that there are only finitely many $a\in A$ with\/~$\supp\eta_a\cap U\ne\es$.
\item[{\bf(iv)}] $\sum_{a\in A}\eta_a\equiv 1$, where the sum makes sense locally on $X$ by {\bf(iii)}.
\end{itemize}
We call\/ $\{\eta_a:a\in A\}$ a \begin{bfseries}partition of unity subordinate to\end{bfseries}~$\{U_a:a\in A\}$.
\end{thm}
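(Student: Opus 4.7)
The plan is to mimic the standard proof of existence of partitions of unity on a smooth manifold (e.g.\ Bredon \cite[Th.~II.10.1]{Bred1} or Lee \cite[Th.~1.73]{Lee1}), since both essential ingredients are available: bump functions on $\bX$ from Lemma \ref{sm3lem1}, and paracompactness coming from the topological hypotheses of Definition \ref{sm3def1}(a).

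First, I would construct a relatively compact exhaustion. Because $X$ is locally compact, Hausdorff and second countable, standard point-set topology produces open sets $W_1\subseteq W_2\subseteq\cdots$ with each $\ov{W_i}$ compact, $\ov{W_i}\subseteq W_{i+1}$, and $X=\bigcup_{i\ge 1}W_i$. Writing $W_0=W_{-1}=\es$, the compact sets $K_i=\ov{W_i}\sm W_{i-1}$ cover $X$, while the open sets $V_i=W_{i+1}\sm\ov{W_{i-2}}$ contain $K_i$ and form a locally finite family.

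Second, for each $x\in K_i$ I would choose $a(x)\in A$ with $x\in U_{a(x)}$ and apply Lemma \ref{sm3lem1} to the open neighbourhood $U_{a(x)}\cap V_i$ of $x$, obtaining a smooth function $f_x:\bX\ra[0,\iy)$ with $f_x(x)>0$ and $\supp f_x\subseteq U_{a(x)}\cap V_i$. Compactness of $K_i$ extracts a finite subcover of $K_i$ by the open sets $\{y\in X:f_x(y)>0\}$; doing this for every $i$ and reindexing yields a countable family $\{f_j:j\in J\}$ of smooth non-negative functions on $\bX$ whose positive sets cover $X$, each with $\supp f_j\subseteq U_{a(j)}\cap V_{i(j)}$ for some $a(j)\in A$ and $i(j)\ge 1$; local finiteness of the $V_i$ makes the family $\{\supp f_j\}$ locally finite.

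Third, I would set $F=\sum_{j\in J}f_j$ and $\eta_a=\sum_{j\in J:\,a(j)=a}f_j/F$ for each $a\in A$. Local finiteness reduces each sum to a finite sum on a neighbourhood of any point, and $F>0$ everywhere because the $\{f_j>0\}$ cover $X$. Conditions (i) and (iv) hold by construction, (ii) follows from $\supp\eta_a\subseteq\bigcup_{a(j)=a}\supp f_j\subseteq U_a$, and (iii) from local finiteness of the $\supp f_j$. The only point not already present in the classical manifold argument is the verification that the resulting $\eta_a$ are smooth in the s-manifold sense, but this is immediate: by Definition \ref{sm3def1}, a continuous map $\bX\ra\R$ is smooth if and only if its restriction to each stratum $X^i$ is smooth as a map of manifolds, and this property is preserved under locally finite sums and under division by a nowhere-vanishing smooth function. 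I therefore expect no substantive obstacle, consistent with the author's remark that the standard proof applies with essentially no changes; the real content of the theorem is simply that Definition \ref{sm3def1}(a) supplies the paracompactness needed to run the classical argument verbatim.
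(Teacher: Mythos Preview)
Your proposal is correct and follows exactly the approach the paper indicates: the paper gives no detailed proof, merely observing that $X$ is paracompact (being locally compact, Hausdorff, second countable) and that the standard manifold proof using the bump functions of Lemma~\ref{sm3lem1} goes through with essentially no changes. Your write-up is a faithful unpacking of that standard argument.
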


\begin{cor}
\label{sm3cor1}
Let\/ $\bX$ be an s-manifold with\/ $\dim\bX>0$ and\/ $X_0\ne\es$. Then the $\R$-algebra $C^\iy(\bX)$ of smooth functions $f:\bX\ra\R$ is infinite-dimensional.
\end{cor}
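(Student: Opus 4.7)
The strategy is to exhibit a countably infinite linearly independent family in $C^\iy(\bX)$ by constructing bump functions with pairwise disjoint supports.

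First, since $X_0\ne\es$ and $X_0\subseteq X$ is open (as noted in Definition~\ref{sm3def1}(iv)) and is a smooth manifold of dimension $n=\dim\bX\ge 1$, I would pick a point $x_0\in X_0$ and a chart neighbourhood $W\subseteq X_0$ of $x_0$ diffeomorphic to an open ball in $\R^n$. Inside $W$ I can choose a sequence of distinct points $x_1,x_2,\ldots$ together with pairwise disjoint open neighbourhoods $V_i\subseteq W$ with $x_i\in V_i$; such a sequence exists because $W$ is a Hausdorff manifold of positive dimension (for example, take $x_i$ to be points converging to some limit in $W\setminus\{x_i\}$ and apply Hausdorffness to produce disjoint $V_i$). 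Note that each $V_i$ is open in $X_0$, and since $X_0$ is open in $X$, each $V_i$ is open in~$X$.

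Next, I would apply Lemma~\ref{sm3lem1} to each pair $(x_i,V_i)$ to obtain a smooth map $f_i:\bX\ra\R$ in the sense of s-manifolds with $\supp f_i\subseteq V_i$, $f_i(x_i)>0$, and $f_i(X)\subseteq[0,\iy)$. The supports $\supp f_i\subseteq V_i$ are pairwise disjoint, so $f_i(x_j)=0$ whenever $i\ne j$, while $f_i(x_i)>0$.

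Finally, the family $\{f_i:i\ge 1\}\subset C^\iy(\bX)$ is $\R$-linearly independent: given any finite relation $\sum_{i\in S}c_if_i=0$ with $S\subset\N$ finite and $c_i\in\R$, evaluating at $x_j$ for any $j\in S$ yields $c_jf_j(x_j)=0$, hence $c_j=0$. Therefore $C^\iy(\bX)$ contains a countably infinite linearly independent set, and so is infinite-dimensional.

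The only mildly delicate step is ensuring that the chosen $V_i$ are genuinely open in $X$ rather than merely in $X_0$, but this is immediate from $X_0$ being open in $X$; after that, the argument is a routine application of the bump function lemma.
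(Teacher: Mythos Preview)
Your proof is correct and follows essentially the same idea as the paper's: exploit that near a point of $X_0$ the s-manifold looks like $\R^n$ with $n\ge 1$, and extend local smooth functions to global ones. The paper's argument is a one-line sketch invoking Theorem~\ref{sm3thm2} (partitions of unity), whereas you give a more explicit and self-contained version using Lemma~\ref{sm3lem1} directly to build a countable linearly independent family of bump functions with disjoint supports; both routes are standard and equivalent in spirit.
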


\begin{proof}
Near each $x\in X_0$, the s-manifold $\bX$ looks locally like $\R^n$ for $n=\dim\bX$, and smooth functions $f:\bX\ra\R$ look locally like smooth functions $\R^n\ra\R$, which are an infinite-dimensional space. We can combine local choices of smooth functions $f:\bX\ra\R$ using partitions of unity in the usual way, to get a huge space $C^\iy(\bX)$ of global smooth functions~$f:\bX\ra\R$.
\end{proof}

Next we define embeddings. We only consider embeddings into $\R^N$, as it is not clear whether there is a useful notion of embedding between s-manifolds.

\begin{dfn}
\label{sm3def5}
Let $\bX$ be an s-manifold. A smooth map $f:\bX\ra\R^N$ is called an {\it embedding\/} if $f\vert_{X^i}:X^i\ra\R^N$ is an embedding of manifolds for all strata $X^i$ of $\bX$, and $f:X\ra f(X)$ is a homeomorphism.
\end{dfn}

Here is an s-manifold version of the Whitney Embedding Theorem \cite[\S II.10]{Bred1}, \cite[Th.~10.11]{Lee2}. Whitney also gives a more difficult proof that $n$-manifolds can be embedded into $\R^{2n}$, but we will not attempt to generalize this.

\begin{thm}
\label{sm3thm3}
Let\/ $\bX$ be an s-manifold of dimension $n$. Then there exists a proper embedding $f:\bX\ra\R^{2n+1}$.	
\end{thm}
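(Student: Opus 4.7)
The plan is to follow the classical non-compact Whitney embedding argument, adapted to the stratified setting, in three stages: first produce some proper smooth embedding into a large but finite-dimensional Euclidean space, then use a Sard-type generic projection argument to cut the target dimension down to $2n+1,$ preserving properness throughout.

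For the initial embedding I would use Definition \ref{sm3def1}(d) together with second countability and paracompactness to fix a countable locally finite open cover $\{W_i\}_{i\ge 1}$ of $X$ by relatively compact open sets, each contained in the domain of a local map $g_i:U_i\ra\R^{M_i}$ that is a homeomorphism onto its image and a smooth embedding on every stratum it meets. Shrink to $V_i$ with $\ov{V_i}\subset W_i$ and $\bigcup_iV_i=X,$ and use Lemma \ref{sm3lem1} and Theorem \ref{sm3thm2} to produce smooth bump functions $\rho_i:\bX\ra\R$ with $\rho_i\equiv 1$ on $V_i$ and $\supp\rho_i\subset W_i,$ together with a proper smooth function $h=\sum_i i\,\eta_i:\bX\ra[0,\iy)$ built from a partition of unity $\{\eta_i\}$ subordinate to $\{W_i\}$. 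To land in a finite-dimensional $\R^N,$ I would invoke that $X$ has topological covering dimension at most $n$: by Definition \ref{sm3def1}(e) the space is locally a finite union of manifolds of dimension $\le n,$ so the cover $\{W_i\}$ admits a refinement of multiplicity $\le n+1,$ whose sets split into $n+1$ colour classes of pairwise disjoint opens. Combining the local embeddings on each colour class into a single map, and adjoining the $\rho_i$ and the proper function $h$ as extra coordinates, gives a smooth proper embedding $F:\bX\ra\R^N$ for some finite $N$.

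For the projection step, consider linear projections $\pi_V:\R^N\ra\R^N/V\cong\R^{2n+1}$ indexed by $V\in G(N-2n-1,\R^{N-1}),$ constrained to $V\subset\R^{N-1}$ so that the last coordinate $h$ survives in the quotient. Requiring $\pi_V\ci F$ to be an injective immersion on each stratum and to separate points from distinct strata translates, via the classical secant and tangent constructions, into a countable list of measure-zero exclusions on the Grassmannian: each stratum has dimension $\le n,$ so $2\dim X^i+1\le 2n+1$ handles the single-stratum conditions and $\dim X^i+\dim X^j\le 2n<2n+1$ handles the cross-stratum ones. A generic $V$ avoids all of them, and the resulting map is smooth on strata and injective; it remains proper because $h$ persists as a coordinate, and since $X$ is locally compact Hausdorff a proper continuous injection is automatically a closed topological embedding.

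The main obstacle is the finite-dimensional reduction in the first stage: unlike the classical case, $X$ is not itself a manifold, and one must justify that it has topological covering dimension $\le n$ and so admits open covers of bounded multiplicity. This can be handled by combining the local finiteness of the stratification, Definition \ref{sm3def1}(e), with the local structure of $X$ near each point as a finite union of manifolds of dimension $\le n;$ once this is in hand, the stratum-by-stratum Sard argument of stage two runs in essentially the same way as in the classical Whitney proof, and properness is automatic because $h$ is preserved as a coordinate of the final projection.
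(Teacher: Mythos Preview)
Your outline takes a somewhat different route from the paper. The paper handles the compact case first --- where a finite cover immediately gives a finite-dimensional target $\R^N$ --- proves the generic-projection step there, and then defers the non-compact extension entirely to Bredon's standard argument. You instead attack the non-compact case directly via a covering-dimension bound, which is a legitimate alternative; your identification of the bound $\dim X\le n$ as the key lemma is apt, and it follows from the local structure in Definition~\ref{sm3def1}(e) together with Urysohn metrizability (available independently of Corollary~\ref{sm3cor2}, since locally compact Hausdorff second countable already gives metrizability).

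There is, however, a concrete gap in your first stage. The local maps $g_i:U_i\ra\R^{M_i}$ supplied by Definition~\ref{sm3def1}(d) carry no a priori bound on $M_i$, so you cannot ``combine the local embeddings on each colour class into a single map'' with values in a fixed finite-dimensional space. One repair is to apply your Sard projection argument \emph{locally} first, replacing each $g_i$ by a generic projection $g_i':U_i\ra\R^{2n+1}$; since each $\ov{W_i}$ is compact and meets only finitely many strata, this is essentially the compact case and works stratum-by-stratum. A second, related point: even with uniform targets, the per-colour-class map $\sum_{i\in I_j}\rho_i g_i'$ need not be injective across different $W_i$ in the same class, since the embeddings $g_i'$ are unrelated and ``adjoining the $\rho_i$'' literally would require infinitely many coordinates. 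You need the proper function $h$ to separate pieces within a class --- for instance by building the colour classes from a compact exhaustion so that $h$ takes disjoint ranges on distinct $W_i$ in the same class --- and this should be made explicit. These are standard fixes, but as written the first stage does not yet produce an embedding into a finite-dimensional~$\R^N$.
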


\begin{proof}
This can be proved by following the proof for manifolds in Bredon \cite[\S II.10]{Bred1} closely. First suppose $\bX$ is compact. Definition \ref{sm3def1}(d) says every $x\in\bX$ has an open neighbourhood $\bU_x\subseteq\bX$ and an embedding $f_x:\bU_x\hookra\R^{N_x}$ for $N_x\gg 0$. We choose a finite cover $\bU_{x_1},\ldots\bU_{x_k}$ of $\bX$ by such neighbourhoods. Theorem \ref{sm3thm2} gives a subordinate partition of unity $\eta_{x_1},\ldots,\eta_{x_k}$. Then
\e
(\eta_{x_1},\ldots,\eta_{x_k},\eta_{x_1}f_{x_1},\ldots,\eta_{x_k}f_{x_k}):\bX\longra\R^N
\label{sm3eq17}
\e
is an embedding for $N=k+N_{x_1}+\cdots+N_{x_k}$. By an argument using Sard's Theorem as in \cite[Th.~II.10.7]{Bred1}, we find that if $\pi:\R^N\ra\R^{2n+1}$ is a generic linear map then the composition of \eq{sm3eq17} with $\pi$ is an embedding $\bX\hookra\R^{2n+1}$. This proves the theorem when $\bX$ is compact. We can extend to noncompact $\bX$ as in \cite[Th.~II.10.7]{Bred2}, which uses $X$ second countable.	
\end{proof}

\begin{cor}
\label{sm3cor2} 
The topological space $X$ of an s-manifold\/ $\bX$ is metrizable.	
\end{cor}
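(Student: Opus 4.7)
The plan is to deduce metrizability as an essentially immediate consequence of the Whitney-type embedding theorem just established. First I would invoke Theorem~\ref{sm3thm3} to obtain a proper embedding $f:\bX\ra\R^{2n+1}$, where $n=\dim\bX$. By Definition~\ref{sm3def5} of embedding, $f$ restricts to a homeomorphism $f:X\ra f(X)$ between $X$ and its image, viewed as a topological subspace of $\R^{2n+1}$.

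Next I would use the standard fact that metrizability is inherited by subspaces: if $(Y,d)$ is a metric space and $Z\subseteq Y$, then $(Z,d\vert_{Z\t Z})$ is a metric space inducing the subspace topology. Applying this to $Z=f(X)\subseteq\R^{2n+1}$ with $d$ the Euclidean metric yields a metric on $f(X)$ compatible with its topology. Pulling this metric back along the homeomorphism $f:X\ra f(X)$ gives a metric on $X$ compatible with the original topology, so $X$ is metrizable.

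No real obstacle arises here; the corollary is just a packaging of Theorem~\ref{sm3thm3} together with the elementary fact that subspaces of $\R^N$ are metrizable. Properness of $f$ is not needed for metrizability (though it is relevant elsewhere, e.g.\ for ensuring $f(X)$ is closed and for the intended use of the embedding in Steenrod homology arguments). The only thing worth remarking in the write-up is why we are entitled to apply Theorem~\ref{sm3thm3} in the first place, namely that Definition~\ref{sm3def1}(a) gives $X$ locally compact, Hausdorff, and second countable, which are exactly the hypotheses built into the embedding theorem's proof (paracompactness, partitions of unity via Theorem~\ref{sm3thm2}, and the noncompact extension argument of Bredon).
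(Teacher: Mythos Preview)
Your proposal is correct and takes the same approach as the paper: the corollary is stated immediately after Theorem~\ref{sm3thm3} with no explicit proof, indicating it is meant as a direct consequence of the embedding into $\R^{2n+1}$. Your write-up simply spells out the trivial deduction the paper leaves implicit.
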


\begin{rem}
\label{sm3rem3}
As in Steenrod \cite{Stee1} and Milnor \cite{Miln}, if $X\subset\cS^{N+1}$ is compact then there are canonical isomorphisms
\begin{equation*}
H_k^\St(X,R)\cong \check H^{N-k}(\cS^{N+1}\sm X,R)\qquad \text{for $0<k<N$,}
\end{equation*}
where $\check H^*(-,R)$ is \v Cech cohomology. If $\bX$ is a compact s-manifold of dimension $n$ then Theorem \ref{sm3thm3} shows that $X$ is homeomorphic to a compact subset of $\cS^{N+1}$ for any $N\ge 2n$, so this gives an alternative interpretation of~$H_*^\St(X,R)$.	
\end{rem}

\subsection{Transverse fibre products of s-manifolds}
\label{sm36}

We generalize Definition \ref{sm2def3} and Theorem \ref{sm2thm1} to s-manifolds.

\begin{dfn}
\label{sm3def6}
Let $\bX,\bY,\bZ$ be s-manifolds with $\dim\bX=l$, $\dim\bY=m$, $\dim\bZ=n$, and $g:\bX\ra\bZ$, $h:\bY\ra\bZ$ be smooth maps. We say that $g,h$ are {\it transverse\/} if whenever $x\in\bX$ and $y\in\bY$ with $g(x)=h(y)=z\in\bZ$, using the notation of Definition \ref{sm3def1} we have:
\begin{itemize}
\setlength{\itemsep}{0pt}
\setlength{\parsep}{0pt}
\item[(a)] $T_xg\op T_yh:T_x\bX\op T_y\bY\ra T_z\bZ$ is surjective; and
\item[(b)] Either:
\begin{itemize}
\setlength{\itemsep}{0pt}
\setlength{\parsep}{0pt}
\item[(i)] $\dim_x\bX=l$, $\dim_y\bY=m$, and $\dim_z\bZ=n$;
\item[(ii)] $\dim_x\bX=l-1$, $\dim_y\bY=m$, and $\dim_z\bZ=n$;
\item[(iii)] $\dim_x\bX=l$, $\dim_y\bY=m-1$, and $\dim_z\bZ=n$;
\item[(iv)] $\dim_x\bX=l-1$, $\dim_y\bY=m-1$, and $\dim_z\bZ=n-1$; or
\item[(v)] $\dim_x\bX+\dim_y\bY-\dim_z\bZ\le l+m-n-2$.
\end{itemize}
\end{itemize}

Note that as (a) forces $\dim_x\bX+\dim_y\bY-\dim_z\bZ\ge 0$, (b) shows that $l+m-n\ge 0$ if any such $x,y,z$ exist. We call $h:\bY\ra\bZ$ a {\it submersion\/} if whenever $y\in\bY$ with $h(y)=z\in\bZ$ we have:
\begin{itemize}
\setlength{\itemsep}{0pt}
\setlength{\parsep}{0pt}
\item[(a$)'$] $T_y\bY\ra T_z\bZ$ is surjective; and
\item[(b$)'$] Either:
\begin{itemize}
\setlength{\itemsep}{0pt}
\setlength{\parsep}{0pt}
\item[(i)] $\dim_z\bZ=n$; or
\item[(ii)] $\dim_y\bY-\dim_z\bZ\le m-n-2$.
\end{itemize}
\end{itemize}
Then (a$)'$,(b$)'$ imply (a),(b), so if $h$ is a submersion then $g,h$ are transverse for any $g:\bX\ra\bZ$.
\end{dfn}

\begin{rem}
\label{sm3rem4}
A useful special case of Definition \ref{sm3def6} is when $\bZ$ is an ordinary $n$-manifold. Then (b),(b$)'$ are automatic, so $g:\bX\ra\bZ$, $h:\bY\ra\bZ$ are {\it transverse\/} if $T_xg\op T_yh:T_x\bX\op T_y\bY\ra T_z\bZ$ is surjective whenever $x\in\bX$ and $y\in\bY$ with $g(x)=h(y)=z\in\bZ$, exactly as in Definition \ref{sm2def3}, and $h:\bY\ra\bZ$ is a {\it submersion\/} if $T_yh:T_y\bY\ra T_z\bZ$ is surjective whenever $y\in\bY$ with~$h(y)=z\in\bZ$.
\end{rem}

We will show in the next definition and theorem that if $g,h$ are transverse then a fibre product $\bW=\bX\t_{g,\bZ,h}\bY$ exists in~$\SMan$.

\begin{dfn}
\label{sm3def7}
Work in the situation of Definition \ref{sm3def6}, with $g,h$ transverse, and use the notation
\begin{equation*}
\bX=\bigl(X,\{X^i:i\in I\}\bigr),\quad
\bY=\bigl(Y,\{Y^j:j\in J\}\bigr),\quad
\bZ=\bigl(Z,\{Z^k:k\in K\}\bigr).
\end{equation*}

Define a topological space $W\subseteq X\t Y$, with the subspace topology, by
\e
W=\bigl\{(x,y)\in X\t Y:g(x)=h(y)\bigr\}.
\label{sm3eq18}
\e
Define $e:W\ra X$ and $f:W\ra Y$ to map $e:(x,y)\mapsto x$ and $f:(x,y)\mapsto y$. For all $(i,j,k)\in I\t J\t K$ with $g(X^i),h(Y^j)\subseteq Z^k$, define
\begin{equation*}
W^{ijk}=\bigl\{(x,y)\in W:x\in X^i,\; y\in Y^j,\; g(x)=h(y)\in Z^k\bigr\}.
\end{equation*}
Observe that
\e
W^{ijk}=X^i\t_{g\vert_{X^i},Z^k,h\vert_{Y^j}}Y^j,
\label{sm3eq19}
\e
which is a transverse fibre product, so $W^{ijk}$ is a smooth manifold of dimension $\dim W^i+\dim Y^j-\dim Z^k$ by Theorem \ref{sm2thm1}. Also $W^{ijk}$ is locally closed in $W$, as $X^i,Y^j,Z^k$ are in $X,Y,Z$.

Define an indexing set
\begin{equation*}
H=\bigl\{(i,j,k)\in I\t J\t K:g(X^i),h(Y^j)\subseteq Z^k,\; W^{ijk}\ne\es\bigr\}.	
\end{equation*}
Then we have a stratification of $W$ into nonempty locally closed subsets
\begin{equation*}
W=\coprod_{(i,j,k)\in H}W^{ijk}.
\end{equation*}
Define $\bW=\bigl(W,\{W^{ijk}:(i,j,k)\in H\}\bigr)$.

Let $W^{ijk}$ be one such stratum, pick $(x,y)\in W^{ijk}$, and set $z=g(x)=h(y)$. Set $d=l+m-n$, where we will have $\dim\bW=d$. Then $T_x\bX=T_xX^i$, $T_y\bY=T_yY^j$, and $T_z\bZ=T_zZ^k$. Thus Definition \ref{sm3def6}(b) becomes either:
\begin{itemize}
\setlength{\itemsep}{0pt}
\setlength{\parsep}{0pt}
\item[(i)] $\dim X^i=l$, $\dim Y^j=m$, and $\dim Z^k=n$, so $\dim W^{ijk}=d$;
\item[(ii)] $\dim X^i=l-1$, $\dim Y^j=m$, and $\dim Z^k=n$, so $\dim W^{ijk}=d-1$;
\item[(iii)] $\dim X^i=l$, $\dim Y^j=m-1$, and $\dim Z^k=n$, so $\dim W^{ijk}=d-1$;
\item[(iv)] $\dim X^i\!=\!l\!-\!1$, $\dim Y^j\!=\!m\!-\!1$, and $\dim Z^k\!=\!n\!-\!1$, so $\dim W^{ijk}\!=\!d\!-\!1$; or
\item[(v)] $\dim X^i+\dim Y^j-\dim Z^k\le l+m-n-2$, so $\dim W^{ijk}\le d-2$.
\end{itemize}
Hence $\dim W^{ijk}\le d$ for all strata $W^{ijk}$, as required in Definition \ref{sm3def1}(c). 
\end{dfn}

\begin{thm}
\label{sm3thm4}
In Definition\/ {\rm\ref{sm3def7},} $\bW$ is an s-manifold with $\dim\bW=d:=l+m-n,$ where $d\ge 0$ if\/ $\bW\ne\es,$ and\/ $e:\bW\ra\bX,$ $f:\bW\ra\bY$ are smooth maps with\/ $g\ci e=h\ci f$. The following is a Cartesian square in {\rm$\SMan$:}
\e
\begin{gathered}
\xymatrix@R=13pt@C=90pt{ *+[r]{\bW} \ar[r]_f \ar[d]^e & *+[l]{\bY}
\ar[d]_h \\ *+[r]{\bX} \ar[r]^g & *+[l]{\bZ.\!} }
\end{gathered}
\label{sm3eq20}
\e
Thus $\bW$ is a fibre product\/ $\bX\t_{g,\bZ,h}\bY$ in $\SMan$.
\end{thm}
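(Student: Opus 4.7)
The plan is to verify in turn that $\bW = (W, \{W^{ijk} : (i,j,k) \in H\})$ satisfies Definition \ref{sm3def1}(a)--(e) with $\dim\bW = d := l + m - n$, then that $e, f$ are morphisms in $\SMan$ with $g \ci e = h \ci f$, and finally verify the universal Cartesian property. Definition \ref{sm3def1}(a) is immediate: since $Z$ is Hausdorff, $W = (g \t h)^{-1}(\{(z,z) : z \in Z\})$ is closed in $X \t Y$, so $W$ inherits local compactness, the Hausdorff property, and second countability. Parts (b), (c) follow from the construction together with Theorem \ref{sm2thm1} applied stratumwise to $W^{ijk} = X^i \t_{Z^k} Y^j$ (a transverse fibre product of ordinary manifolds by Definition \ref{sm3def6}(a)), and the case analysis (i)--(v) at the end of Definition \ref{sm3def7} which yields $\dim W^{ijk} \le d$ in every case. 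For (d) I would combine local embeddings $\phi : U \ra \R^{N_1}$, $\psi : V \ra \R^{N_2}$ for $\bX, \bY$ near $(x,y) \in W$ into the product embedding $(\phi, \psi)\vert_{(U \t V)\cap W} : (U \t V)\cap W \ra \R^{N_1+N_2}$.

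For (e) I equip $H$ locally with the partial order inherited coordinate-wise from the local partial orders on $I, J, K$ for $\bX, \bY, \bZ$; local finiteness of $\{W^{ijk}\}$ is inherited from that of $\bX, \bY$. Condition (e)(i) is straightforward: for $(x',y') \in \ov{W^{ijk}}$, taking limits of sequences gives $x' \in \ov{X^i}$, $y' \in \ov{Y^j}$, and $g(x') = h(y') \in \ov{Z^k}$, forcing coordinate-wise $\pr$ in $I, J, K$. The genuinely hard part is (e)(ii), for which the crucial observation is a \emph{uniqueness-of-stratum principle}: by the smooth s-map axiom of Definition \ref{sm3def1}, for each stratum $X^i$ there is a unique $k \in K$ with $g(X^i) \subseteq Z^k$, and analogously for each $Y^j$. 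Hence, given any triple $(i,j,k) \in H$, the coordinate $k$ is determined by $i$ alone (and also by $j$ alone); in particular two triples in $H$ agreeing in the $i$-coordinate or in the $j$-coordinate automatically agree in $k$.

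Armed with this, I run a case analysis on $\dim W^{ijk} \in \{d, d-1\} \cup \{0, \ldots, d-2\}$ corresponding to cases (i), (ii)--(iv), (v) of Definition \ref{sm3def6}(b), and for each I use Definition \ref{sm3def1}(e)(ii) applied to $\bX, \bY, \bZ$ to show that any $(i',j',k') \succ (i,j,k)$ strict satisfies the requisite dimension drop. The naive concern is that, for example, a case (iii) stratum of $\bW$ of dimension $d - 1$ could have a case (iv) stratum of dimension $d - 1$ in its closure; however, such a configuration would force $\dim Y^j = \dim Y^{j'} = m - 1$, hence $j = j'$ by Definition \ref{sm3def1}(e)(ii) for $\bY$, hence $k = k'$ by the uniqueness principle, contradicting $\dim Z^k = n \ne n - 1 = \dim Z^{k'}$. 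Every other potentially obstructive cross-case transition is eliminated by an analogous reduction. This case analysis is the main technical obstacle of the proof.

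The remaining claims are routine. Each restriction $e\vert_{W^{ijk}}, f\vert_{W^{ijk}}$ is a projection from the transverse manifold fibre product $X^i \t_{Z^k} Y^j$ onto a factor, hence smooth, so $e, f$ are morphisms in $\SMan$, and $g \ci e = h \ci f$ holds by the definition of $W$. For the Cartesian property, given a test object $\bW'$ with smooth maps $e' : \bW' \ra \bX$, $f' : \bW' \ra \bY$ satisfying $g \ci e' = h \ci f'$, the set-theoretic map $b : w' \mapsto (e'(w'), f'(w'))$ lands in $W$, is continuous, and sends each stratum $W^{\prime p}$ of $\bW'$ into a single stratum $W^{ijk}$ (with indices determined by the target strata of $e', f'$, and $k$ fixed by the uniqueness principle). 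Smoothness of $b\vert_{W^{\prime p}}$ into $W^{ijk}$ follows from the universal property of the ordinary transverse fibre product $X^i \t_{Z^k} Y^j$ in $\Man$ applied to $e'\vert_{W^{\prime p}}, f'\vert_{W^{\prime p}}$, and uniqueness of $b$ is immediate from the factorization $b = (e, f) \ci b$.
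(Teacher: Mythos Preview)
Your proposal is correct and follows essentially the same approach as the paper's proof: verify Definition~\ref{sm3def1}(a)--(e) for $\bW$ using the product structure on $X\times Y$, the coordinate-wise partial order on $H$, and a case analysis on Definition~\ref{sm3def6}(b)(i)--(v), then establish the Cartesian property via the universal property of each manifold fibre product $X^i\times_{Z^k}Y^j$. Your explicit ``uniqueness-of-stratum principle'' (that $k$ is determined by either $i$ or $j$) is precisely the mechanism the paper uses, slightly more implicitly, to derive the contradiction in the tricky case (ii)/(iii) $\to$ (iv); the paper singles out this same case as the one requiring care and dispatches the remaining cases with the same reasoning you sketch.
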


\begin{proof}
First we verify $\bW$ satisfies Definition \ref{sm3def1}(a)--(e). For (a), $W$ is locally compact, Hausdorff and second countable as $X,Y$ are and $W$ is a closed subspace of $X\t Y$. We have proved (b),(c) in Definition \ref{sm3def7}; that $H$ is finite or countable in (b) follows from the fact that $I,J,K$ are. For (d), if $(x,y)\in W$ then Definition \ref{sm3def1}(d) for $\bX,\bY$ gives open neighbourhoods $U_x,U_y$ of $x,y$ in $X,Y$ and continuous maps $f_x:U_x\hookra\R^N$, $f_y:U_y\hookra\R^{N'}$ that are topological embeddings and restrict to smooth embeddings on strata. Define $U_{x,y}=(U_x\t U_y)\cap W$ and $f_{x,y}:U_{x,y}\ra\R^{N+N'}=\R^N\op\R^{N'}$ by $f_{x,y}(x',y')=f_x(x')\op f_y(y')$. Then $U_{x,y},f_{x,y}$ satisfy the conditions of~(d).

For the first part of (e), if $(x,y)\in W$, we can choose open neighbourhoods $V_x,V_y$ of $x,y$ in $X,Y$ which intersect only finitely many strata of $\bX,\bY$ by Definition \ref{sm3def1}(e), and then $V_{x,y}=W\cap(V_x\t V_y)$ is an open neighbourhood of $(x,y)$ in $W$ which intersects only finitely many strata $W^{ijk}$ of~$\bW$.

For the second part of (e), let $\pr,\dot\pr,\ddot\pr$ be the partial orders on $I_{V_x},J_{V_y},K_{V_z}$ given by Definition \ref{sm3def1}(e) for $\bX,\bY,\bZ$. Define a partial order $\dddot\pr$ on $H_{V_{x,y}}$ by $(i,j,k)\dddot\pr(i',j',k')$ if $i\pr i'$, $j\dot\pr j'$, and $k\ddot\pr k'$. It is easy to see that $\dddot\pr$ is a partial order. To prove (e)(i) for $\bW$, suppose that $(x_a,y_a)_{a=1}^\iy$ is a sequence in $W^{ijk}$ with $(x_a,y_a)\ra(x,y)\in W^{i'j'k'}$ as $a\ra\iy$. Then $x_a\in X^i$, $y_a\in Y^j$ with $g(x_a)=h(y_a)=z_a\in Z^k$, and $x\in X^{i'}$, $y\in Y^{j'}$ with $g(x)=h(y)=z\in Z^{k'}$, and $x_a\ra x$, $y_a\ra y$, $z_a\ra z$ as $a\ra\iy$. By Definition \ref{sm3def1}(e)(i) for $\bX,\bY,\bZ$ we have $i\pr i'$, $j\dot\pr j'$ and $k\ddot\pr k'$. Thus $(i,j,k)\dddot\pr(i',j',k')$, proving (e)(i) for~$\bW$.

Part (e)(ii) for $\bW$ follows from the conditions Definition \ref{sm3def6}(b)(i)--(v), since if $\dim W^{ijk}=d$ then Definition \ref{sm3def7}(i) holds, and if $\dim W^{ijk}=d-1$ then Definition \ref{sm3def7}(ii),(iii) or (iv) holds, and otherwise $\dim W^{ijk}\le d-2$ as in Definition \ref{sm3def7}(v). We can then deduce (e)(ii) from Definition \ref{sm3def7}(i)--(v) and (e)(ii) for $\bX,\bY,\bZ$. Note in particular that if $(i,j,k)$ is as in Definition \ref{sm3def7}(ii) or (iii), and $(i',j',k')$ as in Definition \ref{sm3def7}(iv), so that $\dim W^{ijk}=\dim W^{i'j'k'}=d-1$, then $(i,j,k)\dddot{\not\pr}(i',j',k')$, as if $i\pr i'$, $j\dot\pr j'$, $k\ddot\pr k'$ we must have $i=i'$ or $j=j'$ but $k\ne k'$, a contradiction. Hence $\bW$ is an s-manifold.

As in Definition \ref{sm3def7} $\dim W^{ijk}\le d$ for all strata $W^{ijk}$, so $d\ge 0$ if $\bW\ne\es$. It is immediate from Definition \ref{sm3def7} that $e,f$ are smooth maps of s-manifolds with $g\ci e=h\ci f$. Suppose $\bW'$ is an s-manifold and $e':\bW'\ra\bX$ and $f':\bW'\ra\bY$ are smooth with $g\ci e'=h\ci f'$. Define $b:W'\ra W$ by $b(w')=(e'(w'),f'(w'))$. This is well defined by \eq{sm3eq18} and $g\ci e'=h\ci f'$, and clearly continuous. Suppose $W^{\prime h}$ is a stratum of $\bW'$. Then there are unique $i\in I$, $j\in J$ and $k\in K$ such that $e'(W^{\prime h})\subseteq X^i$, $f'(W^{\prime h})\subseteq Y^j$, and $g(X^i),h(Y^j)\subseteq Z^k$, and $e'\vert_{W^{\prime h}}:W^{\prime h}\ra X^i$, $f'\vert_{W^{\prime h}}:W^{\prime h}\ra Y^j$ are smooth maps of manifolds. Thus as $W^{ijk}$ is the fibre product \eq{sm3eq19}, the map $b\vert_{W^{\prime h}}:W^{\prime h}\ra W^{ijk}$ is a smooth map of manifolds. Hence $b:\bW'\ra\bW$ is a smooth map of s-manifolds, with $e'=e\ci b$ and $f'=f\ci b$. It is unique with this property, as $b(w')=(e'(w'),f'(w'))$ is determined pointwise by $e',f'$. Therefore $\bW$ is a fibre product $\bX\t_{g,\bZ,h}\bY$ in $\SMan$, and \eq{sm3eq20} is a Cartesian square in~$\SMan$.
\end{proof}

\begin{rem}
\label{sm3rem5}
We can generalize the material above to fibre products in $\cSMan$ as follows: let $g:\bX\ra\bZ$, $h:\bY\ra\bZ$ be morphisms in $\cSMan$, and write $\bX=\coprod_{l\ge 0}\bX_l$ for $\bX_l\in\SMan$ with $\dim\bX=l$, and similarly for $\bY,\bZ$. Then we say that $g,h$ are {\it transverse\/} if $g\vert_{\cdots}:\bX_l\cap g^{-1}(\bZ_n)\ra\bZ_n$ and $h\vert_{\cdots}:\bY_m\cap h^{-1}(\bZ_n)\ra\bZ_n$ are transverse in $\SMan$ for all $l,m,n\ge 0$. Then the analogue of Theorem \ref{sm3thm4} holds in $\cSMan$, with fibre product
\begin{equation*}
\bW=\coprod_{l,m,n\ge 0}\bigl(\bX_l\cap g^{-1}(\bZ_n)\bigr)\t_{g\vert_{\cdots},\bZ_n,h\vert_{\cdots}}\bigl(\bY_m\cap h^{-1}(\bZ_n)\bigr).
\end{equation*}
\end{rem}

Note that when $\bZ=*$ is a point, Theorem \ref{sm3thm4} shows that {\it products\/} $\bX\t\bY$ of s-manifolds exist in $\SMan$, with $\dim(\bX\t\bY)=\dim\bX+\dim\bY$. Using Sard's Theorem we deduce:

\begin{cor}
\label{sm3cor3}
Let\/ $\bY$ be an s-manifold of dimension $m,$ and\/ $h:\bY\ra\R^n$ be a smooth map. Then for generic $\bs z\in\R^n$ (i.e.\ for $\bs z$ outside a null set) the level set\/ $h^{-1}(\bs z)$ has the structure of an s-manifold of dimension $m-n$.
\end{cor}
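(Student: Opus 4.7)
The plan is to realize the level set $h^{-1}(\bs z)$ as a transverse fibre product in $\SMan$. A point $\bs z\in\R^n$ determines a smooth map $\iota_{\bs z}:*\to\R^n$ of s-manifolds, where $*$ is the one-point, $0$-dimensional s-manifold. The underlying topological space of the fibre product $*\t_{\iota_{\bs z},\R^n,h}\bY$ is exactly $\{(*,y):h(y)=\bs z\}\cong h^{-1}(\bs z)$, so it suffices to exhibit a null subset $B\subset\R^n$ such that $\iota_{\bs z}$ and $h$ are transverse (Definition \ref{sm3def6}) whenever $\bs z\in\R^n\sm B$, and then invoke Theorem \ref{sm3thm4} to produce the desired s-manifold structure of dimension~$0+m-n=m-n$.

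Since $\R^n$ is an ordinary manifold, Remark \ref{sm3rem4} reduces transversality of $\iota_{\bs z}$ and $h$ to condition (a) alone: $T_yh:T_y\bY\ra T_{\bs z}\R^n$ must be surjective for every $y\in h^{-1}(\bs z)$. By the definition of smooth map of s-manifolds, the restriction $h\vert_{Y^i}:Y^i\ra\R^n$ is a smooth map of ordinary manifolds for each stratum $Y^i$ of $\bY$, and $T_y\bY=T_yY^i$ for $y\in Y^i$. Thus the transversality condition becomes: $\bs z$ is a regular value of $h\vert_{Y^i}:Y^i\ra\R^n$ for every stratum~$Y^i$.

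Apply Sard's theorem stratum-by-stratum. When $\dim Y^i\ge n$, the set $B_i\subset\R^n$ of critical values of $h\vert_{Y^i}$ has Lebesgue measure zero by the classical Sard theorem. When $\dim Y^i<n$, every $y\in Y^i$ is automatically critical for dimensional reasons ($T_yh$ cannot surject onto $\R^n$), and Sard's theorem then shows that $B_i:=h(Y^i)$ is itself a null subset of $\R^n$. By Definition \ref{sm3def1}(b) the index set $I$ is finite or countable, so $B:=\bigcup_{i\in I}B_i$ is a null subset of $\R^n$. For any $\bs z\in\R^n\sm B$, the pair $\iota_{\bs z},h$ is transverse, and Theorem \ref{sm3thm4} endows $h^{-1}(\bs z)\cong *\t_{\iota_{\bs z},\R^n,h}\bY$ with the structure of an s-manifold of dimension $m-n$; the case $m<n$ is handled automatically, since then $B\supseteq h(\bY)$ and $h^{-1}(\bs z)=\es$ for $\bs z\notin B$, consistent with Theorem~\ref{sm3thm4}. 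The only substantive step is the stratum-by-stratum Sard argument; the minor subtlety is the low-dimensional strata, where surjectivity of $T_yh$ is impossible, and the transversality condition translates into forcing $h^{-1}(\bs z)$ to avoid the stratum entirely.
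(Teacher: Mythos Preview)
Your proof is correct and follows essentially the same approach as the paper: realize $h^{-1}(\bs z)$ as the fibre product $*\t_{\iota_{\bs z},\R^n,h}\bY$, observe via Remark \ref{sm3rem4} that Definition \ref{sm3def6}(b) is automatic since $\R^n$ is an ordinary manifold, and apply Sard's Theorem stratum-by-stratum using countability of $I$. Your explicit treatment of the low-dimensional strata and the case $m<n$ adds welcome detail, but the argument is the same.
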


\begin{proof}
Apply Theorem \ref{sm3thm4} with $\bX=*$ the point, $\bZ=\R^n$, and $g:*\mapsto\bs z$. To show that $g,h$ are transverse, note that Definition \ref{sm3def6}(b) is automatic, and using Sard's Theorem for the smooth maps $g\vert_{Y^j}:Y^j\ra\R^n$ for the countable set of strata $Y^j$ of $\bY$ we see that Definition \ref{sm3def6}(a) holds for generic~$\bs z\in\R^n$.	
\end{proof}

\subsection{\texorpdfstring{Oriented transverse fibre products in $\SMan$}{Oriented transverse fibre products in SMan}}
\label{sm37}

Let us now consider the question: suppose $\bW=\bX\t_{g,\bZ,h}\bY$ is a transverse fibre product in $\SMan$. If $\bX,\bY,\bZ$ are oriented (or have orientation bundles), is $\bW$ canonically oriented (or has a canonical orientation bundle)?

In general the answer is no, because of difficulties in satisfying \eq{sm3eq3} and \eq{sm3eq6} for $\bW$. The next example illustrates this.

\begin{ex}
\label{sm3ex7}
Let $\bZ$ be the oriented s-manifold of dimension 1 drawn in Figure \ref{sm3fig3}, with four 1-dimensional strata diffeomorphic to $\R$, labelled $a,b,c,d$, with orientations given by the arrows, meeting at a 0-dimensional stratum, the central vertex $v$. Equation \eq{sm3eq3} for $\bZ$ becomes $-1+1-1+1=0$, listing the contributions from $a,b,c,d$.

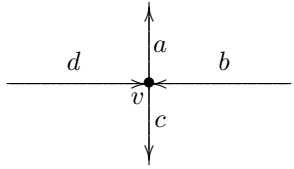
\begin{figure}[htb]
\centerline{$\splinetolerance{.8pt}
\begin{xy}
0;<.5cm,0cm>:
,(0,0)*+{\bu}
,(-3.6,0);(-.1,0)**\crv{} ?>*\dir{>}
,(-3.5,0);(-.2,0)**\crv{}
,(3.6,0);(.1,0)**\crv{} ?>*\dir{>}
,(3.5,0);(.2,0)**\crv{}
,(0,0);(0,2)**\crv{} ?>*\dir{>}
,(0,0.1);(0,1.9)**\crv{}
,(0,0);(0,-2)**\crv{} ?>*\dir{>}
,(0,-0.1);(0,-1.9)**\crv{}
,(.3,1)*+{a}
,(2,.6)*+{b}
,(.3,-1)*+{c}
,(-2,.6)*+{d}
,(-.3,-.4)*+{v}
\end{xy}$}
\caption{An oriented s-manifold $\bZ$ of dimension 1.}
\label{sm3fig3}
\end{figure}

Define $\bX\subset\bZ$ to be the s-submanifold consisting of strata $a,b,v$, and $\bY\subset\bZ$ to be the s-submanifold consisting of strata $b,c,v$. They are oriented, as \eq{sm3eq3} for $\bX$ and $\bY$ become $-1+1=0$ and $1-1=0$.

Write $g:\bX\ra\bZ$ and $h:\bY\ra\bZ$ for the inclusions. They are transverse, and the fibre product $\bW=\bX\t_{g,\bZ,h}\bY$ is the intersection $\bX\cap\bY$, with strata $b,v$. However, $\bW$ is not orientable, as \eq{sm3eq3} for $\bW$ at $v$ would be $\pm 1=0$.
\end{ex}

We can eliminate this problem by strengthening our notion of transversality:

\begin{dfn}
\label{sm3def8}
In the situation of Definition \ref{sm3def6}, we say that $g:\bX\ra\bZ$, $h:\bY\ra\bZ$ are {\it strongly transverse\/} if they are transverse, and the case Definition \ref{sm3def6}(b)(iv) does not occur. As in Remark \ref{sm3rem4}, an important special case is when $\bZ$ is an ordinary $n$-manifold; then transverse implies strongly transverse.

We also generalize strong transversality to $\cSMan$ as in Remark~\ref{sm3rem5}.
\end{dfn}

The fibre product in Example \ref{sm3ex7} is not strongly transverse.

\begin{thm}
\label{sm3thm5}
{\bf(a)} In Theorem\/ {\rm\ref{sm3thm4},} suppose that\/ $g,h$ are strongly transverse and\/ $\bX,\bY,\bZ$ are oriented. Then the transverse fibre product\/ $\bW=\bX\t_{g,\bZ,h}\bY$ has a natural orientation. This depends on an orientation convention, which we take to be that in Akaho--Joyce\/ {\rm\cite[\S 2.4]{AkJo}} and Fukaya--Oh--Ohta--Ono\/ {\rm\cite[\S 8.2]{FOOO}}. It depends on the order of\/ $\bX,\bY,$ and in oriented s-manifolds we have
\e
\bX\t_{g,\bZ,h}\bY\cong (-1)^{(l-n)(m-n)}\bY\t_{h,\bZ,g}\bX.
\label{sm3eq21}
\e

\noindent{\bf(b)} In Theorem\/ {\rm\ref{sm3thm4},} suppose that\/ $g,h$ are strongly transverse and\/ $\bX,\bY,\bZ$ have orientation bundles $(\Or_\bX,\om_\bX),\ab(\Or_\bY,\om_\bY),\ab(\Or_\bZ,\om_\bZ)$. Then we may define an orientation bundle $(\Or_\bW,\om_\bW)$ on\/ $\bW$ as follows. Define a principal\/ $\Z_2$-bundle $\pi:\Or_\bW\ra W$ by
\e
\Or_\bW=e^*(\Or_\bX)\ot_{\Z_2}f^*(\Or_\bY)\ot_{\Z_2}(g\ci e)^*(\Or_\bZ).
\label{sm3eq22}
\e
Definition {\rm\ref{sm3def7}(i)} implies that
\e
W_0=\bigl(X_0\cap g^{-1}(Z_0)\bigr)\vert_{g\vert_{\cdots},Z_0,h\vert_{\cdots}}\bigl(Y_0\cap h^{-1}(Z_0)\bigr),
\label{sm3eq23}
\e
where $X_0\cap g^{-1}(Z_0)$ and\/ $Y_0\cap h^{-1}(Z_0)$ are open and closed in $X_0,Y_0$ and so are manifolds, and\/ \eq{sm3eq23} is a transverse fibre product of manifolds. Properties of transverse fibre products imply that (after choosing a suitable orientation convention) the orientation bundle $\pi:\Or_{W_0}\ra W_0$ has a canonical isomorphism
\e
\Or_{W_0}\cong \pi_{X_0}^*(\Or_{X_0})\ot_{\Z_2}\pi_{Y_0}^*(\Or_{Y_0})\ot_{\Z_2}\pi_{Z_0}^*(\Or_{Z_0}).
\label{sm3eq24}
\e
Define an isomorphism $\om_\bW:\Or_{W_0}\ra\Or_\bW\vert_{W_0}$ of principal\/ $\Z_2$-bundles on $W_0$ to be the composition of
\ea
&\pi_{X_0}^*(\om_\bX)\!\ot\!\pi_{Y_0}^*(\om_\bY)\!\ot\!\pi_{Z_0}^*(\om_\bZ):
\pi_{X_0}^*(\Or_{X_0})\!\ot_{\Z_2}\!\pi_{Y_0}^*(\Or_{Y_0})\!\ot_{\Z_2}\!\pi_{Z_0}^*(\Or_{Z_0})
\nonumber\\
&\qquad \longra\pi_{X_0}^*(\Or_\bX)\!\ot_{\Z_2}\!\pi_{Y_0}^*(\Or_\bY)\!\ot_{\Z_2}\!\pi_{Z_0}^*(\Or_\bZ)
\label{sm3eq25}
\ea
with \eq{sm3eq22} and \eq{sm3eq24}.
\end{thm}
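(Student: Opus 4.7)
The plan is to deduce both parts from a local boundary-vanishing check near points of $W_1,$ where strong transversality simplifies the structure of $\bW.$ Part (a) is the special case of (b) obtained by trivializing all orientation bundles, so I focus on (b); the symmetry \eqref{sm3eq21} in part (a) then reduces to the classical symmetry of oriented transverse fibre products of manifolds, applied stratum-by-stratum to $W_0$ via \eqref{sm3eq23}.

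For the construction in (b), the formulas \eqref{sm3eq22}--\eqref{sm3eq25} already exhibit candidates $(\Or_\bW,\om_\bW),$ using \eqref{sm3eq23} and the standard oriented-fibre-product isomorphism \eqref{sm3eq24} from Theorem \ref{sm2thm1}. What remains is to verify the only non-trivial condition, Definition \ref{sm3def3}(c), namely that
\[\pd\ci(\om_\bW)_*[W_0]_\fund=0\quad\text{in $H_{d-1}^\St(W_1,\Or_\bW|_{W_1},\Z).$}\]

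This verification is \emph{local} by the sheaf property of Steenrod homology on the manifold $W_1$ (cf.\ Definition \ref{sm3def2}), so it suffices to work in an arbitrarily small neighbourhood of each $w\in W_1.$ Strong transversality (Definition \ref{sm3def8}) excludes Definition \ref{sm3def6}(b)(iv), so every such $w=(x,y)$ is of type (ii) or (iii) in Definition \ref{sm3def7}. By symmetry assume case (ii): $x\in X_1,$ $y\in Y_0,$ $z:=g(x)=h(y)\in Z_0.$ Choose neighbourhoods $U\ni x$ in $X,$ $V\ni y$ in $Y_0$ and $N\ni z$ in $Z_0$ so small that $g(U\cap X_{\le 1})\subseteq N$ and $U\cap X_{\le 1}$ meets only strata of $\bX$ through $x.$ Then $W_{\le 1}$ near $w$ is identified with $(U\cap X_{\le 1})\t_{g,N,h}V,$ with codim-$0$ stratum $W_0^{\rm loc}:=(U\cap X_0)\t_NV$ and codim-$1$ stratum $W_1^{\rm loc}:=(U\cap X_1)\t_NV.$ Since $V,N$ are ordinary manifolds and $h:V\to N$ is transverse to $g$ on each of $U\cap X_0,$ $U\cap X_1,$ taking the transverse fibre product with $V$ induces a \emph{Gysin-type pullback} $h^{!}$ between the long exact sequences of the pairs $(U\cap X_1)\subseteq(U\cap X_{\le 1})$ and $W_1^{\rm loc}\subseteq W_{\le 1}^{\rm loc}.$ The orientation convention encoded in \eqref{sm3eq22}--\eqref{sm3eq25} ensures that $h^{!}$ carries $(\om_\bX)_*[U\cap X_0]_\fund$ to $(\om_\bW)_*[W_0^{\rm loc}]_\fund,$ so by naturality of $\pd$ under $h^{!}$ and the condition \eqref{sm3eq6} for $\bX$ locally at $x,$
\[\pd\bigl((\om_\bW)_*[W_0^{\rm loc}]_\fund\bigr)=h^{!}\bigl(\pd(\om_\bX)_*[U\cap X_0]_\fund\bigr)=0.\]
Case (iii) is symmetric with the roles of $\bX,\bY$ interchanged.

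The main obstacle is making $h^{!}$ precise in Steenrod homology and verifying its naturality with respect to $\pd.$ This is eased by Property \ref{sm2pr2}(a): all relevant pieces $U\cap X_0,$ $U\cap X_1,$ $V,$ $W_0^{\rm loc},$ $W_1^{\rm loc}$ are manifolds, so Steenrod and Borel--Moore homology agree on them, and the Gysin pullback is standard in the Borel--Moore setting. Locally one may factor $h$ through a submersion to an ambient $\R^K$ followed by a closed embedding, reducing $h^{!}$ to external product with a fibre fundamental class along the submersion (where naturality with $\pd$ is immediate) composed with a Thom-class cap product. All orientation signs fall out of the sign pattern already encoded in \eqref{sm3eq22}--\eqref{sm3eq25}, which was designed to match the oriented-fibre-product convention of \cite[\S 2.4]{AkJo}, \cite[\S 8.2]{FOOO}.
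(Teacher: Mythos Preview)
Your argument is correct and takes a genuinely different route from the paper. Both proofs reduce (a) to (b) and identify that the only non-trivial check is Definition~\ref{sm3def3}(c), but they verify it differently. You localize on $W_1$: near each point $w\in W_1$, strong transversality forces case (ii) or (iii) of Definition~\ref{sm3def7}, so locally the fibre product is over an ordinary manifold $N\subseteq Z_0$ with one factor an ordinary manifold $V\subseteq Y_0$; you then invoke a Gysin pullback $h^!$ in Borel--Moore homology (legitimate by Property~\ref{sm2pr2}(a), since all pieces are manifolds) and its naturality with $\pd$ to transport \eqref{sm3eq6} for $\bX$ to the required vanishing for $\bW$. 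The paper instead works globally on $W_{\le 1}$: it chooses a decreasing system of tubular neighbourhoods $U_i$ of the diagonal $\De_{Z_0}(Z_0)\subset Z_0\t Z_0$, forms $[\bX]_\fund\bt[\bY]_\fund$ capped with the relative Poincar\'e dual of $(\De_{Z_0})_*([Z_0]_\fund)$ on each $(g\t h)^{-1}(\ov U_i)$, and uses the continuity of Steenrod homology under inverse limits (Property~\ref{sm2pr2}(h)(i)) to produce a class $[W_{\le 1}]'_\fund$ whose restriction to $W_0$ is $(\om_\bW)_*([W_0]_\fund)$; this forces $\pd\ci(\om_\bW)_*([W_0]_\fund)=0$ directly from exactness. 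Your approach is lighter on Steenrod-specific machinery and closer in spirit to classical differential topology; the paper's approach is more uniform, produces an explicit class on $W_{\le 1}$ along the way, and handles both components of $W_1$ in a single stroke rather than by a case split. One small point: your factorization of $h$ is stated backwards (you want a closed embedding followed by a projection, not the reverse), but the intended construction is clear.
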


\begin{proof}
As orientations on s-manifolds $\bX$ correspond to orientation bundles $(\Or_\bX,\om_\bX)$ with $\Or_\bX$ trivial, the first part of (a) follows from (b) in the case when $\Or_\bX,\ab\Or_\bY,\ab\Or_\bZ$ are trivial principal $\Z_2$-bundles, so that $\Or_\bW$ in \eq{sm3eq22} is also trivial. Having proved the first part of (a), equation \eq{sm3eq21} will follow from the corresponding equation \cite[Prop.~2.10]{AkJo} for fibre products of the oriented manifolds $X_0,Y_0,Z_0$. So it is enough to prove~(b).

Work in the situation of (b). As for \eq{sm3eq23}, since Definition \ref{sm3def7}(iv) does not occur by strong transversality, Definition \ref{sm3def7}(ii),(iii) imply that 
\begin{align*}
W_1=\bigl(&X_1\cap g^{-1}(Z_0)\bigr)\vert_{g\vert_{\cdots},Z_0,h\vert_{\cdots}}\bigl(Y_0\cap h^{-1}(Z_0)\bigr)
\amalg \\
\bigl(&X_0\cap g^{-1}(Z_0)\bigr)\vert_{g\vert_{\cdots},Z_0,h\vert_{\cdots}}\bigl(Y_1\cap h^{-1}(Z_0)\bigr),
\end{align*}
which is the disjoint union of two transverse fibre products of manifolds. Hence
\e
W_{\le 1}\!=\!\bigl\{(x,y)\!\in\! \bigl(X_{\le 1}\!\cap\! g^{-1}(Z_0)\bigr)\!\t\!\bigl(Y_{\le 1}\!\cap\! h^{-1}(Z_0)\bigr):
\text{$g(x)\!=\!h(y)$ in $Z_0$}\bigr\}.
\label{sm3eq26}
\e

As $Z_0$ is an $n$-manifold, we can choose a tubular neighbourhood for the diagonal embedding $\De_{Z_0}:Z_0\hookra Z_0\t Z_0$. Using this we choose a sequence of decreasing open neighbourhoods $Z_0\t Z_0\supset U_1\supset U_2\supset U_3\supset\cdots\supset \De(Z_0)$ of $\De(Z_0)$ in $Z_0\t Z_0$, such that in the tubular neighbourhood which identifies $Z_0\t Z_0$ near $\De(Z_0)$ with the total space of $TZ_0\ra Z_0$, then $U_i$ is identified with the open balls of radius $\frac{1}{2^i}$ about the zero section using some metric on $Z_0$. Writing $\ov U_i$ for the closure of $U_i$ in $Z_0\t Z_0$ we have $Z_0\t Z_0\supset\ov U_1\supset U_1\supset\ov U_2\supset U_2\supset\cdots\supset \De(Z_0)$, where~$\bigcap_{i\ge 1}\ov U_i=\De_{Z_0}(Z_0)$.

The fundamental class $[Z_0]_\fund\in H_n^\St(Z_0,\Or_{Z_0},\Z)$ pushes forward along the proper map $\De_{Z_0}$ to $(\De_{Z_0})_*([Z_0]_\fund)$ in $H_n^\St(Z_0\t Z_0,\Pi_1^*(\Or_{Z_0}),\Z)$. As $Z_0\t Z_0$ is a manifold, this has a Poincar\'e dual cohomology class $\Pd\bigl((\De_{Z_0})_*([Z_0]_\fund)\bigr)$ in \v Cech cohomology $\check H^n(Z_0\t Z_0,\Pi_2^*(\Or_{Z_0}),\Z)$, where we change from $\Pi_1^*(\Or_{Z_0})$ to $\Pi_2^*(\Or_{Z_0})$ as the orientation bundle of $Z_0\t Z_0$ is $\Pi_1^*(\Or_{Z_0})\ot_{\Z_2}\Pi_2^*(\Or_{Z_0})$.

The same works in the \v Cech cohomology of each open neighbourhood $U_i$, and as the class is supported near $\De_{Z_0}(Z_0)$ and away from $\ov U_i\sm U_i$, we can extend to the relative cohomology of $(\ov U_i;\ov U_i\sm U_i)$, giving classes for $i=1,2,\ldots$
\begin{equation*}
\Pd\bigl((\De_{Z_0})_*([Z_0]_\fund)\bigr){}_i\in \check H^n(\ov U_i;\ov U_i\sm U_i,\Pi_2^*(\Or_{Z_0}),\Z).
\end{equation*}
To relate these for different $i$, we form similar classes
\begin{equation*}
\Pd\bigl((\De_{Z_0})_*([Z_0]_\fund)\bigr){}_{i,i+1}\in \check H^n(\ov U_i;\ov U_i\sm U_{i+1},\Pi_2^*(\Or_{Z_0}),\Z)
\end{equation*}
and then under the inclusions of pairs $\inc_i:(\ov U_i;\ov U_i\sm U_i)\hookra(\ov U_i;\ov U_i\sm U_{i+1})$ and $\inc_{i+1}':(\ov U_{i+1};\ov U_{i+1}\sm U_{i+1})\hookra(\ov U_i;\ov U_i\sm U_{i+1})$ we have
\e
\begin{split}
\inc_i^*\bigl(\Pd\bigl((\De_{Z_0})_*([Z_0]_\fund)\bigr){}_{i,i+1}\bigr)&=\Pd\bigl((\De_{Z_0})_*([Z_0]_\fund)\bigr){}_i,\\
\inc_{i+1}^{\prime *}\bigl(\Pd\bigl((\De_{Z_0})_*([Z_0]_\fund)\bigr){}_{i,i+1}\bigr)&=\Pd\bigl((\De_{Z_0})_*([Z_0]_\fund)\bigr){}_{i+1}.
\end{split}
\label{sm3eq27}
\e

The fundamental classes $[\bX]_\fund,[\bY]_\fund$ given by Theorem \ref{sm3thm1} give a class
\e
[\bX]_\fund\bt[\bY]_\fund\in H_{l+m}^\St(X\t Y,\Or_\bX\bt\Or_\bY,\Z).
\label{sm3eq28}
\e
Pulling back to the open subset $(X_{\le 1}\t Y_{\le 1})\cap (g\t h)^{-1}(U_i)$ gives a class
\begin{equation*}
\inc_i^{\prime\prime*}\bigl([\bX]_\fund\bt[\bY]_\fund\bigr)\in H_{l+m}^\St\bigl((X_{\le 1}\t Y_{\le 1})\cap (g\t h)^{-1}(U_i),\Or_\bX\bt\Or_\bY\vert_{\cdots},\Z\bigr).
\end{equation*}
By properties of relative (co)homology, we have a cap product
\begin{align*}
\cap:\,&H_{l+m}^\St\bigl((X_{\le 1}\t Y_{\le 1})\cap (g\t h)^{-1}(U_i),\Or_\bX\bt\Or_\bY\vert_{\cdots},\Z\bigr)
\t\\
&\check H^n\bigl((X_{\le 1}\t Y_{\le 1})\cap (g\t h)^{-1}(\ov U_i);\\
&\quad (X_{\le 1}\t Y_{\le 1})\cap (g\t h)^{-1}(\ov U_i\sm U_i),\Pi_2^*(\Or_{Z_0}),\Z\bigr)\longra \\
& H_d^\St\bigl((X_{\le 1}\t Y_{\le 1})\cap (g\t h)^{-1}(\ov U_i),(\Or_\bX\bt\Or_\bY\vert_{\cdots})\ot\Pi_2^*(\Or_{Z_0}),\Z\bigr),
\end{align*}
where $d=l+m-n$. Thus we may form the homology class
\ea
&\inc_i^{\prime\prime*}\bigl([\bX]_\fund\bt[\bY]_\fund\bigr)\cap (g\t h)\vert_{\cdots}^*\bigl(\Pd\bigl((\om_\bZ)_*\ci(\De_{Z_0})_*([Z_0]_\fund)\bigr){}_i\bigr)
\label{sm3eq29}\\
&\in H_d^\St\bigl((X_{\le 1}\t Y_{\le 1})\cap (g\t h)^{-1}(\ov U_i),(\Or_\bX\bt\Or_\bY\vert_{\cdots})\ot\Pi_2^*(\Or_\bZ),\Z\bigr),
\nonumber
\ea
and we use $\om_\bZ$ to convert $\Or_{Z_0}$ to $\Or_\bZ$. Using \eq{sm3eq27} we can show that under the proper inclusion $\inc_i^{i+1}:(X_{\le 1}\t Y_{\le 1})\cap (g\t h)^{-1}(\ov U_{i+1})\hookra (X_{\le 1}\t Y_{\le 1})\cap (g\t h)^{-1}(\ov U_i)$ we have
\ea
&\inc_{i*}^{i+1}\bigl(\inc_{i+1}^{\prime\prime*}\bigl([\bX]_\fund\!\bt\![\bY]_\fund\bigr)\!\cap\! (g\!\t\! h)\vert_{\cdots}^*\bigl(\Pd\bigl((\om_\bZ)_*\!\ci\!(\De_{Z_0})_*([Z_0]_\fund)\bigr){}_{i+1}\bigr)\bigr)
\nonumber\\
&=\inc_i^{\prime\prime*}\bigl([\bX]_\fund\!\bt\![\bY]_\fund\bigr)\!\cap\! (g\!\t\! h)\vert_{\cdots}^*\bigl(\Pd\bigl((\om_\bZ)_*\!\ci\!(\De_{Z_0})_*([Z_0]_\fund)\bigr){}_i\bigr).
\label{sm3eq30}
\ea

As $\bigcap_{i\ge 1}\ov U_i=\De_{Z_0}(Z_0)$ we see from \eq{sm3eq26} that
\begin{equation*}
W_{\le 1}=\bigcap_{i=1}^\iy\bigl((X_{\le 1}\t Y_{\le 1})\cap (g\t h)^{-1}(\ov U_i)\bigr).
\end{equation*}
Hence the twisted version of Property \ref{sm2pr2}(h)(i) gives an exact sequence
\e
\begin{gathered}
\xymatrix@C=12pt@R=5pt{ 0 \ar[r] & \varprojlim^1 {\begin{subarray}{l}\ts H_{d+1}^\St\bigl((X_{\le 1}\t Y_{\le 1})\cap (g\t h)^{-1}(\ov U_i), \\ \ts \quad (\Or_\bX\bt\Or_\bY\vert_{\cdots})\ot\Pi_2^*(\Or_\bZ),\Z\bigr)\end{subarray}} \ar[dr] \\
&& *+[l]{\!\!\!\!\!\!\!\!\!\!\!\! H_d^\St(W_{\le 1},\Or_\bW\vert_{W_{\le 1}},R)} \ar[dl] \\ 
0 & \varprojlim {\begin{subarray}{l}\ts H_d^\St\bigl((X_{\le 1}\t Y_{\le 1})\cap (g\t h)^{-1}(\ov U_i), \\ \ts \quad (\Or_\bX\bt\Or_\bY\vert_{\cdots})\ot\Pi_2^*(\Or_\bZ),\Z\bigr),\end{subarray}} \ar[l] }
\end{gathered}
\label{sm3eq31}
\e
since $(\Or_\bX\bt\Or_\bY\vert_{\cdots})\ot\Pi_2^*(\Or_\bZ)$ agrees with $\Or_\bW$ on $W_{\le 1}$ by \eq{sm3eq22}.

Equation \eq{sm3eq30} implies that the classes \eq{sm3eq29} define an element in the direct limit on the last line of \eq{sm3eq31}. Hence there exists a class $[W_{\le 1}]'_\fund\in H_d^\St(W_{\le 1},\Or_\bW\vert_{W_{\le 1}},R)$ such that under the proper inclusion $\ti\inc_i:W_{\le 1}\hookra (X_{\le 1}\t Y_{\le 1})\cap (g\t h)^{-1}(\ov U_i)$, we have
\ea
&(\ti\inc_i)_*\bigl([W_{\le 1}]'_\fund\bigr)
\label{sm3eq32}\\
&=\inc_i^{\prime\prime*}\bigl([\bX]_\fund\!\bt\![\bY]_\fund\bigr)\!\cap\! (g\!\t\! h)\vert_{\cdots}^*\bigl(\Pd\bigl((\om_\bZ)_*\!\ci\!(\De_{Z_0})_*([Z_0]_\fund)\bigr){}_i\bigr).
\nonumber
\ea

Now all the proof from \eq{sm3eq28} to \eq{sm3eq32} works if we substitute $W_0,X_0,Y_0$ for $W_{\le 1},X_{\le 1},Y_{\le 1}$. Also, each step commutes with the pullback maps on Steenrod homology for the open inclusions $W_0\hookra W_{\le 1},\ldots,Y_0\hookra Y_{\le 1}$. Thus we see that
\ea
&(\ti\inc_i)_*\bigl([W_{\le 1}]'_\fund\vert_{W_0}\bigr)
\label{sm3eq33}\\
&=\inc_i^{\prime\prime*}\bigl([\bX]_\fund\vert_{X_0}\bt[\bY]_\fund\vert_{Y_0}\bigr)\cap (g\t h)\vert_{\cdots}^*\bigl(\Pd\bigl((\om_\bZ)_*\!\ci\!(\De_{Z_0})_*([Z_0]_\fund)\bigr){}_i\bigr).
\nonumber\\
&=\inc_i^{\prime\prime*}\bigl((\om_\bX)_*\bigl([X_0]_\fund\bigr)\bt(\om_\bY)_*\bigl([Y_0]_\fund\bigr)\bigr)
\nonumber\\
&\qquad \cap (g\t h)\vert_{\cdots}^*\bigl((\om_\bZ)_*\ci\Pd\bigl((\De_{Z_0})_*([Z_0]_\fund)\bigr){}_i\bigr)
\nonumber\\
&=(\om_\bX\bt\om_\bY\bt\om_\bZ)_*\bigl(\inc_i^{\prime\prime*}\bigl([X_0\t Y_0]_\fund\cap (g\t h)\vert_{\cdots}^*\bigl(\Pd\bigl((\De_{Z_0})_*([Z_0]_\fund)\bigr){}_i\bigr)\bigr)
\nonumber\\
&=(\om_\bW)_*\bigl(\inc_i^{\prime\prime*}\bigl([W_0]_\fund\bigr)\bigr),
\nonumber
\ea
where in the first step we use \eq{sm3eq32}, in the second \eq{sm3eq13} for $\bX,\bY$, we rearrange in the third, and in the fourth we use that as $W_0=(X_0\cap g^{-1}(Z_0))\t_{g\vert_{\cdots},Z_0,h\vert_{\cdots}}(Y_0\cap h^{-1}(Z_0))$ is a transverse fibre product, it has fundamental class
\begin{equation*}
[W_0]_\fund=	[X_0\t Y_0]_\fund\cap (g\t h)\vert_{\cdots}^*\bigl(\Pd\bigl((\De_{Z_0})_*([Z_0]_\fund)\bigr),
\end{equation*}
which is valid on any neighbourhood of $W_0$ in $(X_0\cap g^{-1}(Z_0))\t(Y_0\cap h^{-1}(Z_0))$,
and the definition of $\om_\bW$ in the theorem.

Now in the analogue of \eq{sm3eq31} with $W_0,X_0,Y_0$ in place of $W_{\le 1},X_{\le 1},Y_{\le 1}$, the unique element of $H_d^\St(W_0,\Or_\bW\vert_{W_{\le 1}},R)$ mapping to $(\om_\bW)_*(\inc_i^{\prime\prime*}([W_0]_\fund))$ in the direct limit for each $i$ is $(\om_\bW)_*([W_0]_\fund)$. Thus as the two versions of \eq{sm3eq31} commute with restriction maps to $W_0,X_0,Y_0$ we see from \eq{sm3eq33} that
\begin{equation*}
[W_{\le 1}]'_\fund\vert_{W_0}=(\om_\bW)_*([W_0]_\fund).
\end{equation*}
Hence, the analogue of \eq{sm3eq14} for $\bW$ shows that $\pd\ci(\om_\bW)_*([W_0]_\fund)=0$ in $H_{d-1}^\St(W_1,\Or_\bW\vert_{W_1},\Z)$. This proves equation \eq{sm3eq6}, so $(\Or_\bW,\om_\bW)$ is an orientation bundle on $\bW$, as we have to show.
\end{proof}

\begin{rem}
\label{sm3rem6}
The proof of Theorem \ref{sm3thm5} does not need the orientation or orientation bundle on $\bZ$ to satisfy \eq{sm3eq3} or \eq{sm3eq6}. This is because our notion of strong transversality in Definition \ref{sm3def8} ensures that the dimension $d,d-1$ strata of $\bW$ do not involve the dimension $n-1$ strata of~$\bZ$.
\end{rem}

\subsection{Vector bundles and sections}
\label{sm38}

The usual definition of vector bundle on manifolds \cite[\S 5]{Lee2} generalizes immediately to s-manifolds.

\begin{dfn}
\label{sm3def9}
Let $\bX$ be an s-manifold of dimension $n$. A {\it vector bundle\/} $\pi:\bE\ra\bX$ over $\bX$, of {\it rank\/} $r$, is an s-manifold $\bE$ of dimension $n+r$ with a smooth map $\pi:\bE\ra\bX$, satisfying:
\begin{itemize}
\setlength{\itemsep}{0pt}
\setlength{\parsep}{0pt}
\item[(i)] Each fibre $E_x=\pi^{-1}(x)$ for $x\in X$ has the structure of an $\R$-vector space of dimension $r$, which is part of the data of $\bs E$.
\item[(ii)] We can cover $\bX$ by open s-submanifolds $\bU\subseteq\bX$ such that the open submanifold $\pi^{-1}(\bU)\subseteq\bE$ admits a diffeomorphism of s-manifolds $\pi^{-1}(\bU)\cong\bU\t\R^r$ which identifies $\pi\vert_{\pi^{-1}(\bU)}:\pi^{-1}(\bU)\ra\bU$ with the projection $\pi_\bU:\bU\t\R^r\ra\bU$, and identifies the vector space structure on $E_u$ for $u\in U$ with that on $\{u\}\t\R^r\cong\R^r$.
\end{itemize}

A {\it smooth section\/} $s$ of $\bE$ is a smooth map $s:\bX\ra\bE$ with $\pi\ci s=\id_\bX$. The {\it zero section\/} $0:\bX\ra\bE$ maps $x\mapsto 0\in E_x$ for $x\in X$.

On each stratum $X^i\subset X$, the restriction $E\vert_{X^i}\ra X^i$ is a vector bundle in the sense of manifolds, and $s\vert_{X^i}$ is a smooth section of $E\vert_{X^i}\ra X^i$.

We write $\Ga^\iy(\bE)$ for the set of smooth sections of $\bE$. It is naturally an $\R$-vector space, and a representation of the $\R$-algebra $C^\iy(\bX)$ of smooth functions $f:\bX\ra\R$ from \S\ref{sm35}. We make $\Ga^\iy(\bE)$ into a topological vector space by combining the $C^0_{\rm loc}$ topology on continuous sections of $E\ra X$, and the $C^\iy_{\rm loc}$ topology on smooth sections of $E\vert_{X^i}\ra X^i$ for each stratum $X^i$ of $X$. 

That is, a sequence $(s_j)_{j=1}^\iy$ in $\Ga^\iy(\bE)$ converges to $s\in\Ga^\iy(\bE)$ if on every compact subset $K\subseteq X$, $s_j\vert_K\ra s\vert_K$ as $j\ra\iy$ as continuous sections of $E$, and on every compact subset $K^i\subseteq X^i$ for all $i\in I$, $\nabla^ks_j\vert_{K^i}\ra \nabla^ks\vert_{K^i}$ as $j\ra\iy$ for all $k\ge 0$, where $\nabla$ is any connection on~$E\vert_{X^i}\ra X^i$.
\end{dfn}

\begin{rem}
\label{sm3rem7}
Unlike manifolds, s-manifolds $\bX$ in general do {\it not\/} have good notions of tangent and cotangent vector bundle $T\bX,T^*\bX$. Although we defined tangent spaces $T_x\bX$ in Definition \ref{sm3def1}(c), $\dim T_x\bX$ can vary discontinuously with $x$ in $\bX$, and so $T_x\bX$ are not the fibres of a vector bundle $T\bX$ on~$\bX$.	
\end{rem}

As for Corollary \ref{sm3cor1}, we can use Theorem \ref{sm3thm2} to deduce:

\begin{cor}
\label{sm3cor4}
Let\/ $\bX$ be an s-manifold with\/ $\dim\bX>0$ and\/ $X_0\ne\es,$ and $\bE\ra\bX$ be a vector bundle of rank\/ $r>0$. Then the $\R$-vector space $\Ga^\iy(\bE)$ of smooth sections of $\bE$ is infinite-dimensional.
\end{cor}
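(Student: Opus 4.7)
The plan is to imitate the proof of Corollary \ref{sm3cor1}, localizing to a point $x\in X_0$, using local triviality of $\bE$ there, and extending local sections globally via bump functions from Lemma \ref{sm3lem1}.

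First I would pick any point $x\in X_0$. Since $X_0$ is open in $X$ and is a smooth $n$-manifold with $n=\dim\bX>0$, and since $\bE\to\bX$ is locally trivial by Definition \ref{sm3def9}(ii), I can choose an open neighbourhood $\bU\subseteq\bX$ of $x$ with $\bU\subseteq X_0$ such that $\bU$ is diffeomorphic to an open subset of $\R^n$ and $\pi^{-1}(\bU)\cong\bU\t\R^r$ as vector bundles. Smooth sections of $\bE\vert_\bU$ then correspond to smooth maps $\bU\to\R^r$ in the ordinary sense of manifolds, and form an infinite-dimensional $\R$-vector space (for example, they contain all polynomial maps in local coordinates, or all monomials $x_1^{k}e_1$ for $k\ge 0$, which are linearly independent).

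Next I would use Lemma \ref{sm3lem1} to choose a bump function $\eta:\bX\to\R$ with $\eta(x)>0$, $\eta(X)\subseteq[0,\iy)$, and $\ov{\{y\in X:\eta(y)\ne 0\}}\subseteq \bU$. Given any smooth section $s_0\in\Ga^\iy(\bE\vert_\bU)$, the product $\eta\cdot s_0$ is a smooth section of $\bE\vert_\bU$ with support contained in a closed subset of $\bU$, so it extends by zero to a global smooth section $s\in\Ga^\iy(\bE)$ (the extension is smooth on each stratum $X^i$ because it is smooth on $\bU\cap X^i$, identically zero on the open set $X\sm\supp\eta$, and these open sets cover $X^i$). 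Applying this extension procedure to a countable linearly independent family $\{s_0^{(k)}\}_{k\ge 1}\subseteq\Ga^\iy(\bE\vert_\bU)$ chosen so that the $\eta\cdot s_0^{(k)}$ remain linearly independent (e.g.\ take $s_0^{(k)}$ supported in ever-shrinking neighbourhoods of $x$, or take them to have linearly independent germs at $x$ where $\eta(x)\ne 0$) produces infinitely many linearly independent elements of $\Ga^\iy(\bE)$.

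The only step requiring any care is verifying that the extension by zero is smooth in the sense of s-manifolds, i.e.\ smooth on every stratum $X^i$; this is immediate because smoothness is a local property on each stratum and the two open sets $\bU$ and $X\sm\supp\eta$ cover $X$. Everything else is routine, so there is no serious obstacle: the result follows directly from local triviality of $\bE$ together with the existence of bump functions provided by Lemma \ref{sm3lem1}.
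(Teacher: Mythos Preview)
Your proposal is correct and follows essentially the same approach as the paper: the paper simply says ``As for Corollary \ref{sm3cor1}, we can use Theorem \ref{sm3thm2} to deduce'' this result, and the proof of Corollary \ref{sm3cor1} works by localizing near a point of $X_0$ (where smooth functions form an infinite-dimensional space) and extending globally via partitions of unity. Your use of a single bump function from Lemma \ref{sm3lem1} rather than a full partition of unity from Theorem \ref{sm3thm2} is a cosmetic simplification of the same idea.
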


\begin{dfn}
\label{sm3def10}
Let $\bX$ be an s-manifold of dimension $n,$ and $\bE\ra\bX$ a vector bundle of rank $r$. If $x\in\bX$, so that $0(x)\in\bE$, then there is a canonical isomorphism
\e
T_{0(x)}\bE\cong T_x\bX\op E_x,
\label{sm3eq34}
\e
as this holds for vector bundles on smooth manifolds, and we can deduce it from the vector bundle $E\vert_{X^i}\ra X^i$ on the stratum $X^i$ of $\bX$ containing~$x$.

Let $s$ be a smooth section of $\bE$. Write $s^{-1}(0)\subset X$ for the closed subset of points $x\in X$ with $s(x)=0(x)$. We say that $s$ is {\it transverse\/} if for all $x\in s^{-1}(0)$, the following composition is surjective:
\begin{equation*}
\xymatrix@C=40pt{ T_x\bX \ar[r]^(0.45){T_xs} & T_{0(x)}\bE \ar[r]^(0.45){\eq{sm3eq34}}_(0.45)\cong & T_x\bX\op E_x \ar[r]^(0.6){\pi_{E_x}} & E_x. }
\end{equation*}
\end{dfn}

\begin{thm}
\label{sm3thm6}
Let\/ $\bX$ be an s-manifold of dimension $n,$ and\/ $\bE\ra\bX$ a vector bundle of rank\/ $r$. Then:
\begin{itemize}
\setlength{\itemsep}{0pt}
\setlength{\parsep}{0pt}
\item[{\bf(a)}] Generic sections of\/ $\bE\ra\bX$ are transverse. That is, the subset of transverse sections in $\Ga^\iy(\bE)$ is dense, in the topology from Definition\/~{\rm\ref{sm3def9}}.
\item[{\bf(b)}] If\/ $s$ is a transverse smooth section of\/ $\bE$ then the zero set $s^{-1}(0)\subseteq X$ naturally has the structure of an s-manifold of dimension $n-r,$ which we write as $\bs{s^{-1}(0)}$.

If\/ $\bX$ has an orientation bundle $(\Or_\bX,\om_\bX)$ then $\bs{s^{-1}(0)}$ has an orientation bundle $(\Or_{\smash{\bs{s^{-1}(0)}}},\om_{\smash{\bs{s^{-1}(0)}}})$ with\/ $\Or_{\smash{\bs{s^{-1}(0)}}}=\bigl(\Or_\bX\ot_{\Z_2}\Or_E\bigr)\big\vert{}_{s^{-1}(0)},$ where $\pi:\Or_E\ra X$ is the principal\/ $\Z_2$-bundle of orientations on the fibres of the topological vector bundle $\pi:E\ra X$.
\end{itemize}
\end{thm}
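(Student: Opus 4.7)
For part \textbf{(a)}, I would combine parametric Sard on each stratum with a Baire--category argument in $\Ga^\iy(\bE)$. Transversality in Definition \ref{sm3def10} is, stratum by stratum, transversality of the ordinary restrictions $s\vert_{X^i}:X^i\ra E\vert_{X^i}$ to the zero section. Using the bump functions of Lemma \ref{sm3lem1}, the partitions of unity of Theorem \ref{sm3thm2}, and local trivialisations of $\bE$, one constructs for each compact $K\subseteq X^i$ a finite-dimensional linear family of global smooth sections of $\bE$ whose values span $E_x$ at every $x\in K$; parametric Sard then shows that sections transverse on $K$ form an open dense subset of $\Ga^\iy(\bE)$. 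Exhausting each $X^i$ by countably many such $K$ and ranging over the countable index set of strata, Baire's theorem in the Fr\'echet space $\Ga^\iy(\bE)$ of Definition \ref{sm3def9} yields a residual, hence dense, set of globally transverse sections.

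For the s-manifold structure in \textbf{(b)}, put $W^i:=(s\vert_{X^i})^{-1}(0)$; standard transversality makes each nonempty $W^i$ a smooth submanifold of $X^i$ of dimension $\dim X^i-r\le n-r$, with $W^i=\es$ whenever $\dim X^i<r$. The verification of Definition \ref{sm3def1}(a)--(e) for $\bs{s^{-1}(0)}:=\bigl(s^{-1}(0),\{W^i:i\in I,\;W^i\ne\es\}\bigr)$ is inheritance from $\bX$: (a) because $s^{-1}(0)$ is closed in $X$; (b),(c) by construction of the $W^i$; (d) by restricting the local embeddings $f:U\ra\R^N$ of Definition \ref{sm3def1}(d) to $U\cap s^{-1}(0)$; and (e) by keeping the same partial order $\pr$, with the dichotomy (e)(ii)(A),(B) preserved because the passage $X^i\mapsto W^i$ drops every nonempty dimension by exactly $r$, so codimensions within the top dimension agree between $\bX$ and $\bs{s^{-1}(0)}$.

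For the orientation bundle in \textbf{(b)}, set $\Or_{\smash{\bs{s^{-1}(0)}}}:=\bigl(\Or_\bX\ot_{\Z_2}\Or_E\bigr)\big\vert_{s^{-1}(0)}$, and on the top stratum $W_0\subseteq(s^{-1}(0))_0$ define $\om_{\smash{\bs{s^{-1}(0)}}}$ by the usual transverse-zero-set orientation isomorphism $\Or_{W_0}\cong\Or_{X_0}\ot_{\Z_2}\Or_E\vert_{W_0}$ for a section of an oriented vector bundle over an oriented manifold. The real content is the vanishing \eq{sm3eq6} on $X_{\le 1}$, which by the sheaf-theoretic argument of Definition \ref{sm3def3} is a local condition. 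Given $x_0\in s^{-1}(0)$ I pick an open s-submanifold $\bU\subseteq\bX$ through $x_0$ on which $\bE\vert_\bU\cong\bU\t\R^r$ is trivialised, write $s\vert_\bU=(\id_\bU,\si)$ for some smooth $\si:\bU\ra\R^r$, and realise $s^{-1}(0)\cap U=\si^{-1}(0)$ as the transverse fibre product $\bU\t_{\si,\R^r,0}*$ in $\SMan$ with target the \emph{ordinary} manifold $\R^r$. Since $\R^r$ has only its top-dimensional stratum, case (b)(iv) of Definition \ref{sm3def6} cannot occur, so Definition \ref{sm3def8} makes transversality here automatically strong, and Theorem \ref{sm3thm5}(b) applies. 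It yields an orientation bundle on $\bU\cap s^{-1}(0)$ whose underlying principal $\Z_2$-bundle reduces, using triviality of $\Or_{\R^r}$ and $\Or_*$, to $\Or_\bU\vert_{\si^{-1}(0)}$; this matches $(\Or_\bX\ot_{\Z_2}\Or_E)\vert_{s^{-1}(0)\cap U}$ because $\Or_E\vert_\bU$ is trivialised by the chosen framing, and the two orientation isomorphisms $\om$ coincide on $W_0\cap U$ by the standard orientation convention. Hence \eq{sm3eq6} holds near each $x_0$, and therefore globally.

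The main obstacle is the orientation bundle. The obvious attempt of realising $\bs{s^{-1}(0)}=\bX\t_{s,\bE,0}\bX$ in $\SMan$ and invoking Theorem \ref{sm3thm5}(b) directly \emph{fails}, because at a point of a codimension-one stratum $X^i\subseteq\bX$ one has $\dim_x\bX=\dim_y\bX=n-1$ and $\dim_z\bE=n+r-1$, which is exactly case (b)(iv) of Definition \ref{sm3def6}; strong transversality of Definition \ref{sm3def8} then fails. Trivialising $\bE$ locally replaces the target by the ordinary manifold $\R^r$, which has no codimension-one strata and so rules out case (b)(iv), restoring strong transversality and enabling Theorem \ref{sm3thm5}(b) to be applied.
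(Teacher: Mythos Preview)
Your proposal is correct. For part (b) your argument coincides with the paper's almost exactly: the paper also realises $s^{-1}(0)\cap U$ as the transverse fibre product $\bU\t_{f,\R^r,0}*$ over the ordinary manifold $\R^r$, invokes Theorem \ref{sm3thm4} for the s-manifold structure, observes that such fibre products are automatically strongly transverse, and then applies Theorem \ref{sm3thm5}(b) for the orientation bundle. Your direct verification of Definition \ref{sm3def1}(a)--(e) via the $W^i$ is also explicitly mentioned in the paper as an alternative. Your closing observation that the global fibre product $\bX\t_{s,\bE,0}\bX$ fails strong transversality at codimension-one strata, forcing the local trivialisation trick, is a nice clarification that the paper leaves implicit.

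The one genuine difference is in part (a). The paper avoids the Baire argument entirely: using an argument parallel to the Whitney embedding theorem (Theorem \ref{sm3thm3}), it produces a \emph{single} global finite family $s_1,\ldots,s_{r+n}\in\Ga^\iy(\bE)$ spanning every fibre, and then for any given $s$ applies Sard to the finite-dimensional family $s+\ep_1s_1+\cdots+\ep_{r+n}s_{r+n}$. Your route via local spanning families on compacts $K\subseteq X^i$ followed by a countable-intersection Baire argument in the Fr\'echet space $\Ga^\iy(\bE)$ is also valid and is the textbook transversality approach; it yields the slightly stronger conclusion that transverse sections are residual, not merely dense. The paper's approach is shorter and sidesteps having to verify that $\Ga^\iy(\bE)$ is a Baire space.
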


\begin{proof}
For (a), by an argument similar to parts of the proof of Theorem \ref{sm3thm3}, we can show that we can choose sections $s_1,\ldots,s_{r+n}\in\Ga^\iy(\bE)$ such that $s_1(x),\ldots,s_{r+n}(x)$ span $E\vert_x$ for each $x\in X$. Given $s\in\Ga^\iy(\bE)$, consider the perturbed section $\ti s=s+\ep_1s_1+\cdots\ep_ns_{r+n}$ for $(\ep_1,\ldots,\ep_{r+n})\in\R^{r+n}$ small. Using Sard's Theorem in a similar way to Corollary \ref{sm3cor3}, we can show that $\ti s$ is transverse for $(\ep_1,\ldots,\ep_{r+n})$ outside a null set in $\R^{r+n}$. By taking $(\ep_1,\ldots,\ep_{r+n})$ very small we can make $\ti s$ arbitrarily close to $s$ in the topology on $\Ga^\iy(\bE)$. Thus the set of transverse sections is dense in~$\Ga^\iy(\bE)$.

For (b), let $\bU\subseteq\bX$ and $\pi^{-1}(\bU)\cong\bU\t\R^r$ be as in Definition \ref{sm3def9}(ii). Then we may write $s\vert_U=(\id_U,f)$ for smooth $f:\bU\ra\R^r$, where $U\cap s^{-1}(0)=f^{-1}(0)$. As $s$ is transverse, $T_xf:T_x\bU\ra T_0\R^r=\R^r$ is surjective for each $x\in U$ with $f(x)=0$. Therefore $f:\bU\ra\R^r$ and $0:*\ra\R^r$ are transverse in the sense of Definition \ref{sm3def6}, so Definition \ref{sm3def7} and Theorem \ref{sm3thm4} imply that $U\cap s^{-1}(0)=U\t_{f,\R^r,0}*$ has the structure of an s-manifold of dimension~$n-r$. 

Examining the proofs we can show that these s-manifold structures are independent of the local trivializations $\pi^{-1}(\bU)\cong\bU\t\R^r$, and glue to give a global s-manifold structure on $s^{-1}(0)$. Rather than thinking of it as a local fibre product, we can also build the s-manifold structure on $s^{-1}(0)$ directly in terms of the manifold structures on $(s\vert_{X^i})^{-1}(0)\subseteq X^i$ for each $i\in I$, where $s\vert_{X^i}$ is a transverse section of $E\vert_{X^i}\ra X^i$ in the sense of smooth manifolds. 

For the second part of (b), note that transverse fibre products over the ordinary manifold $\R^r$ are automatically strongly transverse as in Definition \ref{sm3def8}, so we can prove it starting from Theorem \ref{sm3thm5} in a similar way.
\end{proof}

\begin{rem}
\label{sm3rem8}
The category of s-manifolds $\SMan$ has all the properties needed to define a well-defined 2-category of {\it Kuranishi spaces\/} in the sense of the author \cite{Joyc2,Joyc5,Joyc6} --- see in particular \cite{Joyc5} in which Kuranishi spaces are defined starting from a general category of `manifolds' $\bs{\bf\dot Man}$ satisfying some axioms, which hold for $\SMan$ (we have just proved a lot of the axioms). Kuranishi spaces are (in the author's view of the subject) {\it derived smooth manifolds\/} and {\it derived smooth orbifolds}, where `derived' is in the sense of Derived Algebraic Geometry. 

Kuranishi spaces were introduced by Fukaya--Oh--Ohta--Ono \cite{FOOO,FuOn}, some years before the invention of Derived Algebraic Geometry, as the geometric structure on moduli spaces of $J$-holomorphic curves in Symplectic Geometry, and were primarily used as a tool to define virtual classes for such moduli spaces. The goal of \cite{Joyc2,Joyc5,Joyc6} (again, in the author's view) was to correct the definition of Kuranishi space in \cite{FOOO,FuOn} using insights from Derived Algebraic Geometry.

A Kuranishi space $\bX$ has an atlas of local charts $(V,E,\Ga,s,\psi)$, where $V$ is a `manifold' (e.g. a manifold, or manifold with corners, or s-manifold), $E\ra X$ is a vector bundle, $\Ga$ is a finite group acting on $V,E$, $s\in\Ga^\iy(E)$ is a $\Ga$-equivariant smooth section, and $\psi:s^{-1}(0)/\Ga\ra U$ is a homeomorphism with an open subset $U\subseteq X$. There is also complicated data on transitions between charts.

Theorem \ref{sm3thm6} implies that a Kuranishi space $\bX$ in s-manifolds with trivial isotropy groups can be perturbed to an s-manifold $\bs{\ti X}$. If $\bX$ has an orientation, or an orientation bundle, then so does $\bs{\ti X}$, so $\bs{\ti X}$ has a fundamental class $[\bs{\ti X}]_\fund$ in Steenrod homology by Theorem \ref{sm3thm1}. This is a sign that Kuranishi spaces $\bX$ in s-manifolds have {\it virtual classes\/} $[\bX]_\virt$ in Steenrod homology, where roughly $[\bX]_\virt=[\bs{\ti X}]_\fund$ for any perturbation of $\bX$ to an s-manifold $\bs{\ti X}$.
\end{rem}

\subsection{\texorpdfstring{$R$-weighted s-manifolds}{R-weighted s-manifolds}}
\label{sm39}

We can include {\it weights\/} in a commutative ring $R$ on the top-dimensional strata of s-manifolds.

\begin{dfn}
\label{sm3def11}
Let $\bX=(X,\{X^i:i\in I\})$ be an s-manifold of dimension $n$, and $R$ a commutative ring, e.g.\ $R=\Q$ or $\Z$. An $R$-{\it weighting\/} $w$ for $\bX$ is a map
\begin{equation*}
w:\bigl\{X^i:i\in I,\; \dim X^i=n\bigr\}\longra R.
\end{equation*}
Then we call $(\bX,w)$ an $R$-{\it weighted s-manifold}. An unweighted s-manifold $\bX$ may be regarded as a weighted s-manifold $(\bX,1)$ with~$w\equiv 1$.

We can now modify the material of \S\ref{sm33} for $R$-weighted s-manifolds. For $(\bX,w)$ as above, suppose we are given an orientation on $X_0$. Noting that $X_0=\coprod_{i\in I:\dim X^i=n}X^i$, we define the {\it weighted fundamental class\/} of $X_0$ to be
\e
[X_0]^w_\fund=\sum_{i\in I:\dim X^i=n}w(X^i)[X^i]_\fund\qquad\text{in $H_n^\St(X_0,R)$.}
\label{sm3eq35}
\e
Then we define an {\it orientation\/} on $(\bX,w)$ to be an orientation on $X_0$ such that
\e
\pd\bigl([X_0]^w_\fund\bigr)=0\qquad\text{in $H_{n-1}^\St(X_1,R)$,}
\label{sm3eq36}
\e
generalizing \eq{sm3eq3}, where $\pd$ is as in \eq{sm3eq2} with $R$ in place of $\Z$. The analogue of Theorem \ref{sm3thm1}(a) with $R$ in place of $\Z$ then implies that there is a unique class $[\bX]^w_\fund$ in the Steenrod homology group $H_n^\St(X,R)$ such that  
\begin{equation*}
[\bX]^w_\fund\vert_{X_0}=[X_0]^w_\fund.
\end{equation*}
We call $[\bX]^w_\fund$ the {\it weighted fundamental class\/} of $(\bX,w)$. 

Similarly, we define an {\it orientation bundle\/} $(\Or_\bX,\om_\bX)$ on $(\bX,w)$ to be as in Definition \ref{sm3def3}, but replacing \eq{sm3eq6} by
\begin{equation*}
\pd\ci (\om_\bX)_*\bigl([X_0]^w_\fund\bigr)=0\qquad\text{in $H_{n-1}^\St(X_1,\Or_\bX\vert_{X_1},R)$,}
\end{equation*}
where now $[X_0]^w_\fund\in H_n^\St(X_0,\Or_{X_0},R)$. The analogue of Theorem \ref{sm3thm1}(b) with $R$ in place of $\Z$ then implies that there is a unique class $[\bX]^w_\fund$ in the Steenrod homology group $H_n^\St(X,\Or_\bX,R)$ such that, generalizing \eq{sm3eq15}
\begin{equation*}
[\bX]^w_\fund\vert_{X_0}=(\om_\bX)_*\bigl([X_0]^w_\fund\bigr).
\end{equation*}
We call $[\bX]^w_\fund$ the {\it weighted fundamental class\/} of $(\bX,w)$. 
\end{dfn}

\begin{rem}
\label{sm3rem9}
{\bf(a)} This is designed for applications in Symplectic Geometry, as in \S\ref{sm5}. To define Gromov--Witten invariants of general symplectic manifolds $(X,\om)$, moduli spaces $\oM_{g,k}(J,\be)$ of $J$-holomorphic curves must be given virtual classes $[\oM_{g,k}(J,\be)]_\virt$ in homology over $\Q$, not $\Z$ (e.g.\ see Fukaya--Ono \cite{FuOn}). Our approach will be to define perturbed versions of these moduli spaces $\bs\oM_{g,k}(J,\be)$ which are oriented $\Q$-weighted s-manifolds, and then the desired virtual class is the $\Q$-weighted fundamental class $[\bs\oM_{g,k}(J,\be)]^w_\fund$.
\smallskip

\noindent{\bf(b)} Our $\Q$-weighted s-manifolds are similar to perturbation of Kuranishi spaces using multisections in Fukaya--Ono \cite[\S 3, \S 6]{FuOn}: they perturb a Kuranishi space $\bs K$ such as $\oM_{g,k}(J,\be)$ to a multi-sheeted `manifold' $\ti K$ with rational weights, and then take the weighted fundamental class $[\ti K]_\fund^w$ of $[\ti K]$. In contrast, we will build the perturbations into the definition of the moduli spaces.  
\end{rem}

\begin{ex}
\label{sm3ex8}
Figure \ref{sm3fig4} gives a picture of a simple oriented $\Q$-weighted s-manifold of dimension $1$, where arrows give orientations and the $\Q$-weights are shown. Equation \eq{sm3eq36} at the vertex corresponds to the equation $\frac{5}{6}-\frac{1}{2}-\frac{1}{3}=0$.
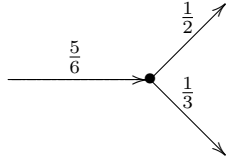
\begin{figure}[htb]
\centerline{$\splinetolerance{.8pt}
\begin{xy}
0;<.5cm,0cm>:
,(0,0)*+{\bu}
,(-3.6,0);(-.1,0)**\crv{} ?>*\dir{>}
,(-3.5,0);(-.2,0)**\crv{}
,(0,0);(2,2)**\crv{} ?>*\dir{>}
,(0,0);(2,-2)**\crv{} ?>*\dir{>}
,(-2,.6)*+{\frac{5}{6}}
,(1,1.65)*+{\frac{1}{2}}
,(1,-.35)*+{\frac{1}{3}}
\end{xy}$}
\caption{An oriented $\Q$-weighted s-manifold of dimension 1.}
\label{sm3fig4}
\end{figure}
\end{ex}

The material on (oriented) transverse fibre products of s-manifolds in \S\ref{sm36}--\S\ref{sm37} extends to $R$-weighted s-manifolds:

\begin{dfn}
\label{sm3def12}
In Definition \ref{sm3def7} and Theorem \ref{sm3thm4}, suppose $\bX,\bY$ have $R$-weightings $w_\bX,w_\bY$, but we take $\bZ$ to be unweighted. Define an $R$-weighting $w_\bW$ on $\bW$ by
\e
w_\bW(W^{ijk})=w_\bX(X^i)w_\bY(Y^j)
\label{sm3eq37}
\e
whenever $X^i,Y^j,Z^k$ are strata of $\bX,\bY,\bZ$ with $\dim X^i=l$, $\dim Y^j=m$, $\dim Z^k=n$. Definition \ref{sm3def7}(i) shows that all top-dimensional strata of $\bW$ are of this form. Then $(\bW,w_\bW)$ is a $R$-weighted s-manifold.

Theorem \ref{sm3thm5} and its proof then extend easily to the $R$-weighted case, replacing fundamental classes by weighted fundamental classes for $\bX,\bY$. The second part of Theorem \ref{sm3thm6}(b) also extends to the $R$-weighted case in the obvious way.
\end{dfn}

\begin{rem}
\label{sm3rem10}
We took $\bZ$ to be unweighted as it is unclear what is the most natural way to include $w_\bZ(Z^k)$ in \eq{sm3eq37}. In the applications the author has in mind, $\bZ$ is a manifold or manifold with corners, and is unweighted (i.e.~$w_\bZ\equiv 1$).
\end{rem}

Definition \ref{sm2def1} defined the {\it smooth singular homology groups\/} $H_n^\ssi(Y,R)$ of a manifold $Y$. We can interpret $H_n^\ssi(Y,R)$ in terms of $R$-weighted s-manifolds. 

\begin{ex}
\label{sm3ex9}
Let $Y$ be a smooth manifold, $R$ a commutative ring, and $\al\in H_n^\ssi(Y,R)$ for $n\ge 0$. Write $\al=[C]$ for $C=\sum_{a\in A}\rho_a\,\si_a$ in $C_n^\ssi(Y,R)$ with $\pd C=0$, so that $A$ is a finite indexing set, $\rho_a\in R$, and $\si_a:\De_n\ra Y$ is smooth. We take the $\si_a$ for $a\in A$ to be distinct. Define a topological space $X=(A\t\De_n)/\sim$ with the quotient topology, where $A$ has the discrete topology, and $\sim$ is the equivalence relation on $A\t\De_n$ generated by the rule that if $0\le k<n$, and $a_1,a_2\in A$, and $\io_1,\io_2:\De_k\hookra\De_n$ are inclusions of $k$-dimensional faces preserving order of coordinates (such $\io$ are of the form $F_{i_n}^n\ci F_{i_{n-1}}^{n-1}\ci\cdots\ci F_{i_{k+1}}^{k+1}$ for $F_i^j:\De_{j-1}\hookra\De_j$ as in Definition \ref{sm2def1}), such that $\si_{a_1}\ci\io_1=\si_{a_2}\ci\io_2:\De_k\ra Y$, then $(a_1,\io_1(\bs z))\sim (a_2,\io_2(\bs z))$ for all $\bs z\in\De_k$. 

Then $X$ is a {\it simplicial complex\/} of dimension $n$, as in Example \ref{sm3ex2} (although we do not require the simplices $\De_k\ra X$ to be injective except on interiors $\De_k^\ci$), where the $k$-simplices $\De_k\ra X$ are parametrized by maps $\si_a\ci\io:\De_k\ra Y$ for $a\in A$ and $\io:\De_k\hookra\De_n$ an inclusion of a $k$-dimensional face, and two such $k$-simplices $\si_a\ci\io,\si_{a'}\ci\io'$ are the same if $\si_a\ci\io=\si_{a'}\ci\io'$ as maps $\De_k\ra Y$. Thus, Example \ref{sm3ex2} makes $X$ into a compact s-manifold $\bX$ of dimension~$n$.

Define a map $f:X\ra Y$ by $f([a,\bs z])=\si_a(\bs z)$, where $[a,\bs z]\in X$ is the $\sim$-equivalence class of $(a,\bs z)\in A\t\De_n$. Then $f$ is continuous, and smooth on each stratum $\De_k^\ci\subset X$. Thus, regarding the manifold $Y$ as an s-manifold $\bY$, $f:\bX\ra\bY$ is a smooth map of s-manifolds.

As there are no nontrivial equivalences $\sim$ on $A\t\De_n^\ci$, we see that $X_0$ in \eq{sm3eq1} is $X_0=A\t\De_n^\ci$, and the dimension $n$ strata of $\bX$ are $\{a\}\t\De_n^\ci$ for $a\in A$. Define an $R$-weighting $w$ on $\bX$ by
\begin{equation*}
w:\bigl\{\{a\}\t\De_n^\ci:a\in A\bigr\}\longra R,\quad w:\{a\}\t\De_n^\ci\longmapsto\rho_a.
\end{equation*}
Define an orientation on $(\bX,w)$ by taking the orientation on $\{a\}\t\De_n^\ci$ to be the standard orientation on $\De_n^\ci$ for each $a\in A$. Then equation \eq{sm3eq36} is equivalent to $\pd\bigl(\sum_{a\in A}\rho_a\,\si_a\bigr)=0$ in $C_{n-1}^\ssi(Y,R)$, so this is indeed an orientation. Thus, we have a weighted fundamental class $[\bX]_\fund^w\in H_n^\St(X,R)$.

As $\bX$ is compact, $f:\bX\ra\bY$ is proper, so $f_*([\bX]_\fund^w)\in H_n^\St(Y,R)$. If $Y$ is compact then $f_*([\bX]_\fund^w)$ is identified with $\al\in H_n^\ssi(Y,R)$ under the natural isomorphism $H_n^\St(Y,R)\cong H_n^\ssi(Y,R)$.
\end{ex}

We will return to this in Example \ref{sm4ex6}, using s-manifolds with boundary.

\section{S-manifolds with corners}
\label{sm4}

\subsection{The definition of s-manifolds with corners}
\label{sm41}

\begin{dfn}
\label{sm4def1}
An {\it s-manifold with corners\/} $\bfX$ of {\it dimension\/} $\dim\bfX=n\ge 0$, is data
\e
\bfX=\bigl(\bX,\; \Pi_k:C_k(\bX)\ra\bX,\; 0\le k\le n, \; D_{kl}(\bX),\; 0\le k\le l\le n\bigr),
\label{sm4eq1}
\e
satisfying the following conditions (a)--(f), where:
\begin{itemize}
\setlength{\itemsep}{0pt}
\setlength{\parsep}{0pt}
\item[(a)] $\bX$ is an s-manifold of dimension $n$, and for each $k=0,\ldots,n$, $C_k(\bX)$ is an s-manifold of dimension $n-k$, and $\Pi_k:C_k(\bX)\ra\bX$ is a proper morphism of s-manifolds, which is {\it locally a closed embedding}, that is, we can cover $C_k(X)$ by open $V\subseteq C_k(X)$ such that $\Pi_k\vert_V:V\ra X$ is a homeomorphism with a locally closed subset $\Pi_k(V)\subseteq X$. 

We write $C(\bX)=\coprod_{k=0}^nC_k(\bX)$, regarded as an object of the category $\cSMan$ of s-manifolds with mixed dimension from Definition \ref{sm3def1}. We also write $\Pi_\bX=\coprod_{k=0}^n\Pi_k:C(\bX)\ra\bX$, as a morphism in $\cSMan$.
\item[(b)] $\Pi_0:C_0(\bX)\ra\bX$ is an isomorphism of s-manifolds (so we can just take~$C_0(\bX)=\bX$).
\item[(c)] For each stratum $C_k(X)^i$ of $C_k(\bX)$, there is a stratum $X^j$ of $\bX$ such that $\Pi_k(C_k(X)^i)=X^j$, and $\Pi_k\vert_{C_k(X)^i}:C_k(X)^i\ra X^j$ is a {\it proper covering map\/} of manifolds (a proper local diffeomorphism). 
\item[(d)] For all $0\le k\le l\le n$, $D_{kl}(\bX)$ is an open and closed subset of the topological fibre product
\begin{equation*}
C_k(X)\t_{\Pi_k,X,\Pi_l}C_l(X)=\bigl\{(v,w)\in C_k(X)\t C_l(X):\Pi_k(v)=\Pi_l(w)\bigr\}.
\end{equation*}
It is a disjoint union of fibre products $C_k(X)^i\t_{X^j}C_l(X)^{i'}$. That is, if $(v,w)\in D_{kl}(\bX)$ and $v,w$ and $x=\Pi_k(v)=\Pi_l(w)$ lie in the strata $C_k(X)^i,C_l(X)^{i'},X^j$ of $C_k(\bX),C_l(\bX),\bX$, then $D_{kl}(\bX)$ contains the entire fibre product $C_k(X)^i\t_{X^j}C_l(X)^{i'}$. Thus $D_{kl}(\bX)$ is discrete information.

We write $D(\bX)=\coprod_{0\le k\le l\le n}D_{kl}(\bX)$, considered as an open and closed subset of~$C(X)\t_{\Pi_X,X,\Pi_X}C(X)$.
\item[(e)] For all $0\le k\le l\le n$, the projection $\Pi_{C_l(X)}:D_{kl}(\bX)\ra C_l(X)$ is a {\it proper covering map\/} of topological spaces (a proper local homeomorphism). Together with (c),(d), this implies that we can give $D_{kl}(\bX)$ the natural structure of an s-manifold of dimension $n-l$, with strata the fibre products $C_k(X)^i\t_{X^j}C_l(X)^{i'}$ in (d), which are transverse fibre products of manifolds, such that $\Pi_{C_l(\bX)}:D_{kl}(\bX)\ra C_l(\bX)$ is a local isomorphism.
\item[(f)] For $x\in X$ write $C_k(X)_x=\Pi_k^{-1}(x)\subseteq C_k(X)$ and $C(X)_x=\coprod_{k=0}^nC_k(X)_x$. They are finite sets, as $\Pi_k$ is a proper locally closed embedding. Define a binary relation $\tl$ on $C(X)_x$ by $u\tl v$ if $(u,v)\in D(\bX)$. We require that:
\begin{itemize}
\setlength{\itemsep}{0pt}
\setlength{\parsep}{0pt}
\item[(i)] $\tl$ is a partial order on $C(X)_x$.
\item[(ii)] If $u,v\in C_k(X)_x$ then $u\tl v$ if and only if $u=v$.
\item[(iii)] Every nonempty subset $\es\ne S\subseteq C(X)_x$ has a (unique) {\it least upper bound\/} $\lub(S)$, and {\it greatest lower bound\/} $\glb(S)$, in~$(C(X)_x,\tl)$.

Thus $C(X)_x$ has a {\it maximum\/} $\max C(X)_x$ and {\it minimum\/} $\min C(X)_x$, where $\Pi_0^{-1}(x)=\{\min C(X)_x\}\subseteq C_0(X)$.
\end{itemize}
\end{itemize}
We call an s-manifold with corners $\bfX$ an {\it s-manifold with boundary\/} if $C_k(\bX)=\es$ for $k\ge 2$, and an {\it s-manifold without boundary\/} if $C_k(\bX)=\es$ for~$k\ge 1$. 
\end{dfn}


\begin{dfn}
\label{sm4def2}
Let $\bfX$ be an s-manifold with corners of dimension $n$. Define a partial order $\bl$ on $\{0,1\}^d$ by $(u_1,\ldots,u_d)\bl(v_1,\ldots,v_d)$ if $u_i\le v_i$ for $i=1,\ldots,d$. We say that $\bfX$ {\it has simplicial corners\/} if for all $x\in X$, the partial order $(C(X)_x,\tl)$ in Definition \ref{sm4def1}(f) is isomorphic to $(\{0,1\}^d,\ab\bl)$ for some $d=0,\ldots,n$, where the isomorphism identifies $u\in C_k(X)_x$ with $(u_1,\ldots,u_d)\in \{0,1\}^d$ with~$u_1+\cdots+u_d=k$. 
\end{dfn}

Simplicial corners are the `usual' kind of corners. Manifolds with corners, when regarded as s-manifolds with corners as in Example \ref{sm4ex1} below, have simplicial corners, but manifolds with g-corners in \S\ref{sm271}, such as $X_P$ in \eq{sm2eq23}, can have non-simplicial corners. Some properties such as \eq{sm2eq10}--\eq{sm2eq13} can be true of s-manifolds with simplicial corners, but false for non-simplicial corners. S-manifolds with boundary, or without boundary, are always simplicial.

\begin{dfn}
\label{sm4def3}
Let $\bfX$ be an s-manifold with corners of dimension $n$. For $0\le k\le l\le n$, as $\Pi_l$ is proper, $\Pi_{C_k(\bX)}:D_{kl}(\bX)\ra C_k(\bX)$ is proper, and has image a closed set in $C_k(\bX)$. We define the {\it interior\/} $C_k(\bX)^\ci\subseteq C_k(\bX)$ to be the open subset $C_k(\bX)^\ci=C_k(\bX)\sm \bigl(\bigcup_{l=k+1}^n\Pi_{C_k(\bX)}\bigl(D_{kl}(\bX)\bigr)\bigr)$, as an open s-submanifold of $C_k(\bX)$, of dimension $n-k$, for $k=0,\ldots,n$. We think of $C_k(\bX)^\ci$ as an {\it s-manifold}, not an s-manifold with corners. We write $C(\bX)^\ci=\coprod_{k=0}^nC_k(\bX)^\ci$, an open s-submanifold of $C(\bX)$ of mixed dimension.

For $x,C(X)_x,\tl$ as in Definition \ref{sm4def1}(f), $u\in C_k(X)_x$ lies in $C_k(X)^\ci$ if and only if there does not exist $v\in C(X)_x$ with $u\ne v$ and $u\tl v$. That is, $u$ must be maximal in $(C(X)_x,\tl)$. But $C(X)_x$ has a unique maximum $\max C(X)_x$ by Definition \ref{sm4def1}(f)(iii), so $u\in C_k(\bX)^\ci$ if and only if $u=\max C(X)_x$. Therefore $\Pi_\bX\vert_{C(\bX)^\ci}:C(\bX)^\ci\ra\bX$ is a bijection.

Define the $k^{\rm th}$ {\it corner stratum\/} of $\bfX$ to be $S^k(X)=\Pi_k(C_k(\bX)^\ci)\subseteq X$. As $\Pi_\bX\vert_{C(\bX)^\ci}$ is a bijection, $X$ decomposes as the disjoint union $X=\coprod_{k=0}^nS^k(X)$. Also $\Pi_k\vert_{C_k(\bX)^\ci}:C_k(\bX)^\ci\ra S^k(X)$ is a bijection, and a homeomorphism, as $\Pi_k$ is a proper locally closed embedding. An alternative expression is
\begin{equation*}
S^k(X)=\Pi_k(C_k(\bX))\sm \bigl(\ts\bigcup_{l=k+1}^n\Pi_l(C_l(\bX))\bigr).
\end{equation*}
This implies that for $k=0,\ldots,n$
\begin{equation*}
\ts\coprod_{l=k}^nS^l(X)=\ts\bigcup_{l=k}^n\Pi_l(C_l(\bX)).
\end{equation*}
Now $\Pi_l(C_l(\bX))$ is closed in $X$ as $\Pi_l$ is proper. So the last two equations imply that $S^k(X)$ is locally closed in $X$, and $\ov{S^k(X)}\subseteq\ts\coprod_{l=k}^nS^l(X)$, where $\ov{S^k(X)}$ is the closure of $S^k(X)$ in~$X$. 

In particular, this implies that $S^0(X)$ is an open subset of $X$ homeomorphic to $C_0(X)^\ci$. We write $\bX^\ci$ for the corresponding open s-submanifold of $\bX$, isomorphic to $C_0(\bX)^\ci$, and call $\bX^\ci$ the {\it interior\/} of $\bfX$.
\end{dfn}

\begin{dfn}
\label{sm4def4}
Let $\bfX,\bfY$ be s-manifolds with corners. A {\it morphism}, or {\it smooth map}, $\bff:\bfX\ra\bfY$ is a pair $\bff=(f,C(f))$ where $f:\bX\ra\bY$ is a morphism in $\SMan$, and $C(f):C(\bX)\ra C(\bY)$ is a morphism in $\cSMan$, such that the following commutes in~$\cSMan$
\e
\begin{gathered}
\xymatrix@C=150pt@R=15pt{ *+[r]{C(\bX)} \ar[r]_{C(f)} \ar[d]^{\Pi_\bX} & *+[l]{C(\bY)} \ar[d]_{\Pi_\bY} \\ 
 *+[r]{\bX} \ar[r]^{f}  & *+[l]{\bY,\!} }
\end{gathered}
\label{sm4eq2}
\e
and (noting that $(C(f)\!\t\! C(f))(D(\bX))\!\subseteq\! C(Y)\!\t_{\Pi_Y,Y,\Pi_Y}\!C(Y)$ by \eq{sm4eq2}), we~have
\ea
\bigl(C(f)\t C(f)\bigr)\bigl(D(\bX)\bigr)&\subseteq D(\bY)\subseteq C(Y)\t_{\Pi_Y,Y,\Pi_Y}C(Y),
\label{sm4eq3}\\
C(f)(C(\bX)^\ci)&\subseteq C(\bY)^\ci.
\label{sm4eq4}
\ea

We call $\bff$ {\it interior\/} if also $C(f)(C_0(\bX))\subseteq C_0(\bY)\subseteq C(\bY)$. Then~$f(\bX^\ci)\subseteq\bY^\ci$. This corresponds to interior maps of manifolds with corners in Definition~\ref{sm2def4}.

If $\bfg:\bfY\ra\bfZ$ is another morphism of s-manifolds with corners, the {\it composition\/} is $\bfg\ci\bff=(g\ci f,C(g)\ci C(f))$. It is a morphism, and is interior if $\bff,\bfg$ are interior. The {\it identity\/} $\id_\bfX$ is $\id_\bfX=(\id_\bX,\id_{C(\bX)})$, an interior morphism. With these definitions, s-manifolds with corners and smooth maps form a category $\SManc$, with a subcategory $\SMancin\subset\SManc$ with morphisms interior maps. We write $\SManb\subset\SManc$ and $\SManbin\subset\SMancin$ for the full subcategories with objects s-manifolds with boundary.

We call $\bff:\bfX\ra\bfY$ a {\it diffeomorphism\/} if it is smooth with smooth inverse.

If $\bX$ is an s-manifold, we may make it into an s-manifold without boundary $\bfX$ by taking $C_0(\bX)=\bX$, $\Pi_0=\id_\bX$, $D_{00}(\bX)=\De_X$, and $C_k(\bX)=D_{kl}(\bX)=\es$ if $k\ne 0$, $(k,l)\ne(0,0)$. This defines an equivalence between s-manifolds $\bfX$ without boundary and s-manifolds in Definition \ref{sm3def1}. It also identifies smooth maps $f:\bX\ra\bY$ of s-manifolds with smooth maps $\bff:\bfX\ra\bfY$ of the corresponding s-manifolds without boundary, which are automatically interior. Thus we may identify $\SMan$ with the full subcategory of $\SManc$ (equivalently, of $\SMancin$) with objects s-manifolds without boundary. 

As for $\cSMan$ in Definition \ref{sm3def1}, we write $\cSManc$ for the category whose objects are disjoint unions $\coprod_{m=0}^\iy\bfX_m$, where $\bfX_m$ is an s-manifold with corners, of dimension $m$, allowing $\bfX_m=\es$, and whose morphisms $\bs f=(f,C(f)):\coprod_{m=0}^\iy\bfX_m\ra\coprod_{n=0}^\iy\bfY_n$ are pairs of morphisms $f:\coprod_{m=0}^\iy\bX_m\ra\coprod_{n=0}^\iy\bY_n$ and $C(f):\coprod_{m=0}^\iy C(\bX_m)\ra\coprod_{n=0}^\iy C(\bY_n)$ in $\cSMan$ satisfying \eq{sm4eq2}--\eq{sm4eq4}. Objects of $\cSManc$ will be called {\it s-manifolds with corners of mixed dimension}. We write $\cSMan,\cSManbin,\cSManb,\cSMancin$ for the subcategories of $\cSManc$ corresponding to $\SMan,\ldots,\SMancin$.

If $\bfX\in\SManc$ and $x\in X$ we write $x\in\bfX$, and we define the {\it tangent space\/} $T_x\bfX=T_x\bX$ for $T_x\bX$ as in Definition \ref{sm3def1}. If $\bff=(f,C(f)):\bfX\ra\bfY$ is smooth and $x\in\bfX$ with $f(x)=y\in\bfY$ (written $\bff(x)=y\in\bfY$) we write $T_x\bff:T_x\bfX\ra T_y\bfY$ for the tangent map $T_xf:T_x\bX\ra T_y\bY$ in Definition~\ref{sm3def1}.
\end{dfn}

\begin{dfn}
\label{sm4def5}
Let $\bff:\bfX\ra\bfY$ be a morphism of s-manifolds with corners. We call $\bff$ {\it b-normal\/} if:
\begin{itemize}
\setlength{\itemsep}{0pt}
\setlength{\parsep}{0pt}
\item[(i)] Let $x\in X$ with $f(x)=y\in Y$. Then $C(f)\vert_{C(X)_x}$, as a morphism of partial orders $(C(X)_x,\tl)\ra(C(Y)_y,\tl)$, preserves least upper bounds and greatest lower bounds. That is, if $\es\ne S\subseteq C(X)_x$ then $C(f)(\lub(S))=\lub(C(f)(S))$ and $C(f)(\glb(S))=\glb(C(f)(S))$.
\item[(ii)] $C(f)(C_k(\bX))\subseteq\coprod_{k'=0}^kC_{k'}(\bY)\subseteq C(\bY)$.
\item[(iii)] $(C(f)\t C(f))(D_{kl}(\bX))\subseteq\coprod_{ 0\le k'\le k, \;
0\le l'-k'\le l-k}D_{k'l'}(\bY)\subseteq D(\bY)$.
\end{itemize}
This generalizes b-normal maps of manifolds with corners in Definition \ref{sm2def4}. Note that (ii) for $k=0$ gives $C(f)(C_0(\bX))\subseteq C_0(\bY)$, so $\bff$ is interior.

Then compositions of b-normal maps, and identities, are b-normal. Thus we have subcategories $\SManbbn\subset\SManbin$, $\SMancbn\subset\SMancin$, $\cSManbbn\subset\cSManbin$ and $\cSMancbn\subset\cSMancin$ with morphisms b-normal maps. This gives a diagram of inclusions of subcategories
\begin{equation*}
\xymatrix@C=50pt@R=15pt{ 
& \,\,\SManbbn\,\,\, \ar@{^{(}->}[r] \ar@{^{(}->}[d] & \,\,\SMancbn \ar@{^{(}->}[d] \\
\SMan\,\,\, \ar@{^{(}->}[ur] \ar@{^{(}->}[r] \ar@{_{(}->}[dr] & \,\,\SManbin\,\,\, \ar@{^{(}->}[r] \ar@{^{(}->}[d] & \,\,\SMancin \ar@{^{(}->}[d] \\
& \,\,\SManb\,\,\, \ar@{^{(}->}[r] & \,\,\SManc. }	
\end{equation*}
\end{dfn}

\begin{rem}
\label{sm4rem1}
{\bf(a)} As in Example \ref{sm4ex1} below, Definitions \ref{sm4def1}--\ref{sm4def5} are modelled on properties of the categories $\Manc,\ab\Mangc,\ab\Manac$ from \S\ref{sm24}--\S\ref{sm27}. For manifolds with (g- or a-)corners $X$ we did not need to include the corners $C_k(X)$ in the definition of $X$, since we could construct $C_k(X)$ from the depth stratification $X=\coprod_{l=0}^nS^l(X)$ as in \S\ref{sm25}. There are several reasons why a similar construction would not work for s-manifolds with corners. 

Firstly, for manifolds with (g- or a-)corners $X$ we have $\ov{S^k(X)}=\coprod_{l=k}^nS^l(X)$, but for s-manifolds with corners $\bfX$ we know only that $\ov{S^k(X)}\subseteq\coprod_{l=k}^nS^l(X)$, so for example for $l>1$ we cannot assume a point of $S^l(X)$ is the limit of points of $S^1(X)$. Similarly, \eq{sm4eq5} below may not hold for s-manifolds with corners. Secondly, near a point $x\in X$ the strata $X^i$ of $X$ may have infinitely many connected components (e.g.\ see Example \ref{sm3ex4} at $x=(0,\ldots,0)$). These mean that the notions of `local boundary component' and `local $k$-corner component' in Definition \ref{sm2def7} are badly behaved for s-manifolds with corners.

Since we want our s-manifolds with corners $\bfX$ to have good notions of boundary $\pd\bfX$ and $k$-corners $C_k(\bfX)$, our solution is to include enough data in $\bfX$ to make the definitions of $\pd\bfX$ and  $C_k(\bfX)$ in \S\ref{sm43} easy.
\smallskip

\noindent{\bf(b)} If $\bfX=(\bX,\ldots)$ is an s-manifold with corners then smooth maps $\bff=(f,C(f)):\bfX\ra\R^n$ are equivalent to smooth maps of s-manifolds $f:\bX\ra\R^n$, with $C(f)=f\ci\Pi_\bX$. Therefore the material of \S\ref{sm35} on partitions of unity, embeddings, and the Whitney Embedding Theorem for s-manifolds, generalizes immediately to s-manifolds with corners, and we will not discuss it again.
\end{rem}

\subsection{Examples of s-manifolds with corners}
\label{sm42}

\begin{ex}
\label{sm4ex1}
{\bf(Manifolds with corners, g-corners, or a-corners.)} Let $X$ be a manifold with corners of dimension $n$, as in \S\ref{sm24}--\S\ref{sm25}. We will construct an s-manifold with corners $\bfX$ of dimension~$n$.

Write $\bX$ and $C_k(\bX)$ for $k=0,\ldots,n$ for the s-manifolds corresponding in Example \ref{sm3ex1} to the manifolds with corners $X,C_k(X)$, and $\Pi_k:C_k(\bX)\ra\bX$ for the s-manifold morphism corresponding in Example \ref{sm3ex1} to~$\Pi_k:C_k(X)\ra X$.

For $0\le k\le l\le n$, define a subset $D_{kl}(X)\subseteq C_k(X)\t_{\Pi_k,X,\Pi_l}C_l(X)$ by
\e
D_{kl}(X)=\ov{\bigl\{(v,w)\in C_k(X)\t_{\Pi_k,X,\Pi_l}C_l(X):w\in C_l(X)^\ci\bigr\}},
\label{sm4eq5}
\e
where the closure is in $C_k(X)\t_{\Pi_k,X,\Pi_l}C_l(X)$. By considering local models it is not difficult to show that $D_{kl}(X)$ is open and closed in $C_k(X)\t_{\Pi_k,X,\Pi_l}C_l(X)$, and is a disjoint union of fibre products of strata. In fact one can show that there is a unique isomorphism
\begin{equation*}
D_{kl}(X)\cong C_{l-k}(C_k(X))
\end{equation*}
which identifies $\Pi_{C_k(X)}:D_{kl}(X)\ra C_k(X)$ with $\Pi_{l-k}:C_{l-k}(C_k(X))\ra C_k(X)$.

Set $D_{kl}(\bX)=D_{kl}(X)$, and define $\bfX$ as in \eq{sm4eq1}. It is straightforward to check from properties of manifolds with corners that $\bfX$ satisfies Definition \ref{sm4def1}(a)--(f), so $\bfX$ is an s-manifold with corners. It has {\it simplicial corners\/} in the sense of Definition~\ref{sm4def2}. 

Now let $f:X\ra Y$ be a smooth map of manifolds with corners, and $\bfX,\bfY$ the s-manifolds with corners corresponding to $X,Y$. Then \S\ref{sm25} defines a morphism $C(f):C(X)\ra C(Y)$ in $\cMancin$ with $\Pi_Y\ci C(f)=f\ci\Pi_X$. By Example \ref{sm3ex1}, $f,C(f)$ give morphisms $f:\bX\ra\bY$ in $\SMan$ and $C(f):C(\bX)\ra C(\bY)$ in $\cSMan$. Define $\bff=(f,C(f))$. Then it is easy to see that $\bff:\bfX\ra\bfY$ is a morphism in $\SManc$, which is interior, or b-normal, if $f$ is interior, or b-normal.

This induces an inclusion functor $\Manc\hookra\SManc$, which is faithful, but not full. (Note that this does {\it not\/} coincide with the composition of $\Manc\hookra\SMan$ in Example \ref{sm3ex1} and $\SMan\hookra\SManc$ in Definition \ref{sm4def1}.)

All the above works without change for manifolds with g-corners in \S\ref{sm271} and manifolds with a-corners in \S\ref{sm271}, except that manifolds with g-corners $X$ such as $X_P$ in \eq{sm2eq23} can yield s-manifolds $\bfX$ with {\it non-simplicial\/} corners. This gives inclusion functors $\Mangc\hookra\SManc$ and $\Manac\hookra\SManc$, which are faithful, but not full.
\end{ex}

\begin{ex}
\label{sm4ex2}
{\bf(Orbifolds with corners.)} Generalizing orbifolds in Example \ref{sm3ex3}, there is a category (or 2-category) $\Orbc$ of {\it orbifolds with corners}, spaces locally modelled on $\R^n_k/G$ for $G\subset\GL(n,\R)$ a finite group preserving $\R^n_k\subseteq\R^n$. By combining Examples \ref{sm3ex3} and \ref{sm4ex1} we can define an faithful but not full inclusion $\Orbc\hookra\SManc$, such that for an orbifold with corners $X$ the corresponding s-manifold with corners $\bfX$ has stratification combining both the orbifold strata $X^{[G]}$ and the depth stratification $X=\coprod_{k=0}^{\dim X}S^k(X)$, and the corner structure $\bigl(C_k(\bX)_{0\le k}$, $D_{kl}(\bX)_{0\le k\le l}\bigr)$ is built using the depth stratification.
\end{ex}

\begin{ex}
\label{sm4ex3}
{\bf(Example \ref{sm3ex4} with $n=1$.)}
Let $\bX$ be as in Example \ref{sm3ex4} with $n=1$. Define $C_0(\bX)=\bX$, $\Pi_0=\id_\bX$, $C_1(\bX)=\{(0,0)\}$ considered as an s-manifold of dimension 0, and $\Pi_1$ to be the inclusion $\{(0,0)\}\hookra X$.
Set $D_{kl}(X)=C_k(X)\t_{\Pi_k,X,\Pi_l}C_l(X)$ for $0\le k\le 1$. Then $\bfX=\bigl(\bX,$ $\Pi_k:C_k(\bX)\ra\bX$, $k=0,1$, $D_{kl}(\bX),\; 0\le k\le l\le 1\bigr)$ is an s-manifold with boundary, of dimension~1.
\end{ex}

\begin{ex}
\label{sm4ex4}
{\bf(Example \ref{sm3ex5}.)}
Let $\bY$ be as in Example \ref{sm3ex5}. Define $C_0(\bY)=\bY$, $\Pi_0=\id_\bY$, $C_1(\bY)=\{(-1,0),(0,0)\}$ considered as an s-manifold of dimension 0, and $\Pi_1$ to be the inclusion $\{(-1,0),(0,0)\}\hookra Y$.
Set $D_{kl}(Y)=C_k(Y)\t_{\Pi_k,Y,\Pi_l}C_l(Y)$ for $0\le k\le 1$. Then $\bfY=\bigl(\bY,$ $\Pi_k:C_k(\bY)\ra\bY$, $k=0,1$, $D_{kl}(\bY),\; 0\le k\le l\le 1\bigr)$ is an s-manifold with boundary, of dimension~1.
\end{ex}

\subsection{Boundaries and corners of s-manifolds with corners}
\label{sm43}

We extend the material of \S\ref{sm25} to s-manifolds with corners.

\begin{dfn}
\label{sm4def6}
Let $\bfX$ be an s-manifold with corners of dimension $n$, and use the notation of Definition \ref{sm4def1}. For each $k=0,1,\ldots,n$ we will define the $k$-{\it corners\/} $C_k(\bfX)$ of $\bfX$, which is an s-manifold with corners of dimension $n-k$. If $n\ge 1$ we also define the {\it boundary\/} $\pd\bfX$ to be $\pd\bfX=C_1(\bfX)$, an s-manifold with corners of dimension $n-1$. As in \eq{sm4eq1} we write
\begin{align*}
C_k(\bfX)=\bigl(C_k(\bX),\; &\Pi_l:C_l(C_k(\bX))\ra\bX,\; 0\le l\le n-k, \\
&D_{lm}(C_k(\bX)),\; 0\le l\le m\le n-k\bigr),
\end{align*}
where for $D_{k(k+l)}(\bX)\subseteq C_k(X)\t_XC_{k+l}(X)$ as in \eq{sm4eq1} with the s-manifold structure of Definition \ref{sm4def1}(e), we define
\begin{equation*}
C_l(C_k(\bX)):=D_{k(k+l)}(\bX), \quad \Pi_l:=\Pi_{C_k(\bX)}:D_{k(k+l)}(\bX)\longra C_k(\bX).
\end{equation*}
Then for $0\le l\le m\le n-k$ we define
\ea
&D_{lm}(C_k(\bX))\subseteq D_{k(k+l)}(\bX)\t_{\Pi_{C_k(\bX)},C_k(X),\Pi_{C_k(\bX)}}D_{k(k+m)}(\bX)
\quad\text{by}
\label{sm4eq6}\\
&D_{lm}(C_k(\bX))=\bigl\{\bigl((u,v),(u,w)\bigr):u\in C_k(\bX),\; v\in C_{k+l}(\bX),\; w\in C_{k+m}(\bX),
\nonumber\\
&(u,v)\in D_{k(k+l)}(\bX),\; (u,w)\in D_{k(k+m)}(\bX),\; (v,w)\in D_{(k+l)(k+m)}(\bX)\bigr\}.
\nonumber
\ea

Observe that by mapping $\bigl((u,v),(u,w)\bigr)\mapsto\bigl((u,w),(v,w)\bigr)$ we may identify $D_{lm}(C_k(\bX))$ with 
\ea
&D_{lm}(C_k(\bX))\cong
\label{sm4eq7}\\
&\bigl\{\bigl((u,w),(v,w)\bigr)\in D_{k(k+m)}(\bX)\t_{\Pi_{C_{k+m}(\bX)},C_{k+m}(X),\Pi_{C_{k+m}(\bX)}}D_{(k+l)(k+m)}(\bX):
\nonumber\\
&(u,v)\in D_{k(k+l)}(\bX)\bigr\}.
\nonumber
\ea
If $\bigl((u,w),(v,w)\bigr)$ lies in the fibre product on the second line then $\Pi_k(u)=\Pi_{k+m}(w)=\Pi_{k+l}(v)$, so $(u,v)$ lies in $C_k(X)\t_XC_{k+l}(X)$. As $D_{k(k+l)}(\bX)$ is an open and closed subset of $C_k(X)\t_XC_{k+l}(X)$, we see that \eq{sm4eq7} is an open and closed subset of $D_{k(k+m)}(\bX)\t_{C_{k+m}(X)}D_{(k+l)(k+m)}(\bX)$.

We will verify that $C_k(\bfX)$ satisfies Definition \ref{sm4def1}(a)--(f). For (a), $C_k(\bX),\ab C_l(C_k(\bX))$ are s-manifolds of dimensions $n-k,n-k-l$ by Definition \ref{sm4def1}(a),(e), and $\Pi_l$ is a proper closed embedding as $D_{k(k+l)}(\bX)\subseteq C_k(\bX)\t_{\Pi_k,\bX,\Pi_{k+l}}C_{k+l}(\bX)$ is closed and $\Pi_{k+l}:C_{k+l}(\bX)\ra\bX$ is a proper closed embedding.

For (b), Definition \ref{sm4def1}(f)(ii) implies that $D_{kk}(\bX)=\De_{C_k(\bX)}=\bigl\{(u,u):u\in C_k(\bX)\bigr\}$. Thus $\Pi_2:D_{kk}(\bX)\ra C_k(\bX)$ is a bijection. As it is also a proper covering map of s-manifolds, it is an isomorphism, and (b) follows.

Part (c) holds as $D_{k(k+l)}(\bX)$ is a disjoint union of transverse fibre products $C_k(X)^i\t_{X^j}C_{k+l}(X)^{i'}$ by Definition \ref{sm4def1}(d), which are the strata of $D_{k(k+l)}(\bX)$. Since $\Pi_{k+l}:C_{k+l}(X)^{i'}\ra X^j$ is a proper covering map by Definition \ref{sm4def1}(a), the projection $C_k(X)^i\t_{X^j}C_{k+l}(X)^{i'}\ra C_k(X)^i$ is a proper covering map of manifolds.

For (d), $D_{lm}(C_k(\bX))$ is open and closed in $D_{k(k+l)}(\bX)\t_{C_k(\bX)}D_{k(k+m)}(\bX)$ as $D_{(k+l)(k+m)}(\bX)$ is open and closed in $C_{k+l}(X)\t_XC_{l+m}(X)$. It is a disjoint union of fibre products of strata as $D_{k(k+l)}(\bX),D_{k(k+m)}(\bX),D_{(k+l)(k+m)}(\bX)$ are. For (e), the projection 
\begin{equation*}
\Pi_{D_{k(k+m)}(\bX)}:D_{lm}(C_k(\bX))\longra D_{k(k+m)}(\bX)=:C_m(C_k(\bX))
\end{equation*}
is a proper covering map as in the presentation \eq{sm4eq7} of $D_{lm}(C_k(\bX))$, it is open and closed in $D_{k(k+m)}(\bX)\t_{C_{k+m}(X)}D_{(k+l)(k+m)}(\bX)$, and the projection
\begin{equation*}
D_{k(k+m)}(\bX)\t_{C_{k+m}(X)}D_{(k+l)(k+m)}(\bX)\longra D_{k(k+m)}(\bX)
\end{equation*}
is a proper covering map as $\Pi_{C_{k+m}(X)}:D_{(k+l)(k+m)}(\bX)\ra C_{k+m}(X)$ is a proper covering map by Definition \ref{sm4def1}(e) for~$\bfX$.

For (f), let $u\in C_k(\bX)$ with $\Pi_k(u)=x\in X$, and write $\tl$ for the partial order on $C(X)_x$ in Definition \ref{sm4def1}(f) for $\bfX$, and $\bl$ for the corresponding binary relation on $C(C_k(X))_u$. Then points of $C(C_k(X))_u$ are pairs $(u,v)$ such that $v\in C(X)_x$ with $u\tl v$, and for $(u,v),(u,w)\in C(C_k(X))_u$, we have $(u,v)\bl(u,w)$ if and only if $v\tl w$. Parts (f)(i)--(iii) for $\bl$ are now more-or-less immediate from the corresponding properties of $\tl$ applied to the second factor $v$ in~$(u,v)$. Hence Definition \ref{sm4def1}(a)--(f) hold, and $C_k(\bfX)$ is an s-manifold with corners.

If $v\in C(\bX)$ with $\Pi_\bX(v)=x$ then $v\in C(\bX)^\ci$ if and only if $v=\max(C(X)_x)$ in $(C(X)_x,\tl)$. Similarly, if $(u,v)\in C(C_k(\bX))$ with $\Pi_\bX(u)=\Pi_\bX(v)=x$ then $(u,v)\in C(C_k(\bX))^\ci$ if and only if $(u,v)$ is the maximum in $(C(C_k(X))_u,\bl)$, which is equivalent to $v=\max(C(X)_x)$ in $(C(X)_x,\tl)$, that is, $v\in C(\bX)^\ci$. Thus we see that
\e
C_l(C_k(\bX))^\ci=\bigl\{(u,v)\in D_{k(k+l)}(\bX):v\in C_{k+l}(\bX)^\ci\bigr\}.
\label{sm4eq8}
\e

Define a morphism $\bs\Pi_k:C_k(\bfX)\ra\bfX$ by
\begin{equation*}
\bs\Pi_k=\Bigl(\Pi_k:C_k(\bX)\ra\bX,\; \ts\coprod\limits_{l=k}^n\Pi_{C_{k+l}(\bX)}:\coprod\limits_{l=k}^nD_{kl}(\bX)\longra\coprod\limits_{l=0}^nC_l(\bX)\Bigr).
\end{equation*}
Here $\Pi_{C_{k+l}(\bX)}:D_{kl}(\bX)\ra C_{k+l}(\bX)$ is a morphism of s-manifolds by Definition \ref{sm4def1}(e). Equation \eq{sm4eq2} for $\bs\Pi_k$ is obvious. In \eq{sm4eq3}, $C(\Pi_k)\t C(\Pi_k)$ maps $((u,v),(u,w))$ in \eq{sm4eq6} to $(v,w)$, so \eq{sm4eq3} for $\bs\Pi_k$ follows from the condition $(v,w)\in D_{(k+l)(k+m)}(\bX)$ in \eq{sm4eq6}. Equation \eq{sm4eq4} for $\bs\Pi_k$ follows from \eq{sm4eq8}. Hence $\bs\Pi_k$ is a smooth map of s-manifolds with corners.

We also write $\bs i_\bfX:\pd\bfX\ra\bfX$ for $\bs\Pi_1:C_1(\bfX)\ra\bfX$. We can iterate boundaries to get $\bfX,\pd\bfX,\pd^2\bfX,\ldots,$ where $\pd^k\bfX$ is an s-manifold with corners of dimension $n-k$ for $k=0,1,\ldots,n$. It has a natural proper covering map $\pd^k\bfX\ra C_k(\bfX)$. We write $\bs i_\bfX^k:\pd^k\bfX\ra\bfX$ for the composition~$\bs i_\bfX\ci \bs i_{\pd\bfX}\ci\cdots\ci\bs i_{\pd^{k-1}\bfX}$.

We define the {\it corners\/} of $\bfX$ to be $C(\bfX)=\coprod_{k=0}^nC_k(\bfX)$, as an object of $\cSManc$. Then $\bs\Pi_\bfX=\coprod_{k=0}^n\bs\Pi_k$ is a morphism $\bs\Pi_\bfX:C(\bfX)\ra\bfX$ in $\cSManc$.

Now let $\bff=(f,C(f)):\bfX\ra\bfY$ be a morphism of s-manifolds with corners. Define a morphism in $\cSManc$ by
\begin{equation*}
C(\bff)=\bigl(C(f),C(f)\t C(f)\bigr):C(\bfX)\longra C(\bfY).
\end{equation*}
Here $C(f)$ is as in $\bff$, and $C(f)\t C(f):C(X)\t_XC(X)\ra C(Y)\t_YC(Y)$ maps $D(\bX)=C(C(\bX))\ra D(\bY)=C(C(\bY))$ by \eq{sm4eq3}. Equation \eq{sm4eq2} is obvious, and \eq{sm4eq3} follows from \eq{sm4eq6} for $C(\bfX),C(\bfY)$ and \eq{sm4eq3} for $\bff$. Equation \eq{sm4eq4} for $C(\bff)$ follows from \eq{sm4eq8} for $\bfX,\bfY$ and \eq{sm4eq4} for $\bff$. Thus $C(\bff)$ is a morphism in $\cSManc$. From Definition \ref{sm4def1}(f)(ii) we see that $C_0(C(\bX))=\De_{C(\bX)}$, so $C(C(f))=C(f)\t C(f)$ maps $C_0(C(\bX))\ra C_0(C(\bY))$, and $C(\bff)$ is interior. 

If $\bff$ is b-normal, we can show that Definition \ref{sm4def5}(i)--(iii) for $C(\bff)$ follow from Definition \ref{sm4def5}(i)--(iii) for $\bff$, so $C(\bff)$ is b-normal. It is easy to see that $\bff\mapsto C(\bff)$ is compatible with identities and compositions, so we have defined a functor
\begin{equation*}
C:\SManc\longra\cSMancin\subset\cSManc,
\end{equation*}
which also maps $\SMancbn\ra\cSMancbn$. As the constructions of corners for manifolds with (g-) or (a-)corners and for s-manifolds with corners are compatible, the following diagrams commute up to natural isomorphism, where the inclusions are as in Example \ref{sm4ex1}:
\begin{align*}
\xymatrix@C=2pt@R=15pt{
*+[r]{\Manc\,\,\,\,\,} \ar[d]^C & \ar@{^{(}->}[rrrrr] &&&&& *+[l]{\SManc} \ar[d]_C &
*+[r]{\!\!\!\!\!\!\!\Mangc\,\,\,} \ar@<-10pt>[d]^C & \ar@{^{(}->}[rrrrr] &&&&& *+[l]{\SManc} \ar[d]_C &
*+[r]{\!\!\!\!\!\!\!\Manac\,\,\,} \ar@<-10pt>[d]^C & \ar@{^{(}->}[rrrrr] &&&&& *+[l]{\SManc} \ar[d]_C
\\  
*+[r]{\cManc\,\,} & \ar@{^{(}->}[rrrrr] &&&&& *+[l]{\cSManc,\!} &
*+[r]{\!\!\!\!\!\!\!\cMangc\,\,\,} & \ar@{^{(}->}[rrrrr] &&&&& *+[l]{\cSManc,\!} &
*+[r]{\!\!\!\!\!\!\!\cManac\,\,\,} & \ar@{^{(}->}[rrrrr] &&&&& *+[l]{\cSManc.\!} 
}
\end{align*}
As \eq{sm2eq10}--\eq{sm2eq13} are false in $\Mangc$ by \S\ref{sm271}(h), they are also false in~$\SManc$.
\end{dfn}

As in \cite[\S 4]{Joyc1}, we can make {\it b-normal\/} morphisms act on boundaries:

\begin{dfn}
\label{sm4def7}
Let $\bff:\bfX\ra\bfY$ be a b-normal morphism in $\SManc$. Then Definition \ref{sm4def5}(ii) implies that $C(f)\vert_{C_1(\bX)}$ maps $C_1(\bX)\ra C_0(\bY)\amalg C_1(\bY)$. Definition \ref{sm4def6} makes $C_1(\bX)$ into an s-manifold with corners $\pd\bfX$. Define $\pd^\bff_0\bfX$ and $\pd^\bff_1\bfX$ to be the open and closed s-submanifolds of $\pd\bfX$ mapped to $C_0(\bY)$ and $C_1(\bY)$ respectively, so that $\pd\bfX=\pd^\bff_0\bfX\amalg\pd^\bff_1\bfX$. Define morphisms in $\SManc$
\begin{align*}
\pd_0\bff&:\pd^\bff_0\bfX\longra\bfY,& \pd_1\bff&:\pd^\bff_1\bfX\longra\pd\bfY\quad\text{by}\\
\pd_0\bff&=\Pi_\bfY\ci C(\bff)\vert_{\pd^\bff_0\bfX}=\bff\ci\Pi_\bfX\vert_{\pd^\bff_0\bfX},
& \pd^\bff_1\bfX&=C(\bff)\vert_{\pd^\bff_1\bfX}.
\end{align*}
As $C_0(\bfY)\cong\bfY$, we see that $\pd_0\bff\amalg\pd_1\bff:\pd^\bff_0\bfX\amalg\pd^\bff_1\bfX\ra\bfY\amalg\pd\bfY$ is identified with $C(\bff)\vert_{C_1(\bfX)}:C_1(\bfX)\ra C_0(\bfY)\amalg C_1(\bfY)$. We use this notation in Theorem~\ref{sm4thm2}.	
\end{dfn}

\subsection{\texorpdfstring{Orientations and `fundamental classes'}{Orientations and ‘fundamental classes’}}
\label{sm44}

Section \ref{sm26} explained our point of view on `fundamental classes' of manifolds with corners. We now extend the material of \S\ref{sm33} on orientations and fundamental classes of s-manifolds to the corners case, such that \S\ref{sm26}(i)--(vii) hold.

\begin{dfn}
\label{sm4def8}
Let $\bfX$ be an s-manifold with corners of dimension $n$. Then $\pd^k\bfX$ is an s-manifold with corners of dimension $n-k$, so its interior $(\pd^k\bfX)^\ci$ is an s-manifold of dimension $n-k$, for $k=0,\ldots,n$. An {\it orientation\/} $\bs o_\bfX=(o_\bfX^0,o_\bfX^1,\ldots,o_\bfX^n)$ on $\bfX$ is an orientation $o_\bfX^k$ on $(\pd^k\bfX)^\ci$ for $k=0,\ldots,n$ in the sense of Definition \ref{sm3def2}, satisfying the following condition~\eq{sm4eq10}.

For each $k=0,\ldots,n-1$, as in \eq{sm3eq1} we have the open subsets $(\pd^k\bfX)_0^\ci\subseteq(\pd^k\bfX)^\ci\subseteq\pd^k\bfX$ and $(\pd^{k+1}\bfX)_0^\ci\subseteq(\pd^{k+1}\bfX)^\ci\subseteq\pd^{k+1}\bfX$, which are manifolds of dimensions $n-k,n-k-1$ with orientations $o_\bfX^k$, $o_\bfX^{k+1}$, giving fundamental classes $[(\pd^k\bfX)_0^\ci]_\fund\in H_{n-k}^\St((\pd^k\bfX)_0^\ci,\Z)$ and $[(\pd^{k+1}\bfX)_0^\ci]_\fund\in H_{n-k-1}^\St((\pd^{k+1}\bfX)_0^\ci,\Z)$. By Definition \ref{sm4def3} the morphism $\bs i_{\pd^k\bfX}\vert_{(\pd^{k+1}\bfX)_0^\ci}:(\pd^{k+1}\bfX)_0^\ci\ra\pd^k\bfX$ is a homeomorphism with a locally closed subset of $\pd^k\bfX$, which thus has the structure of an oriented manifold, and is disjoint from $(\pd^k\bfX)_0^\ci$. Define
\begin{equation*}
(\pd^k\bfX)_{0,\le 1}=(\pd^k\bfX)_0^\ci\amalg \bs i_{\pd^k\bfX}\bigl((\pd^{k+1}\bfX)_0^\ci\bigr)\subseteq\pd^k\bfX.
\end{equation*}
Then as for \eq{sm3eq2}, Property \ref{sm2pr2}(g) gives a long exact sequence
\e
\begin{gathered}
\xymatrix@C=25pt@R=15pt{  & 0=H_{n-k}^\St(\bs i_{\pd^k\bfX}\bigl((\pd^{k+1}\bfX)_0^\ci\bigr),\Z) \ar[r]_(0.57){\inc_*} & H_{n-k}^\St((\pd^k\bfX)_{0,\le 1},\Z) \ar[d]^{\vert_{(\pd^k\bfX)_0^\ci}} \\
\cdots  & H_{n-k-1}^\St(\bs i_{\pd^k\bfX}\bigl((\pd^{k+1}\bfX)_0^\ci\bigr),\Z) \ar[l] & H_{n-k}^\St((\pd^k\bfX)_0^\ci,\Z). \ar[l]_(0.39)\pd }
\end{gathered}
\label{sm4eq9}
\e
In a similar way to \eq{sm3eq3}, we require that for $k=0,\ldots,n-1$ we have
\e
\pd\bigl([(\pd^k\bfX)_0^\ci]_\fund\bigr)=(\bs i_{\pd^k\bfX}\vert_{(\pd^{k+1}\bfX)_0^\ci})_*\bigl([(\pd^{k+1}\bfX)_0^\ci]_\fund\bigr)
\label{sm4eq10}
\e
in $H_{n-k-1}^\St(\bs i_{\pd^k\bfX}\bigl((\pd^{k+1}\bfX)_0^\ci\bigr),\Z)$.

Observe that if \eq{sm4eq10} holds, then the orientation $o_\bfX^k$ on $(\pd^k\bfX)_0^\ci$ determines the fundamental class of $(\pd^{k+1}\bfX)_0^\ci$, and thus the orientation $o_\bfX^{k+1}$ on $(\pd^{k+1}\bfX)_0^\ci$. Hence by induction on $k$, all of $o_\bfX^0,o_\bfX^1,\ldots,o_\bfX^n$ are determined by the orientation $o_\bfX^0$ on $\bfX^\ci=(\pd^0\bfX)^\ci$. So we can equivalently define an orientation on $\bfX$ to be just an orientation $o_\bfX=o_\bfX^0$ on $\bfX^\ci$, such that there exist (necessarily unique) orientations $o_\bfX^k$ on $(\pd^k\bfX)^\ci$ for $k=1,\ldots,n$ satisfying~\eq{sm4eq10}.

If $\bs o_\bfX=(o_\bfX^0,o_\bfX^1,\ldots,o_\bfX^n)$ is an orientation on $\bfX$, then $\bs o_{\pd^k\bfX}=(o_\bfX^k,\ldots,o_\bfX^n)$ is an orientation on $\pd^k\bfX$ for $k=0,\ldots,n$, as in \S\ref{sm26}(i). We call $\bfX$ {\it oriented\/} if it has a particular choice of orientation, and {\it orientable\/} if it admits an orientation, and {\it locally orientable\/} if $\bfX$ can be covered with open s-submanifolds with corners $\bfU\subseteq\bfX$ with $\bfU$ orientable. If $\bfX$ is an oriented s-manifold with corners, we write $-\bfX$ for $\bfX$ with the opposite orientation.
\end{dfn}

\begin{dfn}
\label{sm4def9}
Let $\bfX$ be an s-manifold with corners of dimension $n$. An {\it orientation bundle\/} $(\Or_\bfX,\bs\om_\bfX)$ on $\bX$ consists of a principal $\Z_2$-bundle $\pi:\Or_\bfX\ra\bfX$ over $\bfX$, so that $(i_\bfX^k)^*(\Or_\bfX)\ra\pd^k\bfX$ is a principal $\Z_2$-bundle over $\pd^k\bfX$ for $k=0,\ldots,n$, and an $(n+1)$-tuple $\bs\om_\bfX=(\om_\bfX^0,\om_\bfX^1,\ldots,\om_\bfX^n)$, such that:
\begin{itemize}
\setlength{\itemsep}{0pt}
\setlength{\parsep}{0pt}
\item[(a)] $\om_\bfX^k:\Or_{(\pd^k\bfX)_0^\ci}\ra(i_\bfX^k)^*(\Or_\bfX)\vert_{(\pd^k\bfX)_0^\ci}$ is an isomorphism of principal $\Z_2$-bundles over $(\pd^k\bfX)_0^\ci\subseteq \pd^k\bfX$, such that $\bigl((i_\bfX^k)^*(\Or_\bfX)\vert_{(\pd^k\bfX)^\ci},\om_\bfX^k\bigr)$ is an orientation bundle on the s-manifold $(\pd^k\bfX)^\ci$ for $k=0,\ldots,n$.
\item[(b)] Twisting \eq{sm4eq9} by $(i_\bfX^k)^*(\Or_\bfX)$ gives a long exact sequence
\begin{equation*}
\!\!\!\!\!\xymatrix@C=120pt@R=15pt{  *+[r]{H_{n-k}^\St((\pd^k\bfX)_{0,\le 1},(i_\bfX^k)^*(\Or_\bfX)\vert_{\cdots},\Z)}\ar[r]_(0.54){\raisebox{-9pt}{$\vert_{(\pd^k\bfX)_0^\ci}$}} & *+[l]{H_{n-k}^\St((\pd^k\bfX)_0^\ci,(i_\bfX^k)^*(\Or_\bfX)\vert_{\cdots},\Z)} \ar@<-2ex>[d]_\pd
\\
*+[r]{\cdots}  & *+[l]{H_{n-k-1}^\St(\bs i_{\pd^k\bfX}\bigl((\pd^{k+1}\bfX)_0^\ci\bigr),(i_\bfX^k)^*(\Or_\bfX)\vert_{\cdots},\Z).} \ar[l] }
\end{equation*}
Then as for \eq{sm4eq10}, we require that for $k=0,\ldots,n-1$ we have
\end{itemize}
\e
\pd\bigl((\om_\bfX^k)_*\bigl([(\pd^k\bfX)_0^\ci]_\fund\bigr)\bigr)=(\bs i_{\pd^k\bfX}\vert_{(\pd^{k+1}\bfX)_0^\ci})_*\bigl((\om_\bfX^{k+1})_*\bigl([(\pd^{k+1}\bfX)_0^\ci]_\fund\bigr)\bigr)
\label{sm4eq11}
\e
\begin{itemize}
\setlength{\itemsep}{0pt}
\setlength{\parsep}{0pt}
\item[] in $H_{n-k-1}^\St(\bs i_{\pd^k\bfX}\bigl((\pd^{k+1}\bfX)_0^\ci\bigr),(i_\bfX^k)^*(\Or_\bfX)\vert_{\cdots},\Z)$.
\end{itemize}

Observe that if \eq{sm4eq11} holds, then $\om_\bfX^k$ determines $(\om_\bfX^{k+1})_*([(\pd^{k+1}\bfX)_0^\ci]_\fund)$, and so determines $\om_\bfX^{k+1}$. Hence by induction on $k$, all of $\om_\bfX^0,\om_\bfX^1,\ldots,\om_\bfX^n$ are determined by $\om_\bfX^0$. So we can equivalently define an orientation bundle on $\bfX$ to be a pair $(\Or_\bfX,\om_\bfX^0)$, such that there exist (necessarily unique) $\om_\bfX^k$ as in (i) for $k=1,\ldots,n$ satisfying~\eq{sm4eq11}.

If $(\Or_\bfX,(\om_\bfX^0,\om_\bfX^1,\ldots,\om_\bfX^n)\bigr)$ is an orientation bundle on $\bfX$, then $\bs i_{\pd^k\bfX}^*(\Or_\bfX,\ab\bs\om_\bfX)\ab:=\bigl((i^k_\bfX)^*(\Or_\bfX),\ab(\om_\bfX^k,\ldots,\om_\bfX^n)\bigr)$ is an orientation bundle on $\pd^k\bfX$ for $k=0,\ldots,n$. As in Definition \ref{sm3def3}(i)--(iii), it is easy to see that:
\begin{itemize}
\setlength{\itemsep}{0pt}
\setlength{\parsep}{0pt}
\item[(i)] If an s-manifold with corners $\bfX$ has an orientation bundle, it is locally orientable. 
\item[(ii)] If $\bfX$ is oriented then it has an orientation bundle $(\Or_\bfX,\bs\om_\bfX)$ with $\Or_\bfX=X\t\Z_2$, and $\om_\bfX^k:\Or_{(\pd^k\bfX)_0^\ci}\ra(i_\bfX^k)^*(\Or_\bfX)\vert_{(\pd^k\bfX)_0^\ci}=(\pd^k\bfX)_0^\ci\t\Z_2$ the trivialization of $\Or_{(\pd^k\bfX)_0^\ci}$ induced by the orientation on $(\pd^k\bfX)_0^\ci$.
\item[(iii)] If $\bfX$ has an orientation bundle $(\Or_\bfX,\bs\om_\bfX)$, any trivialization $\Or_\bfX\cong X\t\Z_2$ induces an orientation on $\bfX$.
\end{itemize}
We say that an orientation on $\bfX$ is {\it compatible with\/} $(\Or_\bfX,\bs\om_\bfX)$ if it corresponds to a trivialization~$\Or_\bfX\cong X\t\Z_2$. 
\end{dfn}

Here is the analogue of \S\ref{sm26}(ii)--(vi) for s-manifolds with corners. Note that part (c) is not obvious, and uses deep properties of Steenrod homology, since $X$ could be a pathological topological space, as in Examples \ref{sm3ex5}, \ref{sm4ex4} and \ref{sm4ex5}(d). 

\begin{thm}
\label{sm4thm1}
Let\/ $\bfX$ be an oriented s-manifold with corners of dimension $n,$ and use the notation of Definitions\/ {\rm\ref{sm4def1}} and\/ {\rm\ref{sm4def3}}. Write $S^{\le 1}(X)=S^0(X)\amalg S^1(X)\subset X,$ so that\/ $S^{\le 1}(X)$ is a locally closed subset of\/ $X,$ with\/ $S^0(X)=\bfX^\ci$ an open subset of $S^{\le 1}(X),$ whose complement\/ $S^1(X)$ is homeomorphic to $(\pd\bfX)^\ci$ under $i_\bfX:\pd\bfX\ra\bfX$. Thus by Property\/ {\rm\ref{sm2pr2}(g)} we have an exact sequence
\e
\xymatrix@C=24pt{ H_n^\St(S^0(X),\Z) \ar[r]^(0.48)\pd & H_{n-1}^\St(S^1(X),\Z) \ar[r]^(0.48){\inc_*} & H_{n-1}^\St\bigl(S^{\le 1}(X),\Z\bigr). }
\label{sm4eq12}
\e

\noindent{\bf(a)} Suppose $\bfX$ is oriented, so that\/ $\pd\bfX$ is also oriented. Then the s-manifolds $\bfX^\ci,(\pd\bfX)^\ci$ are oriented and have fundamental classes in Steenrod homology by Theorem\/ {\rm\ref{sm3thm1}(a)}. For $\pd$ as in {\rm\eq{sm4eq12},} these satisfy
\e
\pd\bigl([\bfX^\ci]_\fund\bigr)=(i_\bfX)_*\bigl([(\pd\bfX)^\ci]_\fund\bigr).
\label{sm4eq13}
\e
As $i_\bfX:(\pd\bfX)^\ci\ra S^1(X)$ is a homeomorphism, this implies that\/ $[\bfX^\ci]_\fund$ determines $[(\pd\bfX)^\ci]_\fund,$ and so by induction $[\bfX^\ci]_\fund$ determines $[(\pd^k\bfX)^\ci]_\fund$ for $k=0,\ldots,n$. Also exactness of\/ \eq{sm4eq12} implies that
\e
\inc_*\ci(i_\bfX)_*\bigl([(\pd\bfX)^\ci]_\fund\bigr)=0
\label{sm4eq14}
\e
in $H_{n-1}^\St\bigl(S^{\le 1}(X),\Z\bigr)$. If\/ $\bfX$ is an s-manifold with boundary then $S^{\le 1}(X)=X,$ and\/ $(\pd\bfX)^\ci=\pd\bfX$ is an s-manifold without boundary, and\/ {\rm\eq{sm4eq13}--\eq{sm4eq14}} become
\ea
\pd\bigl([\bfX^\ci]_\fund\bigr)&=(i_\bfX)_*\bigl([\pd\bfX]_\fund\bigr),
\label{sm4eq15}\\
\inc_*\ci(i_\bfX)_*\bigl([\pd\bfX]_\fund\bigr)&=0,
\label{sm4eq16}
\ea
where \eq{sm4eq16} holds in $H_{n-1}^\St(X,\Z)$.
\smallskip

\noindent{\bf(b)} Suppose $\bfX$ has an orientation bundle $(\Or_\bfX,\bs\om_\bfX)$. Then we have a twisted version of\/ \eq{sm4eq12}
\begin{equation*}
\text{\begin{small}$\displaystyle\xymatrix@C=15pt{ H_n^\St(S^0(X),\Or_\bfX\vert_{\cdots},\Z) \ar[r]^(0.48)\pd & H_{n-1}^\St(S^1(X),\Or_\bfX\vert_{\cdots},\Z) \ar[r]^(0.48){\inc_*} & H_{n-1}^\St\bigl(S^{\le 1}(X),\Or_\bfX\vert_{\cdots},\Z\bigr), }$\end{small}}
\end{equation*}
and under this the fundamental classes of\/ $\bfX^\ci,(\pd\bfX)^\ci$ from Theorem\/ {\rm\ref{sm3thm1}(b)} satisfy {\rm\eq{sm4eq13}--\eq{sm4eq14}} in Steenrod homology twisted by $\Or_\bfX$. If\/ $\bfX$ is an s-manifold with boundary then {\rm\eq{sm4eq15}--\eq{sm4eq16}} hold.
\smallskip

\noindent{\bf(c)} Now suppose that $\bfX$ is a compact oriented s-manifold with boundary of dimension $n=1$. Then $\pd\bfX$ is an ordinary compact oriented\/ $0$-manifold, that is, a finite set with signs. Writing $\#(\pd\bfX)\in\Z$ for the signed count, we have
\begin{equation*}
\#(\pd\bfX)=0.
\end{equation*}
\end{thm}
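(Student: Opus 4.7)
The plan is to deduce (c) directly from part (a) applied in dimension $n=1$, by pushing the identity \eq{sm4eq16} forward to a point. First I would observe that when $n=1$, the boundary $\pd\bfX$ is an s-manifold without boundary of dimension $0$: Definition \ref{sm3def1}(c),(e) force any $0$-dimensional s-manifold to be a locally finite disjoint union of points, so $\pd\bfX$ is an ordinary $0$-manifold. Since $\bs i_\bfX:\pd\bfX\to\bfX$ is proper (Definition \ref{sm4def1}(a)) and $\bfX$ is compact, $\pd\bfX$ is a finite set. The orientation on $\bfX$ induces one on $(\pd\bfX)^\ci=\pd\bfX$ via Definition \ref{sm4def8}, and on a $0$-manifold this is exactly a choice of sign $\pm1$ at each point, so the signed count $\#(\pd\bfX)\in\Z$ is well-defined.

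Next I would apply part (a) of the theorem with $n=1$ to obtain \eq{sm4eq16}:
\begin{equation*}
\inc_*\ci (i_\bfX)_*\bigl([\pd\bfX]_\fund\bigr)=0\qquad\text{in }H_0^\St(X,\Z).
\end{equation*}
Since $X$ is compact, the constant map $p:X\to *$ is proper, so Property \ref{sm2pr2}(b) supplies a pushforward $p_*:H_0^\St(X,\Z)\to H_0^\St(*,\Z)\cong\Z$. Applying $p_*$ and using functoriality of pushforward gives
\begin{equation*}
(p\ci\inc\ci i_\bfX)_*\bigl([\pd\bfX]_\fund\bigr)=0\qquad\text{in }\Z.
\end{equation*}

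Finally I would identify this pushforward with $\#(\pd\bfX)$. By Property \ref{sm2pr2}(a), for the compact $0$-manifold $Y=\pd\bfX$ one has $H_0^\St(Y,\Z)\cong H_0(Y,\Z)\cong\bigoplus_{y\in Y}\Z$; the fundamental class is $[\pd\bfX]_\fund=\sum_{y\in\pd\bfX}\sign(y)\,[y]$, and pushforward along the constant map to the point yields exactly $\sum_{y\in\pd\bfX}\sign(y)=\#(\pd\bfX)$. Combining with the previous display gives $\#(\pd\bfX)=0$. The argument is essentially the s-manifold version of \S\ref{sm26}(vi); there is no substantial obstacle once part (a) has been established, the only point worth checking being that the orientation on $\pd\bfX$ produced by Definition \ref{sm4def8} and \eq{sm4eq10} agrees pointwise with the $\pm 1$-signs used to compute $\#(\pd\bfX)$, which is immediate from the pointwise nature of $o^1_\bfX$ on a $0$-manifold.
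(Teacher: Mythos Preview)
Your argument for part (c) is correct and is essentially identical to the paper's: apply \eq{sm4eq16} from part (a), push forward along the proper map $\pi:X\to *$, and identify the result with the signed count $\#(\pd\bfX)$.

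However, you have not addressed parts (a) and (b) at all, and these are not automatic. The content of (a) is the identity \eq{sm4eq13}, and this requires an argument: the fundamental class $[\bfX^\ci]_\fund$ lives in $H_n^\St(S^0(X),\Z)$ and is characterised only by its restriction to the smaller open set $S^0(X)_0\subseteq S^0(X)$ via Theorem~\ref{sm3thm1}(a), while the orientation axiom \eq{sm4eq10} with $k=0$ relates fundamental classes on the sub-level $(\pd^0\bfX)_0^\ci=S^0(X)_0$ and $(\pd\bfX)_0^\ci$, not on all of $S^0(X)$ and $(\pd\bfX)^\ci$. The paper bridges this gap with a commutative diagram
\begin{equation*}
\xymatrix@C=20pt@R=15pt{
0 \ar[d] & 0 \ar[d]  \\
H_n^\St(S^0(X),\Z) \ar[d]^{\vert_{S^0(X)_0}} \ar[r]^(0.48)\pd & H_{n-1}^\St(S^1(X),\Z) \ar[d]^{\vert_{S^1(X)_0}}  \\
H_n^\St(S^0(X)_0,\Z)  \ar[r]^(0.48)\pd & H_{n-1}^\St(S^1(X)_0,\Z),  }
\end{equation*}
whose columns are injective by the exact sequence \eq{sm3eq12} applied to the s-manifolds $S^0(\bX)=\bfX^\ci$ and $S^1(\bX)\cong(\pd\bfX)^\ci$. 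One then checks \eq{sm4eq13} after restriction to the bottom row, where it becomes exactly \eq{sm4eq10} for $k=0$, and injectivity of the columns lifts the identity back to the top row. Part (b) is the same diagram chase with coefficients twisted by $\Or_\bfX$. You should supply this step; without it, \eq{sm4eq13} and hence \eq{sm4eq16} are unproven, and your deduction of (c) has no starting point.
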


\begin{proof}
For (a), consider the commutative diagram with exact rows and columns
\e
\begin{gathered}
\xymatrix@C=20pt@R=15pt{
0 \ar[d] & 0 \ar[d]  \\ 
H_n^\St(S^0(X),\Z) \ar[d]^{\vert_{S^0(X)_0}} \ar[r]^(0.48)\pd & H_{n-1}^\St(S^1(X),\Z) \ar[r]^(0.48){\inc_*} \ar[d]^{\vert_{S^1(X)_0}} & H_{n-1}^\St\bigl(S^{\le 1}(X),\Z\bigr) \ar[d]^{\vert_{S^{\le 1}(X)_0}} \\
H_n^\St(S^0(X)_0,\Z)  \ar[r]^(0.48)\pd & H_{n-1}^\St(S^1(X)_0,\Z) \ar[r]^(0.48){\inc_*} & H_{n-1}^\St\bigl(S^{\le 1}(X)_0,\Z\bigr).   }
\end{gathered}
\label{sm4eq17}
\e
Here $S^i(X)_0$ are the subsets $S^i(\bX)_0$ in \eq{sm3eq1} for the s-manifolds $S^i(\bX)$, and $S^{\le 1}(X)_0=S^0(X)_0\amalg S^1(X)_0$, as a subset of $S^{\le 1}(X)$. The columns of \eq{sm4eq17} are from \eq{sm3eq12} for $S^0(\bX),S^1(\bX),S^{\le 1}(\bX)$. The rows are the exact sequences \eq{sm4eq12} for $\bfX,\bfX_0$, see also \eq{sm4eq9} for~$k=0$.

By Theorem \ref{sm3thm1}(a) we have $[\bfX^\ci]_\fund\in H_n^\St(S^0(X),\Z)$ with $[\bfX^\ci]_\fund\vert_{S^0(X)_0}\ab=[S^0(X)_0]_\fund$ in $H_n^\St(S^0(X)_0,\Z)$ and $(i_\bfX)_*\bigl([(\pd\bfX)^\ci]_\fund\bigr)\in H_{n-1}^\St(S^1(X),\Z)$ with $(i_\bfX)_*\bigl([(\pd\bfX)^\ci]_\fund\bigr)\vert_{S^1(X)_0}=[S^1(X)_0]_\fund$ in $H_{n-1}^\St(S^1(X)_0,\Z)$. But \eq{sm4eq10} for $k=0$ gives $\pd\bigl([S^0(X)_0]_\fund\bigr)=[S^1(X)_0]_\fund$, so from \eq{sm4eq17} we see that \eq{sm4eq13} holds, and \eq{sm4eq14} is immediate. 

Part (b) holds by the same argument in Steenrod homology twisted by $\Or_\bfX$. For (c), as $X$ is compact the projection $\pi:X\ra *$ is proper, so $\pi_*:H_0^\St(X,\Z)\ra H_0^\St(*,\Z)=\Z$ is defined by Property \ref{sm2pr2}(b). Applying $\pi_*$ to \eq{sm4eq16} gives
\begin{equation*}
\#(\pd\bfX)=\pi_*\ci\inc_*\ci(i_\bfX)_*\bigl([\pd\bfX]_\fund\bigr)=0\quad\text{in $H_0^\St(*,\Z)=\Z$.}\qedhere
\end{equation*}
\end{proof}

\begin{ex}
\label{sm4ex5}
{\bf(a) (Manifolds with (g- or a-)corners.)} As in Example \ref{sm4ex1}, manifolds with (g- or a-)corners can be made into s-manifolds with corners. This identifies the usual notions of orientations and orientation bundles for manifolds with (g- or a-)corners with those for s-manifolds with corners.
\smallskip

\noindent{\bf(b) (Orbifolds with corners.)} As in Example \ref{sm4ex2}, orbifolds with corners $X$ can be made into s-manifolds with corners $\bfX$. Generalizing Example \ref{sm3ex6}(c), $\bfX$ is locally orientable if and only if $X$ is, and then orientations and orientation bundles for orbifolds with corners $X$ and s-manifolds with corners $\bfX$ agree.
\smallskip

\noindent{\bf(c)} The s-manifold with boundary $\bfX$ in Examples \ref{sm3ex4} and \ref{sm4ex3} does not admit an orientation, since in \eq{sm4eq10} we always have $\pd\bigl([(\pd^0\bfX)_0^\ci]_\fund\bigr)=0$ by the proof in Example \ref{sm3ex6}(d) that 
$\bX$ in Example \ref{sm3ex4} admits an orientation.
\smallskip

\noindent{\bf(d)} The s-manifold with boundary $\bfX$ in Examples \ref{sm3ex5} and \ref{sm4ex4} does admit an orientation. Take the interval $[-1,0]\t\{0\}$ Examples \ref{sm3ex5} to have the usual orientation of $[-1,0]$, and each circle $\cS^1_{1/k}$ to have an arbitrary orientation, and for the orientation of $\pd\bfX$ we give $(-1,0)$ the negative orientation and $(0,0)$ the positive orientation. Then we can show \eq{sm4eq10} holds for $k=0$ by the argument of Example~\ref{sm3ex6}(e).
\end{ex}

\subsection{\texorpdfstring{$\Z_2$-fundamental classes}{ℤ₂-fundamental classes}}
\label{sm45}

The material of \S\ref{sm34} on $\Z_2$-fundamental classes of s-manifolds generalizes easily to s-manifolds with corners.

\begin{dfn}
\label{sm4def10}
Let $\bfX$ be an s-manifold with corners of dimension $n$. We say that $\bfX$ {\it admits a $\Z_2$-fundamental class\/} if:
\begin{itemize}
\setlength{\itemsep}{0pt}
\setlength{\parsep}{0pt}
\item[{\bf(i)}] The s-manifold $(\pd^k\bfX)^\ci$ admits a $\Z_2$-fundamental class in the sense of Definition \ref{sm3def4} for $k=0,\ldots,n$.
\item[{\bf(ii)}] We have an analogue of \eq{sm4eq9} over $\Z_2$:
\begin{equation*}
\xymatrix@C=20pt@R=15pt{  & 0=H_{n-k}^\St(\bs i_{\pd^k\bfX}\bigl((\pd^{k+1}\bfX)_0^\ci\bigr),\Z_2) \ar[r]_(0.57){\inc_*} & H_{n-k}^\St((\pd^k\bfX)_{0,\le 1},\Z_2) \ar[d]^{\vert_{(\pd^k\bfX)_0^\ci}} \\
\cdots  & H_{n-k-1}^\St(\bs i_{\pd^k\bfX}\bigl((\pd^{k+1}\bfX)_0^\ci\bigr),\Z_2) \ar[l] & H_{n-k}^\St((\pd^k\bfX)_0^\ci,\Z_2). \ar[l]_(0.39)\pd }
\end{equation*}
As for \eq{sm4eq10}, we require that for $k=0,\ldots,n-1$ we have
\begin{equation*}
\pd\bigl([(\pd^k\bfX)_0^\ci]_\fund\bigr)=(\bs i_{\pd^k\bfX}\vert_{(\pd^{k+1}\bfX)^\ci})_*\bigl([(\pd^{k+1}\bfX)_0^\ci]_\fund\bigr),
\end{equation*}
where the fundamental classes are of manifolds in $\Z_2$-homology, and so do not need orientations.
\end{itemize}

If $\bfX$ admits a $\Z_2$-fundamental class then so does $\pd^k\bfX$ for~$k=0,\ldots,n$.

The analogue of Theorem \ref{sm4thm1}(a) then holds for the $\Z_2$-fundamental classes of $\bfX^\ci,\pd\bfX^\ci$, in Steenrod homology over $\Z_2$. The analogue of Theorem \ref{sm4thm1}(c) holds with $\#(\pd\bfX)=0$ in $\Z_2$.
\end{dfn}

\subsection{Transverse fibre products of s-manifolds with corners}
\label{sm46}

Here are analogues of Definitions \ref{sm3def6}, \ref{sm3def7}, Remark \ref{sm3rem4}, and Theorem~\ref{sm3thm4}.

\begin{dfn}
\label{sm4def11}
Let $\bfX,\bfY,\bfZ$ be s-manifolds with corners with $\dim\bfX=l$, $\dim\bfY=m$, $\dim\bfZ=n$, and $\bfg=(g,C(g)):\bfX\ra\bfZ$, $\bfh=(h,C(h)):\bfY\ra\bfZ$ be b-normal morphisms. We say that $\bfg,\bfh$ are {\it transverse\/} if:
\begin{itemize}
\setlength{\itemsep}{0pt}
\setlength{\parsep}{0pt}
\item[(a)] $g:\bX\ra\bZ$ and $h:\bY\ra\bZ$ are transverse in $\SMan$ in the sense of Definition~\ref{sm3def6}.
\item[(b)] $C(g):C(\bX)\ra C(\bZ)$ and $C(h):C(\bY)\ra C(\bZ)$ are transverse in $\cSMan$ in the sense of Remark~\ref{sm3rem5}.
\end{itemize}
Actually (b) implies (a) by Definition \ref{sm4def1}(b), as $\bfg,\bfh$ are b-normal, so interior.

We call a b-normal morphism $\bfh:\bfY\ra\bfZ$ a {\it submersion\/} if $h:\bY\ra\bZ$ and $C(h):C(\bY)\ra C(\bZ)$ are submersions in the sense of Definition \ref{sm3def6} (actually $C(h)$ a submersion implies $h$ is). If $\bfh$ is a submersion then $\bfg,\bfh$ are transverse for any b-normal~$\bfg:\bfX\ra\bfZ$.
\end{dfn}

\begin{rem}
\label{sm4rem2}
{\bf(a)} Let $\bfg:\bfX\ra\bfZ$ be a morphism in $\SManc$ and $x'\in C_k(\bX)$ with $\Pi_k(x')=x\in\bX$, $C(g)(x')=z'\in C_l(\bZ)$, and $g(x)=\Pi_l(z')=z\in\bZ$. Applying tangent maps to equation \eq{sm4eq2} for $\bfg$ gives a commutative diagram
\e
\begin{gathered}
\xymatrix@R=13pt@C=115pt{ *+[r]{T_{x'}C_k(\bX)} \ar[r]_{T_{x'}C(g)} \ar[d]^{T_{x'}\Pi_k}_\cong & *+[l]{T_{z'}C_l(\bZ)}
\ar[d]^{T_{z'}\Pi_l}_\cong \\ *+[r]{T_x\bX} \ar[r]^{T_xg} & *+[l]{T_z\bZ.\!} }
\end{gathered}
\label{sm4eq18}
\e
The columns $T_{x'}\Pi_k,T_{z'}\Pi_l$ are isomorphisms as by Definition \ref{sm4def1}(c), $\Pi_k,\Pi_l$ are covering maps of manifolds on strata.

In Definition \ref{sm4def11}(a),(b) we require $g,h$ and $C(g),C(h)$ to be transverse. This involves two conditions: a surjectivity condition on tangent spaces in Definition \ref{sm3def6}(a), and dimension conditions in Definition \ref{sm3def6}(b).

Because of the commuting diagrams \eq{sm4eq18} for $\bfX,\bfY,\bfZ$ with columns isomorphisms, Definition \ref{sm3def6}(a) for $g,h$ and $C(g),C(h)$ are {\it equivalent}, and we can impose only the conditions for $g,h$. But Definition \ref{sm3def6}(b) for $C(g),C(h)$ is generally a stronger restriction than Definition \ref{sm3def6}(b) for~$g,h$.
\smallskip

\noindent{\bf(b)} As in Remark \ref{sm3rem4}, an important special case of Definition \ref{sm4def11} is when $\bfZ$ is an ordinary $n$-manifold without boundary. Then Definition \ref{sm3def6}(b) for $g,h$ and $C(g),C(h)$ is automatic, so by {\bf(a)}, $\bfg:\bfX\ra\bfZ$, $\bfh:\bfY\ra\bfZ$ are {\it transverse\/} if $T_x\bfg\op T_y\bfh:T_x\bfX\op T_y\bfY\ra T_z\bfZ$ is surjective whenever $x\in\bfX$ and $y\in\bfY$ with $\bfg(x)=\bfh(y)=z\in\bfZ$, and $\bfh:\bfY\ra\bfZ$ is a {\it submersion\/} if $T_y\bfh:T_y\bfY\ra T_z\bfZ$ is surjective whenever $y\in\bfY$ with $\bfh(y)=z\in\bfZ$, exactly as in Definition~\ref{sm2def3}.
\smallskip

\noindent{\bf(c)} In Definition \ref{sm4def11} we require $\bfg,\bfh$ to be {\it b-normal}. This is necessary for \eq{sm4eq25} and \eq{sm4eq26} in Theorem \ref{sm4thm2} to hold.
\end{rem}

\begin{dfn}
\label{sm4def12}
Work in the situation of Definition \ref{sm4def11}, and write $\bfg=(g,C(g))$ and $\bfh=(h,C(h))$. Then Theorem \ref{sm3rem4} and Remark \ref{sm3rem5} give Cartesian squares in $\SMan$ and $\cSMan$: 
\begin{gather}
\begin{gathered}
\xymatrix@R=13pt@C=115pt{ *+[r]{\bW} \ar[r]_f \ar[d]^e & *+[l]{\bY}
\ar[d]_h \\ *+[r]{\bX} \ar[r]^g & *+[l]{\bZ,\!} }
\end{gathered}
\label{sm4eq19}\\
\begin{gathered}
\xymatrix@R=13pt@C=100pt{ *+[r]{C(\bW)} \ar[r]_{C(f)} \ar[d]^{C(e)} & *+[l]{C(\bY)}
\ar[d]_{C(h)} \\ *+[r]{C(\bX)} \ar[r]^{C(g)} & *+[l]{C(\bZ),\!} }
\end{gathered}
\label{sm4eq20}
\end{gather}
where $\bX,\bY,\bZ,C(\bX),C(\bY),C(\bZ),g,h,C(g),C(h)$ are part of the data of $\bfX,\ab\bfY,\ab\bfZ,\ab\bfg,\ab\bfh$, and \eq{sm4eq19}--\eq{sm4eq20} are the definitions of $\bW,C(\bW),e,f,C(e),C(f)$.

As $C(\bW)$ is an object of $\cSMan$ it is a disjoint union of s-manifolds of different dimensions. The contribution from $C^i(\bX)\t_{C^k(\bZ)}C^j(\bY)$ has dimension $l+m-n-i-j+k$. As $\bfg,\bfh$ are b-normal this is only nonempty if $i,j\ge k$, so the dimension is at most $d:=l+m-n$. Define $C_h(\bW)$ to be the part of $C(\bW)$ which is an s-manifold of dimension $d-h$. Then $C(\bW)=\coprod_{h=0}^dC_h(\bW)$. 

Consider the diagram in $\cSMan$:
\e
\begin{gathered}
\xymatrix@R=13pt@C=50pt{ *+[r]{C(\bW)} \ar[rr]_{C(f)} \ar@{..>}[dr]^{\Pi_\bW} \ar[dd]^{C(e)} && C(\bY)
\ar[dd]_(0.25){C(h)} \ar[dr]^{\Pi_\bY} \\ 
& \bW \ar[rr]_(0.25)f \ar[dd]^(0.25)e && *+[l]{\bY} \ar[dd]_h \\
*+[r]{C(\bX)} \ar[dr]_{\Pi_\bX} \ar[rr]^(0.25){C(g)} && C(\bZ) \ar[dr]^{\Pi_\bZ}\\
& \bX \ar[rr]^g && *+[l]{\bZ.\!} }
\end{gathered}
\label{sm4eq21}
\e
The two bottom right parallelograms commute by \eq{sm4eq2}, and the two rectangles commute by \eq{sm4eq19}--\eq{sm4eq20}. As \eq{sm4eq19} is Cartesian in $\SMan$, and so in $\cSMan$, there exists a unique morphism $\Pi_\bW:C(\bW)\ra\bW$ in $\cSMan$ making the diagram commute. Define $\Pi_h:C_h(\bW)\ra\bW$ to be the restriction of $\Pi_\bW$ to $C_h(\bW)\subseteq C(\bW)$ for $h=0,\ldots,d$. 

From \eq{sm4eq21} we see we have a commutative diagram of topological spaces:
\e
\begin{gathered}
\xymatrix@R=13pt@C=180pt{ *+[r]{C(W)\t_{\Pi_W,W,\Pi_W}C(W)} \ar[r]_{C(f)\t C(f)} \ar[d]^{C(e)\t C(e)} & *+[l]{C(Y)\t_{\Pi_Y,Y,\Pi_Y}C(Y)}
\ar[d]_{C(h)\t C(h)} \\*+[r]{C(X)\t_{\Pi_X,X,\Pi_X}C(X)} \ar[r]^{C(g)\t C(g)} & *+[l]{C(Z)\t_{\Pi_Z,Z,\Pi_Z}C(Z),\!} }
\end{gathered}
\label{sm4eq22}
\e
which is in fact a Cartesian square. As in Definition \ref{sm4def1} we have open and closed $D(\bX)\subseteq C(X)\t_{\Pi_X,X,\Pi_X}C(X)$, and similarly for $D(\bY),D(\bZ)$, and \eq{sm4eq3} gives
$\bigl(C(g)\t C(g)\bigr)\bigl(D(\bX)\bigr)\subseteq D(\bZ)$ and $\bigl(C(h)\t C(h)\bigr)\bigl(D(\bY)\bigr)\subseteq D(\bZ)$. Define
\e
D(\bW)=D(\bX)\t_{C(g)\t C(g),D(\bZ),C(h)\t C(h)}D(\bY),
\label{sm4eq23}
\e
as a subset of $C(W)\t_{\Pi_W,W,\Pi_W}C(W)$ regarded as a fibre product as in \eq{sm4eq22}. It is open and closed, as $D(\bX),D(\bY),D(\bZ)$ are. Define $D_{kl}(\bW)$ to be the intersection of $D(\bW)$ with~$C_k(W)\t_{\Pi_k,W,\Pi_l}C_l(W)$. 

Suppose $(w',w'')\in D_{h_1h_2}(\bW)$ with $(C(e)\t C(e))(w',w'')=(x',x'')$ in $D_{i_1i_2}(\bX)$, $(C(f)\t C(f))(w',w'')=(y',y'')\!\in\! D_{j_1j_2}(\bY)$, and $(C(g)\t C(g))(x',x'')\ab=(C(h)\t C(h))(y',y'')=(z',z'')\in D_{k_1k_2}(\bZ)$. As $\bfg,\bfh$ are b-normal, Definition \ref{sm4def5}(iii) implies that $i_2-i_1,j_2-j_1\ge k_2-k_1\ge 0$. But $h_2-h_1=i_2-i_1+j_2-j_1-k_2+k_1$, so $h_1\le h_2$. Thus $D_{kl}(\bW)=\es$ unless $k\le l$, so $D(\bW)=\coprod_{0\le k\le l\le d}D_{kl}(\bW)$. Define 
\begin{equation*}
\bfW=\bigl(\bW,\; \Pi_k:C_k(\bW)\ra\bW,\; 0\le k\le d, \; D_{kl}(\bW),\; 0\le k\le l\le d\bigr),
\end{equation*}
and write $\bfe=(e,C(e))$ and $\bff=(f,C(f))$.
\end{dfn}

\begin{thm}
\label{sm4thm2}
In Definition\/ {\rm\ref{sm4def12},} $\bfW$ is an s-manifold with corners with\/ $\dim\bfW=d:=l+m-n,$ where $d\ge 0$ if\/ $\bfW\ne\es,$ and\/ $\bfe:\bfW\ra\bfX,$ $\bff:\bfW\ra\bfY$ are b-normal (hence interior) morphisms with\/ $\bfg\ci\bfe=\bfh\ci\bff$. The following is a Cartesian square in both\/ $\SManc$ and\/~$\SMancin\!:$
\e
\begin{gathered}
\xymatrix@R=13pt@C=90pt{ *+[r]{\bfW} \ar[r]_\bff \ar[d]^\bfe & *+[l]{\bfY}
\ar[d]_\bfh \\ *+[r]{\bfX} \ar[r]^\bfg & *+[l]{\bfZ.\!} }
\end{gathered}
\label{sm4eq24}
\e
Thus\/ $\bfW$ is a fibre product\/ $\bfX\t_{\bfg,\bfZ,\bfh}\bfY$ in $\SManc$ and\/~$\SMancin$.

The following is also a Cartesian square in both\/ $\cSManc$ and\/ $\cSMancin,$ with\/ $C(\bfg),C(\bfh)$ transverse:
\e
\begin{gathered}
\xymatrix@R=13pt@C=90pt{ *+[r]{C(\bfW)} \ar[r]_{C(\bff)} \ar[d]^{C(\bfe)} & *+[l]{C(\bfY)}
\ar[d]_{C(\bfh)} \\ *+[r]{C(\bfX)} \ar[r]^{C(\bfg)} & *+[l]{C(\bfZ).\!} }
\end{gathered}
\label{sm4eq25}
\e

Using the notation of Definition\/ {\rm\ref{sm4def7},} there is a canonical isomorphism
\e
\pd\bfW\cong \bigl(\pd_0^\bfg\bfX\t_{\pd_0\bfg,\bfZ,\bfh}\bfY\bigr)\amalg\bigl(\bfX\t_{\bfg,\bfZ,\pd_0\bfh}\pd_0^\bfh\bfY\bigr)\amalg\bigl(\pd_1^\bfg\bfX\t_{\pd_1\bfg,\pd\bfZ,\pd_1\bfh}\pd_1^\bfh\bfY\bigr),
\label{sm4eq26}
\e
where the three fibre products on the right hand side are transverse in\/~$\SManc$.
\end{thm}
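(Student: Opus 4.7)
The plan is to verify Definition \ref{sm4def1}(a)--(f) for $\bfW$ using the underlying $\SMan$-level fibre products already constructed via Theorem \ref{sm3thm4}, then deduce the Cartesian property in $\SManc$ by bookkeeping, and finally read off the corner formula \eq{sm4eq26}. First I would observe that transversality of $C(g),C(h)$ in $\cSMan$ (Definition \ref{sm4def11}(b)) already gives us the fibre product $C(\bW)$ in $\cSMan$ and the projection $\Pi_\bW:C(\bW)\ra\bW$ by the universal property of \eq{sm4eq19}. The dimension count is immediate: the stratum of $C(\bW)$ sitting over $C_i(\bX)\t_{C_k(\bZ)}C_j(\bY)$ has dimension $(l-i)+(m-j)-(n-k)$, and by b-normality of $\bfg,\bfh$ (Definition \ref{sm4def5}(ii)) we have $k\le\min(i,j)$, so the dimension is bounded above by $d=l+m-n$, which both establishes $\dim\bfW=d$ and confirms that $C_h(\bW)$ collects all contributions with $i+j-k=h$.

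Next I would check Definition \ref{sm4def1}(a)--(f) for the proposed $\bfW$. Parts (a)--(c) reduce to properties of the transverse fibre product of s-manifolds and the fact that a pullback of a proper covering map by a continuous map is a proper covering map; in particular $\Pi_k:C_k(\bW)\ra\bW$ is locally a closed embedding because it is obtained as a base change of the locally closed embeddings $\Pi_i\t\Pi_j$ on $C_i(\bX)\t C_j(\bY)$ against $\Pi_k^\bZ$. Part (b) is trivial since $C_0(\bX)\cong\bX$, $C_0(\bY)\cong\bY$, $C_0(\bZ)\cong\bZ$. For (d), the subset $D(\bW)$ defined by \eq{sm4eq23} is open and closed in $C(W)\t_W C(W)$ because it is the preimage under $(C(e)\t C(e), C(f)\t C(f))$ of the open and closed subset $D(\bX)\t_{D(\bZ)}D(\bY)$, and the b-normality argument sketched in Definition \ref{sm4def12} shows $D_{kl}(\bW)=\es$ for $k>l$. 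Part (e) follows because $\Pi_{C_l(\bW)}$ on $D_{kl}(\bW)$ is the base change of the proper covering maps $\Pi_{C_l(\bX)}$, $\Pi_{C_l(\bY)}$ against the analogous projection for $\bZ$, and transversality ensures the resulting stratum structure is a disjoint union of transverse fibre products of manifolds.

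The main obstacle will be verifying Definition \ref{sm4def1}(f), the partial order conditions on the finite set $C(W)_w$ over a point $w=(x,y)\in W$ with $g(x)=h(y)=z$. Here I would exploit \eq{sm4eq23}: a point of $C(W)_w$ is a compatible triple $(x',y')\in C(X)_x\t C(Y)_y$ with $C(g)(x')=C(h)(y')\in C(Z)_z$, and by definition $(x_1',y_1')\tl(x_2',y_2')$ if and only if $x_1'\tl x_2'$ in $C(X)_x$ and $y_1'\tl y_2'$ in $C(Y)_y$ (the condition on the $C(Z)$-component then follows automatically, using b-normality of $\bfg,\bfh$ via Definition \ref{sm4def5}(i)). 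Then (f)(i),(ii) follow from the componentwise structure, and for (f)(iii) the least upper bound and greatest lower bound are obtained componentwise, where the b-normality condition Definition \ref{sm4def5}(i) ensures that $\lub$ and $\glb$ of compatible pairs remain compatible under $C(g), C(h)$, so land in $C(W)_w$.

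Finally, the Cartesian properties of \eq{sm4eq24} and \eq{sm4eq25} are proved by the usual universal-property argument: if $\bfW'$ with morphisms $\bfe':\bfW'\ra\bfX$, $\bff':\bfW'\ra\bfY$ satisfies $\bfg\ci\bfe'=\bfh\ci\bff'$, then the s-manifold universal property of Theorem \ref{sm3thm4} applied at both levels (to $\bW'$ and $C(\bW')$) produces unique morphisms $b:\bW'\ra\bW$ and $C(b):C(\bW')\ra C(\bW)$, and one checks that the pair $(b,C(b))$ satisfies \eq{sm4eq2}--\eq{sm4eq4} and preserves $D$, hence defines a morphism in $\SManc$. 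The analogous argument works in $\SMancin$ since $b$ automatically inherits the interior condition. For the corner formula \eq{sm4eq26}, I would compute $\pd\bfW=C_1(\bfW)$ by collecting the pieces with $i+j-k=1$: these are exactly $(i,j,k)=(1,0,0)$, $(0,1,0)$, and $(1,1,1)$, and identifying the three resulting fibre products with $\pd_0^\bfg\bfX\t_\bfZ\bfY$, $\bfX\t_\bfZ\pd_0^\bfh\bfY$, and $\pd_1^\bfg\bfX\t_{\pd\bfZ}\pd_1^\bfh\bfY$ respectively, where strong transversality of the three factors follows from the hypothesis that $C(g),C(h)$ are transverse in $\cSMan$.
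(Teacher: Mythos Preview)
Your overall strategy matches the paper's proof closely: verify Definition~\ref{sm4def1}(a)--(f) componentwise using the $\SMan$-level fibre products, then deduce the Cartesian and boundary statements. However, there are a few places where your sketch is too hasty and misses ingredients the paper needs.

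First, your claim that (f)(ii) ``follows from the componentwise structure'' is not quite right. If $(x_1',y_1')\tl(x_2',y_2')$ with both in $C_h(W)_w$, you know $x_1'\tl x_2'$ in $C_{i_a}(X)_x$ and $y_1'\tl y_2'$ in $C_{j_a}(Y)_y$, but the equal-codimension condition $h_1=h_2$ only says $i_1+j_1-k_1=i_2+j_2-k_2$. To force $i_1=i_2$, $j_1=j_2$, $k_1=k_2$ you need Definition~\ref{sm4def5}(iii) for $\bfg,\bfh$ (b-normality), which gives $i_2-i_1,\,j_2-j_1\ge k_2-k_1\ge 0$; combined with $h_1=h_2$ this forces all the differences to vanish. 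The paper makes this argument explicitly.

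Second, to verify that $\bfe,\bff$ (and later the comparison morphism $\bfb$) satisfy \eq{sm4eq4}, you need the identity
\[
C(\bW)^\ci=C(\bX)^\ci\t_{C(\bZ)^\ci}C(\bY)^\ci,
\]
which the paper proves directly (its equation \eq{sm4eq27}) by characterizing interior points as maxima in $(C(W)_w,\tl)$ and using \eq{sm4eq4} for $\bfg,\bfh$. You gesture at ``one checks \eq{sm4eq2}--\eq{sm4eq4}'' but do not indicate this step.

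Third, two assertions in the theorem statement are not addressed in your sketch: that $\bfe,\bff$ are themselves b-normal (Definition~\ref{sm4def5}(i)--(iii) must be checked for them; (i) comes out of your $\lub/\glb$ computation in (f)(iii), but (ii),(iii) need separate verification), and that $C(\bfg),C(\bfh)$ are transverse in $\cSManc$. For the latter the paper observes that $C(C(g)),C(C(h))$ reduce to $C(g),C(h)$ via the proper covering maps $\Pi_{C_{k+l}(\bX)}:D_{k(k+l)}(\bX)\ra C_{k+l}(\bX)$.

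Finally, your derivation of \eq{sm4eq26} by enumerating $(i,j,k)$ with $i+j-k=1$ is correct at the level of underlying s-manifolds, and is more direct than the paper's route via restricting the Cartesian square \eq{sm4eq25}. But the theorem asserts an isomorphism of s-manifolds \emph{with corners}, so you still need \eq{sm4eq25} Cartesian to match up the corner structure on both sides; make sure that dependence is explicit.
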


\begin{proof}
First we verify Definition \ref{sm4def1}(a)--(f) for $\bfW$. For (a), $\bW,C_k(\bW)$ are s-manifolds of the required dimensions by Definition~\ref{sm4def12}. 

To show $\Pi_\bW$ is proper, let $K\subseteq W$ be compact. Then $e(K)\subseteq X$ and $f(K)\subseteq Y$ are compact as $e,f$ are continuous, so $\Pi_\bX^{-1}(e(K))$ and $\Pi_\bY^{-1}(f(K))$ are compact as $\Pi_\bX,\Pi_\bY$ are proper. Thus $\Pi_\bX^{-1}(e(K))\t\Pi_\bY^{-1}(f(K))$ is a compact subset of $C(X)\t C(Y)$. Now $(C(e),C(f)):C(W)\ra C(X)\t C(Y)$ is a closed embedding (and so proper) as $C(W)=C(X)\t_{C(Z)}C(Y)$ is a topological fibre product and $C(Z)$ is Hausdorff. Hence $(C(e),C(f))^{-1}\bigl(\Pi_\bX^{-1}(e(K))\t\Pi_\bY^{-1}(f(K))\bigr)$ is compact. But from \eq{sm4eq21} we see this is $\Pi_W^{-1}(K)$. Hence $\Pi_\bW$ is proper, and $\Pi_k$ is proper for  $k=0,\ldots,d$. We deduce that $\Pi_\bW$ is locally a closed embedding from \eq{sm4eq19}--\eq{sm4eq21} and the fact that $\Pi_\bX,\Pi_\bY,\Pi_\bZ$ are locally closed embeddings. Thus $\Pi_k$ is locally a closed embedding, completing~(a).

For (b), as $\bfg,\bfh$ are b-normal they are interior, $C(g),C(h)$ map $C_0(\bX),C_0(\bY)\ab\ra C_0(\bZ)$, so $C_0(\bX)\t_{C_0(\bZ)}C_0(\bY)\subseteq C_0(\bW)\subseteq C(\bW)=C(\bX)\t_{C(\bZ)}C(\bY)$. Also, no other $C_i(\bX)\t_{C_k(\bZ)}C_j(\bY)$ can contribute to $C_0(\bW)$, as for this to be nonempty we must have $i,j\ge k$ by Definition \ref{sm4def5}(ii) since $\bfg,\bfh$ are b-normal, so $C_0(\bW)=C_0(\bX)\t_{C_0(\bZ)}C_0(\bY)$. As $\Pi_0:C_0(\bX)\ra\bX$, $\Pi_0:C_0(\bY)\ra\bY$, $\Pi_0:C_0(\bZ)\ra\bZ$ are isomorphisms, $\Pi_0:C_0(\bW)\ra\bW$ is an isomorphism.

For (c), in \eq{sm4eq21} the morphisms $\Pi_\bX,\Pi_\bY,\Pi_\bZ$ are proper covering maps on strata by Definition \ref{sm4def1}(c) for $\bfX,\bfY,\bfZ$. By the Cartesian properties of \eq{sm4eq19}--\eq{sm4eq21}, $\Pi_\bW$ is also a proper covering map on strata.

The first part of (d) holds by Definition \ref{sm4def12}, and being a disjoint union of fibre products of strata follows from \eq{sm4eq23}, Definition \ref{sm4def1}(d) for $\bfX,\bfY,\bfZ$, and~\eq{sm4eq19}--\eq{sm4eq21}.

For (e), we have $D(\bW)\subseteq C(W)\t_WC(W)$. Write $\Pi_2^\bW:D(\bW)\ra C(W)$ for projection to the second $C(W)$ factor. As $C(W)=C(X)\t_{C(Z)}C(Y)$, we see from \eq{sm4eq22}--\eq{sm4eq23} that $\Pi_2^\bW$ is identified with the morphism of fibre products 
\begin{equation*}
\Pi_2^\bX\t_{\Pi_2^\bZ}\Pi_2^\bY:D(\bX)\t_{D(\bZ)}D(\bY)\longra C(W)\t_{C(Z)}C(Y).	
\end{equation*}
Now $\Pi_2^\bX,\Pi_2^\bY,\Pi_2^\bZ$ are proper covering maps by Definition \ref{sm4def1}(e) for $\bfX,\bfY,\bfZ$. Hence $\Pi_2^\bW$ is a proper covering map, and (e) follows.

For (f), let $(x,y)\in W$ with $g(x)=h(y)=z\in Z$. Write $\tl_X,\tl_Y,\tl_Z$ for the partial orders on $C(X)_x,C(Y)_y,C(Z)_z$ in Definition \ref{sm4def1}(f) for $\bfX,\bfY,\bfZ$, and $\tl_W$ for the corresponding binary relation on $C(W)_{(x,y)}$. Suppose $(t_1,u_2),(t_2,u_2)\in C(W)_{(x,y)}$. Then $t_a\in C(X)_x$, $u_a\in C(Y)_y$ with $C(g)(t_a)=C(h)(u_a)=v_a\in C(Z)_z$ for $a=1,2$. From \eq{sm4eq23} we see that $(t_1,u_2)\tl_W(t_2,u_2)$ if $t_1\tl_Xt_2$, $u_1\tl_Yu_2$, and $v_1\tl_Zv_2$. Part (f)(i) is immediate from $\tl_X,\tl_Y,\tl_Z$ being partial orders. For (f)(ii), let $t_a\in C_{i_a}(X)_x$, $u_a\in C_{j_a}(Y)_y$, $v_a\in C_{k_a}(Z)_z$ for $a=1,2$. Then $(t_a,u_a)\in C_{h_a}(W)_{(x,y)}$ where $h_a=i_a+j_a-k_a$. We have $i_1\le i_2$ with $i_1=i_2$ if and only if $t_1=t_2$, and $j_1\le j_2$ with $j_1=j_2$ if and only if $u_1=u_2$, and $k_1\le k_2$ with $k_1=k_2$ if and only if $v_1=v_2$. Also $i_2-i_1,j_2-j_1\ge k_2-k_1$ by Definition \ref{sm4def5}(iii) as $\bfg,\bfh$ are b-normal. Noting that $t_1=t_2$ or $u_1=u_2$ force $v_1=v_2$, all these imply that if $h_1=h_2$ then $(t_1,u_1)=(t_2,u_2)$, proving~(f)(ii). 

For (f)(iii), let $S=\bigl\{(t_a,u_a):a\in A\bigr\}$ be a nonempty subset of $C(W)_{(x,y)}$, and set $v_a=C(g)(t_a)=C(h)(u_a)\in C(Z)_z$. Write $t=\lub(\{t_a:a\in A\})\in C(X)_x$, $u=\lub(\{u_a:a\in A\})\in C(Y)_y$, and $v=\lub(\{v_a:a\in A\})\in C(Z)_z$. Then $C(g)(t)=C(h)(u)=v$ by Definition \ref{sm4def5}(i), as $\bfg,\bfh$ are b-normal. So $(t,u)\in C(X)_x\t_{C(Z)_z}C(Y)_y=C(W)_{(x,y)}$, and it is easy to check from the description of $\tl_W$ above that $(t,u)=\lub(S)$. So least upper bounds exist in $(C(W)_{(x,y)},\tl_W)$, and similarly greatest lower bounds do, proving (f)(iii). Note that this also implies that $C(e)$ and $C(f)$ map preserve least upper bounds and greatest lower bounds, as required for $\bfe,\bff$ to be b-normal. Thus $\bfW$ is an s-manifold with corners.

We will show that
\e
C(\bW)^\ci=C(\bX)^\ci\t_{C(g)\vert_{\cdots},C(\bZ)^\ci,C(h)\vert_{\cdots}}C(\bY)^\ci,
\label{sm4eq27}
\e
as an open subset of the fibre product $C(\bW)=C(\bX)\t_{C(\bZ)}C(\bY)$ from \eq{sm4eq20}, noting that $C(g)(C(\bX)^\ci),C(h)(C(\bY)^\ci)\subseteq C(\bZ)^\ci\subseteq C(\bZ)$ by \eq{sm4eq4}. Let $(u_1,v_1)\ab\in C(\bW)$, so that $C(g)(u_1)=C(h)(v_1)=w_1\in C(\bZ)$, and $\Pi_X(u_1)=x\in X$, $\Pi_Y(v_1)=y\in Y$, and $\Pi_Z(w_1)=g(x)=h(y)=z\in Z$. Write $u_2=\max(C(X)_x)$, $v_2=\max(C(Y)_y)$ and $w_2=\max(C(Z)_z)$. Then $u_2\in C(\bX)^\ci$, $v_2\in C(\bY)^\ci$, $w_2\in C(\bZ)^\ci$ by Definition \ref{sm4def3}, and $C(g)(u_2)=C(h)(v_2)=w_2$ by \eq{sm4eq4}, so $(u_2,v_2)\in C(\bW)$ by \eq{sm4eq20}. Also $(u_1,u_2)\in D(\bX)$, $(v_1,v_2)\in D(\bY)$, $(v_1,v_2)\in D(\bZ)$ by Definition \ref{sm4def1}(f), so $((u_1,v_1),(u_2,v_2))\in D(\bW)$ by \eq{sm4eq23}. Hence $(u_1,v_1)\in C(\bW)^\ci$ if and only if $(u_1,v_1)=(u_2,v_2)$. This holds if and only if $(u_1,v_1)\in C(\bX)^\ci\t_{C(\bZ)^\ci}C(\bY)^\ci$, proving~\eq{sm4eq27}.

By definition $e,f$ and $C(e),C(f)$ are morphisms in $\SMan,\cSMan$, and \eq{sm4eq2} commutes for $\bfe,\bff$ by \eq{sm4eq21}. Equations \eq{sm4eq3}--\eq{sm4eq4} for $\bfe,\bff$ follow from \eq{sm4eq23} and \eq{sm4eq27}. Hence $\bfe:\bfW\ra\bfX$, $\bff:\bfW\ra\bfY$ are smooth maps. To show $\bfe$ is b-normal, we proved Definition \ref{sm4def5}(i) for $\bfe$ above, and (ii),(iii) for $\bfe$ follow easily from (ii),(iii) for $\bfh$. Thus $\bfe$ is b-normal. Similarly $\bff$ is b-normal.

Equations \eq{sm4eq19}--\eq{sm4eq20} imply that $\bfg\ci\bfe=\bfh\ci\bff$, so \eq{sm4eq24} commutes.

To prove that \eq{sm4eq24} is Cartesian in $\SManc$, suppose $\bfe':\bfW'\ra\bfX$, $\bff':\bfW'\ra\bfY$ are morphisms in $\SManc$ with $\bfg\ci\bfe'=\bfh\ci\bff'$. Then $g\ci e'=h\ci f'$ and $C(g)\ci C(e')=C(h)\ci C(f')$, so the Cartesian squares  \eq{sm4eq19}--\eq{sm4eq20} give unique morphisms $b:\bW'\ra\bW$ in $\SMan$ and $C(b):C(\bW')\ra C(\bW)$ in $\cSMan$ with
\e
e'=e\ci b,\quad 	f'=f\ci b,\quad C(e')=C(e)\ci C(b),\quad C(f')=C(f)\ci C(b).
\label{sm4eq28}
\e
Set $\bfb=(b,C(b))$. We see that \eq{sm4eq2} commutes for $\bfb$ using \eq{sm4eq21} and the Cartesian property of \eq{sm4eq19}. Equations \eq{sm4eq3}--\eq{sm4eq4} for $\bfb$ follow from \eq{sm4eq3}--\eq{sm4eq4} for $\bfe',\bff'$ and \eq{sm4eq23}, \eq{sm4eq27}. Hence $\bfb$ is a morphism in $\SManc$. Equation \eq{sm4eq28} implies that $\bfe'=\bfe\ci\bfb$, $\bff'=\bff\ci\bfb$, and $\bfb$ is unique with this property by uniqueness of $b,C(b)$. Therefore \eq{sm4eq24} is Cartesian in $\SManc$. If $\bfe',\bff'$ are interior then as $C_0(\bW)=C_0(\bX)\t_{C_0(\bZ)}C_0(\bY)$ we see that $\bfb$ is also interior. Thus \eq{sm4eq24} is also Cartesian in $\SMancin$.

Next we show that $C(\bfg),C(\bfh)$ are transverse in $\cSManc$. That is, we must show that $C(g),C(h)$ are transverse and $C(C(g)),C(C(h))$ are transverse in the sense of Definition \ref{sm3def6}. But $C(g),C(h)$ are transverse by Definition \ref{sm4def11} as $\bfg,\bfh$ are transverse. Definition \ref{sm4def6} defines $C_l(C_k(\bX))=D_{k(k+l)}(\bX)$, where $D_{k(k+l)}(\bX)\!\subseteq\! C_k(\bX)\!\t_\bX\! C_{k+l}(\bX)$, and the projection $\Pi_{C_{k+l}(\bX)}:D_{k(k+l)}(\bX)\ra C_{k+l}(\bX)$ is a proper covering map by Definition \ref{sm4def1}(e). In a similar way to Remark \ref{sm4rem2}(a), it is easy to check that the conditions in Definition \ref{sm3def6} for $C(C(g)),C(C(h))$ to be transverse are mapped under $\Pi_{C_{k+l}(\bX)}:C_l(C_k(\bX))\ra C_{k+l}(\bX)$ and the analogues for $\bfY,\bfZ$ to the conditions in Definition \ref{sm3def6} for $C(g),C(h)$ to be transverse. Hence $C(\bfg),C(\bfh)$ are transverse.

Applying $C:\SMancbn\ra\cSMancbn$ to \eq{sm4eq24} shows that \eq{sm4eq25} is a commutative square in $\cSMancbn$. To prove that \eq{sm4eq25} is Cartesian in $\cSManc$ and $\cSMancin$, we can check that our explicit constructions of the corner functor $C$ in Definition \ref{sm4def6}, and the fibre product $\bfW=\bfX\t_{\bfg,\bfZ,\bfh}\bfY$ in Definition \ref{sm4def12}, commute, so that $C(\bfW)$ in Definition \ref{sm4def6} is equivalent to the fibre product $C(\bfW)=C(\bfX)\t_{C(\bfg),C(\bfZ),C(\bfh)}C(\bfY)$ constructed as in Definition \ref{sm4def12} (though in $\cSManc$ rather than~$\SManc$).

As $\bfe,\bff,\bfg,\bfh$ are b-normal, by Definition \ref{sm4def5}(ii) we may restrict \eq{sm4eq25} to
\begin{equation*}
\xymatrix@R=13pt@C=170pt{ *+[r]{C_1(\bfW)} \ar[r]_{C(\bff)\vert_{C_1(\bfW)}} \ar[d]^{C(\bfe)\vert_{C_1(\bfW)}} & *+[l]{C_0(\bfY)\amalg C_1(\bfY)}
\ar[d]_{C(\bfh)\vert_{C_{\le 1}(\bfY)}} \\ *+[r]{C_0(\bfX)\amalg C_1(\bfX)} \ar[r]^{C(\bfg)\vert_{C_{\le 1}(\bfX)}} & *+[l]{C_0(\bfZ)\amalg C_1(\bfZ).\!} }
\end{equation*}
Using $C_0(\bfX)\cong\bfX$, $C_1(\bfX)=\pd\bfX$, etc., and the notation of Definition \ref{sm4def7}, we identify this with the commutative diagram
\e
\begin{gathered}
\xymatrix@R=13pt@C=160pt{ *+[r]{\pd\bfW=\pd^\bfe_0\bfW\amalg\pd^\bfe_1\bfW=\pd^\bff_0\bfW\amalg\pd^\bff_1\bfW} \ar[r]_(0.65){\pd_0\bff\amalg\pd_1\bff} \ar[d]^{\pd_0\bfe\amalg\pd_1\bfe} & *+[l]{\bfY\amalg \pd^\bfh_0\bfY\amalg\pd^\bfh_1\bfY}
\ar[d]_{\bfh\amalg\pd_0\bfh\amalg\pd_1\bfh} \\ *+[r]{\bfX\amalg \pd^\bfg_0\bfX\amalg\pd^\bfg_1\bfX} \ar[r]^{\bfg\amalg\pd_0\bfg\amalg\pd_1\bfg} & *+[l]{\bfZ\amalg \pd\bfZ.\!} }
\end{gathered}
\label{sm4eq29}
\e
The fibre product of the three bottom right hand terms in \eq{sm4eq29} is a Cartesian subsquare of \eq{sm4eq25}, and is the disjoint union of five individual fibre products, the three in \eq{sm4eq26} together with $\bfX\t_\bfZ\bfY$, which corresponds to $C_0(\bfW)$ in \eq{sm4eq25}, and $\pd^\bfg_0\bfX\t_{\pd_0\bfg,\bfZ,\pd_0\bfh}\pd^\bfh_0\bfY$, which corresponds to part of $C_2(\bfW)$ in \eq{sm4eq25}. Equation \eq{sm4eq26} follows, and the proof of Theorem \ref{sm4thm2} is complete.
\end{proof}

\subsection{\texorpdfstring{Oriented transverse fibre products in $\SManc$}{Oriented transverse fibre products in SManᶜ}}
\label{sm47}

As in \S\ref{sm37}, we consider when the transverse fibre products in $\SManc$ in Theorem \ref{sm4thm2} are compatible with orientations or orientation bundles. Example \ref{sm3ex7} identifies a problem with this, which we solved in Definition \ref{sm3def8} by defining {\it strong transversality\/} to exclude the case Definition~\ref{sm3def6}(b)(iv).

For s-manifolds with corners, the analogue of Definition \ref{sm3def6}(b)(iv) is a problem for orientations whenever $\bfZ$ has a boundary. In particular, it is difficult to ensure \eq{sm4eq10} and \eq{sm4eq11} hold for $\bfW$ on the component $\pd_1^\bfg\bfX\t_{\pd\bfZ}\pd_1^\bfh\bfY$ in \eq{sm4eq26}. We deal with this by assuming $\bfZ$ is {\it without boundary}.

\begin{dfn}
\label{sm4def13}
Let $\bfX,\bfY,\bfZ$ be s-manifolds with corners with $\dim\bfX=l$, $\dim\bfY=m$, $\dim\bfZ=n$, where $\bfZ$ is without boundary, and $\bfg=(g,C(g)):\bfX\ra\bfZ$, $\bfh=(h,C(h)):\bfY\ra\bfZ$ be morphisms (which are automatically b-normal as $\bfZ$ is without boundary). We say that $\bfg,\bfh$ are {\it strongly transverse\/} if:
\begin{itemize}
\setlength{\itemsep}{0pt}
\setlength{\parsep}{0pt}
\item[(a)] $g:\bX\ra\bZ$ and $h:\bY\ra\bZ$ are strongly transverse in $\SMan$ in the sense of Definition~\ref{sm3def8}.
\item[(b)] $C(g):C(\bX)\ra C(\bZ)$ and $C(h):C(\bY)\ra C(\bZ)$ are strongly transverse in $\cSMan$.
\end{itemize}
Actually (b) implies (a). Also $\bfg,\bfh$ strongly transverse implies $\bfg,\bfh$ transverse.	

As in Remark \ref{sm4rem2}(b), an important special case is when $\bfZ$ is an ordinary $n$-manifold without boundary. Then $\bfg:\bfX\ra\bfZ$, $\bfh:\bfY\ra\bfZ$ are {\it strongly transverse\/} if $T_x\bfg\op T_y\bfh:T_x\bfX\op T_y\bfY\ra T_z\bfZ$ is surjective whenever $x\in\bfX$ and $y\in\bfY$ with $\bfg(x)=\bfh(y)=z\in\bfZ$.
\end{dfn}

Observe that if $\bfZ$ is without boundary in Theorem \ref{sm4thm2} then in equation \eq{sm4eq26} we have $\pd_0^\bfg\bfX=\pd\bfX$, $\pd_0\bfg=\bfg\ci\bs i_\bfX$, $\pd_0^\bfh\bfY=\pd\bfY$, $\pd_0\bfh=\bfh\ci\bs i_\bfY$, and the final term $\pd_1^\bfg\bfX\t_{\pd\bfZ}\pd_1^\bfh\bfY$ is empty, so \eq{sm4eq26} becomes
\begin{equation*}
\pd\bfW\cong \bigl(\pd\bfX\t_{\bfg\ci\bs i_\bfX,\bfZ,\bfh}\bfY\bigr)\amalg \bigl(\bfX\t_{\bfg,\bfZ,\bfh\ci\bs i_\bfY}\pd\bfY\bigr).
\end{equation*}
Iterating this gives for $c=0,\ldots,d$
\e
\pd^c\bfW\cong \coprod_{a,b\ge 0:a+b=c}\bigl(\pd^a\bfX\t_{\bfg\ci\bs i^a_\bfX,\bfZ,\bfh\ci\bs i^b_\bfY}\pd^b\bfY\bigr)^{\coprod^{\binom{c}{a}}},
\label{sm4eq30}
\e
where $(\cdots)^{\coprod^{\binom{c}{a}}}$ means the disjoint union of $\binom{c}{a}$ copies of $(\cdots)$. The binomial coefficients arise as the $c$ ordered applications of $\pd$ in $\pd^c\bfW$ can each act on either $\bfX$ or $\bfY$. Here is the analogue of Theorem~\ref{sm3thm5}.

\begin{thm}
\label{sm4thm3}
{\bf(a)} In Theorem\/ {\rm\ref{sm4thm2},} suppose\/ $\bfZ$ is without boundary, $\bfg,\bfh$ are strongly transverse, and\/ $\bfX,\bfY,\bfZ$ are oriented. Then the transverse fibre product\/ $\bfW=\bfX\t_{\bfg,\bfZ,\bfh}\bfY$ has a natural orientation. This depends on an orientation convention, which we take to be that in Akaho--Joyce\/ {\rm\cite[\S 2.4]{AkJo}} and Fukaya--Oh--Ohta--Ono\/ {\rm\cite[\S 8.2]{FOOO}}. It depends on the order of\/ $\bfX,\bfY,$ and in oriented s-manifolds with corners we have
\ea
\bfX\t_{\bfg,\bfZ,\bfh}\bfY&\cong (-1)^{(l-n)(m-n)}\bfY\t_{\bfh,\bfZ,\bfg}\bfX,
\label{sm4eq31}\\
\pd\bfW&\cong \bigl(\pd\bfX\t_{\bfg\ci\bs i_\bfX,\bfZ,\bfh}\bfY\bigr)\amalg (-1)^{l-n}\bigl(\bfX\t_{\bfg,\bfZ,\bfh\ci\bs i_\bfY}\pd\bfY\bigr).
\label{sm4eq32}
\ea

\noindent{\bf(b)} In Theorem\/ {\rm\ref{sm4thm2},} suppose\/ $\bfZ$ is without boundary, $\bfg,\bfh$ are strongly transverse, and\/ $\bfX,\bfY,\bfZ$ have orientation bundles $(\Or_\bfX,\bs\om_\bfX),\ab(\Or_\bfY,\bs\om_\bfY),\ab(\Or_\bfZ,\bs\om_\bfZ)$. Then we may define an orientation bundle $(\Or_\bfW,\bs\om_\bfW=(\om_\bfW^0,\ldots,\om_\bfW^d))$ on\/ $\bfW,$ where $\Or_\bfW$ and\/ $\om_\bfW^0$ are defined as in {\rm\eq{sm3eq22}--\eq{sm3eq25},} and\/ $\om_\bfW^k$ for $k>0$ are determined by\/ $\om_\bfW^0$ as in Definition\/~{\rm\ref{sm4def9}}.
\end{thm}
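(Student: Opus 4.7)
The plan is to prove (b) directly and derive (a) as the trivial-$\Z_2$-bundle case, exactly as in Theorem \ref{sm3thm5}. Given (b), the sign formula \eqref{sm4eq31} is a formal consequence of the analogous formula \eqref{sm3eq21} applied to the interiors $\bfX^\ci,\bfY^\ci$, since by Definition \ref{sm4def8} an orientation on an s-manifold with corners is determined by its orientation on the interior. For \eqref{sm4eq32}, the disjoint-union decomposition follows from \eqref{sm4eq30} with $c=1$, which simplifies because $\bfZ$ has no boundary so the third term in \eqref{sm4eq26} is empty, and the sign $(-1)^{l-n}$ on the $\bfX\t_\bfZ\pd\bfY$ component is the standard orientation convention for fibre products of manifolds with boundary, as in \cite[\S 2.4]{AkJo} and~\cite[\S 8.2]{FOOO}.

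To carry out (b), I first exploit the structural simplification that since $C_k(\bfZ)=\es$ for $k\ge 1$, combining the Cartesian square \eqref{sm4eq25} with Remark \ref{sm3rem5} shows that the corners of $\bfW$ are determined entirely by those of $\bfX$ and $\bfY$: by \eqref{sm4eq30},
\begin{equation*}
(\pd^c\bfW)^\ci \cong \coprod_{a+b=c}\bigl((\pd^a\bfX)^\ci\t_{\bfZ}(\pd^b\bfY)^\ci\bigr){}^{\coprod^{\binom{c}{a}}}
\end{equation*}
for each $c=0,\ldots,d$, and each such fibre product is strongly transverse in $\SMan$ because $\bfZ$ has no codimension-$1$ strata, so the excluded case in Definition \ref{sm3def8} cannot occur. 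I then define $\Or_\bfW$ on all of $W$ by the global analogue of \eqref{sm3eq22}, and on each interior stratum $(\pd^a\bfX)^\ci\t_\bfZ(\pd^b\bfY)^\ci$ of $(\pd^c\bfW)^\ci$ I apply Theorem \ref{sm3thm5}(b) to the orientation bundles $(\om_\bfX^a,\om_\bfY^b,\om_\bfZ^0)$ to obtain isomorphisms as in \eqref{sm3eq25}; gluing over all strata gives the candidate $\om_\bfW^c$, and Theorem \ref{sm3thm5}(b) guarantees that each $\bigl((\bs i_\bfW^c)^*(\Or_\bfW)\vert_{(\pd^c\bfW)^\ci},\om_\bfW^c\bigr)$ is an orientation bundle on the s-manifold $(\pd^c\bfW)^\ci$.

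The remaining and main task is to verify the inductive boundary compatibility \eqref{sm4eq11} for each $k=0,\ldots,d-1$, and this is where I expect the main obstacle to lie. The Steenrod-homology boundary map $\pd$ in Definition \ref{sm4def9}(b) must send $(\om_\bfW^k)_*\bigl([(\pd^k\bfW)_0^\ci]_\fund\bigr)$ to the pushforward along $\bs i_{\pd^k\bfW}$ of $(\om_\bfW^{k+1})_*\bigl([(\pd^{k+1}\bfW)_0^\ci]_\fund\bigr)$. Working one summand at a time, the interior-boundary contribution from a piece $(\pd^a\bfX)^\ci\t_\bfZ(\pd^b\bfY)^\ci$ splits into two contributions coming from $(\pd^{a+1}\bfX)^\ci\t_\bfZ(\pd^b\bfY)^\ci$ and $(\pd^a\bfX)^\ci\t_\bfZ(\pd^{b+1}\bfY)^\ci$ respectively, as there is no $\bfZ$-boundary contribution. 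On each such piece the identity reduces to the compatibility \eqref{sm4eq11} for $\bfX$ or for $\bfY$ themselves, combined with the naturality of the inverse-limit cap-product construction \eqref{sm3eq28}--\eqref{sm3eq33} under restriction to a boundary stratum of $\bfX$ or $\bfY$: the core technical point is to re-run the $U_i$-neighbourhood argument from the proof of Theorem \ref{sm3thm5}(b) relative to this boundary and check that the Steenrod $\pd$-map commutes with the cap product against $\Pd\bigl((\De_{Z_0})_*([\bZ]_\fund)\bigr){}_i$. The combinatorial multiplicities are reconciled by $\binom{k+1}{a+1}=\binom{k}{a}+\binom{k}{a+1}$, and the signs match by iterating the $(-1)^{l-n}$ of \eqref{sm4eq32}.
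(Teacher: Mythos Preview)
Your approach is essentially the paper's: reduce (a) to (b) via trivial $\Z_2$-bundles, decompose $(\pd^c\bfW)^\ci$ via \eqref{sm4eq30}, apply Theorem \ref{sm3thm5}(b) piecewise, and verify \eqref{sm4eq11} by reducing to \eqref{sm4eq11} for $\bfX$ or $\bfY$ combined with the cap-product/limiting argument from the proof of Theorem~\ref{sm3thm5}.

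Two points where the paper is more careful than your sketch. First, your justification that each piece is strongly transverse ``because $\bfZ$ has no codimension-$1$ strata'' conflates two notions: $\bfZ$ being \emph{without boundary} as an s-manifold with corners (i.e.\ $C_k(\bfZ)=\es$ for $k\ge 1$) does \emph{not} say the underlying s-manifold $\bZ$ has $Z_1=\es$. Strong transversality on each piece follows instead directly from the hypothesis (Definition \ref{sm4def13}) that $C(g),C(h)$ are strongly transverse, restricted to open subsets.

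Second, the paper emphasises that in defining $\om_\bfW^c$ one does not simply take the Theorem \ref{sm3thm5}(b) isomorphism on each copy of $(\pd^a\bfX)^\ci\t_\bfZ(\pd^b\bfY)^\ci$ in \eqref{sm4eq33}: one must multiply by a sign depending on \emph{which} of the $\binom{c}{a}$ copies one is on, obtained by iterating \eqref{sm4eq32} $c$ times and tracking the order in which the $\pd$'s land on $\bfX$ versus $\bfY$. Your final line acknowledges signs, but your definition step (``gluing over all strata gives the candidate $\om_\bfW^c$'') omits them; without this copy-dependent sign, the verification of \eqref{sm4eq11} fails, because the two copies of a given $(\pd^a\bfX)^\ci\t_\bfZ(\pd^b\bfY)^\ci$ inside $\pd^{k+1}\bfW$ arising from the $(a{-}1,b)$ and $(a,b{-}1)$ pieces of $\pd^k\bfW$ carry different signs.
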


\begin{proof}
As orientations on s-manifolds with corners $\bfX$ correspond to orientation bundles $(\Or_\bfX,\bs\om_\bfX)$ with $\Or_\bfX$ trivial, the first part of (a) follows from (b) in the case when $\Or_\bfX,\ab\Or_\bfY,\ab\Or_\bfZ$ are trivial principal $\Z_2$-bundles, so that $\Or_\bfW$ in \eq{sm3eq22} is also trivial. Having proved the first part of (a), equations \eq{sm4eq31}--\eq{sm4eq32} follow from the corresponding equations \cite[Prop.~2.10]{AkJo}, \cite[Lem.~8.2.3]{FOOO} for fibre products of the oriented manifolds $X_0,Y_0,Z_0$, and their boundaries. So it is enough to prove~(b).

Work in the situation of (b). Taking interiors in \eq{sm4eq30} gives
\e
(\pd^c\bfW)^\ci\cong \coprod_{a,b\ge 0:a+b=c}\bigl((\pd^a\bfX)^\ci\t_{\bfg\ci\bs i^a_\bfX,\bfZ,\bfh\ci\bs i^b_\bfY}(\pd^b\bfY)^\ci\bigr)^{\coprod^{\binom{c}{a}}},
\label{sm4eq33}
\e
where the fibre products are strongly transverse fibre products of s-manifolds, not s-manifolds with corners. By Definition \ref{sm4def9}, the s-manifolds $(\pd^a\bfX)^\ci,\ab(\pd^b\bfY)^\ci,\ab\bfZ$ have orientation bundles with $\Z_2$-bundles $(\bs i^a_\bfX)^*(\Or_\bfX),(\bs i^b_\bfY)^*(\Or_\bfY),\ab\Or_\bfZ$. Thus Theorem \ref{sm3thm5} gives an orientation bundle on $(\pd^a\bfX)^\ci\t_{\bfZ}(\pd^b\bfY)^\ci$ with $\Z_2$-bundle $\Pi_X^*(\Or_\bfX)\ot_{\Z_2}\Pi_Y^*(\Or_\bfY)\ot_{\Z_2}\Pi_Z^*(\Or_\bfZ)$. 

In Definition \ref{sm4def9}, $\om_\bfW^c$ is data on $(\pd^c\bfW)_0^\ci\subseteq(\pd^c\bfW)^\ci$, and so is determined by the isomorphism \eq{sm4eq33} and these orientation bundles on $(\pd^a\bfX)^\ci\t_{\bfZ}(\pd^b\bfY)^\ci$ from Theorem \ref{sm3thm5}. We define $\om_\bfW^c$ to be the isomorphism in the orientation bundle given by Theorem \ref{sm3thm5} on each copy $(\pd^a\bfX)^\ci\t_{\bfZ}(\pd^b\bfY)^\ci$ in \eq{sm4eq33}, {\it except that\/} we must change the signs: iterating \eq{sm4eq32} $c$ times assigns a sign to each term in \eq{sm4eq30}, which depends not just on $a,b,l,n$ but on which of the $\binom{c}{a}$ copies of the $(\pd^a\bfX)^\ci\t_{\bfZ}(\pd^b\bfY)^\ci$ we are on, and we multiply $\om_\bfW^c$ by this sign.

This completes the definition of $(\Or_\bfW,\bs\om_\bfW=(\om_\bfW^0,\ldots,\om_\bfW^d))$, and Definition \ref{sm4def9}(a) for $(\Or_\bfW,\bs\om_\bfW)$ follows from Theorem \ref{sm3thm5}. We prove Definition \ref{sm4def9}(b) for $(\Or_\bfW,\bs\om_\bfW)$ by a similar argument to the proof of Theorem \ref{sm3thm5}. For the case $k=0$ in Definition \ref{sm4def9}(b), suppressing the isomorphism $\bs i_\bfW\vert_{(\pd\bfW)_0^\ci}$, equation \eq{sm4eq11} becomes
\begin{align*}
\pd\bigl((\om_\bfW^0)_*\bigl([(\bfX\t_\bfZ\bfY)_0^\ci]_\fund\bigr)\bigr)=\,&(\om_\bfW^1)_*\bigl([(\pd\bfX\t_\bfZ\bfY)_0^\ci]_\fund\bigr)\op\\
&(\om_\bfW^1)_*\bigl([(\bfX\t_\bfZ\pd\bfY)_0^\ci]_\fund\bigr)
\end{align*}
in $H_{d-1}^\St((\pd\bfX\t_\bfZ\bfY)_0^\ci,\Pi_W^*(\Or_\bfW),\Z)\op H_{d-1}^\St((\bfX\t_\bfZ\pd\bfY)_0^\ci,\Pi_W^*(\Or_\bfW),\Z)$. This is two separate equations, one for $\pd\bfX\t_\bfZ\bfY$ and one for $\bfX\t_\bfZ\pd\bfY$. We prove the one for $\pd\bfX\t_\bfZ\bfY$ by starting with equation \eq{sm4eq11} in Definition \ref{sm4def9}(b) for $\bfX$ with $k=0$, taking exterior tensor products $-\bt[(\bfY)_0^\ci]_\fund$ to get an equation on $X\t Y$, then taking cap products with $-\cap\Pd((\De_{Z_0})_*([Z_0]_\fund))$, and using a limiting argument as in the proof of Theorem \ref{sm3thm5}. The cases $k>0$ use the same idea, but with more notation. We leave the details to the reader.
\end{proof}

\subsection{Vector bundles and sections}
\label{sm48}

The material of \S\ref{sm38} on vector bundles and sections on s-manifolds generalizes immediately to s-manifolds with corners, with no new ideas.

\subsection{\texorpdfstring{$R$-weighted s-manifolds with corners}{R-weighted s-manifolds with corners}}
\label{sm49}

The material of \S\ref{sm39} on $R$-weighted s-manifolds also generalizes easily to $R$-weighted s-manifolds with corners. 

\begin{dfn}
\label{sm4def14}
Let $\bfX$ be an s-manifold with corners of dimension $n$, and $R$ a commutative ring. An $R$-{\it weighting\/} $\bs w$ for $\bfX$ is $\bs w=(w^0,w^1,\ldots,w^n)$, where $w^k$ is an $R$-weighting on the s-manifold $(\pd^k\bfX)^\ci$ in the sense of Definition \ref{sm3def11} for $k=0,\ldots,n$. Then we call $(\bfX,\bs w)$ an $R$-{\it weighted s-manifold with corners}. An unweighted s-manifold with corners $\bfX$ may be regarded as a weighted s-manifold with corners $(\bfX,\bs 1)$ with all weights~1.

We can now define {\it orientations\/} and {\it orientation bundles\/} on $(\bfX,\bs w)$ as in Definitions \ref{sm4def8} and \ref{sm4def9}, except that orientations or orientation bundles for $(\pd^k\bfX)^\ci$ are replaced by $R$-weighted versions for $((\pd^k\bfX)^\ci,w^k)$, and fundamental classes in \eq{sm4eq10} and \eq{sm4eq11} are replaced by $w^k$-weighted fundamental classes defined as in \eq{sm3eq35}. Theorem \ref{sm4thm1} then generalizes immediately to $R$-weighted fundamental classes, for example, \eq{sm4eq13} becomes
\begin{equation*}
\smash{\pd\bigl([\bfX^\ci]^{w^0}_\fund\bigr)=(i_\bfX)_*\bigl([(\pd\bfX)^\ci]^{w^1}_\fund\bigr).}
\end{equation*}
\end{dfn}

\begin{ex}
\label{sm4ex6}
Continue in the situation of Example \ref{sm3ex9}, so that $Y$ is a smooth manifold, $R$ a commutative ring, and $\al\in H_n^\ssi(Y,R)$ is a class in smooth singular homology.

Suppose $C_0,C_1\in C_n^\ssi(Y,R)$ are cycles with $\pd C_0=\pd C_1=0$ and $[C_0]=[C_1]=\al$. Then Example \ref{sm3ex9} defines compact, oriented $R$-weighted s-manifolds $(\bX_i,w_i)$ and smooth maps $f_i:\bX_i\ra\bY=Y$  from $C_i$ for $i=0,1$.

As $C_0,C_1$ are homologous, we can choose $B\in C_{n+1}^\ssi\bigl(Y\t[0,1],R\bigr)$ with $\pd B=-C_0\t\{0\}+C_1\t\{1\}$. Then by generalizing Example \ref{sm3ex9} to include boundaries over $Y\t\{0,1\}$, we can define a compact, oriented, $R$-weighted s-manifold with boundary $(\bfW,v)$ with a smooth map $\bfe:\bfW\ra \bfY=Y$, such that $\pd(\bfW,v)\cong -(\bX_0,w_0)\amalg (\bX_1,w_1)$ in compact, oriented, $R$-weighted s-manifolds without boundary, and this identifies $\bfe\vert_{\pd\bfW}$ with $f_0\amalg f_1:\bX_0\amalg\bX_1\ra\bY$.

The $R$-weighted version of \eq{sm4eq16} implies that
\begin{equation*}
-\inc_*\bigl([\bX_0]_\fund^{w_0}\bigr)+\inc_*\bigl([\bX_1]_\fund^{w_1}\bigr)=\inc_*\bigl([\pd\bfW]_\fund^{\pd v}\bigr)=0\quad\text{in $H_n^\St(W,R)$.}
\end{equation*}
Pushing this forward by $e:W\ra Y$ implies that
\begin{equation*}
(f_0)_*([\bX_0]_\fund^{w_0})=(f_1)_*([\bX_1]_\fund^{w_1})\quad \text{in $H_n^\St(Y,R)$.}
\end{equation*}
This shows that in Example \ref{sm3ex9}, as one would expect, the class $f_*([\bX]_\fund^w)$ depends only on $\al\in H_n^\ssi(Y,R)$, not on the choice of cycle $C$ with~$[C]=\al$.
\end{ex}

\section{Applying s-manifolds in Symplectic Geometry}
\label{sm5}

Many important areas of Symplectic Geometry involve mathematical structures defined by `counting' moduli spaces $\oM$ of $J$-holomorphic curves, including Gromov--Witten invariants \cite{FuOn,HWZ1}, quantum cohomology \cite{McSa1}, Lagrangian Floer cohomology \cite{AkJo,Fuka,FOOO}, Fukaya categories \cite{Auro,Seid2}, Symplectic Field Theory \cite{EGH}, Symplectic Cohomology \cite{Seid1}, and Contact Homology~\cite{EES}.

In each of these fields, one studies moduli spaces $\oM$ of $J$-holomorphic curves $u:\Si\ra X$ in a symplectic manifold $(X,\om)$, where $J$ is a (possibly domain-dependent) almost complex structure on $X$ compatible with $\om$, and the Riemann surface $\Si$ may have boundary, or nodal singularities, or marked points.

To `count' the moduli space $\oM$ one needs to give $\oM$ a geometric structure which `behaves like a compact oriented manifold', in the sense that it has a fundamental class $[\oM]_\fund$ in homology $H_*(\oM,\Q)$, or a fundamental chain in cases when $\oM$ has boundary and corners. This is difficult, because $\oM$ may be very singular, and not of the expected dimension, even for generic $J$.

There are three main approaches to this problem in the literature:
\begin{itemize}
\setlength{\itemsep}{0pt}
\setlength{\parsep}{0pt}
\item[(A)] (Fukaya's Kuranishi spaces.) Fukaya--Oh--Ohta--Ono \cite{Fuka,FOOO,FuOn} give moduli spaces $\oM$ the structure of {\it Kuranishi spaces\/} (see the author \cite{Joyc2,Joyc5,Joyc6}), and construct fundamental classes for them in $\Q$-homology.
\item[(B)] (Hofer's polyfolds.) Hofer--Wysocki--Zehnder \cite{HWZ1,HWZ2} give moduli spaces $\oM$ the structure of {\it polyfold Fredholm sections} (see \cite{FFGW} for a survey), and construct fundamental classes for them in $\Q$-homology.
\item[(C)] (The rest of the world.) Both (A),(B) are very difficult and technical, and take hundreds of pages to set up the foundations properly. Many authors simplify the problem by making restrictive assumptions on the geometry (e.g.\ by considering only symplectic manifolds $(X,\om)$ which are exact, or monotone, and only curves $\Si$ of genus 0), and perturbing the $J$-holomorphic curve equations (e.g.\ by domain-dependent almost complex structures), to ensure that the moduli spaces $\oM$ are manifolds (or pseudo-manifolds), so that defining fundamental classes/chains for $\oM$ is easy.
\end{itemize}

We wish to advocate a fourth approach:
\begin{itemize}
\setlength{\itemsep}{0pt}
\setlength{\parsep}{0pt}
\item[(D)] (S-manifolds.) Make {\it no restrictive assumptions\/} on the geometry, but perturb the $J$-holomorphic curve equations in such a way that all moduli spaces $\oM$ in the problem are compact oriented s-manifolds (possibly with corners, or $\Q$-weighted). Use fundamental classes as in \S\ref{sm33} and~\S\ref{sm44}.
\end{itemize}
Our definitions of s-manifolds, and s-manifolds with corners, above, have been very carefully designed to make this work. We conclude by giving a bit more detail on this programme:
\smallskip

\noindent{\bf(a)} Let $\oM$ be a moduli space of stable triples $(\Si,u,\bs z)$, where $\Si$ is a Deligne--Mumford prestable $J$-holomorphic curve, possibly with boundary, $u:\Si\ra X$ is a $J$-holomorphic map, possibly with $u(\pd\Si)\subset L$ for $L\subset X$ a Lagrangian, and $\bs z=(z_1,\ldots,z_k)$ is a $k$-tuple of marked points on $\Si$. We assume that the (possibly domain-dependent) almost complex structure $J$ is generic. We divide $\oM$ into strata $\oM^i$ according to:
\begin{itemize}
\setlength{\itemsep}{0pt}
\setlength{\parsep}{0pt}
\item[(i)] The topology of $\Si$, in particular its interior and boundary nodes.
\item[(ii)] Whether $u:\Si\ra X$ is a branched cover of another $J$-holomorphic map $u':\Si'\ra X$ over some component of $\Si$.
\item[(iii)] The finite symmetry group $\Aut(\Si,u,\bs z)$.
\end{itemize}

Typically there is one {\it nonsingular stratum\/} $\oM^{\rm ns}$ in which $\Si$ has no nodes, $u:\Si\ra X$ is not a branched cover, and $\Aut(\Si,u,\bs z)=\{1\}$. If $J$ is generic then each stratum $\oM^i$ is a smooth manifold, as in Definition \ref{sm3def1}(c). Most of Definition \ref{sm3def1}(a)--(e) is straightforward. For $\oM$ to be an s-manifold, or s-manifold with corners, the {\it key question\/} is whether the conditions on $\dim\oM^i$ hold. The easy case is when $\dim\oM^{\rm ns}=n$ and $\dim\oM^i\le n-2$ for~$i\ne{\rm ns}$.
\smallskip

\noindent{\bf(b)} The reason this is a difficult problem is that in many cases, with the most obvious definition of $\oM$, there can be singular strata $\oM^i$ with $\dim\oM^i\ge\dim\oM^{\rm ns}$. These mean that $\oM$ is not an s-manifold, and make defining $[\oM]_\fund$ much more difficult: we need a deeper geometric structure, such as Kuranishi spaces or polyfold Fredholm sections, and virtual techniques.

Our solution is to perturb the $J$-holomorphic equations --- to use a special kind of domain-dependent almost complex structure $\bs J$, which can depend on choices of extra marked points on $\Si$ --- which ensure that the conditions on $\dim\oM^i$ for $\oM$ to be an s-manifold (with corners) do hold, and orientations and fundamental classes for $\oM$ work.
\smallskip

\noindent{\bf(c)} In the Kuranishi space and polyfold theories (A),(B) above, there are significant technical difficulties in deciding what is the appropriate smooth structure to put on $\oM$ in the normal directions to a  singular stratum $\oM^i\subset\oM$. In polyfold theory \cite{HWZ1,HWZ2} this is done using a `gluing profile'. We avoid this problem completely by having {\it no smooth structure\/} normal to strata in s-manifolds. Therefore it is simpler, and less work, to prove a moduli space is an s-manifold, than a Kuranishi space or polyfold Fredholm section.
\smallskip

\noindent{\bf(d)} We divide moduli spaces $\oM$ into two types:
\begin{itemize}
\setlength{\itemsep}{0pt}
\setlength{\parsep}{0pt}
\item[(i)] Those for which a fundamental class $[\oM]_\fund$ is expected to exist in homology over $\Z$. These include moduli spaces $\oM_{0,k}(\al,J)$ of genus 0 curves with $k\ge 3$ marked points in a semipositive symplectic manifold $(X,\om)$, which may be used to define Gromov--Witten invariants and Quantum Cohomology for semipositive $(X,\om)$, and moduli spaces $\oM_{0,k,l}(\al,J)$ of genus 0 discs with $k\ge 0$ interior marked points and $l\ge 1$ boundary marked points, in a semipositive $(X,\om)$, with boundary in a Lagrangian $L\subset X$, which may be used to define Lagrangian Floer cohomology and Fukaya categories for semipositive $(X,\om)$.
\item[(ii)] The rest, for which a fundamental class $[\oM]_\fund$ is expected to exist in homology over $\Q$, but not over $\Z$.
\end{itemize}
The important difference is that for type (i), the virtual dimensions of moduli subspaces $\oM^\Ga\subset\oM$ of triples $(\Si,u,\bs z)$ with $\Aut(\Si,u,\bs z)\cong\Ga$ satisfy $\vdim\oM^\Ga<\vdim\oM$ for $\Ga\ne\{1\}$, but in case (ii) we can have $\vdim\oM^\Ga\ge\vdim\oM$. As triples $(\Si,u,\bs z)$ with $\Aut(\Si,u,\bs z)\cong\Ga$ should be `counted' with weight $1/\md{\Ga}$, fundamental classes can only be defined over $\Q$ in type~(ii).
\smallskip

\noindent{\bf(e)} For type (i), in future work with Guillem Cazassus and Alex Ritter the author hopes to give a systematic method of using domain-dependent almost complex structures $\bs J$ to make moduli spaces $\oM$ into compact, oriented s-manifolds $\bs\oM$ (possibly with corners), for generic~$\bs J$.

For type (ii), this method on its own is not sufficient. We use some additional geometric data on $X$ to determine, given $u:\Si\ra X$, some extra marked points $\{y_1,\ldots,y_N\}$ on $\Si$, which are naturally unordered, and on which $\Aut(\Si,u,\bs z)$ acts effectively. We choose an arbitrary ordering $(y_1,\ldots,y_N)$ of these extra marked points, and use a domain-dependent almost complex structure $\bs J$ which depends on the ordered set $(y_1,\ldots,y_N)$. In this way we {\it break the symmetry group\/} $\Aut(\Si,u,\bs z)$, so that $\oM$ contains only points $(\Si,u,\bs y,\bs z)$ with $\Aut(\Si,u,\bs y,\bs z)=\{1\}$. As there are $N!$ possible orderings of $\{y_1,\ldots,y_N\}$, we must weight strata with $N$ extra marked points by $1/N!$. So we make $\oM$ into a $\Q$-{\it weighted\/} s-manifold (with corners) $(\bs\oM,w)$, as in \S\ref{sm39} and \S\ref{sm49}, for generic $\bs J$. This is why $[\bs\oM]_\fund^w$ is defined in homology over~$\Q$.

\medskip

\noindent{\small\sc The Mathematical Institute, Radcliffe
Observatory Quarter, Woodstock Road, Oxford, OX2 6GG, U.K.

\noindent E-mail: {\tt joyce@maths.ox.ac.uk.}}

\end{document}